\DeclareMathAlphabet{\mathpzc}{OT1}{pzc}{m}{it}
\newcommand*\ja[1]{\textcolor{Fuchsia}{\textbf{*Jason: #1*}}}
\newcommand*\gf[1]{\textcolor{RoyalBlue}{\textbf{*Gary: #1*}}}
\newcommand*\sv[1]{\textcolor{PineGreen}{\textbf{*Sandro: #1*}}}
\theoremstyle{plain}
\newtheorem{theorem}{Theorem}[section]
\newtheorem{corollary}[theorem]{Corollary}
\newtheorem{lemma}[theorem]{Lemma}
\newtheorem{proposition}[theorem]{Proposition}
\newtheorem{claim}{Claim}[theorem]
\theoremstyle{definition}
\newtheorem{definition}[theorem]{Definition}
\newtheorem{remark}[theorem]{Remark}
\newtheorem*{theorem*}{Theorem}
\newenvironment{subproof}[1][\proofname]{%
	\begin{proof}[#1]%
	}{%
	\end{proof}%
}
\renewcommand{\phi}{\varphi}
\renewcommand{\epsilon}{\varepsilon}
\newcommand{\spn}{\ensuremath{\mathrm{span}}}
\newcommand{\sub}{\ensuremath{ \subseteq}} % A shortcut for subset notation
\newcommand{\bus}{\ensuremath{ \supseteq}} % A shortcut for reverse subset notation
\newcommand{\set}[1]{\ensuremath{ \left\{#1\right\} }} % set notation
\newcommand{\norm}[1]{\ensuremath{ \|#1\| }} % norm
\newcommand{\vertiii}[1]{{\left\vert\kern-0.25ex\left\vert\kern-0.25ex\left\vert #1
		\right\vert\kern-0.25ex\right\vert\kern-0.25ex\right\vert}}
\newcommand{\trinorm}[1]{\ensuremath{\vertiii{#1}}} %\left\lvert\left\lvert\left\lvert#1\right\rvert\right\rvert\right\rvert }} % triple barred norm
\newcommand{\absval}[1]{\ensuremath{ \left\lvert#1\right\rvert }} % absolute value bars
\newcommand{\spot}{\ensuremath{ \makebox[1ex]{\textbf{$\cdot$}} }}
\newcommand{\ol}[1]{\overline{#1}}% overline
\newcommand{\Ul}[1]{\underline{#1}}% underline
\newcommand{\bs}{\ensuremath{\backslash}}
\renewcommand\~{\tilde}% adds a tilde to something
\newcommand{\lt}{\ensuremath{\left}}
\newcommand{\rt}{\ensuremath{\right}}
\newcommand{\Om}{\ensuremath{\Omega}}
\newcommand{\Lm}{\ensuremath{\Lambda}}
\newcommand{\Gm}{\ensuremath{\Gamma}}
\newcommand{\Sg}{\ensuremath{\Sigma}}
\newcommand{\Dl}{\ensuremath{\Delta}}
\newcommand{\om}{\ensuremath{\omega}}
\newcommand{\lm}{\ensuremath{\lambda}}
\newcommand{\gm}{\ensuremath{\gamma}}
\newcommand{\al}{\ensuremath{\alpha}}
\newcommand{\bt}{\ensuremath{\beta}}
\newcommand{\sg}{\ensuremath{\sigma}}
\newcommand{\ep}{\ensuremath{\epsilon}}
\newcommand{\eps}{\ensuremath{\epsilon}}
\newcommand{\dl}{\ensuremath{\delta}}
\newcommand{\kp}{\ensuremath{\kappa}}
\newcommand{\zt}{\ensuremath{\zeta}}
\newcommand{\ta}{\ensuremath{\theta}}
\newcommand{\vta}{\ensuremath{\vartheta}}
\newcommand{\vrho}{\ensuremath{\varrho}}
\DeclareSymbolFont{bbold}{U}{bbold}{m}{n}
\DeclareSymbolFontAlphabet{\mathbbold}{bbold}
\newcommand{\ind}{\ensuremath{\mathbbold{1}}}
\DeclareMathOperator*{\esssup}{ess\sup}
\DeclareMathOperator*{\essinf}{ess\inf}
\DeclareMathOperator{\supp}{supp}
\newcommand{\cA}{\ensuremath{\mathcal{A}}}
\newcommand{\cB}{\ensuremath{\mathcal{B}}}
\newcommand{\cC}{\ensuremath{\mathcal{C}}}
\newcommand{\cD}{\ensuremath{\mathcal{D}}}
\newcommand{\cE}{\ensuremath{\mathcal{E}}}
\newcommand{\cF}{\ensuremath{\mathcal{F}}}
\newcommand{\cJ}{\ensuremath{\mathcal{J}}}
\newcommand{\cL}{\ensuremath{\mathcal{L}}}
\newcommand{\tcL}{\ensuremath{\tilde{\mathcal{L}}}}
\newcommand{\cP}{\ensuremath{\mathcal{P}}}
\newcommand{\cX}{\ensuremath{\mathcal{X}}}
\newcommand{\cZ}{\ensuremath{\mathcal{Z}}}
\newcommand{\sA}{\ensuremath{\mathscr{A}}}
\newcommand{\sB}{\ensuremath{\mathscr{B}}}
\newcommand{\sF}{\ensuremath{\mathscr{F}}}
\newcommand{\sL}{\ensuremath{\mathscr{L}}}
\newcommand{\CC}{\ensuremath{\mathbb C}} % Gives you a shortcut for writing the blackboard C for the complex numbers - \CC
\newcommand{\NN}{\ensuremath{\mathbb N}}%Natural Numbers
\newcommand{\RR}{\ensuremath{\mathbb R}}% Real Numbers
\newcommand{\ZZ}{\ensuremath{\mathbb Z}} % Gives you a shortcut for writing the blackboard Z for the integer numbers - \ZZ
\def\lra{\longrightarrow}
\def\var{\text{{\rm var}}}
\def\Var{\text{{\rm Var}}}
\def\BV{\text{{\rm BV}}}
\def\Leb{\text{{\rm Leb}}}
\DeclareMathOperator*{\nuessinf}{\ensuremath{\nu\text{-ess}\inf}}
\DeclareMathOperator*{\Lebessinf}{\ensuremath{\text{Leb-ess}\inf}}
\DeclareMathOperator*{\nuesssup}{\ensuremath{\nu\text{-ess}\sup}}
\DeclareMathOperator*{\Lebesssup}{\ensuremath{\text{Leb-ess}\sup}}
\def\lt{\left}
\def\rt{\right}
\providecommand{\phantomsection}{}% In case hyperref is not loaded
\newcommand{\mylabel}[2]{\raisebox{.7\normalbaselineskip}{\phantomsection}(#1)%
	\def\@currentlabel{#1}\textlabel{#2}}
\newcommand\xlabel[2][]{\phantomsection\def\@currentlabelname{#1}\label{#2}}
\newcommand{\bcomma}{,\allowbreak}
\NewDocumentCommand{\mathlist}{ O{,} m m }
 {
  \egreg_mathlist:nnn { #1 } { #2 } { #3 }
 }
\newcommand{\flag}[1]{\textbf{**[#1]**}}% comment
\newcommand{\cEP}{\ensuremath{\cE P}} 
\numberwithin{equation}{section}
\title[]{Perturbation formulae for quenched random dynamics with applications to open systems and extreme value theory}
\date{\today}
\author{Jason Atnip}
\address{School of Mathematics and Statistics, University of New South Wales, Sydney, NSW 2052, Australia}
\email{\href{j.atnip@unsw.edu.au}{j.atnip@unsw.edu.au} }
\author{Gary Froyland}
\address{School of Mathematics and Statistics, University of New South Wales, Sydney, NSW 2052, Australia}
\email{\href{g.froyland@unsw.edu.au}{g.froyland@unsw.edu.au} }
\author{Cecilia Gonz\'alez-Tokman}
\address{School of Mathematics and Physics, The University of Queensland, St Lucia, QLD 4072, Australia}
\email{\href{cecilia.gt@uq.edu.au}{cecilia.gt@uq.edu.au} }
\author{Sandro Vaienti}
\address{Aix Marseille Université, Université de Toulon, CNRS, CPT, 13009 Marseille, France}
\email{\href{vaienti@cpt.univ-mrs.fr}{vaienti@cpt.univ-mrs.fr} }
\begin{document}

	\maketitle
	
	\begin{abstract}
		We consider quasi-compact linear operator cocycles $\mathcal{L}^{n}_\omega:=\mathcal{L}_{\sigma^{n-1}\omega}\circ\cdots\circ\mathcal{L}_{\sigma\omega}\circ \mathcal{L}_{\omega}$ driven by an invertible ergodic process $\sigma:\Omega\to\Omega$, and their small perturbations $\mathcal{L}_{\omega,\epsilon}^{n}$.
		We prove an abstract $\omega$-wise first-order formula for the leading Lyapunov multipliers  $\lambda_{\omega,\epsilon}=\lambda_\omega-\theta_{\omega}\Delta_{\omega,\epsilon}+o(\Delta_{\omega,\epsilon})$, where $\Delta_{\omega,\epsilon}$ quantifies the closeness of $\mathcal{L}_{\omega,\epsilon}$ and $\mathcal{L}_{\omega}$.  
		We then consider the situation where $\mathcal{L}_{\omega}^{n}$ is a transfer operator cocycle for a random map cocycle $T_\omega^{n}:=T_{\sigma^{n-1}\omega}\circ\cdots\circ T_{\sigma\omega}\circ  T_{\omega}$ 
		%with contracting potential
		and the perturbed transfer operators $\mathcal{L}_{\omega,\epsilon}$ are defined by the introduction of small random holes $H_{\omega,\epsilon}$ in $[0,1]$, creating a random open dynamical system.
		We obtain a first-order perturbation formula in this setting, which reads $\lambda_{\omega,\epsilon}=\lambda_\omega-\theta_{\omega}\mu_{\omega}(H_{\omega,\epsilon})+o(\mu_\omega(H_{\omega,\epsilon})),$ where $\mu_\omega$ is the unique equivariant random measure (and equilibrium state) for the original closed random dynamics. 
		Our new machinery is then deployed to create a spectral approach for a quenched extreme value theory that considers random dynamics with general ergodic invertible driving, and random observations.
		An extreme value law is derived using the first-order terms $\theta_\omega$.
		Further, in the setting of random piecewise expanding interval maps, we establish the existence of random equilibrium states and conditionally invariant measures for random open systems via a random perturbative approach.
		Finally we prove quenched statistical limit theorems for random equilibrium states arising from contracting potentials.
		We illustrate the theory with a variety of explicit examples.
		%\gf{update...to rebalance.} 
	%	\ja{Add comments about limit laws}
		
	\end{abstract}
	\newpage
	\tableofcontents
	\newpage	
	
	\section{Introduction}
	The spectral approach to studying deterministic dynamical systems $T:X\to X$ on a phase space $X$, centres on the analysis of a transfer operator $\mathcal{L}:\mathcal{B}(X)\to\mathcal{B}(X)$, given by $\mathcal{L}f(y)=\sum_{x\in T^{-1}y}g(x)f(x)$, for $f$ in a suitable Banach space $\mathcal{B}(X)$ and for suitable weight functions $g:X\to\mathbb{R}^+$.
	If the map $T$ is \emph{covering} and the weight function is \emph{contracting} in the sense of \cite{liverani_conformal_1998} (or similarly if $\sup\log g<P(T)$ as in \cite{denker_existence_1991,denker_uniqueness_1990} or if $\sup\log g-\inf\log g<h_{top}(T)$ as in \cite{hofbauer_equilibrium_1982,bruin_equilibrium_2008}), then one obtains the existence of an equilibrium state $\mu$, with associated conformal measure $\nu$, with the topological pressure $P(T)$ and the density of equilibrium state $d\mu/d\nu$ given by the logarithm of the leading (positive) eigenvalue $\lambda$ of $\mathcal{L}$ and the corresponding positive eigenfunction $h$, respectively. The map $T$ exhibits an exponential decay of correlations with respect to $\nu$ and $\mu$.
	
	Keller and Liverani \cite{KL99} showed that the leading eigenvalue $\lambda$ and eigenfunction $h$ of $\mathcal{L}$ vary continuously with respect to certain small perturbations of $\mathcal{L}$.
	One example of such a perturbation is the introduction of a small hole $H\subset X$.
	When trajectories of $T$ enter $H$, the trajectory ends;  this generates an \emph{open} dynamical system.
	The set of initial conditions of trajectories that never land in $H$ is the survivor set $X_\infty$.
	For small holes, specialising to Lasota-Yorke maps of the interval, Liverani and Maume-Dechamps \cite{liverani_lasotayorke_2003} apply the perturbation theory of \cite{KL99} to obtain the existence of a unique conformal measure $\nu$ and an absolutely continuous \emph{conditionally} invariant measure $\mu$, with density $h\in\BV([0,1])$. %is more challenging for large holes \cite{liverani_lasotayorke_2003}.
	The leading eigenvalue $\lambda$ is interpretable as an \emph{escape rate}, and the open system displays an exponential decay of correlations with respect to $\nu$ and $\mu$.
	
	To obtain finer information on the behaviour of $\lambda$ with respect to perturbation size, particularly in the situation where the perturbation is not smooth, such as perturbations arising from the introduction of a hole, one requires some additional control on the perturbation.
	Keller and Liverani \cite{keller_rare_2009} develop abstract conditions on $\mathcal{L}$ and its perturbations $\mathcal{L}_\epsilon$ to ensure good first-order behaviour with respect to the perturbation size. 
	%\ja{I added the following sentence} 
	%\ja{fixed the following sentence}
	Following \cite{keller_rare_2009}, several authors \cite{ferguson_escape_2012,FFT015,pollicott_open_2017,BDT18} have used the Keller-Liverani \cite{KL99} perturbation theory to obtain similar first-order behaviour of the escape rate with respect to the perturbation size for open systems in various settings.
	%/gf{Jason, can you edit, above to include the small hole escape rate asymptotics...I think this is what is being discussed above anyway. \cite{ferguson_escape_2012} is already there, and we can add \cite{FFT015}.}
	
	This ``linear response'' of $\lambda$ is exploited in Keller \cite{keller_rare_2012} to develop an elegant spectral approach to deriving an exponential extreme value law to describe likelihoods of observing extreme values from evaluating an observation function $h:X\to\mathbb{R}$ along orbits of $T$.
	In particular, the $N\to\infty$ limiting law of
	\begin{equation}\label{K12}
		\nu\left(\left\{x\in X: h(T^j(x))\le z_{N}, j=0, \dots, N-1\right\}\right),
	\end{equation}
	where the thresholds $z_N$ are chosen so that $\lim_{N\to\infty} N\mu(\{x:h(x)>z_N\})\to t$ for some $t>0$.
	%By adjusting thresholds for what is considered an extreme value so that the likelihood of observing an extreme in an orbit of length $n$ decays at a rate $1/n$, one obtains an extreme value law. 
	The spectral approach of \cite{keller_rare_2012} also provides a relatively explicit expression for the limit of (\ref{K12}), namely
	$\lim_{N\to\infty}\nu\left(\left\{x\in X: h(T^j(x))\le z_{N}, j=0, \dots, N-1\right\}\right)=\exp(-t\theta_0),$
	where $\theta_0$ is the extremal index.
	
	\subsection{Quenched perturbations of abstract operator cocycles and sequential/random dynamical systems}
	In the present work we begin with sequential compositions of linear operators $\mathcal{L}_{\omega,0}^n:=\mathcal{L}_{\sigma^{n-1}\omega,0}\circ\cdots\circ \mathcal{L}_{\sigma\omega,0}\circ \mathcal{L}_{\omega,0}$, 
	where $\sigma:\Omega\to\Omega$ is an invertible map on a configuration set $\Omega$.
	The driving $\sigma$ could also be an ergodic map on a probability space $(\Omega,\mathcal{F},m)$.
	We then consider a family of perturbed cocycles $\mathcal{L}_{\omega,\ep}^n:=\mathcal{L}_{\sigma^{n-1}\omega,\ep}\circ\cdots\circ \mathcal{L}_{\sigma\omega,\ep}\circ \mathcal{L}_{\omega,\ep}$, where the size of the perturbation $\mathcal{L}_{\omega,0}-\mathcal{L}_{\omega,\ep}$ is quantified by the value $\Delta_{\omega,\epsilon}$ (Definition \ref{def: DL_om}).
	Our first main result is an abstract quenched formula (Theorem \ref{thm: GRPT}) for the Lyapunov multipliers $\lambda_{\omega,0}$ up to first order in the size of the perturbation $\Delta_{\omega,\epsilon}$ of the operators $\mathcal{L}_{\omega,0}$.
	This quenched random formula generalises the main abstract first-order formula in \cite{keller_rare_2009} stated in the case of a single deterministic operator $\mathcal{L}_0$.
	\begin{quote}
		\textbf{Theorem A:}
		Suppose that assumptions \eqref{P1}-\eqref{P9} hold (see Section \ref{Sec: Gen Perturb Setup}).
		If $\Delta_{\omega,\epsilon}>0$ for all $\epsilon>0$ then for $m$-a.e. $\omega\in\Omega$:
		\begin{align*}
			\lim_{\ep\to 0}\frac{\lm_{\om,0}-\lm_{\om,\ep}}{\Dl_{\om,\ep}}=1-\sum_{k=0}^{\infty}\hat{q}_{\om,0}^{(k)}
			=:\theta_{\omega,0}.
		\end{align*}
		%	there is some $\ep_0>0$ such that $\Dl_{\om,\ep}=0$ for $\ep\leq \ep_0$ then
		%	\begin{align*}
		%		\lm_{\om,0}=\lm_{\om,\ep},
		%	\end{align*}
	\end{quote}
	
	We then introduce random compositions of maps $T_\omega:X\to X$, drawn from a collection $\{T_\omega\}_{\omega\in\Omega}$.
	A driving map $\sigma:\Omega\to\Omega$ on a probability space $(\Omega,\cF,m)$ creates a map cocycle $T_\omega^n:=T_{\sigma^{n-1}\omega}\circ\cdots\circ T_{\sigma\omega}\circ T_\omega$.
	This map cocycle generates a transfer operator cocycle $\mathcal{L}_{\omega,0}^n:=\mathcal{L}_{\sigma^{n-1}\omega,0}\circ\cdots\circ \mathcal{L}_{\sigma\omega,0}\circ \mathcal{L}_{\omega,0}$, where $\mathcal{L}_{\omega,0}$ is the transfer operator for the map $T_\omega$.
	At each iterate, a random hole $H_\omega\subset X$ is introduced.

	The existence of a random quenched equilibrium state, conformal measure, escape rates, and exponential decay of correlations is established in \cite{AFGTV21} for relatively large holes, generalising the large-hole constructions of \cite{liverani_lasotayorke_2003} for a single deterministic map $T$ to the random setting with general driving.
	In contrast, our focus here is establishing a random quenched analogue of the results of \cite{liverani_lasotayorke_2003}, \cite{keller_rare_2009}, and \cite{keller_rare_2012} discussed above.
	To this end, 
	%We then specialise this abstract result to the situation where $\mathcal{L}_\omega$ is a transfer operator for the closed map $T_\omega$ and 
	we let $\mathcal{L}_{\omega,\epsilon}$ be the transfer operator for the open map $T_\omega$ with a hole $H_{\omega,\epsilon}$ introduced in $X$, namely $\mathcal{L}_{\omega,\epsilon}(f)=\mathcal{L}_\omega(\ind_{X\setminus H_{\omega,\epsilon}}f)$.
	Our second collection of main results is a quenched formula for the derivative of $\lambda_{\omega,\epsilon}$ with respect to the sample invariant probability measure of the hole $\mu_{\omega,0}(H_{\omega,\epsilon})$ (Theorem \ref{thm: dynamics perturb thm}),
	as well as a quenched formula for the derivative of the escape rate $R_\ep(\mu_{\om,0})$ with respect to the sample invariant probability measure of the hole $\mu_{\omega,0}(H_{\omega,\epsilon})$ (Corollary \ref{esc rat cor}). 
	
	\begin{quote}
		\textbf{Corollary A:}
		%\label{cor: dynamics perturb thm}
		If $(\mathlist{\bcomma}{\Om, m, \sg, \cJ_0, T, \cB, \cL_0, \nu_0, \phi_0, H_\ep})$ is a  random open  system (see Section \ref{sec: open systems}) with
		$\mu_{\om,0}(H_{\om,\ep})>0$ for all $\ep>0$ and \eqref{C1}--\eqref{C8} hold (see Section \ref{sec:goodrandom}), then for $m$-a.e.\ $\omega\in\Omega$:
		\begin{align}\label{cor A eq}
			\lim_{\ep\to 0}
			\frac{1-\lm_{\om,\ep}/\lm_{\om,0}}{\mu_{\om,0}(H_{\om,\ep})}
			=
			1-\sum_{k=0}^{\infty}\hat{q}_{\om,0}^{(k)}
		\end{align}
		and 
		\begin{align*}
			R_\ep(\mu_{\om,0})=\int_\Om \log \lm_{\om,0}-\log\lm_{\om,\ep}\, dm(\om).
		\end{align*}
		In addition, if \eqref{DCT cond 1*} holds and the $\mu_{\om,0}$-measures of the random holes scale with $\epsilon$ according to \eqref{EVT style cond}, then for $m$-a.e.\ $\omega\in\Omega$ 
		%\ja{Result and assumptions need to be updated}
		\begin{equation*}
			\label{escaperatederiv}
			\lim_{\ep\to 0}
			\frac{R_\ep(\mu_{\om,0})}{\mu_{\om,0}(H_{\om,\ep})}
			=
			\int_\Om \left( 1-\sum_{k=0}^{\infty}\hat{q}_{\om,0}^{(k)} \right)\, dm(\om).
		\end{equation*}
		%	\ja{Do we want to include the additional results concerning the escape rate? If not we also need to remove the end of the sentence directly before this Cor. A}
	\end{quote}

	\subsection{Quenched extreme value theory general sequential/random dynamics}
	To generalize to the random setting the rescaled distribution of the maxima given by (\ref{K12}), we now consider the (random) real-valued random observables $h_{\omega}$ defined on the phase space $X$
	%\footnote{A correct definition of the domain of $h_{\omega}$ will be given in section 4; we choose here  $X$ to simplify the exposition.} 
	and construct a process $h_{\sigma^j\omega}\circ T^j_{\omega}$.
	We are interested in determining the limiting law of
	\begin{equation}\label{HHU}
		\mu_{\omega,0}\left(\left\{x\in X: h_{\sigma^j\omega}(T^j_{\omega}(x))\le z_{\sigma^j\omega, N}, j=0, \dots, N-1\right\}\right),
	\end{equation}
	where $\{z_{\sigma^j\omega, N}\}_{0\le j\le N-1}$ is a collection of real-valued  thresholds.
	The sample probability measure $\mu_{\omega,0}$ enjoys the equivariance property $T^*_{\omega}\mu_{\omega,0}=\mu_{\sigma\omega,0},$ however the process $h_{\sigma^j\omega}\circ T^j_{\omega}$ is not stationary in the probabilistic sense, which makes the theory slightly more difficult.
	
	The first approach to non-stationary extreme value theory (EVT) was given 
	under convenient assumptions, by H\"usler in \cite{H83,H86}. He was able to recover the usual extremal behaviour seen for
	i.i.d. or stationary sequences under Leadbetter’s conditions \cite{LED}, namely (i)
	%For the reader's convenience, we remind that Leadbetter's conditions could be  summarized
	%in two assumptions: the first requires
	guaranteed mixing properties for the probability
	measure governing the process %\gf{which process?  the base or fiber process?} \sv{There is no base process, it's not a fiberd situation; I believe that having written i.i.d. and stationary, the context is relatively clear} 
	and (ii) 
	%control on the concentration of exceedances by guaranteeing 
	that the exceedances should appear scattered through the time period under consideration.
	%one looks at the maximum for the sequence.
	%\sv{Gary, I restored what I wrote previously because your changes were not right}\\
	%\gf{Sandro, could you instead please make the minimal adjustments to my edits to make it correct?}
	%\\
	H\"usler's results can not be applied in the dynamical systems setting because his uniform bounds on the control of the exceedances are not  satisfied for deterministic, random, or sequential compositions of maps. %which does not hold for  stochastic processes arising from dynamical
	%systems. 
	The first contribution dealing explicitly with extreme value theory for random and sequential systems is the paper \cite{FFV017};  see also \cite{FFMV016} for an  application to point processes. 
	These works were an adaptation of  Leadbetter's conditions and Hüsler’s approach: let us call them the probabilistic approach to extreme value theory, to distinguish it from the spectral and perturbative approach used in the current paper.
	%, but adapted to the non-stationary setting the more refined results given in \cite{FFT015} \gf{Suggest either deleting or elaborating on \cite{FFT015}}. 

	%	\sv{For reasons which will be clear in a moment, I prefer to state the EVT by using $\mu_{\omega}$ instead of $\nu_{\omega}$. Actually, there is no difference for the final result by using (5.1) and (5.4).}

	As in the deterministic case, in order to avoid a degenerate limit distribution, one should conveniently choose the thresholds $z_{\sigma^j\omega,N}.$  H\"usler proved convergence to the Gumbel's law if for some $0<t<\infty$ we have convergence of the sum 	\begin{equation}\label{BLH}
		\sum_{j=0}^{N-1}\mu_{\omega,0}\left(h_{\sigma^j\omega}(T^j_{\omega}(x))> z_{\sigma^j\omega, N}\right)\rightarrow t.
	\end{equation} for $m$-a.e.\ $\omega.$
%	H\"usler's prescription
	%which was also assumed in \cite{FFV017} 
	%is to ask for the existence of a positive real number $t$ \gf{Sandro, is it instead that the requirement below works for \emph{any} chosen $t$ (not ``existence of a postive real $t$''), as must be the case for our assumption?} such that for $m$-a.e.\ $\omega$
%	\begin{equation}\label{BLH}
	%	\sum_{j=0}^{N-1}\mu_{\omega,0}\left(h_{\sigma^j\omega}(T^j_{\omega}(x))> z_{\sigma^j\omega, N}\right)\rightarrow t.
%	\end{equation}
	In our current framework we will additionally allow the positive number $t$ to be any positive random variable in $L^\infty(m)$.
	The nonstationary theory developed in \cite{FFV017} for quenched random processes, has the further restrictions that the observation function is fixed ($\omega$-independent), and the thresholds $z_N$ (like the scaling $t$) are just real numbers, and requires the obvious restricted equivalent of (\ref{BLH}).
	In our framework the observation function $h_\omega$, the scaling $t_\omega$, and the thresholds $z_{\omega,N}$ may all be random (but need not be).
	We generalise and simplify the requirement (\ref{BLH}) to
	\begin{equation}\label{os}
		N\mu_{\omega,0}\left(h_{\omega}(x)> z_{\omega, N}\right)=t_{\omega}+\xi_{\omega, N},
	\end{equation}
	where the scaling $t$ may be a random variable $t\in L^\infty(m)$ and the ``errors'' $\xi_{\omega,N}$ satisfy (i) $\lim_{N\rightarrow \infty}\xi_{\omega,N}=0$ a.e., and (ii) $|\xi_{\omega,N}|\le W<\infty$ for a.e.\ $\omega$ and all sufficiently large $N$. 
	    %\gf{?}\ja{No all $N$ is the same as sufficiently large since the error can only grow large as $N\to\infty$. Since $0<N\mu_{\om,0}(H)=t_\om+\xi_{\om,N}<N$ then  we have $|\xi_{\om,N}|<|t|_\infty+N$.} 
	    %||w||_{\infty}$
 %there exists $w\in L^{\infty}(m)$ such that for any $N:$
 
 %\textcolor{green}{$|\xi_{\omega,N}|\le \gamma_N$ for a.e. $\omega$, with $\gamma_N\to 0$ as $N\to\infty$}. 
	We provide a more detailed discussion of the relationship between the conditions (\ref{BLH}) and (\ref{os}) at the end of section \ref{EEVV}.

	In summary, we derive a spectral approach for a quenched random extreme value law, where the dynamics $T_\omega$ is random, the observation functions $h_\omega$ can be random, the thresholds controlling what is an extreme value can be random, and the scalings of the likelihoods of observing extreme values can be random, all controlled by general invertible ergodic driving.
	Moreover, we obtain a formula for the explicit form of Gumbel law for the extreme value distribution.
	%(Theorem \ref{evtthm}).
	This leads to our main extreme value theory result (stated in detail later as Theorem \ref{evtthm}):
	%\gf{I added the $\mu_{om,0}$ limit, but lefteqn is not working for me...}
	%\gf{Should we state all other standing assumptions M1, M2, CCM, A, B, X,...}
	\begin{quote}
		\textbf{Theorem B:}
		For a random open system  $(\mathlist{\bcomma}{\Om, m, \sg, \cJ_0, T, \cB, \cL_0, \nu_0, \phi_0, H_\ep})$ (see Section \ref{sec: open systems}),  assuming \eqref{C1'}, \eqref{C2}, \eqref{C3}, \eqref{C4'}, \eqref{C5'}, \eqref{C7'}, \eqref{C8},  and \eqref{xibound} (see Sections~\ref{sec:goodrandom} and \ref{EEVV}),  for $m$-a.e.\ $\omega\in\Omega$ one has
		\begin{align*}
		%    \MoveEqLeft
		&\lim_{N\to\infty} \nu_{\omega,0}\left(x\in X: h_{\sigma^j\omega}(T^j_{\omega}(x))\le z_{\sigma^j\omega, N}, j=0, \dots, N-1\right)\\
		&\qquad =\lefteqn{\lim_{N\to\infty} \mu_{\omega,0}\left(x\in X: h_{\sigma^j\omega}(T^j_{\omega}(x))\le z_{\sigma^j\omega, N}, j=0, \dots, N-1\right)}\\
		&\qquad =\exp\left(-\int_\Omega t_\omega\theta_{\omega,0}\ dm(\omega)\right),
		\end{align*}
		where $\nu_{\omega,0}$ and $\mu_{\omega,0}$ are the random conformal measure and the random invariant measure, respectively, for our random dynamics, $t_\omega$ is a random scaling function, and $\theta_{\omega,0}$ is an $\omega$-local extremal index.
	\end{quote}
	    %\sv{We should probably stress here or elsewhere that what we are getting is the Gumbel law for the extreme value distribution. There are other two possible laws, Weibull and Fechet which we are not considering.}
	This result generalises the spectral approach to extreme value theory in \cite{keller_rare_2012}, for a single map $T$, single observation function $h$, single scaling, and single sequence of thresholds.
	
%	\gf{I added hitting times;  if we think it is too much, we can reduce to the style of Section 1.4.}
Given a family of random holes $\mathcal{H}_{\om,N}:=\{H_{\sigma^j\om, \epsilon_N}\}_{j\ge 0},$ one can define
the first (random) hitting time to a hole, starting at initial condition $x$ and random configuration $\omega$:
$$
\tau_{\om, \mathcal{H}_{\om,N}}(x):=\inf\{k\ge 1, T^k_{\om}(x)\in H_{\sigma^k\om, \epsilon_N}\}.$$
When this family of holes shrink with increasing $N$ according to Condition (\ref{xibound}) (see Section \ref{sec:hts}),
Theorem B provides a description of the statistics of random hitting times, scaled by the measure of the holes (see Theorem \ref{hve}).
	\begin{quote}
	    \textbf{Corollary B:}
	For a random open system  $(\mathlist{\bcomma}{\Om, m, \sg, \cJ_0, T, \cB, \cL_0, \nu_0, \phi_0, H_\ep})$ (see Section \ref{sec: open systems}),  assume \eqref{C1'}, \eqref{C2}, \eqref{C3}, \eqref{C4'}, \eqref{C5'}, \eqref{C7'}, \eqref{C8},  and \eqref{xibound} (see Sections~\ref{sec:goodrandom} and \ref{EEVV}).
%et us consider as in the previous sections, a sequence of small random holes $\mathcal{H}_{\om,N}:=\{H_{\sigma^j\om, \epsilon_N}\}_{j\ge 0},$ and define
For $m$-a.e.\ $\omega\in\Omega$ one has
	\begin{equation}
		\label{eqhit0}
		\lim_{N\to\infty}\mu_{\om,0}\left(\tau_{\om, \mathcal{H}_{\om,N}} \mu_{\om,0}(H_{\om, \epsilon_N})>t_{\om}\right) = \exp\left(-{\int_\Om t_{\om} \theta_{\om,0}\ dm(\omega)}\right).
	\end{equation}
	\end{quote}

	\subsection{Thermodynamic formalism for quenched random open systems}
	By assuming some additional uniformity in $\omega$ on the maps $T_\omega$ we use a recent random perturbative result \cite{crimmins_stability_2019} to obtain a complete quenched thermodynamic formalism. 
	%the existence of a unique random conformal measure $\nu_{\omega,\epsilon}$, a unique quasi-invariant density $\varphi_{\omega,\epsilon}$, and quenched exponential decay of correlations 
	%\gf{(and other objects/properties?...since we list everything in the result below, Jason, can you please summarise with ``thermodynamic formalism'' or similar so we don't re-list everything here?)}.
	The following existence result extends Theorem C of \cite{liverani_lasotayorke_2003}, which concerned inserting a single small hole into the phase space of a single deterministic map $T$, to the situation of random map cocycles with small random holes with the random process controlled by general invertible ergodic driving $\sigma$.
	
	%\gf{Jason, if you think it can be done in some reasonable, way, could you paraphrase Thm \ref{existence theorem} below in Thm B?}
	\begin{quote}
		\textbf{Theorem C:}
		Suppose 
		%a family of transfer operator cocycles $\{\mathcal{L}_{\omega,\epsilon}\}_{\epsilon\ge 0}$ satisfy the conditions of Theorem 17 \cite{FLQ2} ($m$-continuity and $\sigma$ a homeomorphism).
		that (\ref{E1})--(\ref{E9})
		%, \eqref{EX}, and \eqref{M} 
		(see Section~\ref{sec: existence})
		hold for the random open system $(\mathlist{\bcomma}{\Om, m, \sg, [0,1], T, \BV, \cL_0, \nu_0, \phi_0, H_\ep})$ (see Section \ref{sec: open systems}).
		Then for each $\ep>0$ sufficiently small there exists a unique random $T$-invariant probability measure $\mu_\ep=\set{\mu_{\om,\ep}}_{\om\in\Om}$ with $\supp(\mu_{\om,\ep})\sub X_{\om,\infty,\ep}$. 
		Furthermore, $\mu_\ep$ is the unique relative equilibrium state for the random open system %$(\Om, m, \sg, [0,1], T, \BV, \cL_0, \nu_0, \phi_0, H_\ep)$ 
		and satisfies a forward and backward exponential decay of correlations. In addition, there exists a random absolutely continuous (with respect to $\set{\nu_{\om,0}}_{\om\in\Om}$) conditionally invariant probability measure $\varrho_\ep=\set{\varrho_{\om,\ep}}_{\om\in\Om}$ with $\supp(\varrho_{\om,\ep})\sub[0,1]\bs H_{\om,\ep}$ and density function $h_{\om,\ep}\in \BV([0,1])$. 
		%\ja{Add ASIP/CLT/LD if applicable.}
	\end{quote}
	For a more explicit statement of Theorem C as well as the relevant assumptions and definitions see Section~\ref{sec: existence} and Theorem~\ref{existence theorem}.\\
	
	\subsection{Quenched limit theorems for random dynamics with contracting potentials}
	%\ja{Change or Remove the following depending on the changes to Section 8. Refer to Theorem numbers from Section 8. Add/highlight the fact that these limit results are new for more general weights and conformal measures. }\sv{I changed a bit the next sentences}\gf{me too}.
	In Section \ref{sec: limit theorems} we prove some quenched limit theorems for closed random dynamics. %restricted  to the case $\eps=0,$ and we will comment why at the moment we could not extend it to the more general case of positive $\eps.$
	These limit results are new for more general potentials and their associated equilibrium states. We use two approaches. 
	The first is based on the perturbative technique developed in \cite{dragicevic_spectral_2018}, which  generalises the Nagaev-Guivarc'h method to random cocycles. 
	This technique establishes a relation between a suitable twisted operator cocycle and  the distribution of the random Birkhoff sums. 
	As a consequence, it is possible to get quenched versions of the large deviation principle, the central limit theorem, and the local central limit theorem. 
	The second approach invokes the martingale  techniques previously used in the quenched random setting in \cite{DFGTV18a}. 
	We obtain the almost sure invariance principle (ASIP) for the equivariant measure $\mu_{\om,0},$ which also implies the central limit theorem and the law of iterated logarithms, a general bound for large deviations and a dynamical Borel-Cantelli lemma.

	%	\gf{mention we can treat subshifts and complex polynomials. Collect random papers that satisfy our conditions.}
	 \subsection{Examples} We conclude with several explicit examples. 
	We start in Example \hyperref[example1]{1} with the weight $1/T'_\omega$ for a family of random maps, random scalings, and random observations $h_\omega$ with a common extremum location in phase space, which is a common fixed point of the $T_\omega$.
	The special cases of a fixed map $T$ on the one hand, and a fixed scaling $t$ on the other, are also considered.
	The same calculations can be extended to observation functions with common extrema on a periodic orbit common to all $T_\omega$.
	Next in Example \hyperref[example2]{2} we consider the more difficult case where orbits are distributed according to equilibrium states of a general geometric weight $|DT_\omega|^{-r}$,
	%\gf{Jason, did we use a different notation for this $t$?}
	%\ja{I replaced all $t$ parameters for geometric potentials with $r$}
	using random $\beta$-maps and random observation functions $h_\omega$ with a common extremum at $x=0$.
	Example \hyperref[example3]{3} investigates a fixed map $T$ with random observation functions $h_\omega$ with extrema in a shrinking neighbourhood of a fixed point of $T$, where the neighbourhood lacks the symmetry of Example \hyperref[example1]{1}.
	%, and converging to, but hole and a fixed observation function.
	%In Example \hyperref[example3]{3} we consider random compositions of maps $T_\omega$ not sharing fixed points, with a fixed observation function $h$. 
	In the last example, Example \hyperref[example4]{4}, we again consider random maps $T_\om$  
	    %\sv{If we refer to example 4, look that  even the maps are random!} 
	with random observations $h_\omega$, but now the maxima of the observations are not related to fixed or periodic points of $T_\om$.

	Though we apply our results to the setting of random interval maps our results apply equally well to other random settings including random subshifts 
	   % \sv{Are we sure about that? Our theory needs a differentiable structure which is not clear holds immediately for subshifts, even if it is plausible.}\ja{Yes I think so. Pollicott-Urbanski have similar derivative looking results for their countable shifts.}
	\cite{Bogenschutz_RuelleTransferOperator_1995a,mayer_countable_2015}, random distance expanding maps \cite{mayer_distance_2011}, random polynomial systems \cite{Bruck_Generalizediteration_2003}, random transcendental maps \cite{mayer_random_2018}. In addition, Theorem A (as well as \eqref{cor A eq} of Corollary A)
	%\gf{maybe it is a bit messy to mention the first display eqn of Cor A?  In principle something like Thm C would also work sequentially but maybe we can remark one or both of these in the main text closer to the results themselves?}\ja{Label first equation in Cor A and reference here.} 
	applies to sequential and semi-group settings including \cite{atnip_dynamics_2020,stadlbauer_quenched_2020}. See Remark~\ref{rem seq exist} for a sequential version of Theorem C.

\begin{comment}

	-----------------------------------------------------------------------------
	
	Other settings (random closed systems) that our theory could be applied
	\begin{itemize}
	\item Uniform Random Expanding Systems Section 3.11 of \cite{mayer_distance_2011}
	\item Classical expanding random systems Chapter 8 \cite{mayer_distance_2011}
	\item Bruck and Bueger Polynomial Systems \cite{Bruck_Generalizediteration_2003}
	\item Countable alphabet subshifts with weakly positive transfer operators \cite{mayer_countable_2015}
	\item Random transcendental maps \cite{mayer_random_2018}
	\item Denker-Gordin Random Systems \cite{denker_gibbs_1999}
	\item Semigroups of Seim-Hyperbolic Rational Maps \cite{atnip_dynamics_2020}
	\item Ruelle Expanding Semigroups \cite{stadlbauer_quenched_2020}
	
	\end{itemize}

	-----------------------------------------------------------------------------
\end{comment}
	\section{Sequential Perturbation Theorem}\label{Sec: Gen Perturb Setup}
	In this section we prove a general perturbation result for sequential operators acting on sequential Banach spaces. In particular, we will not require any measurability or notion of randomness in this section.  
	
	Suppose that $\Om$ is a set and that the map $\sg:\Om\to\Om$ is invertible.
	Furthermore, we suppose that there is a family of (fiberwise) normed vector spaces  $\set{\cB_\om, \norm{\spot}_{\cB_\om}}_{\om\in\Om}$ and dual spaces $\set{\cB_\om^*,\norm{\spot}_{\cB_\om^*}}_{\om\in\Om}$ such that for each $\om\in\Om$ and each $0\leq \ep\leq \ep_0$ there are operators $\cL_{\om,\ep}:\cB_\om\to\cB_{\sg\om}$ such that the following hold.
	\begin{enumerate}[align=left,leftmargin=*,labelsep=\parindent]
		%	\item[\mylabel{P0}{P0}] For each $\om\in\Om$ we have
		%	$$
		%	\cL_{\om,\ep}(\cB_\om)\sub\cB_{\sg\om}.
		%	$$
		\item[\mylabel{P1}{P1}]There exists a
		%measurable
		function $C_1:\Om\to\RR_+$ such that for $f\in\cB_\om$  we have
		\begin{align*}
			\sup_{\ep\geq 0}\norm{\cL_{\om,\ep}(f)}_{\cB_{\sg\om}}\leq C_1(\om)\norm{f}_{\cB_\om}.
		\end{align*}
		\item[\mylabel{P2}{P2}] For each $\om\in\Om$ and $\ep\geq 0$ there is a functional $\nu_{\om,\ep}\in\cB_\om^*$, the dual space of $\cB_\om$, $\lm_{\om,\ep}\in\CC\bs\set{0}$, and $\phi_{\om,\ep}\in\cB_\om$ such that
		\begin{align*}
			\cL_{\om,\ep}(\phi_{\om,\ep})=\lm_{\om,\ep}\phi_{\sg\om,\ep}
			%\quad \nu_{\om,\ep}(\phi_{\om,\ep})=1,
			\quad\text{ and }\quad
			\nu_{\sg\om,\ep}(\cL_{\om,\ep}(f))=\lm_{\om,\ep}\nu_{\om,\ep}(f)
		\end{align*}
		for all $f\in\cB_\om$. Furthermore we assume that
		$$
		\nu_{\om,0}(\phi_{\om,\ep})=1.
		$$
		\item[\mylabel{P3}{P3}] There is an operator $Q_{\om,\ep}:\cB_\om\to\cB_{\sg\om}$ such that for each $f\in\cB_\om$ we have
		\begin{align*}
			\lm_{\om,\ep}^{-1}\cL_{\om,\ep}(f)=\nu_{\om,\ep}(f)\cdot\phi_{\sg\om,\ep}+Q_{\om,\ep}(f).
		\end{align*}
		Furthermore, we have
		\begin{align*}
			Q_{\om,\ep}(\phi_{\om,\ep})=0
			\quad\text{ and }\quad
			\nu_{\sg\om,\ep}(Q_{\om,\ep}(f))=0.
		\end{align*}
		Note that assumptions \eqref{P2} and \eqref{P3} together imply that
		\begin{align*}
			\nu_{\om,\ep}(\phi_{\om,\ep})=1.
		\end{align*}
		\item[\mylabel{P4}{P4}] There exists a function $C_2:\Om\to\RR_+$ such that
		\begin{align*}
			\sup_{\ep\geq 0}\norm{\phi_{\om,\ep}}_{\cB_{\om}}=C_2(\om)<\infty.
		\end{align*}
		
		For each $\om\in\Om$ and $\ep\geq 0$  we define the quantities
		\begin{align}\label{def: DL_om}
			\Dl_{\om,\ep}:=\nu_{\sg\om,0}\left((\cL_{\om,0}-\cL_{\om,\ep})(\phi_{\om,0})\right)
		\end{align}
		and
		\begin{align}\label{def: eta_om}
			\eta_{\om,\ep}:=\norm{\nu_{\sg\om,0}(\cL_{\om,0}-\cL_{\om,\ep})}_{\cB_\om^*}.
		\end{align}
		\item[\mylabel{P5}{P5}] For each $\om\in\Om$ we have
		\begin{align*}
			\lim_{\ep\to 0}\eta_{\om,\ep}=0.
		\end{align*}
\begin{comment}
		\item[\mylabel{(P6)}{(P6)}] For each $\om\in\Om$ there is $C_1>0$ such that
		\begin{align*}
		\eta_{\om,\ep}\cdot\norm{(\cL_{\om,0}-\cL_{\om,\ep})(\phi_{\om,0})}\leq C_1\absval{\Dl_{\om,\ep}}.
		\end{align*}
		
		content...
\end{comment}
		\item[\mylabel{P6}{P6}] For each $\om\in\Om$ such that $\Dl_{\om,\ep}>0$ for every $\ep>0$, we have that there exists a  function $C_3:\Om\to\RR_+$ such that
		\begin{align*}
			%0<\liminf_{\ep\to 0}\frac{\eta_{\om,\ep}}{\Dl_{\om,\ep}}\leq
			\limsup_{\ep\to 0}\frac{\eta_{\om,\ep}}{\Dl_{\om,\ep}}:=C_3(\om) <\infty.
		\end{align*}
		Given $\om\in\Om$, if there is $\ep_0>0$ such that for each $\ep\leq \ep_0$ we have that $\Dl_{\om,\ep}=0$ then we also have that $\eta_{\om,\ep}=0$ for each $\ep\leq \ep_0$.
		\item[\mylabel{P7}{P7}]  For each $\om\in\Om$ we have
		\begin{align*}
			\lim_{\ep \to 0}\nu_{\om,\ep}(\phi_{\om,0})=1.
		\end{align*}	
		\item[\mylabel{P8}{P8}] For each $\om\in\Om$ with $\Dl_{\om,\ep}>0$ for all $\ep>0$ we have
		\begin{flalign*}
			\lim_{n\to\infty}\limsup_{\ep \to 0} \Dl_{\om,\ep}^{-1}\nu_{\sg\om,0}\lt(\lt(\cL_{\om,0}-\cL_{\om,\ep}\rt)\lt(Q_{\sg^{-n}\om,\ep}^n\phi_{\sg^{-n}\om,0}\rt)\rt)=0.
			%\label{P8}\tag{P8}
		\end{flalign*}
		%	\flag{This assumption seems to be able to replace the exp decay of $Q^n$.}	
		%\ja{Added ``with $\Dl_{\om,\ep}>0$ for all $\ep>0$'' }
		\item[\mylabel{P9}{P9}] For each $\om\in\Om$ with $\Dl_{\om,\ep}>0$ for all $\ep>0$ we have the limit
		\begin{align*}
			q_{\om,0}^{(k)}:=\lim_{\ep\to 0} \frac{\nu_{\sg\om,0}\left((\cL_{\om,0}-\cL_{\om,\ep})(\cL_{\sg^{-k}\om,\ep}^k)(\cL_{\sg^{-(k+1)}\om,0}-\cL_{\sg^{-(k+1)}\om,\ep})(\phi_{\sg^{-(k+1)}\om,0})\right)}{\nu_{\sg\om,0}\left((\cL_{\om,0}-\cL_{\om,\ep})(\phi_{\om,0})\right)}
		\end{align*}
		exists for each $k\geq 0$.
	\end{enumerate}
	
	Consider the identity
	\begin{align}
		\lm_{\om,0}-\lm_{\om,\ep}&=\lm_{\om,0}\nu_{\om,0}(\phi_{\om,\ep})-\nu_{\sg\om,0}(\lm_{\om,\ep}\phi_{\sg\om,\ep})\nonumber\\
		&=\nu_{\sg\om,0}(\cL_{\om,0}(\phi_{\om,\ep}))-\nu_{\sg\om,0}(\cL_{\om,\ep}(\phi_{\om,\ep}))\nonumber\\
		&=\nu_{\sg\om,0}\left((\cL_{\om,0}-\cL_{\om,\ep})(\phi_{\om,\ep})\right).\label{diff eigenvalues identity unif}
	\end{align}
	It then follows from \eqref{diff eigenvalues identity unif}, together with \eqref{def: eta_om} and assumption \eqref{P4}, that
	\begin{align}\label{conv eigenvalues unif}
		\absval{\lm_{\om,0}-\lm_{\om,\ep}}\leq C_2(\om)\eta_{\om,\ep}.
	\end{align}
	In particular, given assumption \eqref{P5}, \eqref{conv eigenvalues unif} implies
	\begin{align}\label{limit of eigenvalues}
		\lim_{\ep\to 0}\lm_{\om,\ep}=\lm_{\om,0}
	\end{align}
	for each $\om\in\Om$.
	\begin{remark}
		%\label{flag remark}\flag{This remark can be deleted if not useful}
		Note that \eqref{P6} and \eqref{conv eigenvalues unif} imply that
		\begin{align*}
			\limsup_{\ep\to 0}\frac{\absval{\lm_{\om,0}-\lm_{\om,\ep}}}{\Dl_{\om,\ep}}\leq C_2(\om)C_3(\om)<\infty.
		\end{align*}
	\end{remark}
	For $n\geq 1$ we define the normalized operator $\~\cL_{\om,\ep}^n:\cB_\om\to\cB_{\sg^n\om}$ by
	\begin{align*}
		\~\cL_{\om,\ep}^n:=(\lm_{\om,\ep}^n)^{-1}\cL_{\om,\ep}^n
	\end{align*}
	where
	\begin{align*}
		\lm_{\om,\ep}^n:=\lm_{\om,\ep}\cdot\dots\cdot\lm_{\sg^{n-1}\om,\ep}.%=\nu_{\sg^{n}\om,\ep}(\cL_{\om,\ep}^n(\ind_\om))
	\end{align*}
	In view of assumption \eqref{P3}, induction gives
	\begin{align*}%\label{key}
		\~\cL_{\om,\ep}^n(f)=\nu_{\om,\ep}(f)\cdot \phi_{\sg^n\om,\ep}+Q_{\om,\ep}^n(f)
	\end{align*}
	for each $n\geq 1$ and all $f\in\cB_\om$.
	Similarly with \eqref{diff eigenvalues identity unif}, we have that
	\begin{align*}
		\lm_{\om,0}^n-\lm_{\om,\ep}^n=\nu_{\sg^n\om,0}\left(\left(\cL_{\om,0}^n-\cL_{\om,\ep}^n\right)(\phi_{\om,\ep})\right).
	\end{align*}
\begin{comment}
	To end this section, we make an observation which will prove fruitful in the sequel. In particular, for each $n\geq 1$ we consider the following identity:
	\begin{align}
	\~\cL_{\om,0}^n-\~\cL_{\om,\ep}^n&=\~\cL_{\om,0}^n-(\lm_{\om,0}^n)^{-1}\cL_{\om,\ep}^n+(\lm_{\om,0}^n)^{-1}\cL_{\om,\ep}^n-\~\cL_{\om,\ep}^n\nonumber\\
	&=(\lm_{\om,0}^n)^{-1}\left(\cL_{\om,0}^n-\cL_{\om,\ep}^n\right)+((\lm_{\om,0}^n)^{-1}-(\lm_{\om,\ep}^n)^{-1})\cL_{\om,\ep}^n\nonumber\\
	&=(\lm_{\om,0}^n)^{-1}\left(\left(\cL_{\om,0}^n-\cL_{\om,\ep}^n\right)+\lm_{\om,0}^n\lm_{\om,\ep}^n((\lm_{\om,0}^n)^{-1}-(\lm_{\om,\ep}^n)^{-1})\~\cL_{\om,\ep}^n\right)\nonumber\\
	&=(\lm_{\om,0}^n)^{-1}\left(\left(\cL_{\om,0}^n-\cL_{\om,\ep}^n\right)+(\lm_{\om,\ep}^n-\lm_{\om,0}^n)\~\cL_{\om,\ep}^n\right)\label{eigen triang ineq}.
	\end{align}
	\flag{This may not be necessary. Only used in proof of Lemma~\ref{Lemma 6.1.a unif exp}}
	
	content...
\end{comment}
	%\section{Random Keller-Liverani Theorem}\label{Sec: Rand KL Theorem}
	We now arrive at the main result of this section.
	We prove a differentiability result for the perturbed quantities $\lm_{\om,\ep}$ as $\ep\to 0$ in the spirit of Keller and Liverani \cite{keller_rare_2009}.

	\begin{theorem}\label{thm: GRPT}
		Suppose that assumptions \eqref{P1}-\eqref{P8} hold.
		If there is some $\ep_0>0$ such that $\Dl_{\om,\ep}=0$ for $\ep\leq \ep_0$ then
		\begin{align*}
			\lm_{\om,0}=\lm_{\om,\ep},
		\end{align*}
		or if \eqref{P9} holds then
		\begin{align*}
			\lim_{\ep\to 0}\frac{\lm_{\om,0}-\lm_{\om,\ep}}{\Dl_{\om,\ep}}=1-\sum_{k=0}^{\infty}(\lm_{\sg^{-(k+1)}\om,0}^{k+1})^{-1}q_{\om,0}^{(k)}.
		\end{align*}
	\end{theorem}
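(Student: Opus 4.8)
The plan is to follow the strategy of Keller--Liverani \cite{keller_rare_2009}, carrying the fibre $\om$ through every step. First I would dispose of the degenerate case: if $\Dl_{\om,\ep}=0$ for all $\ep\le\ep_0$, then the second part of \eqref{P6} gives $\eta_{\om,\ep}=0$ for all such $\ep$, and \eqref{conv eigenvalues unif} forces $\absval{\lm_{\om,0}-\lm_{\om,\ep}}\le C_2(\om)\eta_{\om,\ep}=0$. So from now on assume $\Dl_{\om,\ep}>0$ for all $\ep>0$ and that \eqref{P9} holds. The starting point is the identity \eqref{diff eigenvalues identity unif}, $\lm_{\om,0}-\lm_{\om,\ep}=\nu_{\sg\om,0}\bigl((\cL_{\om,0}-\cL_{\om,\ep})(\phi_{\om,\ep})\bigr)$; the goal is to expand $\phi_{\om,\ep}$ around $\phi_{\om,0}$ to first order in $\Dl_{\om,\ep}$.

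The key step I would carry out is a telescoping identity. Iterating \eqref{P2} yields $\~\cL^{j}_{\sg^{-j}\om,0}(\phi_{\sg^{-j}\om,0})=\phi_{\om,0}$ for every $j\ge0$, so telescoping the cocycle difference $\~\cL^n_{\sg^{-n}\om,\ep}-\~\cL^n_{\sg^{-n}\om,0}$ along the orbit segment $\sg^{-n}\om,\dots,\sg^{-1}\om,\om$, evaluating at $\phi_{\sg^{-n}\om,0}$, and using this equivariance for the unperturbed tail gives, for every $n\ge1$,
\[
\~\cL^n_{\sg^{-n}\om,\ep}(\phi_{\sg^{-n}\om,0})=\phi_{\om,0}+\sum_{k=0}^{n-1}\~\cL^{k}_{\sg^{-k}\om,\ep}\Bigl((\~\cL_{\sg^{-(k+1)}\om,\ep}-\~\cL_{\sg^{-(k+1)}\om,0})(\phi_{\sg^{-(k+1)}\om,0})\Bigr).
\]
Equating this with the iterated form of \eqref{P3} recorded above, namely $\~\cL^n_{\sg^{-n}\om,\ep}(\phi_{\sg^{-n}\om,0})=\nu_{\sg^{-n}\om,\ep}(\phi_{\sg^{-n}\om,0})\,\phi_{\om,\ep}+Q^n_{\sg^{-n}\om,\ep}(\phi_{\sg^{-n}\om,0})$, substituting the resulting expression for $\nu_{\sg^{-n}\om,\ep}(\phi_{\sg^{-n}\om,0})\phi_{\om,\ep}$ into \eqref{diff eigenvalues identity unif}, expanding by linearity, recalling \eqref{def: DL_om}, and dividing by $\Dl_{\om,\ep}$, I obtain for every $n\ge1$ the exact identity
\[
\nu_{\sg^{-n}\om,\ep}(\phi_{\sg^{-n}\om,0})\cdot\frac{\lm_{\om,0}-\lm_{\om,\ep}}{\Dl_{\om,\ep}}=1+\sum_{k=0}^{n-1}T^{(\ep)}_{k}-R^{(\ep)}_{n},
\]
where $T^{(\ep)}_{k}:=\Dl_{\om,\ep}^{-1}\nu_{\sg\om,0}\bigl((\cL_{\om,0}-\cL_{\om,\ep})\,\~\cL^{k}_{\sg^{-k}\om,\ep}\bigl((\~\cL_{\sg^{-(k+1)}\om,\ep}-\~\cL_{\sg^{-(k+1)}\om,0})(\phi_{\sg^{-(k+1)}\om,0})\bigr)\bigr)$ and $R^{(\ep)}_{n}:=\Dl_{\om,\ep}^{-1}\nu_{\sg\om,0}\bigl((\cL_{\om,0}-\cL_{\om,\ep})(Q^n_{\sg^{-n}\om,\ep}\phi_{\sg^{-n}\om,0})\bigr)$.

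Next I would compute $\lim_{\ep\to0}T^{(\ep)}_{k}$. Using $\~\cL^{k}_{\sg^{-k}\om,\ep}=(\lm^{k}_{\sg^{-k}\om,\ep})^{-1}\cL^{k}_{\sg^{-k}\om,\ep}$ and, with $\eta:=\sg^{-(k+1)}\om$, the splitting $\~\cL_{\eta,\ep}-\~\cL_{\eta,0}=-\lm_{\eta,0}^{-1}(\cL_{\eta,0}-\cL_{\eta,\ep})+(\lm_{\eta,\ep}^{-1}-\lm_{\eta,0}^{-1})\cL_{\eta,\ep}$, the quantity $T^{(\ep)}_{k}$ breaks into two pieces. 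The piece from $-\lm_{\eta,0}^{-1}(\cL_{\eta,0}-\cL_{\eta,\ep})$ equals $-(\lm^{k}_{\sg^{-k}\om,\ep})^{-1}\lm_{\eta,0}^{-1}$ times exactly the quotient defining $q^{(k)}_{\om,0}$ in \eqref{P9}, hence converges to $-(\lm^{k}_{\sg^{-k}\om,0})^{-1}\lm_{\sg^{-(k+1)}\om,0}^{-1}q^{(k)}_{\om,0}=-(\lm_{\sg^{-(k+1)}\om,0}^{k+1})^{-1}q^{(k)}_{\om,0}$. The piece from $(\lm_{\eta,\ep}^{-1}-\lm_{\eta,0}^{-1})\cL_{\eta,\ep}$ equals $(\lm^{k}_{\sg^{-k}\om,\ep})^{-1}(\lm_{\eta,\ep}^{-1}-\lm_{\eta,0}^{-1})\,\Dl_{\om,\ep}^{-1}\nu_{\sg\om,0}\bigl((\cL_{\om,0}-\cL_{\om,\ep})\cL^{k+1}_{\sg^{-(k+1)}\om,\ep}(\phi_{\sg^{-(k+1)}\om,0})\bigr)$; by \eqref{def: eta_om} and iterating \eqref{P1} and \eqref{P4} the factor $\Dl_{\om,\ep}^{-1}\nu_{\sg\om,0}(\cdots)$ is bounded in modulus by $\Dl_{\om,\ep}^{-1}\eta_{\om,\ep}$ times a finite constant depending only on $k$ and $\om$, which has finite $\limsup$ as $\ep\to0$ by \eqref{P6}, while $\lm_{\eta,\ep}^{-1}-\lm_{\eta,0}^{-1}\to0$ by \eqref{limit of eigenvalues}; so this piece vanishes. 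Hence $\lim_{\ep\to0}T^{(\ep)}_{k}=-(\lm_{\sg^{-(k+1)}\om,0}^{k+1})^{-1}q^{(k)}_{\om,0}$.

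Finally I would pass to the limit. Fixing $n$ and letting $\ep\to0$ in the boxed identity, the prefactor $\nu_{\sg^{-n}\om,\ep}(\phi_{\sg^{-n}\om,0})$ tends to $1$ by \eqref{P7} (applied at the base point $\sg^{-n}\om$) and each $T^{(\ep)}_{k}$ converges, so
\[
\limsup_{\ep\to0}\frac{\lm_{\om,0}-\lm_{\om,\ep}}{\Dl_{\om,\ep}}=1-\sum_{k=0}^{n-1}(\lm_{\sg^{-(k+1)}\om,0}^{k+1})^{-1}q^{(k)}_{\om,0}-\liminf_{\ep\to0}R^{(\ep)}_{n},
\]
and likewise with $\limsup$ and $\liminf$ interchanged. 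Then I would let $n\to\infty$: assumption \eqref{P8} gives $\lim_{n\to\infty}\limsup_{\ep\to0}R^{(\ep)}_{n}=0$, and together with the a priori bound $\limsup_{\ep\to0}\absval{\lm_{\om,0}-\lm_{\om,\ep}}/\Dl_{\om,\ep}\le C_2(\om)C_3(\om)<\infty$ (from \eqref{conv eigenvalues unif} and \eqref{P6}) this both forces the partial sums $\sum_{k=0}^{n-1}(\lm_{\sg^{-(k+1)}\om,0}^{k+1})^{-1}q^{(k)}_{\om,0}$ to converge and makes the remainder disappear, yielding $\lim_{\ep\to0}\tfrac{\lm_{\om,0}-\lm_{\om,\ep}}{\Dl_{\om,\ep}}=1-\sum_{k=0}^{\infty}(\lm_{\sg^{-(k+1)}\om,0}^{k+1})^{-1}q^{(k)}_{\om,0}$. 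The hard part will be precisely this last step — interchanging $\ep\to0$ with $n\to\infty$ while controlling convergence of the series. In the autonomous setting of \cite{keller_rare_2009} this is handled by a fixed spectral gap, whereas here the base point drifts along the cocycle, so one must lean entirely on \eqref{P8}, which is designed exactly to dominate the tail term $R^{(\ep)}_{n}$ uniformly in $\ep$, together with careful tracking of the $\om$-dependent constants $C_1,C_2,C_3$ appearing in the estimates above.
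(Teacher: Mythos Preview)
Your proposal is correct and follows essentially the same approach as the paper: both start from the identity $\lm_{\om,0}-\lm_{\om,\ep}=\nu_{\sg\om,0}((\cL_{\om,0}-\cL_{\om,\ep})\phi_{\om,\ep})$, insert the iterated spectral decomposition $\~\cL^n_{\sg^{-n}\om,\ep}\phi_{\sg^{-n}\om,0}=\nu_{\sg^{-n}\om,\ep}(\phi_{\sg^{-n}\om,0})\phi_{\om,\ep}+Q^n\phi_{\sg^{-n}\om,0}$, telescope the cocycle difference $\~\cL^n_{\ep}-\~\cL^n_{0}$, split each summand via $\~\cL_{\eta,\ep}-\~\cL_{\eta,0}=-\lm_{\eta,0}^{-1}(\cL_{\eta,0}-\cL_{\eta,\ep})+(\lm_{\eta,\ep}^{-1}-\lm_{\eta,0}^{-1})\cL_{\eta,\ep}$ to isolate the $q^{(k)}_{\om,\ep}$ terms from a vanishing error, and then use \eqref{P7}, \eqref{P8}, \eqref{P9} together with the bound from \eqref{P6} to pass to the double limit. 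Your organization is slightly more compact---you split each $T_k^{(\ep)}$ individually rather than carrying out the longer reindexing manipulation the paper performs in \eqref{first summand in D2}--\eqref{final est D2}---but the ingredients and their roles are identical.
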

	\begin{proof}
		
\begin{comment}
		
		First, if there is some $\ep_0$ such that $\Dl_{\om,\ep}=0$ for all $\ep<\ep_0$ then assumption \eqref{P4} implies that we must have that either
		\begin{align*}
		\nu_{\om,0}\left((\cL_{\om,0}-\cL_{\om,\ep})(?)\right)=0
		\end{align*}
		or
		\begin{align*}
		(\cL_{\om,0}-\cL_{\om,\ep})(\phi_{\om,0})=0.
		\end{align*}
		In either case, we consider the estimate
		\begin{align*}
		\frac{\lm_{\om,0}^n}{\lm_{\om,\ep}^n}
		&=\frac{\lm_{\om,0}^n}{\lm_{\om,\ep}^n}\nu_{\sg^n\om,0}(\phi_{\sg^n\om,0})\\
		&=\left(\lm_{\om,\ep}^n\right)^{-1}\nu_{\sg^n\om,0}\left(\cL_{\om,0}(\phi_{\om,0})\right)\\
		&=\nu_{\sg^n\om,0}\left(\~\cL_{\om,\ep}(\phi_{\om,0})\right)\\
		&=\nu_{\sg^n\om,0}\left(\nu_{\om,\ep}(\phi_{\om,0})\cdot\phi_{\sg^n\om,\ep}+Q_{\om,\ep}^n(\phi_{\om,0})\right)\\
		&=\nu_{\om,\ep}(\phi_{\om,0})+\nu_{\sg^n\om,0}\left(Q_{\om,\ep}^n(\phi_{\om,0})\right)
		\end{align*}
		\flag{Finish the argument in the $\Dl_{\om,\ep}=0$ case}

		First, we note that  assumption \eqref{P6} implies that there is some constant $C>0$ such that for sufficiently small $\ep>0$ we have
		\begin{align}\label{eta leq Delta}	
		\eta_{\om,\ep}\leq C\Dl_{\om,\ep}.
		\end{align}
		If there is some $\ep_0$ such that $\Dl_{\om,\ep}=0$ for all $\ep<\ep_0$ then we must have that $\lm_{\om,0}=\lm_{\om,\ep}$ in light of \eqref{eta leq Delta} and \eqref{conv eigenvalues unif}.	
		
		content...
\end{comment}
		Fix $\om\in\Om$.
		First, we note that  assumption \eqref{P6} implies that if there is some $\ep_0$ such that $\Dl_{\om,\ep}=0$ for all $\ep\leq \ep_0$ (which implies, by assumption, that $\eta_{\om,\ep}=0$), then \eqref{conv eigenvalues unif} immediately implies that
		\begin{align*}
			\lm_{\om,0}=\lm_{\om,\ep}.
		\end{align*}
		Now we suppose that $\Dl_{\om,\ep}>0$ for all $\ep>0$. 
		%and all $\om\in\Om$.
		%\gf{Do we need all $\om\in\Om$ if this is a sequential result? Alternatively, note in the first sentence of the proof we did not specify which $\omega$. Should we add all $\om\in\Om$ there (and in the Theorem statement)?}\ja{We only need the one $\om$ here. I will amend the for all $\om$ statement as well as rewording \eqref{P6} and \eqref{P8} slightly for similar reasons}
		Using \eqref{P2}, \eqref{P3}, and \eqref{diff eigenvalues identity unif}, for each $n\geq 1$ and all $\om\in\Om$ we have
		\begin{align}
			&\nu_{\sg^{-n}\om,\ep}(\phi_{\sg^{-n}\om,0})(\lm_{\om,0}-\lm_{\om,\ep})=\nu_{\sg^{-n}\om,\ep}(\phi_{\sg^{-n}\om,0})\nu_{\sg\om,0}\left((\cL_{\om,0}-\cL_{\om,\ep})(\phi_{\om,\ep})\right)
			\nonumber\\
			&\quad
			=\nu_{\sg\om,0}\left((\cL_{\om,0}-\cL_{\om,\ep})(\nu_{\sg^{-n}\om,\ep}(\phi_{\sg^{-n}\om,0})\cdot\phi_{\om,\ep})\right)
			\nonumber\\
			&\quad
			=\nu_{\sg\om,0}\left((\cL_{\om,0}-\cL_{\om,\ep})(\nu_{\sg^{-n}\om,\ep}(\phi_{\sg^{-n}\om,0})\cdot\phi_{\om,\ep}+Q_{\sg^{-n}\om,\ep}^n(\phi_{\sg^{-n}\om,0})-Q_{\sg^{-n}\om,\ep}^n(\phi_{\sg^{-n}\om,0}))\right)
			\nonumber\\
			&\quad
			=\nu_{\sg\om,0}\left((\cL_{\om,0}-\cL_{\om,\ep})(\~\cL_{\sg^{-n}\om,\ep}^n-Q_{\sg^{-n}\om,\ep}^n)(\phi_{\sg^{-n}\om,0})\right)
			\nonumber\\
			&\quad
			=\nu_{\sg\om,0}\left((\cL_{\om,0}-\cL_{\om,\ep})(\~\cL_{\sg^{-n}\om,0}^n-\~\cL_{\sg^{-n}\om,0}^n+\~\cL_{\sg^{-n}\om,\ep}^n-Q_{\sg^{  -n}\om,\ep}^n)(\phi_{\sg^{-n}\om,0})\right)
			\nonumber\\
			&\quad
			=\nu_{\sg\om,0}\left((\cL_{\om,0}-\cL_{\om,\ep})(\phi_{\om,0})\right)
			\nonumber\\
			&\qquad
			-\nu_{\sg\om,0}\left((\cL_{\om,0}-\cL_{\om,\ep})(\~\cL_{\sg^{-n}\om,0}^n-\~\cL_{\sg^{-n}\om,\ep}^n)(\phi_{\sg^{-n}\om,0})\right)
			\nonumber\\
			&\qquad\qquad
			-\nu_{\sg\om,0}\left((\cL_{\om,0}-\cL_{\om,\ep})(Q_{\sg^{-n}\om,\ep}^n(\phi_{\sg^{-n}\om,0}))\right)
			\nonumber\\
			&\quad
			=\Dl_{\om,\ep}-\nu_{\sg\om,0}\left((\cL_{\om,0}-\cL_{\om,\ep})(\~\cL_{\sg^{-n}\om,0}^n-\~\cL_{\sg^{-n}\om,\ep}^n)(\phi_{\sg^{-n}\om,0})\right)
			\label{D_2 estimate unif}\\
			&\qquad
			-\nu_{\sg\om,0}\left((\cL_{\om,0}-\cL_{\om,\ep})(Q_{\sg^{-n}\om,\ep}^n(\phi_{\sg^{-n}\om,0}))\right).
			\label{Big O estimate unif}
		\end{align}
		Since
		\begin{align*}
			(\~\cL_{\sg^{-n}\om,0}^n-\~\cL_{\sg^{-n}\om,\ep}^n)(\phi_{\sg^{-n}\om,0})
			=
			\phi_{\om,0} - \~\cL_{\sg^{-n}\om,\ep}^n(\phi_{\sg^{-n}\om,0}),
		\end{align*}
		using a telescoping argument, the second term of \eqref{D_2 estimate unif}
		$$
		D_2:=-\nu_{\sg\om,0}\left((\cL_{\om,0}-\cL_{\om,\ep})(\~\cL_{\sg^{-n}\om,0}^n-\~\cL_{\sg^{-n}\om,\ep}^n)(\phi_{\sg^{-n}\om,0})\right)
		$$
		can be rewritten as
		\begin{align}
			D_2&=-\sum_{k=0}^{n-1}\nu_{\sg\om,0}\left((\cL_{\om,0}-\cL_{\om,\ep})(\~\cL_{\sg^{-k}\om,\ep}^k)(\~\cL_{\sg^{-(k+1)}\om,0}-\~\cL_{\sg^{-(k+1)}\om,\ep})(\phi_{\sg^{-(k+1)}\om,0})\right)
			\nonumber\\
			&
			=-\sum_{k=0}^{n-1}\nu_{\sg\om,0}\left((\cL_{\om,0}-\cL_{\om,\ep})(\~\cL_{\sg^{-k}\om,\ep}^k)(\~\cL_{\sg^{-(k+1)}\om,0}\right.
			\nonumber\\
			&\qquad
			\left.-\lm_{\sg^{-(k+1)}\om,0}^{-1}\cL_{\sg^{-(k+1)}\om,\ep}+\lm_{\sg^{-(k+1)}\om,0}^{-1}\cL_{\sg^{-(k+1)}\om,\ep}-\~\cL_{\sg^{-(k+1)}\om,\ep})(\phi_{\sg^{-(k+1)}\om,0})\right)
			\nonumber\\
			&
			=-\sum_{k=0}^{n-1}\nu_{\sg\om,0}\left((\cL_{\om,0}-\cL_{\om,\ep})(\~\cL_{\sg^{-k}\om,\ep}^k)(\~\cL_{\sg^{-(k+1)}\om,0}\right.
			\label{first summand in D2}	\\
			&\qquad
			\left.-\lm_{\sg^{-(k+1)}\om,0}^{-1}\cL_{\sg^{-(k+1)}\om,\ep}+\lm_{\sg^{-(k+1)}\om,0}^{-1}\cL_{\sg^{-(k+1)}\om,\ep})(\phi_{\sg^{-(k+1)}\om,0})\right)
			\nonumber\\
			&\qquad\qquad
			+\sum_{k=0}^{n-1}\nu_{\sg\om,0}\left((\cL_{\om,0}-\cL_{\om,\ep})(\~\cL_{\sg^{-k}\om,\ep}^k)(\~\cL_{\sg^{-(k+1)}\om,\ep})(\phi_{\sg^{-(k+1)}\om,0})\right).
			\label{second summand in D2}
		\end{align}
		Reindexing, the second summand of the above calculation, and multiplying by $1$, \eqref{second summand in D2}, can be rewritten as
		\begin{align}
			\sum_{k=0}^{n-1}&\nu_{\sg\om,0}\left((\cL_{\om,0}-\cL_{\om,\ep})(\~\cL_{\sg^{-(k+1)}\om,\ep}^{k+1})(\phi_{\sg^{-(k+1)}\om,0})\right)
			\nonumber\\
			&\quad
			=\sum_{k=1}^{n}\nu_{\sg\om,0}\left((\cL_{\om,0}-\cL_{\om,\ep})(\~\cL_{\sg^{-k}\om,\ep}^{k})(\phi_{\sg^{-k}\om,0})\right)
			\nonumber\\
			&\quad
			=\sum_{k=1}^{n}\lm_{\sg^{-k}\om,0}^{-1}\lm_{\sg^{-k}\om,0}\nu_{\sg\om,0}\left((\cL_{\om,0}-\cL_{\om,\ep})(\~\cL_{\sg^{-k}\om,\ep}^{k})(\phi_{\sg^{-k}\om,0})\right).
			\label{est1 of D2}
		\end{align}
		And \eqref{first summand in D2} from above can again be broken into two parts and then rewritten as
		\begin{align}
			&-\sum_{k=0}^{n-1}\lm_{\sg^{-(k+1)}\om,0}^{-1}\nu_{\sg\om,0}\left((\cL_{\om,0}-\cL_{\om,\ep})(\~\cL_{\sg^{-k}\om,\ep}^k)(\cL_{\sg^{-(k+1)}\om,0}-\cL_{\sg^{-(k+1)}\om,\ep})(\phi_{\sg^{-(k+1)}\om,0})\right)
			\nonumber\\
			&\quad
			-\sum_{k=1}^{n}\lm_{\sg^{-k}\om,0}^{-1}\lm_{\sg^{-k}\om,\ep}\nu_{\sg\om,0}\left((\cL_{\om,0}-\cL_{\om,\ep})(\~\cL_{\sg^{-k}\om,\ep}^{k})(\phi_{\sg^{-k}\om,0})\right),
			\label{est2 of D2}
		\end{align}
		where in the second sum we have used the fact that $\cL_{\sg^{-k}\om,\ep}=\lm_{\sg^{-k}\om,\ep}\~\cL_{\sg^{-k}\om,\ep}$.
		Altogether using \eqref{est1 of D2} and \eqref{est2 of D2}, $D_2$ we can be written as
		\begin{align}
			D_2&=
			-\sum_{k=0}^{n-1}\lm_{\sg^{-(k+1)}\om,0}^{-1}\nu_{\sg\om,0}\left((\cL_{\om,0}-\cL_{\om,\ep})(\~\cL_{\sg^{-k}\om,\ep}^k)(\cL_{\sg^{-(k+1)}\om,0}-\cL_{\sg^{-(k+1)}\om,\ep})(\phi_{\sg^{-(k+1)}\om,0})\right)
			\nonumber\\
			&\qquad
			-\sum_{k=1}^{n}\lm_{\sg^{-k}\om,0}^{-1}\lm_{\sg^{-k}\om,\ep}\nu_{\sg\om,0}\left((\cL_{\om,0}-\cL_{\om,\ep})(\~\cL_{\sg^{-k}\om,\ep}^{k})(\phi_{\sg^{-k}\om,0})\right)
			\nonumber\\
			&\qquad\quad
			+\sum_{k=1}^{n}\lm_{\sg^{-k}\om,0}^{-1}\lm_{\sg^{-k}\om,0}\nu_{\sg\om,0}\left((\cL_{\om,0}-\cL_{\om,\ep})(\~\cL_{\sg^{-k}\om,\ep}^{k})(\phi_{\sg^{-k}\om,0})\right)
			\nonumber\\
			&=
			-\sum_{k=0}^{n-1}\lm_{\sg^{-(k+1)}\om,0}^{-1}\nu_{\sg\om,0}\left((\cL_{\om,0}-\cL_{\om,\ep})(\~\cL_{\sg^{-k}\om,\ep}^k)(\cL_{\sg^{-(k+1)}\om,0}-\cL_{\sg^{-(k+1)}\om,\ep})(\phi_{\sg^{-(k+1)}\om,0})\right)
			\nonumber\\
			&\quad
			=+\sum_{k=1}^{n}\lm_{\sg^{-k}\om,0}^{-1}\left(\lm_{\sg^{-k}\om,0}-\lm_{\sg^{-k}\om,\ep}\right)\nu_{\sg\om,0}\left((\cL_{\om,0}-\cL_{\om,\ep})(\~\cL_{\sg^{-k}\om,\ep}^{k})(\phi_{\sg^{-k}\om,0})\right).\label{final est D2}
		\end{align}
\begin{comment}
		In view of Lemma \ref{Lemma 6.1.b unif exp} we can rewrite \eqref{Big O estimate unif} as
		\begin{align}
		&\nu_{\sg\om,0}\left((\cL_{\om,0}-\cL_{\om,\ep})(Q_{\sg^{-n}\om,\ep}^n(\phi_{\sg^{-n}\om,0}))\right)
		\leq \eta_{\om,\ep}\norm{Q_{\sg^{-n}\om,\ep}^n(\phi_{\sg^{-n}\om,0})}\nonumber\\
		&\quad\leq A\eta_{\om,\ep}\sum_{k=n}^\infty\left(\norm{Q_{\sg^{-k}\om,\ep}^k}\cdot \left(\norm{(\cL_{\sg^{-(k+1)}\om,0}-\cL_{\sg^{-(k+1)}\om,\ep})(\phi_{\sg^{-(k+1)}\om,0})}+\absval{\lm_{\sg^{-k}\om,0}-\lm_{\sg^{-k}\om,\ep}}\right)\right),\label{theorem ineq eta norm Q}
		\end{align}
		where $A$ is the constant coming from \eqref{A const}.
\end{comment}
		Now for each $k\geq 0$ we let
		\begin{align}\label{eq: def of q_ep}
			q_{\om,\ep}^{(k)}:=\frac{\nu_{\sg\om,0}\left((\cL_{\om,0}-\cL_{\om,\ep})(\cL_{\sg^{-k}\om,\ep}^k)(\cL_{\sg^{-(k+1)}\om,0}-\cL_{\sg^{-(k+1)}\om,\ep})(\phi_{\sg^{-(k+1)}\om,0})\right)}{\nu_{\sg\om,0}\left((\cL_{\om,0}-\cL_{\om,\ep})(\phi_{\om,0})\right)}.
		\end{align}
		Using \eqref{final est D2} we can continue our rephrasing of $\nu_{\sg^{-n}\om,\ep}(\phi_{\sg^{-n}\om,0})(\lm_{\om,0}-\lm_{\om,\ep})$ from \eqref{D_2 estimate unif} to get
		\begin{align}
			&\nu_{\sg^{-n}\om,\ep}(\phi_{\sg^{-n}\om,0})(\lm_{\om,0}-\lm_{\om,\ep})\nonumber
			\\
			&\quad=\Dl_{\om,\ep}-\sum_{k=0}^{n-1}\lm_{\sg^{-(k+1)}\om,0}^{-1}\nu_{\sg\om,0}\left((\cL_{\om,0}-\cL_{\om,\ep})(\~\cL_{\sg^{-k}\om,\ep}^k)(\cL_{\sg^{-(k+1)}\om,0}-\cL_{\sg^{-(k+1)}\om,\ep})(\phi_{\sg^{-(k+1)}\om,0})\right)
			\nonumber\\
			&\qquad
			+\sum_{k=1}^{n}\lm_{\sg^{-k}\om,0}^{-1}\left(\lm_{\sg^{-k}\om,0}-\lm_{\sg^{-k}\om,\ep}\right)\nu_{\sg\om,0}\left((\cL_{\om,0}-\cL_{\om,\ep})(\~\cL_{\sg^{-k}\om,\ep}^{k})(\phi_{\sg^{-k}\om,0})\right)
			\nonumber\\
			&\quad\qquad -\nu_{\sg\om,0}\left((\cL_{\om,0}-\cL_{\om,\ep})(Q_{\sg^{-n}\om,\ep}^n(\phi_{\sg^{-n}\om,0}))\right)
			\nonumber\\
			&\quad=\Dl_{\om,\ep}\left(1-\sum_{k=0}^{n-1}\lm_{\sg^{-(k+1)}\om,0}^{-1}(\lm_{\sg^{-k}\om,\ep}^k)^{-1}q_{\om,\ep}^{(k)}\right)
			\nonumber\\
			&\qquad
			+\sum_{k=1}^{n}\lm_{\sg^{-k}\om,0}^{-1}\left(\lm_{\sg^{-k}\om,0}-\lm_{\sg^{-k}\om,\ep}\right)\nu_{\sg\om,0}\left((\cL_{\om,0}-\cL_{\om,\ep})(\~\cL_{\sg^{-k}\om,\ep}^{k})(\phi_{\sg^{-k}\om,0})\right)
			\nonumber\\
			&\quad\qquad 	-\nu_{\sg\om,0}\left((\cL_{\om,0}-\cL_{\om,\ep})(Q_{\sg^{-n}\om,\ep}^n(\phi_{\sg^{-n}\om,0}))\right).\label{main thm long calc}
			%			\\
			%	&\quad\leq\Dl_{\om,\ep}\left(1-\sum_{k=0}^{n-1}\lm_{\sg^{-(k+1)}\om,0}^{-1}(\lm_{\sg^{-k}\om,\ep}^k)^{-1}q_{\om,\ep}^{(k)}\right)\\
			%		&\qquad +\eta_{\om,\ep}\\
			%			&\quad\qquad +A\eta_{\om,\ep}\norm{Q_{\sg^{-n}\om,\ep}^n}\norm{\phi_{\sg^{-n}\om,0}}\\
			%	&\quad\leq\Dl_{\om,\ep}\left(1-\sum_{k=0}^{n-1}\lm_{\sg^{-(k+1)}\om,0}^{-1}(\lm_{\sg^{-k}\om,\ep}^k)^{-1}q_{\om,\ep}^{(k)}\right) + \\
			%			&\qquad +AC\eta_{\om,\ep}\norm{Q_{\sg^{-n}\om,\ep}^n}\\				
		\end{align}
		Dividing the calculation culminating in \eqref{main thm long calc} by $\Dl_{\om,\ep}$ on both sides gives
		\begin{align}
			\label{maineq}
			&\nu_{\sg^{-n}\om,\ep}(\phi_{\sg^{-n}\om,0})\frac{\lm_{\om,0}-\lm_{\om,\ep}}{\Dl_{\om,\ep}}
			\\
			&\quad=1-\sum_{k=0}^{n-1}\lm_{\sg^{-(k+1)}\om,0}^{-1}(\lm_{\sg^{-k}\om,\ep}^k)^{-1}q_{\om,\ep}^{(k)}
			\nonumber\\
			&\quad
			+\Dl_{\om,\ep}^{-1}\sum_{k=1}^{n}\lm_{\sg^{-k}\om,0}^{-1}\left(\lm_{\sg^{-k}\om,0}-\lm_{\sg^{-k}\om,\ep}\right)\nu_{\sg\om,0}\left((\cL_{\om,0}-\cL_{\om,\ep})(\~\cL_{\sg^{-k}\om,\ep}^{k})(\phi_{\sg^{-k}\om,0})\right)
			\label{final calc second summand}\\
			&\qquad -	\Dl_{\om,\ep}^{-1}\nu_{\sg\om,0}\left((\cL_{\om,0}-\cL_{\om,\ep})(Q_{\sg^{-n}\om,\ep}^n(\phi_{\sg^{-n}\om,0}))\right).
			\label{final calc third summand}
		\end{align}
		Assumption \eqref{P8} ensures that \eqref{final calc third summand} goes to zero as $\ep\to 0$ and $n\to\infty$. Now, using \eqref{def: eta_om}, \eqref{conv eigenvalues unif}, \eqref{P1}, and \eqref{P4} we bound \eqref{final calc second summand} by
		\begin{align}
			&\Dl_{\om,\ep}^{-1}\sum_{k=1}^n\absval{\lm_{\sg^{-k}\om,0}}^{-1}\absval{\lm_{\sg^{-k}\om,0}-\lm_{\sg^{-k}\om,\ep}}\eta_{\om,\ep}\norm{\~\cL_{\sg^{-k}\om,\ep}^k(\phi_{\sg^{-k}\om,0})}_{\cB_{\om}}
			\nonumber\\
			&\qquad\qquad
			\leq
			\frac{\eta_{\om,\ep}}{\Dl_{\om,\ep}}
			\sum_{k=1}^nC_2(\sg^{-k}\om)\absval{\lm_{\sg^{-k}\om,0}}^{-1}\eta_{\sg^{-k}\om,\ep}\absval{\lm_{\sg^{-k}\om,\ep}^k}^{-1}\norm{\cL_{\sg^{-k}\om,\ep}^k(\phi_{\sg^{-k}\om,0})}_{\cB_{\om}}
			\nonumber\\
			&\qquad\qquad
			\leq
			\frac{\eta_{\om,\ep}}{\Dl_{\om,\ep}}
			\sum_{k=1}^n(C_2(\sg^{-k}\om))^2C_1^k(\sg^{-k}\om)\absval{\lm_{\sg^{-k}\om,0}}^{-1}\eta_{\sg^{-k}\om,\ep}\absval{\lm_{\sg^{-k}\om,\ep}^k}^{-1}.
			\label{theorem sum est}
		\end{align}
		In view of \eqref{P5}, \eqref{P6}, and \eqref{limit of eigenvalues}, for fixed $n$, we may continue from \eqref{theorem sum est} and let $\ep\to 0$ to see that
		\begin{align}\label{second summand goes to 0 for final calc}
			\lim_{\ep \to 0}
			\frac{\eta_{\om,\ep}}{\Dl_{\om,\ep}}
			\sum_{k=1}^n(C_2(\sg^{-k}\om))^2C_1^k(\sg^{-k}\om)\absval{\lm_{\sg^{-k}\om,0}}^{-1}\eta_{\sg^{-k}\om,\ep}\absval{\lm_{\sg^{-k}\om,\ep}^k}^{-1}=0.
		\end{align}
		In light of \eqref{P6}, \eqref{P7}, %Remark \ref{remark: nu ep phi 0 to 1},
		\eqref{second summand goes to 0 for final calc}, and \eqref{maineq}--\eqref{final calc third summand} together with \eqref{P8} and \eqref{P9}, we see that first letting $\ep\to 0$ and then $n\to\infty$ gives
		\begin{align*}
			\lim_{\ep\to 0}\frac{\lm_{\om,0}-\lm_{\om,\ep}}{\Dl_{\om,\ep}}
			= 1-\sum_{k=0}^{\infty}(\lm_{\sg^{-(k+1)}\om,0}^{k+1})^{-1}q_{\om,0}^{(k)}
		\end{align*}
		as desired.
	\end{proof}
	In the sequel we will refer to the quantity on the right hand side of the last equation in the proof of the previous theorem by $\ta_{\om,0}$, i.e. we set
	\begin{align}\label{eq: def of theta_0}
		\ta_{\om,0}:=1-\sum_{k=0}^{\infty}(\lm_{\sg^{-(k+1)}\om,0}^{k+1})^{-1}q_{\om,0}^{(k)}.
	\end{align}
	%	\flag{Connect with the notation used in extreme values section}

	\section{Random Open Systems}\label{sec:ROS}
	In this section we introduce the general setup of random open systems to which we can apply our perturbation theory from Section~\ref{Sec: Gen Perturb Setup}.
	
	We begin with a probability space $(\Om,\sF,m)$ and an ergodic, invertible map $\sg:\Om\to\Om$ which preserves the measure $m$, i.e.
	\begin{align*}
		m\circ\sg^{-1}=m.
	\end{align*}
	For each $\om\in \Om$ let $\cJ_{\om,0}$ be a closed subset of a complete metrisable space $X$ such that the map
	\begin{align*}
		\Om\ni \om\longmapsto\cJ_{\om,0}
	\end{align*}
	is a closed random set, i.e. $\cJ_{\om,0}\sub X$ is closed for each $\om\in\Om$ and the map $\om\mapsto\cJ_{\om,0}$ is measurable  
	    %\sv{Should we give the definition of closed random set at least in a footnote?} 
	(see \cite{crauel_random_2002}), and we consider the maps
	\begin{align*}
		T_\om:\cJ_{\om,0}\to\cJ_{\sg\om,0}.
	\end{align*}
	By $T_\om^N:\cJ_{\om,0}\to\cJ_{\sg^N\om,0}$ we mean the $N$-fold composition 
	\begin{align*}
		T_{\sg^N\om}\circ\dots\circ T_\om:\cJ_{\om,0}\to\cJ_{\sg^N\om,0}.
	\end{align*}
	Given a set $A\sub\cJ_{\sg^N\om,0}$ we let
	\begin{align*}
		T_\om^{-N}(A):=\set{x\in\cJ_{\om,0}:T_\om^N(x)\in A}
	\end{align*}
	denote the inverse image of $A$ under the map $T_\om^N$ for each $\om\in\Om$ and $N\geq 1$.
	Now let
	\begin{align*}
		\cJ_0:=\bigcup_{\om\in\Om}\set{\om}\times\cJ_{\om,0}\sub \Om\times X,
	\end{align*}
	and define the induced skew-product map $T:\cJ_0\to\cJ_0$ by
	\begin{align*}
		T(\om,x)=(\sg\om,T_\om(x)).
	\end{align*}
	Let $\sB$ denote the Borel $\sg$-algebra of $X$ and let $\sF\otimes\sB$ be the product $\sg$-algebra on $\Om\times X$. We suppose the following:
	\begin{enumerate}[align=left,leftmargin=*,labelsep=\parindent]
		\item[\mylabel{M1}{M1}] the map $T:\cJ_0\to\cJ_0$ is measurable with respect to $\sF\otimes\sB$.
	\end{enumerate} 
	\begin{definition}\label{def: random prob measures}	
		A measure $\mu$ on $\Om\times X$ with respect to the product $\sg$-algebra $\sF\otimes\sB$ is said to be \textit{random measure} relative to $m$ if it has marginal $m$, i.e. if
		$$
		\mu\circ\pi^{-1}_1=m.
		$$ 
		The disintegrations $\set{\mu_\om}_{\om\in\Om}$ of $\mu$ with respect to the partition $\lt(\{\om\}\times X\rt)_{\om\in\Om}$ satisfy the following properties:
		\begin{enumerate}
			\item For every $B\in\sB$, the map $\Om\ni\om\longmapsto\mu_\om(B)\in X$ is measurable, 
			\item For $m$-a.e. $\om\in\Om$, the map $\sB\ni B\longmapsto\mu_\om(B)\in X$ is a Borel measure.
		\end{enumerate}
		We say that the random measure $\mu=\set{\mu_\om}_{\om\in\Om}$ is a \textit{random probability measure} if for $m$-a.e. $\om\in\Om$ the fiber measure $\mu_\om$ is a probability measure. Given a set $Y=\cup_{\om\in\Om}\set{\om}\times Y_\om\sub\Om\times X$, we say that the random measure $\mu=\set{\mu_\om}_{\om\in\Om}$ is supported in $Y$ if $\supp(\mu)\sub Y$ and consequently $\supp(\mu_\om)\sub Y_\om$ for $m$-a.e. $\om\in\Om$. We let $\cP_\Om(Y)$ denote the set of all random probability measures supported in $Y$. We will frequently denote a random measure $\mu$ by $\set{\mu_\om}_{\om\in\Om}$.
	\end{definition}
	%\ja{The following might be helpful for establishing the correct measurability}
	The following proposition from Crauel \cite{crauel_random_2002}, shows a random probability measure $\set{\mu_\om}_{\om\in\Om}$ on $\cJ_0$ uniquely identifies a probability measure on $\cJ_0$.
	\begin{proposition}[\cite{crauel_random_2002}, Propositions 3.3]\label{prop: random measure equiv}
		If $\set{\mu_\om}_{\om\in\Om}\in\cP_\Om(\cJ_0)$ is a random probability measure on $\cJ_0$, then for every bounded measurable function $f:\cJ_0\to\RR$, the function 
		$$
		\Om\ni\om\longmapsto \int_{\cJ_{\om,0}} f(\om,x) \, d\mu_\om(x)
		$$ 
		is measurable and 
		$$
		\sF\otimes\sB\ni A\longmapsto\int_\Om \int_{\cJ_{\om,0}} \ind_A(\om,x) \, d\mu_\om(x)\, dm(\om)
		$$
		defines a probability measure on $\cJ_0$.
	\end{proposition}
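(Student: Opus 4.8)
The plan is to run the standard functional monotone-class (``good sets / good functions'') argument, using only the two defining properties of a random probability measure from Definition~\ref{def: random prob measures}: that $\om\mapsto\mu_\om(B)$ is $\sF$-measurable for each fixed $B\in\sB$, and that $\mu_\om$ is a Borel probability measure for $m$-a.e.\ $\om$. As a preliminary remark, $\cJ_0\in\sF\otimes\sB$ because $\om\mapsto\cJ_{\om,0}$ is a measurable closed random set, so any bounded measurable $f:\cJ_0\to\RR$ extends by zero to a bounded $\sF\otimes\sB$-measurable function on $\Om\times X$; throughout I write $A_\om:=\set{x\in X:(\om,x)\in A}$ for the $\om$-section of $A\in\sF\otimes\sB$. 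One should also note that redefining $\mu_\om$ on the $m$-null exceptional set (say as a fixed Dirac mass) makes ``$\mu_\om$ is a probability measure'' hold for every $\om$ without affecting any $dm$-integral.

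First I would prove the measurability claim. Let $\cD$ be the family of sets $A\in\sF\otimes\sB$ for which $\om\mapsto\mu_\om(A_\om)$ is $\sF$-measurable. For a measurable rectangle $A=F\times B$ one has $\mu_\om(A_\om)=\ind_F(\om)\,\mu_\om(B)$, measurable by property~(1) of Definition~\ref{def: random prob measures}; hence $\cD$ contains the $\pi$-system of measurable rectangles, which generates $\sF\otimes\sB$. Using that each $\mu_\om$ is a finite measure together with finite additivity and monotone convergence, one checks that $\cD$ is a Dynkin ($\lambda$-)system: it contains $\Om\times X$, is closed under proper differences, and is closed under countable increasing unions. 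By Dynkin's $\pi$--$\lambda$ theorem, $\cD=\sF\otimes\sB$, so $\om\mapsto\int\ind_A(\om,x)\,d\mu_\om(x)=\mu_\om(A_\om)$ is measurable for every $A\in\sF\otimes\sB$. By linearity this extends to simple functions, by the monotone convergence theorem to nonnegative bounded measurable functions, and by taking positive and negative parts to all bounded measurable $f$; this proves the first assertion.

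Next I would verify that $\mathfrak{m}(A):=\int_\Om\int_{\cJ_{\om,0}}\ind_A(\om,x)\,d\mu_\om(x)\,dm(\om)$ defines a probability measure on $\cJ_0$. The integrand in $\om$ is well defined and $\sF$-measurable by the previous step, and nonnegativity is immediate. For countable additivity, if $A=\bigsqcup_k A_k$ is a disjoint union in $\sF\otimes\sB$, then for each $\om$ the sections are disjoint and $\mu_\om(A_\om)=\sum_k\mu_\om((A_k)_\om)$ since $\mu_\om$ is a measure; applying monotone convergence to the partial sums (which are measurable in $\om$) and then again under $\int\,dm$ yields $\mathfrak{m}(A)=\sum_k\mathfrak{m}(A_k)$. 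Finally $\mathfrak{m}(\cJ_0)=\int_\Om\mu_\om(\cJ_{\om,0})\,dm(\om)=\int_\Om 1\,dm(\om)=1$, since $\mu_\om$ is a probability measure on $\cJ_{\om,0}$ for $m$-a.e.\ $\om$.

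The only genuinely delicate point is the measurability step, namely that the pointwise hypothesis ``$\om\mapsto\mu_\om(B)$ is measurable for each fixed $B$'' already forces $\om\mapsto\mu_\om(A_\om)$ to be measurable for every $A\in\sF\otimes\sB$; this is exactly what the Dynkin-system argument supplies. Everything afterward is a routine double application of the monotone convergence theorem, and the only bookkeeping subtlety --- that some statements hold only $m$-a.e.\ --- is harmless because $m$-null sets do not affect any $dm$-integral.
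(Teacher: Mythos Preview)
Your argument is correct and is the standard proof of this fact. The paper does not give its own proof of this proposition; it is simply quoted from Crauel's book \cite{crauel_random_2002} (Proposition~3.3 there), so there is nothing to compare against beyond noting that your monotone-class/Dynkin-system approach is precisely the canonical one for establishing results of this type.
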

	
	We assume there exists a family of Banach spaces $\set{\cB_\om,\norm{\spot}_{\cB_\om}}_{\om\in\Om}$ of real-valued functions on each $\cJ_{\om,0}$ and a family of potentials $\set{g_{\om,0}}_{\om\in\Om}$ with $g_{\om,0}\in\cB_\om$ such that the (fiberwise) transfer operator $\cL_{\om,0}:\cB_\om\to\cB_{\sg\om}$ given by
	\begin{align*}
		\cL_{\om,0}(f)(x):=\sum_{y\in T_\om^{-1}(x)}f(y)g_{\om,0}(y), \quad f\in\cB_\om, \, x\in\cJ_{\sg\om,0}
	\end{align*}
	is well defined.
	Using induction we see that iterates $\cL_{\om,0}^N:\cB_{\om}\to\cB_{\sg^N\om}$ of the transfer operator are given by
	\begin{align*}
		\cL_{\om,0}^N(f)(x):=\sum_{y\in T_\om^{-N}(x)}f(y)g_{\om,0}^{(N)}(y), \quad f\in\cB_\om, \, x\in\cJ_{\sg^N\om,0},
	\end{align*}
	where 
	$$
	g_{\om,0}^{(N)}:=\prod_{j=0}^{N-1}g_{\sg^j\om,0}\circ T_\om^j.
	$$
	We let $\cB$ denote the space of functions $f:\cJ_0\to\RR$ such that $f_\om\in\cB_\om$ for each $\om\in\Om$ and we define the global transfer operator $\cL_0:
	\cB\to \cB$ by $(\cL_0 f)_\om(x):=\cL_{\sg^{-1}\om,0}f_{\sg^{-1}\om}(x)$ for $f\in\cB$ and $x\in\cJ_{\om,0}$.
	We assume the following measurability assumption:
	%\gf{I had forgotten about the measurability of the skew map $T$, which says something about the dot point below, although we haven't assumed anything about $\om\mapsto g_{\om,0}$ and the dot point below captures that.  Should this go prior to (CCM) if we need measurability of the TO there?}\ja{fixed}
	\begin{enumerate}[align=left,leftmargin=*,labelsep=\parindent]
		\item[\mylabel{M2}{M2}] For every measurable function $h \in \cB$, the map 
		$(\om, x) \mapsto (\cL_0 h)_\om(x)$ is measurable.
	\end{enumerate}
	We suppose the following condition on the existence of a closed conformal measure. 
	%\gf{do we need some measurability of $\om\mapsto\varphi_{\om,0}$ in some sense?  and $\lambda$? and $\mathcal{L}$?  We have the corresponding measurability for $\nu_{\om,0}$, and I think we need it for $\varphi$ to construct a suitable $\mu_0$ below on the product space.}\ja{fixed}
	\begin{enumerate}[align=left,leftmargin=*,labelsep=\parindent]
		\item[\mylabel{CCM}{CCM}] There exists a random probability  measure $\nu_0=\set{\nu_{\om,0}}_{\om\in\Om}\in \cP_\Om(\cJ_0)$ and measurable functions $\lm_0:\Om \to\RR\bs\set{0}$ and $\phi_0:\cJ_0\to (0,\infty)$ such that 
		\begin{align*}
			\cL_{\om,0}(\phi_{\om,0})=\lm_{\om,0}\phi_{\sg\om,0}
			%\quad \nu_{\om,\ep}(\phi_{\om,\ep})=1,
			\quad\text{ and }\quad
			\nu_{\sg\om,0}(\cL_{\om,0}(f))=\lm_{\om,0}\nu_{\om,0}(f)
		\end{align*}
		for all $f\in\cB_\om$ where $\phi_{\om,0}(\spot):=\phi_0(\om,\spot)$. Furthermore, we suppose that the fiber measures $\nu_{\om,0}$ are non-atomic and that $\lm_{\om,0}:=\nu_{\sg\om,0}(\cL_{\om,0}\ind)$ with $\log\lm_{\om,0}\in L^1(m)$. 
		    %\sv{If we know that $\lm_{\om,0}>1$ for almost any $\om,$ then integrability of $\lm_{\om,0}$ implies that of $\log\lm_{\om,0}$ by Jensen inequality.} 
		We then define the random probability measure
		%\gf{Does this need measurability of $\varphi$, since we talk about the full product measure $\mu_0$?} 
		$\mu_0$ on $\cJ_0$ by
		\begin{align}\label{eq: def of mu_om,0}
			\mu_{\om,0}(f):=\int_{\cJ_{\om,0}} f\phi_{\om,0} \ d\nu_{\om,0},  \qquad f\in L^1(\cJ_{\om,0}, \nu_{\om,0}).
		\end{align}
	\end{enumerate}
	From the definition, one can easily show that $\mu_0$ is $T$-invariant, that is,  
	\begin{align}\label{eq: mu_om,0 T invar}
		\int_{\cJ_{\om,0}} f\circ T_\om \ d\mu_{\om,0}
		=
		\int_{\cJ_{\sg\om,0}} f \ d\mu_{\sg\om,0}, \qquad f\in L^1(\cJ_{\sg\om,0}, \mu_{\sg\om,0}).
	\end{align}
	\begin{remark}
		Our Assumption \eqref{CCM} has been shown to hold in several random settings: random interval maps \cite{AFGTV20,AFGTV-IVC}, random subshifts \cite{Bogenschutz_RuelleTransferOperator_1995a,mayer_countable_2015}, random distance expanding maps \cite{mayer_distance_2011}, random polynomial systems \cite{Bruck_Generalizediteration_2003}, and random transcendental maps \cite{mayer_random_2018}.
	\end{remark}
	    %\sv{Should we index this assumption as we did for the others?}
	We will also assume that the fiberwise Banach spaces $\cB_\om\sub L^\infty(\cJ_{\om,0}, \nu_{\om,0})$ and that there exists a measurable $m$-a.e. finite function $K:\Om\to[1,\infty)$ such that
	\begin{enumerate}[align=left,leftmargin=*,labelsep=\parindent]
		\item[\mylabel{B}{B}] $\norm{f}_{\infty,\om}\leq K_\om\norm{f}_{\cB_\om}$ 
	\end{enumerate}	
	for all $f\in\cB_\om$ and each $\om\in\Om$, where $\|\cdot\|_{\infty,\om}$ denotes the supremum norm with respect to $\nu_{\om,0}$. 
	%We will assume $K\equiv1$ for convenience. 
	%We further suppose that the global Banach spaces $\cB\sub L^\infty_{\nu_0}(\cJ_0)$.
	%\ja{I think we need some measurability, but I'm not sure exactly what that looks like. Possibly just the following?}
	We are now ready to introduce holes into the closed systems. 
	\subsection{Open Systems}\label{sec: open systems}
	%For each $\ep>0$ and each $\om\in\Om$ we let $H_{\om,\ep}\sub\cJ_{\om,0}$ such that the map $\om\mapsto H_{\om,\ep}$ is measurable. Furthermore, for $\ep>0$ we suppose that there is $\Om_\ep\sub\Om$ 	a positive measure set of fibers $\om$ such that $\nu_{\om,0}(H_{\om,\ep}>0$	\ja{finish new holes}
	
	For each $\ep>0$ we let $H_\ep\sub \cJ_0$ be measurable with respect to the product $\sg$-algebra $\sF\otimes\sB$ on $\cJ_0$ such that 
	\begin{enumerate}[align=left,leftmargin=*,labelsep=\parindent]
		%\item[\mylabel{A1}{A1}] $\nu_0(H_\ep)\geq 0$ for each $\ep>0$, %and thus $\nu_{\om,0}(H_{\om,\ep})\geq 0$ for each $\om\in\Om$,
		%\item[\mylabel{A2}{A2}]
		\item[\mylabel{A}{A}]
		$H_\ep'\sub H_\ep$ for each $0<\ep'\leq \ep$.%, and thus $H_{\om,\ep'}\sub H_{\om,\ep}$ for each $\om\in\Om$.
	\end{enumerate}
	For each $\om\in\Om$ the sets $H_{\om,\ep}\sub \cJ_{\om,0}$ are uniquely determined by the condition that 
	\begin{align*}
		\set{\om}\times H_{\om,\ep}=H_\ep\cap\lt(\set{\om}\times \cJ_{\om,0}\rt).
	\end{align*}
	Equivalently we have 
	\begin{align*}
		H_{\om,\ep}=\pi_2(H_\ep\cap(\set{\om}\times \cJ_{\om,0})),
	\end{align*}
	where $\pi_2:\cJ_0\to \cJ_{\om,0}$ is the projection onto the second coordinate. By definition we have that the sets $H_{\om,\ep}$ are $\nu_{\om,0}$-measurable, and \eqref{A} implies that 
	\begin{enumerate}[align=left,leftmargin=*,labelsep=\parindent]
		%\item[\mylabel{A1'}{A1'}] $\nu_{\om,0}(H_{\om,\ep})\geq 0$ for each $\ep>0$  and each $\om\in\Om$,
		\item[\mylabel{A'}{A'}] $H_{\om,\ep'}\sub H_{\om,\ep}$ for each $\ep'\leq \ep$ and each $\om\in\Om$.
	\end{enumerate}
	For each $\ep>0$ we set 
	$$
	    \Om_{+,\ep}:=\set{\om\in\Om:\mu_{\om,0}(H_{\om,\ep})>0}
	$$ 
	and then define
	\begin{align}
	    \label{def Om+}
	    \Om_+:=\bigcap_{\ep>0}\Om_{+,\ep}.
	\end{align}
	\begin{remark}
	    Note that the set $\Om_+$ is measurable as it is the intersection of a decreasing family of measurable sets and it is not necessarily $\sg$-invariant.
	\end{remark}
	Now define
	\begin{align*}
		\cJ_{\om,\ep}:=\cJ_{\om,0}\bs H_{\om,\ep},
	\end{align*}
	and let
	\begin{align*}
		\cJ_\ep:=\bigcup_{\om\in\Om}\set{\om}\times\cJ_{\om,\ep}.
	\end{align*}
	For each $\om\in\Om$, $N\geq 0$, and $\ep>0$ we define
	\begin{align*}
		X_{\om,N,\ep}:&=\set{x\in\cJ_{\om,0}: T_\om^j(x)\notin H_{\sg^j\om,\ep} \text{ for all } 0\leq j\leq N }
		=\bigcap_{j=0}^N T_\om^{-j}\left(\cJ_{\sg^j\om,\ep}\right)
	\end{align*}
	to be the set of points in $\cJ_{\om,\ep}$ which survive, i.e. those points whose trajectories do not land in the holes, for $N$ iterates. We then naturally define
	\begin{align*}
		X_{\om,\infty,\ep}:=\bigcap_{N=0}^\infty X_{\om,N,\ep}=\bigcap_{N=0}^\infty T_\om^{-N}(\cJ_{\sg^N\om,\ep})
	\end{align*}
	to be the set of points which will never land in a hole under iteration of the maps $T_\om^N$ for any $N\geq 0$. We call $X_{\om,\infty,\ep}$ the \textit{$(\om,\ep)$-surviving set}. 
	    %\sv{Should we require that the surviving set is not empty? Or say that if it is empty the following results are easier to get?}  
	Note that the sets $X_{\om,N,\ep}$ and $X_{\om,\infty,\ep}$ are forward invariant satisfying the properties
	\begin{align}
	    T_\om(X_{\om,N,\ep})\sub X_{\sg\om,N-1,\ep}
	    \qquad\text{ and }\qquad
	    T_\om(X_{\om,\infty,\ep})\sub X_{\sg\om,\infty,\ep}.
	\label{surv set forw inv}
	\end{align}
	Now for any $0\leq \al\leq \infty$ we set
	\begin{align*}
		\hat{X}_{\om,\al,\ep}:=\ind_{X_{\om,\al,\ep}}
		= 
		\prod_{j=0}^{\al}\ind_{\cJ_{\sg^j\om,\ep}}\circ T_\om^j.
	\end{align*}
	The global surviving set is defined as
	\begin{align*}
		\cX_{\al,\ep}:=\bigcup_{\om\in\Om}\set{\om}\times X_{\om,\al,\ep}
	\end{align*}
	for each $0\leq \al\leq \infty$.
	Then $\cX_{\infty,\ep}\sub\cJ_\ep$ is precisely the set of points that survive under forward iteration of the skew-product map $T$. We will assume that the fiberwise survivor sets are nonempty:
		\begin{enumerate}[align=left,leftmargin=*,labelsep=\parindent]
		\item[\mylabel{X}{cond X}]
		For each $\ep>0$ sufficiently small we have $X_{\om,\infty,\ep}\neq\emptyset$ for $m$-a.e. $\om\in\Om$.
	\end{enumerate}
	As an immediate consequence of \eqref{cond X} we have that $\cX_{\infty,\ep}\neq\emptyset$.  
	    %\ja{Add \eqref{cond X} everywhere needed}
	\begin{remark}\label{rem check X}
	    Note that since \eqref{A} implies that $X_{\om,\infty,\ep'}\bus X_{\om,\infty,\ep}$ for all $\ep'<\ep$, \eqref{cond X} holds if there exists $\ep>0$ such that $X_{\om,\infty,\ep}\neq\emptyset$ for $m$-a.e. $\om\in\Om$. Furthermore, since $T_\om(X_{\om,\infty,\ep})\sub X_{\sg\om,\infty,\ep}$, if $X_{\om,\infty,\ep}\neq\emptyset$ then $X_{\sg^N\om,\infty,\ep'}\neq\emptyset$ for each $N\geq 1$ and $\ep'\leq \ep$. As $\cX_{\infty,\ep}$ is forward invariant we have that $X_{\om,\infty,\ep}\neq\emptyset$ not only implies that $\cX_{\infty,\ep}\neq\emptyset$, but also that $\cX_{\infty,\ep}$ is infinite. 
	\end{remark}
	The following proposition presents a setting in which the survivor set is nonempty for random piecewise continuous open dynamics. 
	\begin{proposition}\label{prop surv nonemp}
	    Suppose that for $m$-a.e. $\om\in\Om$ 
	    %the hole $H_{\om,\ep}\sub\cJ_{\om,0}$ is open. Further suppose 
	    there exist $V_\om,U_{\om,1}, \dots, U_{\om,k_\om}\sub\cJ_{\om,\ep}$ nonempty compact subsets such that for $m$-a.e. $\om\in\Om$
	    \begin{enumerate}
	        \item $T_\om\rvert_{U_\om,j}$ is continuous for each $1\leq j\leq k_\om$,% and invertible,
	        \item $T_\om(U_\om)\bus V_{\sg\om}$, where $U_\om:=\cup_{j=1}^{k_\om}U_{\om,j}\sub V_\om$ for each $\om$.
	    \end{enumerate}
	    Then $X_{\om,\infty,\ep}\neq\emptyset$ for $m$-a.e. $\om\in\Om$ and consequently $\cX_{\infty,\ep}\neq\emptyset$.
	    Furthermore, if 
	    $$
	        m(\set{\om\in\Om: k_\om>1})>0
	    $$ 
	    then for $m$-a.e. $\om\in\Om$ the survivor set $X_{\om,\infty,\ep}$ is uncountable. 
	\end{proposition}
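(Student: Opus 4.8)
The plan is to realise survivor points as elements of a decreasing intersection of nonempty compact sets built by pulling the sets $V_{\bullet}$ backward along the cocycle. Since $m$ is $\sg$-invariant there is a conull, $\sg$-forward-invariant set $G\sub\Om$ on which the stated hypotheses hold at every iterate; fix $\om\in G$ so that the sets $U_{\sg^j\om}$, $V_{\sg^j\om}$ and the conditions on them are available for all $j\ge 0$. For $n\ge 1$ put
\begin{equation*}
A_{\om,n}:=\set{x\in U_\om\ :\ T_\om^j(x)\in U_{\sg^j\om}\ \text{for}\ 1\le j\le n-1,\ \ T_\om^n(x)\in V_{\sg^n\om}}.
\end{equation*}
First I would check that each $A_{\om,n}$ is compact: by the pasting lemma the piecewise-continuity hypothesis makes $T_\om$ continuous on the compact set $U_\om$, and inductively $T_\om^j$ is continuous on the compact set $\set{x\in U_\om: T_\om^i(x)\in U_{\sg^i\om},\ 1\le i\le j}$, so $A_{\om,n}$ is the intersection of such a set with the preimage under a continuous map of the compact set $V_{\sg^n\om}$. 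Nonemptiness comes from a one-step pull-back identity: since $U_{\sg^i\om}\sub V_{\sg^i\om}$ and, by the covering hypothesis, $T_{\sg^i\om}(U_{\sg^i\om})\bus V_{\sg^{i+1}\om}$, one has $T_{\sg^i\om}\big(\set{y\in U_{\sg^i\om}: T_{\sg^i\om}(y)\in K}\big)=K$ for every compact $K\sub V_{\sg^{i+1}\om}$; iterating this $n$ times from $K=V_{\sg^n\om}$ produces a nonempty compact subset of $U_\om$ which one identifies with $A_{\om,n}$. The same identity (using $U_{\sg^n\om}\sub V_{\sg^n\om}$) gives $A_{\om,n+1}\sub A_{\om,n}$. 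Cantor's intersection theorem in $X$ then yields a point $x\in\bigcap_{n\ge 1}A_{\om,n}$, and it satisfies $T_\om^j(x)\in U_{\sg^j\om}\sub V_{\sg^j\om}\sub\cJ_{\sg^j\om,\ep}$ for all $j\ge 0$, hence $x\in X_{\om,\infty,\ep}$. Thus $X_{\om,\infty,\ep}\nonempty$ for $m$-a.e.\ $\om$, and consequently $\cX_{\infty,\ep}\nonempty$ since it contains $\set{\om}\times X_{\om,\infty,\ep}$ for such $\om$.

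For the uncountability statement I would first apply ergodicity of $\sg$: since $m(\set{\om:k_\om>1})>0$, Birkhoff's theorem gives, for $m$-a.e.\ $\om$, an infinite sequence of ``branching times'' $n_1<n_2<\cdots$ with $k_{\sg^{n_i}\om}>1$. At each $n_i$ I would fix two distinct branches of $U_{\sg^{n_i}\om}$ and build a Cantor scheme: for a finite word $w=w_1\cdots w_i\in\set{0,1}^i$, let $C_\om(w)$ be the set of $x$ arising from the pull-back construction above that, at every earlier branching time $n_\ell$, have $T_\om^{n_\ell}(x)$ in the $w_\ell$-th of the two chosen branches at $\sg^{n_\ell}\om$. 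Running the pull-back argument again (now with each of the two chosen branches covering the relevant $V$) shows every $C_\om(w)$ is nonempty and compact, with $C_\om(w0),C_\om(w1)\sub C_\om(w)$ and $C_\om(w0)\cap C_\om(w1)=\emptyset$ because the two branches are disjoint. For each $\xi\in\set{0,1}^{\NN}$ the nested intersection $\bigcap_i C_\om(\xi_1\cdots\xi_i)$ is then a nonempty subset of $X_{\om,\infty,\ep}$, and distinct $\xi$ give disjoint intersections, so $X_{\om,\infty,\ep}$ has at least the cardinality of the continuum.

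I expect the branching step to be the main obstacle. The naive pull-back only delivers nonemptiness of the \emph{union} over branches, so to split the construction one must know that each of the two branches used at $\sg^{n_i}\om$ is itself ``full'', i.e.\ maps onto a set containing $V_{\sg^{n_i+1}\om}$, and that the branches are pairwise disjoint (so the two descendant sets are disjoint). In other words, the content of $k_\om>1$ that one actually uses is the presence of at least two disjoint surviving full branches; extracting this from the hypotheses --- rather than merely from $T_\om(U_\om)\bus V_{\sg\om}$ --- is the crux, and if it is not automatic it should be added as an explicit assumption. A secondary, purely bookkeeping, point is to fix at the outset the single conull $\sg$-invariant set on which all hypotheses and the ergodic averages controlling the times $n_i$ hold simultaneously.
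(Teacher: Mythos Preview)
Your nonemptiness argument is correct and is essentially the paper's argument repackaged: the paper builds the same nested compacta by pulling back $V_{\sg^N\om}$ along each finite itinerary $\gm=\gm_0\cdots\gm_{N-1}$ (with $\gm_j\in\{1,\dots,k_{\sg^j\om}\}$) and then takes the union over all itineraries, which is exactly your $A_{\om,N}$. Your use of the pasting lemma to get continuity of $T_\om$ on the finite closed union $U_\om$ is a mild shortcut that lets you avoid the branch-by-branch bookkeeping, but the resulting sets and the Cantor-intersection step are identical.

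For uncountability, the paper proceeds the same way you do: ergodicity of $\sg$ yields infinitely many times $j$ with $k_{\sg^j\om}>1$, and the paper then simply invokes ``the usual bijection'' between survivors and infinite words $\gm\in\Sg_\om=\{\gm_1\gm_2\cdots:1\le\gm_j\le k_{\sg^j\om}\}$, without further justification. Your Cantor-scheme construction is a more explicit version of the same idea. The concern you raise at the end---that the branching really needs each of the two chosen pieces at a branching time to be individually full (covering $V_{\sg^{n_i+1}\om}$) and pairwise disjoint---is not addressed in the paper's proof either; the paper's one-line appeal to the ``usual bijection'' implicitly assumes exactly this. So your caution is well placed, but it is a feature of the proposition as stated rather than a defect of your argument relative to the paper's.
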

	\begin{proof}
	    For each $1\leq j\leq k_\om$ let $T_{\om,j}:U_{\om,j}\to\cJ_{\sg\om,0}$ denote the continuous map $T_\om\rvert_{U_{\om,j}}$, and let $T_{\om,U}:U_\om\to\cJ_{\sg\om,0}$ denote the map $T_\om\rvert_{U_\om}$. Since $V_{\sg\om}$ is compact,  for each $1\leq j\leq k_\om$ we have that $T_{\om,j}^{-1}(V_{\sg\om})$ is a nonempty compact subset of $U_{\om,j}$. 
	    Given $N\geq 1$ let $\gm=\gm_0\gm_1\dots\gm_{N-1}$ be an $N$-length word with $1\leq \gm_j\leq k_{\sg^j\om}$ for each $0\leq j\leq N-1$. Let $\Gm_{\om,N}$ denote the finite collection of all such words of length $N$. Then for each $N\geq 1$ and each $\gm\in\Gm_{\om,N}$
	    $$
	        T_{\om,\gm}^{-N}(V_{\sg^N\om}):=
	        T_{\om,\gm_0}^{-1}\circ\dots\circ T_{\sg^{N-1}\om,\gm_{N-1}}^{-1}(V_{\sg^N\om})
	        \sub U_{\om,\gm_0}
	    $$
	    is compact. 
	    Furthermore, $T_{\om,\gm}^{-N}(V_{\sg^N\om})$ forms a decreasing sequence in $U_{\om,\gm_0}$.  Hence, we see that 
	    $$
	        T_{\om,U}^{-N}(V_{\sg^N\om})=\bigcup_{\gm\in\Gm_{\om,N}}T_{\om,\gm}^{-N}(V_{\sg^N\om})
	    $$
	    forms a decreasing sequence of compact subsets of $U_\om$. 
        Thus,
	    $$
	        X_{\om,\infty,\ep}=\bigcap_{N=0}^\infty T_\om^{-N}(V_{\sg^N\om})
	        \bus \bigcap_{N=0}^\infty T_{\om,U}^{-N}(V_{\sg^N\om})\neq\emptyset
	    $$ 
	    as desired. The final claim follows from the ergodicity of $\sg$, which ensures that for a.e. $\om$ there are infinitely many $j\in\NN$ such that $k_{\sg^j\om}>1$, and the usual bijection between a point in $X_{\om,\infty,\ep}$ and an infinite word $\gm$ in the fiberwise sequence space $\Sg_\om:=\set{\gm=\gm_1\gm_2\dots: 1\leq \gm_j\leq k_{\sg^j\om}}$.
	\end{proof}
	Now we define the perturbed operator $\cL_{\om,\ep}:\cB_\om\to\cB_{\sg\om}$ by
	\begin{align*}
		\cL_{\om,\ep}(f):=\cL_{\om,0}\left(f\cdot\ind_{\cJ_{\om,\ep}}\right)
		=\sum_{y\in T_\om^{-1}(x)}f(y)\ind_{\cJ_{\om,\ep}}g_{\om,0}(y)
		=\sum_{y\in T_\om^{-1}(x)}f(y)g_{\om,\ep}(y)
		, \quad f\in\cB_\om,
	\end{align*}
	    %\sv{If we define $g_{\om,\ep}$ as we do below, should we need the characteristic function $\ind_{\cJ_{\om,\ep}}$ in the last term of the previous equality?}
	where for each $\om\in\Om$ and $\ep\geq 0$ we define $g_{\om,\ep}:=g_{\om,0}\ind_{\cJ_{\om,\ep}}$, and, similarly, for each $N\in\NN$, 
	$$
	g_{\om,\ep}^{(N)}:=\prod_{j=0}^{N-1}g_{\sg^j\om,\ep}\circ T_\om^j.
	$$
	%Clearly $\cL_{\om,\ep}$ is a positive, bounded, linear operator. 
	%Moreover, we have
	%\begin{align}\label{sup norm tr op}
	%	&\norm{\cL_{\om,\ep}}_{\infty}\leq D(T_\om)\norm{g_{\om,\ep}}_{\infty}.
	%	\\	&\flag{Not true for infinitely branched maps. Requires summable potentials}\nonumber
	%\end{align}
	%Furthermore, measurability of the maps $\om\mapsto\cL_{\om,0}f$ and $\om\mapsto H_{\om,\ep}$ implies that for every $\ep>0$ and $f\in\cB_\om$ the map $\om\mapsto\cL_{\om,\ep} f$ is also measurable.
	Note that measurability of $H_\ep\sub \cJ_0$ and condition \eqref{M2} imply  that for every $\ep>0$ and $f\in\cB$ the map $(\om,x)\mapsto (\cL_{\ep} f)_\om (x)$ is also measurable.
	Iterates of the perturbed operator $\cL_{\om,\ep}^N:\cB_{\om}\to\cB_{\sg^N\om}$ are given by
	\begin{align*}
		\cL_{\om,\ep}^N:=\cL_{\sg^{N-1}\om,\ep}\circ\dots\circ\cL_{\om,\ep},
	\end{align*}
	which, using induction, we may write as
	\begin{align}\label{eq: iterate pert tr op}
		\cL_{\om,\ep}^N(f)=\cL_{\om,0}^N\left(f\cdot\hat{X}_{\om,N-1,\ep}\right), \qquad f\in\cB_\om.
	\end{align}
For every $\ep\geq 0$ we let 
$$
    \~\cL_{\om,\ep}:=\lm_{\om,\ep}^{-1}\cL_{\om,\ep}.
$$ 
\begin{comment}
We will assume the following assumption which implies that $\supp(\cL_{\om,\ep}\ind)\neq\emptyset$.
\begin{enumerate}[align=left,leftmargin=*,labelsep=\parindent]
	\item[\mylabel{D}{cond D}]
    	For each $\ep>0$ sufficiently small we have $T_\om(\cJ_{\om,\ep})=\cJ_{\sg\om,0}$.
\end{enumerate}
    \ja{Add \eqref{cond D} everywhere needed. Or should we just suppose that $\supp(\cL_{\om,\ep}\ind)=\cJ_{\sg\om,0}$ or should we suppose the support is the set $D_\infty\neq\emptyset$ from ORLY and then place $D_\infty$ under all appropriate infima?}\sv{I think adapting to what we did in Orly is probably better.}
\begin{remark}
    Note that \eqref{cond D} implies that 
    $$
        \supp(\cL_{\om,\ep}\ind)=\cJ_{\sg\om,0}
    $$
    for $m$-a.e. $\om\in\Om$. Furthermore, note that since $\cJ_{\om,\ep'}\bus\cJ_{\om,\ep}$ for every $\ep'\leq \ep$ it suffices to show that $T_\om(\cJ_{\om,\ep})=\cJ_{\sg\om,0}$ for some $\ep>0$. 
\end{remark}

\end{comment}
\begin{definition}\label{def ROS}
	We will call a closed random dynamical system that satisfies the measurability conditions \eqref{M1}, \eqref{M2}, \eqref{CCM}, and \eqref{B} a \textit{random open system} if 
	%the assumptions \eqref{A1},\eqref{A2} are also satisfied. 
	assumptions \eqref{A} and \eqref{cond X} are also satisfied. 
	    %\ja{Assumption \eqref{cond X}}
	We let $(\mathlist{\bcomma}{\Om, m, \sg, \cJ_0, T, \cB, \cL_0, \nu_0, \phi_0, H_\ep})$
	denote the random open system generated by the random maps $T_\om:\cJ_{\om,0}\to\cJ_{\sg\om,0}$ and random holes $H_{\om,\ep}\sub\cJ_{\om,0}$.
	%\ja{Should we include $\nu_0$ and $\phi_0$ in the notation or just $\cL_0$?}
\end{definition}

\subsection{Some of the Terms from Sequential Perturbation Theorem}

In this short section we develop a more thorough understanding of some of the vital terms in the general sequential perturbation theorem of Section \ref{Sec: Gen Perturb Setup} in the setting of random open systems.
We begin by calculating the quantities $\eta_{\om,\ep}$ and $\Dl_{\om,\ep}$ from Section~\ref{Sec: Gen Perturb Setup}. In particular, we have that 
\begin{align}
	\Dl_{\om,\ep}&:=\nu_{\sg\om,0}\left((\cL_{\om,0}-\cL_{\om,\ep})(\phi_{\om,0})\right)
	\nonumber\\
	&=\nu_{\sg\om,0}\left(\cL_{\om,0}(\phi_{\om,0}\cdot\ind_{H_{\om,\ep}})\right)
	\nonumber\\
	&=\lm_{\om,0}\cdot\nu_{\om,0}(\phi_{\om,0}\cdot\ind_{H_{\om,\ep}})
	\nonumber\\
	&=\lm_{\om,0}\cdot\mu_{\om,0}(H_{\om,\ep})
	\label{eq: Dl = mu H}
\end{align}
and
\begin{align}
	\eta_{\om,\ep}:&=\norm{\nu_{\sg\om,0}\left(\cL_{\om,0}-\cL_{\om,\ep}\right)}_{\cB_\om}
	\nonumber
	\\
	&=\sup_{\norm{\psi}_{\cB_\om}\leq 1}\nu_{\sg\om,0}\left(\cL_{\om,0}(\psi\cdot\ind_{H_{\om,\ep}})\right)
	\nonumber
	\\
	&=\lm_{\om,0}\cdot\sup_{\norm{\psi}_{\cB_\om}\leq 1}\nu_{\om,0}\left(\psi\cdot\ind_{H_{\om,\ep}}\right)
	\nonumber
	\\
	&\leq K_\om\lm_{\om,0}\cdot\nu_{\om,0}\left(H_{\om,\ep}\right),
	\label{etaineq}
\end{align}
where the last line follows from \eqref{B}.

\begin{comment}
We also have
\begin{align*}
\norm{(\cL_{\om,0}-\cL_{\om,\ep})(\phi_{\om,0})}_{\cB_{\sg\om}}
&=\norm{\cL_{\om,0}(\phi_{\om,0}\cdot\ind_{H_{\om,\ep}})}_{\cB_{\sg\om}}\\
&\leq\norm{\cL_{\om,0}}_{\cB_{\om}}\norm{\phi_{\om,0}\cdot\ind_{H_{\om,\ep}}}_{\cB_{\om}}.
\end{align*}
Combining the previous inequalities we have
\begin{align*}
\eta_{\om,\ep}\norm{(\cL_{\om,0}-\cL_{\om,\ep})(\phi_{\om,0})}_{\cB_{\sg\om}}
\leq \absval{\lm_{\om,0}}\cdot\nu_{\om,0}\left(H_{\om,\ep}\right)\norm{\cL_{\om,0}}_{\cB_{\om}}\norm{\phi_{\om,0}\cdot\ind_{H_{\om,\ep}}}_{\cB_{\om}}.
\end{align*}

As $\mu_{\om,\ep}=\phi_{\om,\ep}\cdot\nu_{\om,\ep}$ for all $\ep\geq 0$, assumption \eqref{C5} implies that if $\Dl_{\om,\ep}=0$ then $\eta_{\om,\ep}=0$.
In order to finish checking \eqref{C7} we thus need to show that
\begin{align*}
\limsup_{\ep\to 0}\frac{\nu_{\om,0}\left(H_{\om,\ep}\right)}{\mu_{\om,0}(H_{\om,\ep})}<\infty.
\end{align*}

content...
\end{comment}
For $\ep>0$ and $\mu_{\om,0}(H_{\om,\ep})>0$, calculating $q_{\om,\ep}^{(k)}$ in this setting gives
\begin{align*}
	&q_{\om,\ep}^{(k)}
	:=\frac
	{\nu_{\sg\om,0}\left((\cL_{\om,0}-\cL_{\om,\ep})(\cL_{\sg^{-k}\om,\ep}^k)(\cL_{\sg^{-(k+1)}\om,0}-\cL_{\sg^{-(k+1)}\om,\ep})(\phi_{\sg^{-(k+1)}\om,0})\right)}
	{\nu_{\sg\om,0}\left((\cL_{\om,0}-\cL_{\om,\ep})(\phi_{\om,0})\right)}
	\\
	&=\frac{\lm_{\sg^{-(k+1)}\om,0}^{k+1}\cdot\mu_{\sg^{-(k+1)}\om,0}\left(
		T_{\sg^{-(k+1)}\om}^{-(k+1)}(H_{\om,\ep})
		\cap\left(\bigcap_{j=1}^k T_{\sg^{-(k+1)}\om}^{-(k+1)+j} (H_{\sg^{-j}\om,\ep}^c)\right)
		\cap H_{\sg^{-(k+1)}\om,\ep}
		\right)
	}
	{\mu_{\om,0}(H_{\om,\ep})}
	\\
	&=\frac{\lm_{\sg^{-(k+1)}\om,0}^{k+1}\cdot\mu_{\sg^{-(k+1)}\om,0}\left(
		T_{\sg^{-(k+1)}\om}^{-(k+1)}(H_{\om,\ep})
		\cap\left(\bigcap_{j=1}^k T_{\sg^{-(k+1)}\om}^{-(k+1)+j} (H_{\sg^{-j}\om,\ep}^c)\right)
		\cap H_{\sg^{-(k+1)}\om,\ep}
		\right)
	}
	{\mu_{\sg^{-(k+1)}\om,0}\left(
		T_{\sg^{-(k+1)}\om}^{-(k+1)}(H_{\om,\ep})\right)}.		
\end{align*}
For notational convenience we define the quantity $\hat q_{\om,\ep}^{(k)}$ by 
\begin{align}\label{def of hat q}
	\hat q_{\om,\ep}^{(k)}:=
	\frac{\mu_{\sg^{-(k+1)}\om,0}\left(
		T_{\sg^{-(k+1)}\om}^{-(k+1)}(H_{\om,\ep})
		\cap\left(\bigcap_{j=1}^k T_{\sg^{-(k+1)}\om}^{-(k+1)+j} (H_{\sg^{-j}\om,\ep}^c)\right)
		\cap H_{\sg^{-(k+1)}\om,\ep}
		\right)
	}
	{\mu_{\sg^{-(k+1)}\om,0}\left(
		T_{\sg^{-(k+1)}\om}^{-(k+1)}(H_{\om,\ep})\right)},
\end{align}
and thus we have that 
\begin{align*}
	\hat q_{\om,\ep}^{(k)}=\left(\lm_{\sg^{-(k+1)}\om,0}^{k+1}\right)^{-1} q_{\om,\ep}^{(k)}.
\end{align*}
In light of \eqref{def of hat q}, one can think of $\hat q_{\om,\ep}^{(k)}$ as the conditional probability (on the fiber $\sg^{-(k+1)}\om$) of a point starting in the hole $H_{\sg^{-(k+1)}\om,\ep}$, leaving and avoiding holes for $k$ steps, and finally landing in the hole $H_{\om,\ep}$ after exactly $k+1$ steps conditioned on the trajectory of the point landing in $H_{\om,\ep}$.
Similarly, for $\om\in\Om_+$, we set 
\begin{align*}
	\hat q_{\om,0}^{(k)}:=\left(\lm_{\sg^{-(k+1)}\om,0}^{k+1}\right)^{-1} q_{\om,0}^{(k)}.
\end{align*}

\section{Quenched  perturbation theorem and escape rate asymptotics for random open systems}
\label{sec:goodrandom}
%\ja{We never actually refer to a ``good'' random open system any place besides this short introduction. Should we label this section something different?}
%\gf{Yes, something more in tune with the main results, similar to sections 5,6,7. Maybe my suggestion is too long?}
In this section we introduce versions of the assumptions \eqref{P1}--\eqref{P9} tailored to random open systems. Under these assumptions we then prove a derivative result akin to Theorem~\ref{thm: GRPT} for random open systems as well as a similar derivative result for the escape rate.

Suppose $(\mathlist{\bcomma}{\Om, m, \sg, \cJ_0, T, \cB, \cL_0, \nu_0, \phi_0, H_\ep})$ is a random open system. 
    %\gf{remove next sentence because they are already part of a random open system?} 
    %In addition to \eqref{M1}, \eqref{M2}, \eqref{CCM}, %\eqref{A1}, \eqref{A2}, and \eqref{B} 
    %\eqref{B}, \eqref{A}, and \eqref{cond X},
We assume the following conditions hold:
%\ja{And should we say instead: '' We Assume the following conditions hold:''}
\begin{enumerate}[align=left,leftmargin=*,labelsep=\parindent]
	%\item[\mylabel{C0}{C0}] For each $\om\in\Om$ we have
	%$$
	%\cL_{\om,\ep}(\cB_\om)\sub\cB_{\sg\om}.
	%$$\ja{This condition can be deleted in favor of measurability introduced earlier in Section 4}
	\item[\mylabel{C1}{C1}]There exists a measurable $m$-a.e. finite function $C_1:\Om\to\RR_+$ such that for $f\in\cB_\om$ we have
	\begin{align*}
		\sup_{\ep\geq 0}\norm{\cL_{\om,\ep}(f)}_{\cB_{\sg\om}}\leq C_1(\om)\norm{f}_{\cB_\om}.
	\end{align*}
	\item[\mylabel{C2}{C2}] For each $\ep\geq 0$ there is a random measure  $\set{\nu_{\om,\ep}}_{\om\in\Om}$ supported in $\cJ_{0}$
	%\ja{Make sure random measure is defined to be $m$-measurable and not a probability as $\nu_{\om,\ep}$ are not probabilities. Change $\cP_\Om$ notation accordingly} 
	and measurable functions $\lm_{\ep}:\Om\to(0,\infty)$ %\RR\bs\set{0}$ 
	with $\log\lm_{\om,\ep}\in L^1(m)$ and $\phi_{\ep}:\cJ_0\to \RR$ such that 
	%\gf{I think we need measurability of $\phi_0$ at least...if only for Thm \ref{thm: dynamics perturb thm} where we use measurability of $\mu_0$.}
	\begin{align*}
		\cL_{\om,\ep}(\phi_{\om,\ep})=\lm_{\om,\ep}\phi_{\sg\om,\ep}
		%\quad \nu_{\om,\ep}(\phi_{\om,\ep})=1,
		\quad\text{ and }\quad
		\nu_{\sg\om,\ep}(\cL_{\om,\ep}(f))=\lm_{\om,\ep}\nu_{\om,\ep}(f)
	\end{align*}
	for all $f\in\cB_\om$. Furthermore we assume that for $m$-a.e. $\om\in\Om$
	$$
	\nu_{\om,0}(\phi_{\om,\ep})=1
	\quad\text{ and } \quad
	\nu_{\om,0}(\ind)=1.
	$$
	\item[\mylabel{C3}{C3}] There is an operator $Q_{\om,\ep}:\cB_\om\to\cB_{\sg\om}$ such that for $m$-a.e. $\om\in\Om$ and each $f\in\cB_\om$ we have
	\begin{align*}
		\lm_{\om,\ep}^{-1}\cL_{\om,\ep}(f)=\nu_{\om,\ep}(f)\cdot\phi_{\sg\om,\ep}+Q_{\om,\ep}(f).
	\end{align*}
	Furthermore, for $m$-a.e. $\om\in\Om$ we have
	\begin{align*}
		Q_{\om,\ep}(\phi_{\om,\ep})=0
		\quad\text{ and }\quad
		\nu_{\sg\om,\ep}(Q_{\om,\ep}(f))=0.
	\end{align*}
	\item[\mylabel{C4}{C4}] %Z4
	For each $f\in\cB$ there exist measurable functions $C_f:\Om\to(0,\infty)$ and $\al:\Om\times \NN\to(0,\infty)$ with $\al_\om(N)\to 0$ as $N\to\infty$ such that for $m$-a.e. $\om\in\Om$ and all $N\in\NN$
	\begin{align*}
		\sup_{\ep\geq 0}\norm{Q_{\om,\ep}^N f_{\om}}_{\infty,\sg^N\om}
		&\leq 
		C_f(\om)\al_\om(N)\norm{f_{\om}}_{\cB_{\om}}, 
		\\
		\sup_{\ep\geq 0}\norm{Q_{\sg^{-N}\om,\ep}^Nf_{\sg^{-N}\om}}_{\infty,\om}
		&\leq 
		C_f(\om)\al_\om(N)\norm{f_{\sg^{-N}\om}}_{\cB_{\sg^{-N}\om}}, 
	\end{align*} 
	and $\norm{\phi_{\sg^{N}\om,0}}_{\infty,\sg^N\om}\al_\om(N)\to 0$, $\norm{\phi_{\sg^{-N}\om,0}}_{\infty,\sg^{-N}\om}\al_\om(N)\to 0$ as $N\to\infty$.
	%\ja{Do we want forward and backward or just forward? Fix statement accordingly}
	\item[\mylabel{C5}{C5}] %Z5
	There exists a measurable $m$-a.e. finite function $C_2:\Om\to[1,\infty)$ such that 
	\begin{align*}
		\sup_{\ep\geq 0}\norm{\phi_{\om,\ep}}_{\infty,\om}\leq C_2(\om) 
		\quad\text{ and }\quad
		\norm{\phi_{\om,0}}_{\cB_\om}\leq C_2(\om).
	\end{align*}

	\item[\mylabel{C6}{C6}] %Z6
	For $m$-a.e. $\om\in\Om$ we have
	\begin{align*}
		\lim_{\ep\to 0}\nu_{\om,0}(H_{\om,\ep})=0. 
		%\flag{Provided norm is BV. See flag above about $\eta_{\om,\ep}$}
	\end{align*}
	
	\item[\mylabel{C7}{C7}] 
	There exists a measurable $m$-a.e. finite function $C_3:\Om\to[1,\infty)$ such that  for all $\ep>0$ sufficiently small we have
	\begin{align*}
		\inf\phi_{\om,0}\geq C_3^{-1}(\om)>0
		\qquad\text{ and }\qquad
		\essinf_\om\inf\phi_{\om,\ep}\geq 0.
	\end{align*}
	   % \ja{Note the strengthening of \eqref{C7} and \eqref{C7'} with $\inf_{\ep\geq 0}$ rather than just for $\ep=0$. This is needed to prove Remark~\ref{rem lm_0>lm_ep}. Appropriate changes have been made to the proof of Lemma \eqref{harrylemma2} and the proof of Lemma~\ref{DFGTV18alemma} in Appendix~\ref{appB} }
\begin{comment}
	If for each $\om\in\Om$ and $\ep>0$ we have that $\Dl_{\om,\ep}>0$ then we have that there exists a measurable function $C_3:\Om\to(0,\infty)$ such that 
	\begin{align*}
	%0<\liminf_{\ep\to 0}\frac{\eta_{\om,\ep}}{\Dl_{\om,\ep}}\leq
	\limsup_{\ep\to 0}\frac{\nu_{\om,0}(H_{\om,\ep})}{\mu_{\om,0}(H_{\om,\ep})}\leq C_3(\om),
	\end{align*}
	where $\set{\mu_{\om,0}}_{\om\in\Om}$ is a random measure which is given by 
	\begin{align*}
	\mu_{\om,0}(f):=\nu_{\om,0}(f\cdot \phi_{\om,0}) 	\flag{this needs to go earlier before calc of $\Dl_{\om,\ep}$}
	\end{align*}
	for all $f\in\cB_\om$.
	If for each $\om\in\Om$ and some $\ep_1>0$ such that for each $\ep\leq \ep_1$ we have that $\Dl_{\om,\ep}=0$ then we also have that $\eta_{\om,\ep}=0$ for each $\ep\leq \ep_1$.
	%\item[\mylabel{C10}{C10}] \flag{We need a lower bound for $\norm{\phi_\om}_{\cB_\om}$ uniform in $\om$. This is not about $\inf q_\om$. Maybe just for closed but possibly uniform in $\ep$.}

\end{comment}

	\item[\mylabel{C8}{C8}] %Z8
	For $m$-a.e. $\om\in\Om_+$ 
	%such that $\mu_{\om,0}(H_{\om,\ep})>0$ for all $\ep>0$, 
	we have that the limit $\hat{q}_{\om,0}^{(k)}:=\lim_{\ep\to 0} \hat{q}_{\om,\ep}^{(k)}$ exists for each $k\geq 0$, where $\hat{q}_{\om,\ep}^{(k)}$ is as in \eqref{def of hat q}.
	%\ja{Maybe we don't want this for $m$-a.e. $\om\in\Om$ and instead $m$-a.e. $\om\in\Om$ such that $\mu_{\om,0}(H_{\om,\ep})>0$ for all $\ep>0$.}
	%\gf{that sounds reasonable.}
\end{enumerate}

\begin{remark}\label{rem:scaling}
	To obtain the scaling required in \eqref{C2} and \eqref{C3}, in particular to obtain the assumption that $\nu_{\om,0}(\phi_{\om,\ep})=1$, suppose that $(\mathlist{\bcomma}{\Om, m, \sg, \cJ_0, T, \cB, \cL_0, \nu_0, \phi_0, H_\ep})$ is a random open system satisfying the following properties:
	%\ja{We don't need to assume that $\nu'_{\om,\ep}$ is a probability. If $\nu'_{\om,\ep}$ is a non-trivial measure (i.e. not the 0 functional) then (O1) and (O2) imply that $\nu'_{\om,\ep}(\phi'_{\om,\ep})=1$ to get a probability measure we can then just scale $\nu'_{\om,\ep}$.}
	\begin{enumerate}[align=left,leftmargin=*,labelsep=\parindent]
		\item[\mylabel{O1}{O1}] For each $\ep\geq 0$ there is a random measure $\set{\nu_{\om,\ep}'}_{\om\in\Om}$ with $\nu'_{\om,\ep}\in\cB_\om^*$, the dual space of $\cB_\om$, $\lm'_{\om,\ep}\in\CC\bs\set{0}$, and $\phi'_{\om,\ep}\in\cB_\om$ such that
		\begin{align*}
			\cL_{\om,\ep}(\phi'_{\om,\ep})=\lm'_{\om,\ep}\phi'_{\sg\om,\ep}
			%\quad \nu_{\om,\ep}(\phi_{\om,\ep})=1,
			\quad\text{ and }\quad
			\nu_{\sg\om,\ep}'(\cL_{\om,\ep}(f))=\lm'_{\om,\ep}\nu'_{\om,\ep}(f)
		\end{align*}
		for all $f\in\cB_\om$. %\sv{I think the "hat" in $\nu$ above should be a "'".}
		\item[\mylabel{O2}{O2}] There is an operator $Q_{\om,\ep}':\cB_\om\to\cB_{\sg\om}$ such that for $m$-a.e. $\om\in\Om$ and each $f\in\cB_\om$ we have
		\begin{align*}
			(\lm'_{\om,\ep})^{-1}\cL_{\om,\ep}(f)=\nu'_{\om,\ep}(f)\cdot \phi'_{\sg\om,\ep}+Q'_{\om,\ep}(f).
		\end{align*}
		Furthermore, for $m$-a.e. $\om\in\Om$ we have
		\begin{align*}
			Q'_{\om,\ep}(\phi'_{\om,\ep})=0
			\quad\text{ and }\quad
			\nu_{\sg\om,\ep}'(Q'_{\om,\ep}(f))=0.
		\end{align*}
	\end{enumerate}
	Then, for every $\ep>0$ and each $f\in\cB_\om$, by setting 
	\begin{align*}
		\phi_{\om,0}&:=\phi'_{\om,0},
		&
		\phi_{\om,\ep}&:=\frac{1}{\nu_{\om,0}(\phi'_{\om,\ep})}\cdot \phi'_{\om,\ep},
		\\
		\nu_{\om,0}(f)&:=\nu'_{\om,0}(f),
		&
		\nu_{\om,\ep}(f)&:=\nu_{\om,0}(\phi'_{\om,\ep})\nu'_{\om,\ep}(f),
		\\
		\lm_{\om,0}&:=\lm'_{\om,0},
		&
		\lm_{\om,\ep}&:=\frac{\nu_{\sg\om,0}(\phi'_{\sg\om,\ep})}{\nu_{\om,0}(\phi'_{\om,\ep})}\lm'_{\om,\ep},
		\\
		Q_{\om,0}(f)&:=Q'_{\om,0}(f),
		&
		Q_{\om,\ep}(f)&:=\frac{\nu_{\om,0}(\phi'_{\om,\ep})}{\nu_{\sg\om,0}(\phi'_{\sg\om,\ep})}Q'_{\om,\ep}(f)
	\end{align*}
	we see that \eqref{C2} and \eqref{C3} hold, and in particular, $\nu_{\om,0}(\phi_{\om,\ep})=1$. Furthermore, \eqref{O1} and \eqref{O2} together imply that $\nu'_{\om,\ep}(\phi'_{\om,\ep})=\nu_{\om,\ep}(\phi_{\om,\ep})=1$ for $m$-a.e. $\om\in\Om$. Note that the measures $\nu_{\om,\ep}$ (for $\ep>0$) described in \eqref{C2} are not necessarily probability measures and should not be confused with the conformal measures for the open systems.  
\end{remark}

Now given $N\in\NN$ and $\om\in\Om$, for $\psi_{\om}\in\cB_{\om}$ such that $\nu_{\om,0}(\psi_{\om})=1$ we have
\begin{align*}
	&\int_{X_{\om,N-1,\ep}}
	\psi_{\om}\, d\nu_{\om,0}
	=
	\int_{\cJ_{\om,0}} \psi_{\om}\cdot
	\prod_{j=0}^{N-1}\ind_{\cJ_{\sg^{j-N}\om,\ep}}\circ T_{\om}^j \, d\nu_{\om,0}\\
	&\qquad=
	\left(\lm_{\om,0}^N\right)^{-1}
	\int_{\cJ_{\sg^N\om,0}}\cL_{\om,0}^N\left(\psi_{\om}\cdot\prod_{j=0}^{N-1}\ind_{\cJ_{\sg^{j-N}\om,\ep}}\circ T_{\om}^j \right)\, d\nu_{\sg^N\om,0}\\
	&\qquad=
	\frac{\lm_{\om,\ep}^N}{\lm_{\om,0}^N}
	\int_{\cJ_{\sg^N\om,0}}\~\cL_{\om,\ep}^N\left( \psi_{\om} \right)\, d\nu_{\sg^N\om,0}\\
	&\qquad=
	\frac{\lm_{\om,\ep}^N}{\lm_{\om,0}^N}\int_{\cJ_{\sg^N\om,0}}
	\nu_{\om,\ep}(\psi_{\om})\cdot\phi_{\sg^N\om,\ep}\, d\nu_{\sg^N\om,0}
	+
	\frac{\lm_{\om,\ep}^N}{\lm_{\om,0}^N}
	\int_{\cJ_{\sg^N\om,0}}Q_{\om,\ep}^N\left( \psi_{\om} \right)\, d\nu_{\sg^N\om,0}.	
	%&\leq \frac{\lm_{\om,\ep}^N}{\lm_{\om,0}^N}
	%\nu_{\om,\ep}(\psi_{\om})
	%+\frac{\lm_{\om,\ep}^N}{\lm_{\om,0}^N}\norm{\nu_{\sg^N\om,0}}\norm{Q_{\om,\ep}^N\left( \psi_{\om} \right)}	.
\end{align*}
Thus if $\psi_{\om}=\ind$ we have
\begin{equation}
	\label{NulamN}
	\nu_{\om,0}(X_{\om,N-1,\ep})= \frac{\lm_{\om,\ep}^N}{\lm_{\om,0}^N}\nu_{\om, \ep}(\ind)
	+\frac{\lm_{\om,\ep}^N}{\lm_{\om,0}^N}\int_{\cJ_{\sg^N\om,0}}Q_{\om,\ep}^N\left( \ind \right)\, d\nu_{\sg^N\om,0},
\end{equation}
and if $\psi_{\om}=\phi_{\om,0}$, then we have
\begin{align}
	\label{eq: mu0 of n survivor}
	\mu_{\om,0}(X_{\om,N-1,\ep})= \frac{\lm_{\om,\ep}^N}{\lm_{\om,0}^N}
	\nu_{\om,\ep}(\phi_{\om,0})
	+\frac{\lm_{\om,\ep}^N}{\lm_{\om,0}^N}\int_{\cJ_{\sg^N\om,0}}Q_{\om,\ep}^N\left( \phi_{\om,0} \right)\, d\nu_{\sg^N\om,0}.
\end{align}
\begin{remark}\label{rem lm_0>lm_ep}
    Using \eqref{diff eigenvalues identity unif} and \eqref{C7} we see that 
    \begin{align*}
        \lm_{\om,0}-\lm_{\om,\ep}
		&=\nu_{\sg\om,0}\left((\cL_{\om,0}-\cL_{\om,\ep})(\phi_{\om,\ep})\right)
        =\nu_{\sg\om,0}\left((\cL_{\om,0}(\ind_{H_{\om,\ep}}\phi_{\om,\ep})\right)
		\\
		&=\lm_{\om,0}\nu_{\om,0}(\ind_{H_{\om,\ep}}\phi_{\om,\ep})
		\geq \lm_{\om,0}\nu_{\om,0}(H_{\om,\ep})\inf\phi_{\om,\ep}\geq 0.
    \end{align*}
    Thus, we have that $\lm_{\om,0}\geq \lm_{\om,\ep}$. Note that if 
    $\nu_{\om,0}(\ind_{H_{\om,\ep}}\phi_{\om,\ep})>0$ then we have that $\lm_{\om,0}>\lm_{\om,\ep}$. Consequently, if
    $\nu_{\om,0}(H_{\om,\ep})=0$ then we have that  $\lm_{\om,0}=\lm_{\om,\ep}$.
\end{remark}
\begin{definition}\label{def: escape rate}
	Given a random probability measure $\zt$ on $\cJ_{0}$, for each $\ep>0$, we define the lower and upper escape rates on the fiber $\om\in\Om$ respectively by the following:
	\begin{align*}
		\Ul{R}_\ep(\zt_\om):=-\limsup_{N\to\infty}\frac{1}{N}\log \zt_\om(X_{\om,N,\ep})
		\quad \text{ and } \quad
		\ol{R}_\ep(\zt_\om):=-\liminf_{N\to\infty}\frac{1}{N}\log \zt_\om(X_{\om,N,\ep}).
	\end{align*}
	If $\Ul{R}_\ep(\zt_\om)=\ol{R}_\ep(\zt_\om)$ we say the escape rate exists and denote the common value by $R_\ep(\zt_\om)$. 
\end{definition}
As an immediate consequence of \eqref{NulamN}, \eqref{eq: mu0 of n survivor}, and assumptions \eqref{C4}, \eqref{C5}, and \eqref{C7} 
%\gf{and \eqref{C5}?  for the upper bound on $\varphi_{\om,0}$?} 
for each (fixed) $\ep>0$ we have that 
\begin{align}\label{cons of NulamN}
	\lim_{N\to\infty}\frac{1}{N}\log \nu_{\om,0}(X_{\om,N,\ep})=
	\lim_{N\to\infty}\frac{1}{N}\log \mu_{\om,0}(X_{\om,N,\ep})=
	\lim_{N\to\infty}\frac{1}{N}\log \lm_{\om,\ep}^N-\lim_{N\to\infty}\frac{1}{N}\log \lm_{\om,0}^N.
\end{align}
%\ja{I added the \eqref{C4} assumption because we do need that $Q^N$ is decaying, but the rate doesn't matter. From (4.1) and (4.2) above we can write $$\frac{1}{N}\log\mu_{\om,0}(X_{\om,N-1,\ep})= \frac{1}{N}\log\frac{\lm_{\om,\ep}^N}{\lm_{\om,0}^N}	+\frac{1}{N}\log\left(\nu_{\om,\ep}(\phi_{\om,0})	+\nu_{\sg^N\om,0}\left(Q_{\om,\ep}^N \phi_{\om,0} \right)\, \right) $$Since the $\nu_{\om,\ep}(\phi_{\om,0})$ term is fixed (in $N$) the decay of the $Q^N$ term does not affect the limit and the $1/N$ sends the term to $0$.  }
Since $\log\lm_{\om,\ep}\in L^1(m)$ for all $\ep\geq 0$ by \eqref{C2}, the following proposition follows directly from Birkhoff's Ergodic Theorem.
\begin{proposition}\label{prop: escape rates}
	Given a random open system $(\mathlist{\bcomma}{\Om, m, \sg, \cJ_0, T, \cB, \cL_0, \nu_0, \phi_0, H_\ep})$ satisfying conditions \eqref{C1}-\eqref{C7}, for $m$-a.e. $\om\in\Om$ we have that 
	\begin{align}\label{eq: lem escape rate}
		R_\ep(\nu_{\om,0})=R_\ep(\mu_{\om,0})=\int_\Om \log\lm_{\om,0} \, dm(\om) - \int_\Om\log\lm_{\om,\ep}\, dm(\om).
	\end{align}
\end{proposition}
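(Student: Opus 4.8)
The plan is to obtain Proposition~\ref{prop: escape rates} by feeding the already-established identity \eqref{cons of NulamN} into Birkhoff's Ergodic Theorem for the ergodic, $m$-preserving transformation $\sg$.

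\emph{Step 1: reduce to the eigenvalue products.} Fix $\ep>0$. Recall that \eqref{cons of NulamN} is obtained by factoring the common prefactor $\lm_{\om,\ep}^N/\lm_{\om,0}^N$ out of \eqref{NulamN} and \eqref{eq: mu0 of n survivor}, leaving the residual factors $\nu_{\om,\ep}(\ind)+\int_{\cJ_{\sg^N\om,0}}Q_{\om,\ep}^N(\ind)\,d\nu_{\sg^N\om,0}$ and $\nu_{\om,\ep}(\phi_{\om,0})+\int_{\cJ_{\sg^N\om,0}}Q_{\om,\ep}^N(\phi_{\om,0})\,d\nu_{\sg^N\om,0}$; by \eqref{C4} (applied to $\ind$ and $\phi_0$) together with \eqref{C5} these residual factors are bounded above, and by \eqref{C7} (positivity of $\inf\phi_{\om,0}$, hence $\nu_{\om,\ep}(\phi_{\om,0})>0$, and $\int Q_{\om,\ep}^N(\phi_{\om,0})\,d\nu_{\sg^N\om,0}\to0$) they are bounded away from $0$ for all large $N$, so they contribute nothing to the exponential rate. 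Consequently, reading \eqref{cons of NulamN} at the level of $\liminf$ and $\limsup$, for $m$-a.e.\ $\om$ one has
\[
\liminf_{N\to\infty}\tfrac1N\log\nu_{\om,0}(X_{\om,N,\ep})=\liminf_{N\to\infty}\Bigl(\tfrac1N\log\lm_{\om,\ep}^N-\tfrac1N\log\lm_{\om,0}^N\Bigr),
\]
and likewise with $\liminf$ replaced by $\limsup$, and with $\nu_{\om,0}$ replaced by $\mu_{\om,0}$. (The shift from $X_{\om,N-1,\ep}$ in \eqref{NulamN} to $X_{\om,N,\ep}$ in Definition~\ref{def: escape rate} is harmless, as it does not affect a $\tfrac1N\log(\cdot)$ limit.)

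\emph{Step 2: apply Birkhoff.} Write $\tfrac1N\log\lm_{\om,\ep}^N=\tfrac1N\sum_{j=0}^{N-1}\log\lm_{\sg^j\om,\ep}$. By \eqref{C2} we have $\log\lm_{\cdot,\ep}\in L^1(m)$ for every $\ep\ge0$ (the case $\ep=0$ is also covered by \eqref{CCM}), and $\sg$ is ergodic and $m$-preserving, so Birkhoff's Ergodic Theorem gives, for $m$-a.e.\ $\om$,
\[
\lim_{N\to\infty}\tfrac1N\log\lm_{\om,\ep}^N=\int_\Om\log\lm_{\om,\ep}\,dm(\om),\qquad \lim_{N\to\infty}\tfrac1N\log\lm_{\om,0}^N=\int_\Om\log\lm_{\om,0}\,dm(\om).
\]
In particular the right-hand side in Step~1 is a genuine limit, so the $\liminf$ and $\limsup$ of $\tfrac1N\log\nu_{\om,0}(X_{\om,N,\ep})$ (resp.\ of $\tfrac1N\log\mu_{\om,0}(X_{\om,N,\ep})$) coincide; that is, $\Ul R_\ep(\nu_{\om,0})=\ol R_\ep(\nu_{\om,0})=:R_\ep(\nu_{\om,0})$ and similarly for $\mu_{\om,0}$, and
\[
R_\ep(\nu_{\om,0})=R_\ep(\mu_{\om,0})=-\Bigl(\int_\Om\log\lm_{\om,\ep}\,dm(\om)-\int_\Om\log\lm_{\om,0}\,dm(\om)\Bigr)=\int_\Om\log\lm_{\om,0}\,dm(\om)-\int_\Om\log\lm_{\om,\ep}\,dm(\om),
\]
which is \eqref{eq: lem escape rate}.

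\emph{Main obstacle.} There is essentially no obstacle: the proposition is a repackaging of \eqref{cons of NulamN} together with Birkhoff. The only point needing a little care — and it is part of what justifies \eqref{cons of NulamN}, which the excerpt already treats as immediate — is the sandwiching of the residual factors between two positive constants uniformly in large $N$; the upper bound comes directly from \eqref{C4}–\eqref{C5}, and the lower bound from \eqref{C7} as indicated above, after which the $\tfrac1N\log$ of those factors tends to $0$.
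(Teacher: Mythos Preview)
Your proof is correct and follows essentially the same approach as the paper: the paper states that the proposition follows directly from Birkhoff's Ergodic Theorem applied to the already-established identity \eqref{cons of NulamN}, using the $L^1(m)$-integrability of $\log\lm_{\om,\ep}$ from \eqref{C2}. Your additional discussion of the residual factors simply re-justifies \eqref{cons of NulamN}, which the paper treats as already done.
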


We now begin to work toward an application of Theorem \ref{thm: GRPT} to random open systems.
The following implications are immediate: \eqref{C1}$\implies$\eqref{P1}, \eqref{C2}$\implies$\eqref{P2}, \eqref{C3}$\implies$\eqref{P3}, \eqref{C5}$\implies$\eqref{P4}, and in light of \eqref{eq: Dl = mu H} and \eqref{etaineq} we also have that \eqref{C6}$\implies$\eqref{P5} and \eqref{C7}$\implies$\eqref{P6}.
Thus, in order to ensure that Theorem~\ref{thm: GRPT} applies for the random open dynamical setting we need only to check assumptions \eqref{P7} and \eqref{P8}. We now prove the following lemma showing that \eqref{P7} and \eqref{P8} follow from assumptions \eqref{C1}-\eqref{C7}.

Recall that the set $\Om_+$, defined in \eqref{def Om+}, is the set of all fibers $\om$ such that $\mu_{\om,0}(H_{\om,\ep})>0$ for all $\ep>0$.
\begin{lemma}\label{lem: checking P7 and P8}
	Given a random open system $(\mathlist{\bcomma}{\Om, m, \sg, \cJ_0, T, \cB, \cL_0, \nu_0, \phi_0, H_\ep})$ satisfying conditions \eqref{C1}-\eqref{C7}, for $m$-a.e. $\om\in\Om_+$  we have that 
	\begin{align}\label{eq: lem check P7}
		\lim_{\ep \to 0}\nu_{\om,\ep}(\phi_{\om,0})=1
	\end{align}
	and 
	\begin{align}\label{eq: lem check P8}
		\lim_{N\to\infty}\limsup_{\ep \to 0}\Dl_{\om,\ep}^{-1} \nu_{\sg\om,0}\lt((\cL_{\om,0}-\cL_{\om,\ep})(Q_{\sg^{-N}\om,\ep}^N\phi_{\sg^{-N}\om,0})\rt)=0.
	\end{align}
\end{lemma}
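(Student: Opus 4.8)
The plan is to verify \eqref{eq: lem check P7} and \eqref{eq: lem check P8} by reducing both to the survivor-set identities \eqref{NulamN}--\eqref{eq: mu0 of n survivor} and the uniform-in-$\ep$ decay of the $Q$-operators supplied by \eqref{C4}, together with the bound $\lm_{\om,0}\geq\lm_{\om,\ep}$ from Remark~\ref{rem lm_0>lm_ep} and the identification $\Dl_{\om,\ep}=\lm_{\om,0}\mu_{\om,0}(H_{\om,\ep})$ from \eqref{eq: Dl = mu H}.

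For \eqref{eq: lem check P7}: start from \eqref{eq: mu0 of n survivor}, which gives
$\nu_{\om,\ep}(\phi_{\om,0})=(\lm_{\om,0}^N/\lm_{\om,\ep}^N)\,\mu_{\om,0}(X_{\om,N-1,\ep})-\nu_{\sg^N\om,0}(Q_{\om,\ep}^N(\phi_{\om,0}))$ for every fixed $N$. The strategy is to let $\ep\to0$ first, for fixed $N$. As $\ep\to0$ the hole shrinks (using \eqref{C6} and the fact that $\mu_{\om,0}\ll\nu_{\om,0}$ with bounded density $\phi_{\om,0}\leq C_2(\om)$), so $\mu_{\om,0}(X_{\om,N-1,\ep})\to\mu_{\om,0}(\cJ_{\om,0})=1$; moreover \eqref{limit of eigenvalues} applied along the finite orbit segment gives $\lm_{\om,\ep}^N\to\lm_{\om,0}^N$, so the first term tends to $1$. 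For the second term, \eqref{C4} (first line, applied at $\sg^{-N}(\sg^N\om)=\om$, reading forward) gives $\|Q_{\om,\ep}^N(\phi_{\om,0})\|_{\infty,\sg^N\om}\leq C_{\phi_0}(\om)\al_\om(N)\|\phi_{\om,0}\|_{\cB_\om}$ uniformly in $\ep$, hence $|\nu_{\sg^N\om,0}(Q_{\om,\ep}^N(\phi_{\om,0}))|\leq C_{\phi_0}(\om)\al_\om(N)C_2(\om)$ uniformly in $\ep$. Taking $\limsup_{\ep\to0}$ then $N\to\infty$ and using $\al_\om(N)\to0$ kills this term. Since the left side has no $N$-dependence, we conclude $\lim_{\ep\to0}\nu_{\om,\ep}(\phi_{\om,0})=1$.

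For \eqref{eq: lem check P8}: write $(\cL_{\om,0}-\cL_{\om,\ep})(\psi)=\cL_{\om,0}(\ind_{H_{\om,\ep}}\psi)$, so
\[
\nu_{\sg\om,0}\!\left((\cL_{\om,0}-\cL_{\om,\ep})(Q_{\sg^{-N}\om,\ep}^N\phi_{\sg^{-N}\om,0})\right)
=\lm_{\om,0}\,\nu_{\om,0}\!\left(\ind_{H_{\om,\ep}}\cdot Q_{\sg^{-N}\om,\ep}^N\phi_{\sg^{-N}\om,0}\right).
\]
Bound this in absolute value by $\lm_{\om,0}\,\nu_{\om,0}(H_{\om,\ep})\cdot\|Q_{\sg^{-N}\om,\ep}^N\phi_{\sg^{-N}\om,0}\|_{\infty,\om}$, and by the second line of \eqref{C4} the sup-norm factor is at most $C_{\phi_0}(\om)\al_\om(N)\|\phi_{\sg^{-N}\om,0}\|_{\cB_{\sg^{-N}\om}}$ uniformly in $\ep$. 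Thus the whole expression divided by $\Dl_{\om,\ep}=\lm_{\om,0}\mu_{\om,0}(H_{\om,\ep})$ is at most
\[
\frac{\nu_{\om,0}(H_{\om,\ep})}{\mu_{\om,0}(H_{\om,\ep})}\,C_{\phi_0}(\om)\al_\om(N)\,\|\phi_{\sg^{-N}\om,0}\|_{\cB_{\sg^{-N}\om}}.
\]
By \eqref{C7}, $\mu_{\om,0}(H_{\om,\ep})=\nu_{\om,0}(\ind_{H_{\om,\ep}}\phi_{\om,0})\geq C_3^{-1}(\om)\nu_{\om,0}(H_{\om,\ep})$, so the ratio $\nu_{\om,0}(H_{\om,\ep})/\mu_{\om,0}(H_{\om,\ep})\leq C_3(\om)$ uniformly in $\ep$. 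Taking $\limsup_{\ep\to0}$ leaves $C_3(\om)C_{\phi_0}(\om)\al_\om(N)\|\phi_{\sg^{-N}\om,0}\|_{\cB_{\sg^{-N}\om}}$, and then letting $N\to\infty$ this goes to $0$ — here I would either use directly that \eqref{C4} requires $\|\phi_{\sg^{-N}\om,0}\|_{\infty,\sg^{-N}\om}\al_\om(N)\to0$ (and pass from the $\cB$-norm to the sup-norm if the $\cB$-norm growth is controlled), or invoke a version of \eqref{C4} stated with $\cB$-norms; this matching of norms is the one place needing care. Modulo that bookkeeping, both limits follow.

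The main obstacle is the norm-compatibility issue in the last step of \eqref{eq: lem check P8}: \eqref{C4} as written controls $\|Q^N_{\sg^{-N}\om,\ep}f_{\sg^{-N}\om}\|_{\infty,\om}$ by $C_f(\om)\al_\om(N)\|f_{\sg^{-N}\om}\|_{\cB_{\sg^{-N}\om}}$, but the stated decay hypothesis is phrased with $\|\phi_{\sg^{-N}\om,0}\|_{\infty,\sg^{-N}\om}\al_\om(N)\to0$ rather than with the $\cB$-norm. So one needs the $\cB$-norm of $\phi_{\sg^{-N}\om,0}$ to not grow faster than $\al_\om(N)^{-1}$; this should follow from \eqref{C5} ($\|\phi_{\om,0}\|_{\cB_\om}\leq C_2(\om)$ with $C_2$ tempered, i.e.\ $\frac1N\log C_2(\sg^{-N}\om)\to0$ by the Birkhoff/Borel--Cantelli argument for tempered functions) combined with subexponential decay of $\al_\om(N)$. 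I would insert a short remark making this temperedness explicit, or alternatively restate \eqref{C4} directly in terms of $\cB$-norms; either way the estimate closes.
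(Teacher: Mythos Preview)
Your proof follows exactly the same route as the paper's: for \eqref{eq: lem check P7}, rearrange \eqref{eq: mu0 of n survivor} to isolate $\nu_{\om,\ep}(\phi_{\om,0})$, send $\ep\to0$ for fixed $N$ so that $(\lm_{\om,0}^N/\lm_{\om,\ep}^N)\mu_{\om,0}(X_{\om,N-1,\ep})\to1$, bound the $Q$-term via \eqref{C4}, then send $N\to\infty$; for \eqref{eq: lem check P8}, rewrite the numerator as $\lm_{\om,0}\nu_{\om,0}(\ind_{H_{\om,\ep}}Q_{\sg^{-N}\om,\ep}^N\phi_{\sg^{-N}\om,0})$, bound by $\nu_{\om,0}(H_{\om,\ep})$ times the sup-norm, and use \eqref{C7} to get $\nu_{\om,0}(H_{\om,\ep})/\mu_{\om,0}(H_{\om,\ep})\leq C_3(\om)$.

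The norm-matching concern you flag at the end is a genuine observation, and the paper is equally terse on it: after reaching $C_3(\om)\|Q_{\sg^{-N}\om,\ep}^N(\phi_{\sg^{-N}\om,0})\|_{\infty,\om}$ it simply writes ``the second claim follows from \eqref{C4}'' without spelling out how the factor $\|\phi_{\sg^{-N}\om,0}\|_{\cB_{\sg^{-N}\om}}$ appearing on the right of \eqref{C4} is controlled along the backward orbit. So you are being more scrupulous than the paper itself. One caution on your proposed fix: temperedness of $C_2$ only gives subexponential growth of $C_2(\sg^{-N}\om)$, so you would need \emph{exponential} (not subexponential) decay of $\al_\om(N)$ to close the argument that way. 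In the paper's concrete applications (Section~\ref{sec: existence}) the issue dissolves because the strengthened hypothesis \eqref{C5'} provides a uniform-in-$\om$ bound $\|\phi_{\om,0}\|_{\cB_\om}\leq C_2$, so $\al_\om(N)\|\phi_{\sg^{-N}\om,0}\|_{\cB_{\sg^{-N}\om}}\leq C_2\,\al_\om(N)\to0$ directly.
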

\begin{proof}
	First, using \eqref{eq: Dl = mu H}, we note that if $\mu_{\om,0}(H_{\om,\ep})>0$ then so is $\Dl_{\om,\ep}$.
	To prove \eqref{eq: lem check P7}, we note that for fixed $N\in\NN$ we have 
	\begin{align}\label{eq: lim ratio times measure is 1}
		\lim_{\ep \to 0}\frac{\lm_{\om,0}^N}{\lm_{\om,\ep}^N}\mu_{\om,0}(X_{\om,N-1,\ep})=1
	\end{align}
	since $\lm_{\om,0}^N/\lm_{\om,\ep}^N\to 1$ (by \eqref{conv eigenvalues unif}, \eqref{etaineq}, \eqref{C5}, and \eqref{C6}) and non-atomicity of $\nu_{\om,0}$ \eqref{CCM} together with \eqref{C3} imply that $\mu_{\om,0}(X_{\om,N-1,\ep})\to 1$ as $\ep\to 0$. 
	Using  \eqref{eq: mu0 of n survivor} we can write 
	\begin{align}\label{eq: solve for nu eps}
		\nu_{\om,\ep}(\phi_{\om,0})
		= \frac{\lm_{\om,0}^N}{\lm_{\om,\ep}^N}\mu_{\om,0}(X_{\om,N-1,\ep})
		- 
		\nu_{\sg^N\om,0}(Q_{\om,\ep}^N(\phi_{\om,0})),
	\end{align}
	and thus using \eqref{eq: lim ratio times measure is 1} and \eqref{C4}, 
	for each $\om\in\Om$ and each $N\in\NN$ we can write 
	\begin{align*}
		\lim_{\ep\to 0}\absval{1-\nu_{\om,\ep}(\phi_{\om,0})}
		&\leq
		\lim_{\ep\to 0}
		\absval{ 1-\frac{\lm_{\om,0}^N}{\lm_{\om,\ep}^N}\mu_{\om,0}(X_{\om,N-1,\ep})}
		+
		\norm{Q_{\om,\ep}^N(\phi_{\om,0})}_{\infty,\sg^N\om}
		\\
		&\leq
		C_{\phi_0}( \om)\al_\om(N)\norm{\phi_{\om,0}}_{\cB_\om}.
	\end{align*}
	As this holds for each $N\in\NN$ and as the right-hand side of the previous equation goes to zero as $N\to\infty$, we must in fact have that  
	\begin{align*}
		\lim_{\ep\to 0}\absval{1-\nu_{\om,\ep}(\phi_{\om,0})}=0,
	\end{align*}
	which yields the first claim.
	
	Now, for the second claim, using \eqref{eq: Dl = mu H}, we note that \eqref{C7} implies 
	\begin{align*}
		\Dl_{\om,\ep}^{-1} \nu_{\sg\om,0}\lt((\cL_{\om,0}-\cL_{\om,\ep})(Q_{\sg^{-N}\om,\ep}^N\phi_{\sg^{-N}\om})\rt)
		&=
		\frac{\nu_{\om,0}\lt(\ind_{H_{\om,\ep}}\cdot Q_{\sg^{-N}\om,\ep}^N(\phi_{\sg^{-N}\om,0}) \rt) }{\mu_{\om,0}(H_{\om,\ep})}
		\\
		&\leq
		\frac{\nu_{\om,0}(H_{\om,\ep})}{\mu_{\om,0}(H_{\om,\ep})}\norm{Q_{\sg^{-N}\om,\ep}^N(\phi_{\sg^{-N}\om,0})}_{\infty,\om}
		\\
		&\leq C_3(\om)\norm{Q_{\sg^{-N}\om,\ep}^N(\phi_{\sg^{-N}\om,0})}_{\infty,\om}.
	\end{align*}
	Thus, letting $\ep\to 0$ first and then $N\to\infty$, the second claim follows from \eqref{C4}.
	%\gf{and \eqref{C5} for upper bound on $\varphi$? how do we know this upper bound doesn't blow up along the backward orbit?}.
\end{proof}

%We now state the main result of this section we state the following open dynamical systems version of the general Theorem~\ref{thm: GRPT}. 
Now recall from \eqref{eq: def of theta_0} that if $\mu_{\om,0}(H_{\om,\ep})>0$ for each $\ep>0$, then $\ta_{\om,0}$ is given by 
\begin{align*}
	\ta_{\om,0}:=
	1-\sum_{k=0}^{\infty}(\lm_{\sg^{-(k+1)}\om,0}^{k+1})^{-1}q_{\om,0}^{(k)}
	=
	1-\sum_{k=0}^{\infty}\hat q_{\om,0}^{(k)}.
\end{align*}
In light of Lemma~\ref{lem: checking P7 and P8}, we see that \eqref{C1}-\eqref{C8} imply \eqref{P1}-\eqref{P9}, and thus we have the following Theorem and first main result of this section.

\begin{theorem}\label{thm: dynamics perturb thm}
	%\gf{Should say something about $m$-a.e.\ $\omega$ somewhere.}	
	Suppose that \eqref{C1}-\eqref{C7} hold for a random open system $(\mathlist{\bcomma}{\Om, m, \sg, \cJ_0, T, \cB, \cL_0, \nu_0, \phi_0, H_\ep})$. 
	For $m$-a.e. $\om\in\Om$ if there is some $\ep_0>0$ such that $\mu_{\om,0}(H_{\om,\ep})=0$ for $\ep\leq \ep_0$ then
	\begin{align*}
		\lm_{\om,0}=\lm_{\om,\ep},
	\end{align*}
	or if \eqref{C8} holds, then 
	%\ja{maybe we need ``for every $\om\in\Om$ with $\mu_{\om,0}(H_{\om,\ep})>0$''}
	\begin{align*}
		\lim_{\ep\to 0}
		\frac{\lm_{\om,0}-\lm_{\om,\ep}}{\lm_{\om,0}\mu_{\om,0}(H_{\om,\ep})}
		=\ta_{\om,0}.
	\end{align*}
	Furthermore, the map $\Om_+\ni\om\mapsto\ta_{\om,0}$ is measurable. 
\end{theorem}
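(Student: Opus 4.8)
The plan is to derive the first two assertions directly from Theorem~\ref{thm: GRPT} and then to prove the measurability statement separately, as it is the only genuinely new ingredient.

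\textbf{The derivative formula.} Recall the implications already noted before the statement: \eqref{C1}$\implies$\eqref{P1}, \eqref{C2}$\implies$\eqref{P2}, \eqref{C3}$\implies$\eqref{P3}, \eqref{C5}$\implies$\eqref{P4}, \eqref{C6}$\implies$\eqref{P5}, and (via \eqref{eq: Dl = mu H} and \eqref{etaineq}) \eqref{C7}$\implies$\eqref{P6}; Lemma~\ref{lem: checking P7 and P8} supplies \eqref{P7} and \eqref{P8}; and under \eqref{C8} the limits $q_{\om,0}^{(k)}=\lm_{\sg^{-(k+1)}\om,0}^{k+1}\hat q_{\om,0}^{(k)}$ exist, which is \eqref{P9}. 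Since $\mu_{\om,0}(H_{\om,\ep})$ is non-decreasing in $\ep$ by \eqref{A'}, a fibre $\om$ either lies in $\Om_+$ (so $\mu_{\om,0}(H_{\om,\ep})>0$ for all $\ep>0$) or there is $\ep_0>0$ with $\mu_{\om,0}(H_{\om,\ep})=0$ for $\ep\le\ep_0$. In the first case Theorem~\ref{thm: GRPT} applies on $\Om_+$, and substituting $\Dl_{\om,\ep}=\lm_{\om,0}\mu_{\om,0}(H_{\om,\ep})$ from \eqref{eq: Dl = mu H} together with $\ta_{\om,0}=1-\sum_{k\ge0}(\lm_{\sg^{-(k+1)}\om,0}^{k+1})^{-1}q_{\om,0}^{(k)}=1-\sum_{k\ge0}\hat q_{\om,0}^{(k)}$ gives exactly $\lim_{\ep\to0}(\lm_{\om,0}-\lm_{\om,\ep})/(\lm_{\om,0}\mu_{\om,0}(H_{\om,\ep}))=\ta_{\om,0}$. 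In the second case \eqref{C7} gives $\inf\phi_{\om,0}\ge C_3^{-1}(\om)>0$, so $\mu_{\om,0}(H_{\om,\ep})=0$ forces $\nu_{\om,0}(H_{\om,\ep})=0$, hence $\eta_{\om,\ep}=0$ by \eqref{etaineq}, and \eqref{conv eigenvalues unif} yields $\lm_{\om,0}=\lm_{\om,\ep}$ (this is also Remark~\ref{rem lm_0>lm_ep}).

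\textbf{Measurability of $\om\mapsto\ta_{\om,0}$ on $\Om_+$.} The set $\Om_+$ is measurable by the remark following \eqref{def Om+}. Since $\ta_{\om,0}=1-\sum_{k\ge0}\hat q_{\om,0}^{(k)}$ and, by Theorem~\ref{thm: GRPT}, this series converges for $m$-a.e.\ $\om\in\Om_+$, it suffices to show each $\om\mapsto\hat q_{\om,0}^{(k)}$ is measurable and then pass to the limit of the (finite, hence measurable) partial sums. Fix $k\ge0$ and $\ep>0$. Let $B_{\ep,k}\sub\cJ_0$ be the set of $(\om',x)$ with $x\in H_{\om',\ep}$, $T_{\om'}^{i}(x)\notin H_{\sg^{i}\om',\ep}$ for $1\le i\le k$, and $T_{\om'}^{k+1}(x)\in H_{\sg^{k+1}\om',\ep}$; iterating \eqref{M1} shows $T^i:\cJ_0\to\cJ_0$ is $\sF\otimes\sB$-measurable, so $B_{\ep,k}=(T^{k+1})^{-1}(H_\ep)\cap H_\ep\cap\bigcap_{i=1}^{k}\big(\cJ_0\bs(T^i)^{-1}(H_\ep)\big)\in\sF\otimes\sB$. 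By construction the $\om$-fibre of $B_{\ep,k}$ over the base point $\sg^{-(k+1)}\om$ is exactly the numerator set in \eqref{def of hat q}, and by $T$-invariance of $\mu_0$ the denominator in \eqref{def of hat q} equals $\mu_{\om,0}(H_{\om,\ep})$, which is positive on $\Om_+$. Proposition~\ref{prop: random measure equiv} (equivalently the measurability requirement on the random measure $\mu_0$ in Definition~\ref{def: random prob measures}) shows $\om'\mapsto\int\ind_{B_{\ep,k}}(\om',\cdot)\,d\mu_{\om',0}$ is measurable, and composing with the measurable map $\om\mapsto\sg^{-(k+1)}\om$ shows numerator and denominator, hence $\om\mapsto\hat q_{\om,\ep}^{(k)}$, are measurable on $\Om_+$. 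By \eqref{C8} the limit in $\ep$ exists, so $\hat q_{\om,0}^{(k)}=\lim_{n\to\infty}\hat q_{\om,1/n}^{(k)}$ is a pointwise limit of measurable functions; finally $\ta_{\om,0}=1-\lim_{K\to\infty}\sum_{k=0}^{K}\hat q_{\om,0}^{(k)}$ is measurable on the full-measure subset of $\Om_+$ where the series converges.

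\textbf{Main obstacle.} The routine part is the appeal to Theorem~\ref{thm: GRPT} and Lemma~\ref{lem: checking P7 and P8}; the convergence of the series defining $\ta_{\om,0}$ comes for free as part of the conclusion of Theorem~\ref{thm: GRPT}. The only point requiring care is the measurability of $\om\mapsto\hat q_{\om,\ep}^{(k)}$: one must track how the nested survival/return sets in \eqref{def of hat q} depend on $\om$ and reduce this to $\sF\otimes\sB$-measurability of $H_\ep$ and of the skew-product iterates $T^i$, together with the measurability of the random conformal measure $\mu_0$ and of $\sg$. Once this is set up cleanly via the global set $B_{\ep,k}\in\sF\otimes\sB$, the conclusion is immediate.
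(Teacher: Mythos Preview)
Your proof is correct and follows essentially the same approach as the paper: the first two assertions are obtained by verifying that \eqref{C1}--\eqref{C8} (together with Lemma~\ref{lem: checking P7 and P8}) imply \eqref{P1}--\eqref{P9} and then invoking Theorem~\ref{thm: GRPT}, while the measurability of $\ta_{\om,0}$ is deduced from its construction as a limit of measurable quantities. The paper's own proof is a single sentence referring to ``its construction as a limit of measurable objects''; your global-set argument via $B_{\ep,k}\in\sF\otimes\sB$ and Proposition~\ref{prop: random measure equiv} correctly unpacks this and is a good explicit justification of what the paper leaves implicit.
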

\begin{proof}
	All statements follow directly from Theorem \ref{thm: GRPT}, except measurability of $\theta_{\omega,0}$, which follows from its construction as a limit of measurable objects.
	%\gf{See earlier comments about measurability of $\varphi$, etc...}\ja{fixed}
\end{proof}
%\gf{We will also need measurability/integrability of $\theta_{\omega,0}$ in section 6, so I think we should at least mention measurability of $\theta_{\omega,0}$ in section 5.}
\begin{remark}
	As Theorem \ref{thm: GRPT} does not require any measurability, one could restate
	Theorem~\ref{thm: dynamics perturb thm} to hold for sequential open systems satisfying the sequential versions of hypotheses \eqref{C1}-\eqref{C8}.
\end{remark}
The following lemma will be useful for bounding $\ta_{\om,0}$.

\begin{lemma}
	\label{qbound2}
	Suppose $m(\Om\bs\Om_+)=0$. 
	Then for each $\ep>0$ and $m$-a.e. $\om\in\Om$ we have
	\begin{align*}
		\sum_{k=0}^\infty (\lm_{\sg^{-(k+1)}\om,0}^{k+1})^{-1}q_{\om,\ep}^{(k)}=\sum_{k=0}^\infty \hat q^{(k)}_{\om,\ep}= 1.	
	\end{align*}
\end{lemma}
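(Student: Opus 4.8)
The plan is to strip off the (trivial) first equality, rewrite the numerators of the $\hat q_{\om,\ep}^{(k)}$ as measures on a single shifted fibre, reinterpret them as probabilities for the backward Markov chain attached to the conformal data, and then read off the required summation from Poincar\'e recurrence on the natural extension.

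\emph{Reduction.} The first equality is immediate from $\hat q_{\om,\ep}^{(k)}=(\lm_{\sg^{-(k+1)}\om,0}^{k+1})^{-1}q_{\om,\ep}^{(k)}$, established right after \eqref{def of hat q}. For the second, fix $\ep>0$, write $\om':=\sg^{-(k+1)}\om$, and let
\[
	E_k=E_k(\om):=T_{\om'}^{-(k+1)}(H_{\om,\ep})\cap\Big(\bigcap_{j=1}^k T_{\om'}^{-(k+1)+j}(H_{\sg^{-j}\om,\ep}^c)\Big)\cap H_{\om',\ep}\sub\cJ_{\om',0},
\]
so the numerator of $\hat q_{\om,\ep}^{(k)}$ is $\mu_{\om',0}(E_k)$. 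Iterating the $T$-invariance \eqref{eq: mu_om,0 T invar} of $\mu_0$ gives $\mu_{\om',0}\big(T_{\om'}^{-(k+1)}(H_{\om,\ep})\big)=\mu_{\om,0}(H_{\om,\ep})$, so the denominator of $\hat q_{\om,\ep}^{(k)}$ equals $\mu_{\om,0}(H_{\om,\ep})$ for \emph{every} $k$, and this is positive because $\om\in\Om_+$, a full-measure set by hypothesis. Thus it suffices to prove $\sum_{k\ge0}\mu_{\sg^{-(k+1)}\om,0}(E_k)=\mu_{\om,0}(H_{\om,\ep})$ for $m$-a.e.\ $\om$.

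\emph{A backward chain.} The relations $\cL_{\om,0}\phi_{\om,0}=\lm_{\om,0}\phi_{\sg\om,0}$ and $\nu_{\sg\om,0}\circ\cL_{\om,0}=\lm_{\om,0}\nu_{\om,0}$ show that for $x\in\cJ_{\sg\om,0}$ the weights $y\mapsto \phi_{\om,0}(y)g_{\om,0}(y)/(\lm_{\om,0}\phi_{\sg\om,0}(x))$, $y\in T_\om^{-1}(x)$, form a probability vector, and (integrating against $d\mu_{\sg\om,0}$) that they are exactly the disintegration kernels of $\mu_{\om,0}$ over $T_\om$ with respect to $\mu_{\sg\om,0}$. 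Composing these kernels produces, for $m$-a.e.\ $\om$ and each $x\in\cJ_{\om,0}$, a probability measure $\PP^\om_x$ on backward trajectories $(x=x_0,x_{-1},x_{-2},\dots)$ with $x_{-n}\in\cJ_{\sg^{-n}\om,0}$ and $T_{\sg^{-n-1}\om}(x_{-n-1})=x_{-n}$, and the marginal of $x_{-(k+1)}$ under $x_0\sim\mu_{\om,0}$ followed by $\PP^\om_{x_0}$ is $\mu_{\sg^{-(k+1)}\om,0}$. Since $x_{-(k+1)}$ determines $x_{-k},\dots,x_0$ by forward iteration, a short index check identifies $\{x_{-(k+1)}\in E_k\}$ with $\{x_0\in H_{\om,\ep},\ x_{-i}\notin H_{\sg^{-i}\om,\ep}\ (1\le i\le k),\ x_{-(k+1)}\in H_{\sg^{-(k+1)}\om,\ep}\}$, giving
\[
	\mu_{\sg^{-(k+1)}\om,0}(E_k)=\int_{H_{\om,\ep}}\PP^\om_{x}\big(\tau_x=k+1\big)\,d\mu_{\om,0}(x),\qquad \tau_x:=\inf\{n\ge1:\ x_{-n}\in H_{\sg^{-n}\om,\ep}\}.
\]
As the events $\{\tau_x=k+1\}_{k\ge0}$ are pairwise disjoint with union $\{\tau_x<\infty\}$, summing and using monotone convergence reduces everything to showing that, for $m$-a.e.\ $\om$, $\PP^\om_x(\tau_x<\infty)=1$ for $\mu_{\om,0}$-a.e.\ $x\in H_{\om,\ep}$.

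\emph{Recurrence.} Let $(\hat\cJ,\hat T,\hat\mu)$ be the natural extension of the $\mu_0$-preserving skew product $(\cJ_0,T,\mu_0)$; by construction its present-coordinate projection $\pi_0$ pushes $\hat\mu$ to $\mu_0$ and the conditional measures of $\hat\mu$ over $\pi_0$ are precisely the $\PP^\om_x$. Writing $\hat H_\ep:=\pi_0^{-1}(H_\ep)$, one has $\hat T^{-n}(\hat x)\in\hat H_\ep\iff x_{-n}\in H_{\sg^{-n}\om,\ep}$, so
\[
	\int_\Om\!\int_{H_{\om,\ep}}\!\big(1-\PP^\om_x(\tau_x<\infty)\big)\,d\mu_{\om,0}(x)\,dm(\om)=\hat\mu\Big(\hat H_\ep\setminus\bigcup_{n\ge1}\hat T^{-n}(\hat H_\ep)\Big),
\]
and the right-hand side vanishes by Poincar\'e's recurrence theorem applied to the invertible, $\hat\mu$-preserving map $\hat T^{-1}$ and the set $\hat H_\ep$. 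Hence $1-\PP^\om_x(\tau_x<\infty)=0$ for $m$-a.e.\ $\om$ and $\mu_{\om,0}$-a.e.\ $x\in H_{\om,\ep}$, which is exactly what was needed, and the proof is complete. The step that will need the most care is the middle one: checking that $\PP^\om_x$ is well defined, that it is the disintegration of $\hat\mu$, and that the index shifts in the definition of $E_k$ match "$\tau_x=k+1$"; the recurrence input itself is standard.
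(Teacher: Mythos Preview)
Your proof is correct, but the route differs from the paper's. The paper avoids the natural extension altogether: it applies Poincar\'e recurrence to the \emph{forward} skew product $(\cJ_0,T,\mu_0)$ and the hole $H_\ep$. Writing $B_k=\{(\om,x)\in H_\ep:\tau_{H_\ep}(\om,x)=k+1\}$ for the forward first-return sets, one checks that the numerator of $\hat q_{\om,\ep}^{(k)}$ equals $\mu_{\sg^{-(k+1)}\om,0}(B_{k,\sg^{-(k+1)}\om})$. A change of variables via the $\sg$-invariance of $m$ then gives
\[
\int_\Om \hat q_{\om,\ep}^{(k)}\,\mu_{\om,0}(H_{\om,\ep})\,dm(\om)=\mu_0(B_k),
\]
and summing over $k$ together with Poincar\'e recurrence yields $\int_\Om\big(\sum_k\hat q_{\om,\ep}^{(k)}-1\big)\mu_{\om,0}(H_{\om,\ep})\,dm=0$. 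The a.e.\ conclusion is then extracted from this integral identity using the separate observation that $\sum_k\hat q_{\om,\ep}^{(k)}\le1$ pointwise.

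Your approach builds the natural extension through explicit backward kernels and applies recurrence to $\hat T^{-1}$; this gives the fibrewise equality directly, and the pointwise bound $\sum_k\hat q_{\om,\ep}^{(k)}\le1$ falls out automatically from the disjointness of $\{\tau_x=k+1\}$. The cost is the construction itself (well-definedness of $\PP^\om_x$ and its identification with the disintegration of $\hat\mu$), which you rightly flag as the delicate step. The paper's argument is shorter and stays on the original non-invertible system; yours is more self-contained on each fibre.
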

\begin{proof}
	Recall from Section~\ref{sec: open systems} that the hole $H_\ep\sub\cJ_0$ is given by  
	$$
	H_\ep:=\bigcup_{\om\in \Om}\set{\om}\times H_{\om,\ep},
	$$
	and the skew map $T:\cJ_0\to\cJ_0$ is given by
	$$
	T(\om, x)=(\sg\om, T_{\om}(x)).
	$$
	Let $\tau_{H_\ep}(\om,x)$ denote the first return time of the point $(\om, x)\in H_\ep$ into $H_\ep.$  For each $k\geq 0$ let $B_k:=\set{(\om,x)\in H_\ep: \tau_{H_\ep}(\om,x)=k+1}$ be the set of points $(\om,x)$ that remain outside of the hole $H_\ep$ for exactly $k$ iterates, and for each $\om\in\Om$ we set $B_{k,\om}:=\pi_2(B_k\cap\set{\om}\times\cJ_{\om,0})$. Then $B_{k,\om}$ is precisely the set 
	$$
	B_{k,\om}=\set{x\in H_{\om,\ep}: T_\om^{k+1}(x)\in H_{\sg^{k+1}\om,\ep} \text{ and } T_\om^j(x)\not\in H_{\sg^j\om,\ep} \text{ for all } 1\leq j\leq k}.
	$$
	For each $k\geq 0$ we can disintegrate $\mu_0$ as
	\begin{align*}
	    \mu_0\left(B_k\right)
	    &=\int_\Om\mu_{\om,0}(B_{k,\om})\ dm(\om),
	    %\\
	    %&=\int_\Om\ind_{\sg^{-(k+1)}\Om_{+,\ep}}\mu_{\sg^{-(k+1)}\om,0}(B_{k,\sg^{-(k+1)}\om})\ dm(\om),
	\end{align*}
	where the last equality follows from the $\sg$-invariance $m$.
	In particular, for fixed $\om\in\Om_+$ and each $k\geq 0$ we have
	\begin{align}\label{qsum1}
		&\mu_{\sg^{-(k+1)}\om,0}(B_{k,\sg^{-(k+1)}\om})
		%		\nonumber\\
		%		&\qquad
		%	=
		%	\mu_{\sg^{-(k+1)}\om,0}\left(
		%	\set{x\in H_{\sg^{-(k+1)}\om,\ep}: T_{\sg^{-(k+1)}\om}^j(x)\in H_{\sg^{-(k+1)+j}\om,\ep}^c, \text{ for }1\leq j\leq k 	\text{ and }	T^{k+1}_{\sg^{-(k+1)}\om}(x)\in H_{\om,\ep}}\right).
		=
		\mu_{\sg^{-(k+1)}\om,0}\left(
		T_{\sg^{-(k+1)}\om}^{-(k+1)}(H_{\om,\ep})
		\cap\left(\bigcap_{j=1}^k T_{\sg^{-(k+1)}\om}^{-(k+1)+j} (H_{\sg^{-j}\om,\ep}^c)\right)
		\cap H_{\sg^{-(k+1)}\om,\ep}
		\right).
	\end{align}
	Recall from \eqref{def of hat q} that $\hat q_{\om,\ep}^{(k)}$ is given by
	\begin{align*}
		\hat q_{\om,\ep}^{(k)}:=
		\frac{\mu_{\sg^{-(k+1)}\om,0}\left(
			T_{\sg^{-(k+1)}\om}^{-(k+1)}(H_{\om,\ep})
			\cap\left(\bigcap_{j=1}^k T_{\sg^{-(k+1)}\om}^{-(k+1)+j} (H_{\sg^{-j}\om,\ep}^c)\right)
			\cap H_{\sg^{-(k+1)}\om,\ep}
			\right)
		}
		{\mu_{\sg^{-(k+1)}\om,0}\left(
			T_{\sg^{-(k+1)}\om}^{-(k+1)}(H_{\om,\ep})\right)},
	\end{align*}
	which is well defined since $\om\in\Om_+$ by assumption. Note that since $\sum_{k=0}^\infty\mu_{\sg^{-(k+1)}\om,0}(B_{k,\sg^{-(k+1)}\om})\leq \mu_{\om,0}(H_{\om,\ep})$ we must have that $\sum_{k=0}^{\infty}\hat{q}^{(k)}_{\om,\ep}\leq1$. 
	Now, as the measure $\mu_0$ is $T$-invariant, the right-hand side of \eqref{qsum1} is equal to 
	%$$
	%\mu_{\om,0}\left(\set{x\in \cJ_{\om,0}: x\in H_{\om,\ep}, T_{\om}(x)\in H_{\sg\om,\ep}^c, \cdots, T^k_{\om}(x)\in H_{\sg^k\om,\ep}^c, T^{k+1}_{\om}(x)\in H_{\sg^{k+1}\om,\ep}}\right)
	$\hat{q}^{(k)}_{\om,\ep}\mu_{\om,0}(H_{\om,\ep})$, and therefore 
%	$$
%	    \ind_{\sg^{-(k+1)}\Om_{+,\ep}}\mu_{\sg^{-(k+1)}\om,0}(B_{k,\sg^{-(k+1)}\om})
%	    =
%	    \ind_{\Om_{+,\ep}}\hat{q}^{(k)}_{\om,\ep} \mu_{\om,0}(H_{\om,\ep}).
%	$$
%	Consequently, we have
	\begin{align}\label{B_k measure}
	    \int_\Om \hat{q}^{(k)}_{\om,\ep}\ \mu_{\om,0}(H_{\om,\ep})\ dm(\om)
	    =\int_\Om \mu_{\om,0}(B_{k,\om})\ dm(\om)
	    =\mu_0(B_k).
	\end{align}
	%see definition  displayed in section 5.1 of URP: 
	%The last equality follows from the equivariance property of the invariant random measure $\set{\mu_{\om,0}}_{\om\in\Om}.$ 
	By the Poincar\'e recurrence theorem we have
	$$
	\sum_{k=0}^{\infty}\mu_0(B_k)=\mu_0(H_\ep)=\int_\Om\mu_{\om,0}(H_{\om,\ep})\, dm(\om).
	$$
	%\sv{This must be from $k=0$.}
	By interchanging the sum with the integral above (possible by Tonelli's Theorem) and using \eqref{B_k measure}, we have
	$$
	\int_\Om \left(\sum_{k=0}^{\infty}\hat{q}^{(k)}_{\om,\ep}\ \mu_{\om,0}(H_{\om,\ep})\right)\ dm(\om)
	=
	\int_\Om \mu_{\om,0}(H_{\om,\ep})\ dm(\om),
	$$
	which implies
	$$
	\int_\Om \lt(\mu_{\om,0}(H_{\om,\ep})\left(\sum_{k=0}^{\infty}\hat{q}^{(k)}_{\om,\ep}-1\right)\rt) dm(\om)=0.
	$$
	Since we already have that $\sum_{k=0}^{\infty}\hat{q}^{(k)}_{\om,\ep}\leq1$ for $m$-a.e. $\om\in\Om$, we must in fact have $\sum_{k=0}^{\infty}\hat{q}^{(k)}_{\om,\ep}=1$, which completes the proof.
	    %\sv{In the two sums for $\hat{q}^{(k)}_{\om,\ep},$ $k$ should starts from $0.$}
\end{proof}
By definition, we have that $\hat q_{\om,\ep}^{(k)}\in[0,1]$ and thus \eqref{C8} implies that $\hat q_{\om,0}^{(k)}\in[0,1]$ for each $k\geq 0$ as well. In light of the previous lemma, dominated convergence implies that  
\begin{align}\label{theta in [0,1]}
	\ta_{\om,0}\in[0,1].
\end{align}

\subsection{Escape Rate Asymptotics}

%\gf{Particularly regarding (\ref{bound inv meas1}) there is the issue of checking the condition. At this stage it is impossible because it is well before we impose any uniformity in $\omega$.  I wonder whether it is better to delay this material so we approximately tighten conditions as the paper proceeds.}
%\ja{We haven't required any uniform assumptions up to this point but everything in the next Corollary and Remark is less uniform than what we assume in the EVT section so it is in the appropriate place as far as our spectrum of generality so this material could come at the end of this section or even as its own short section. }
%The one change that I think we should make is to put Lemma \ref{qbound2} before the escape rate derivative stuff as Lemma \ref{qbound2} has no uniform assumptions.}

If we have some additional $\om$-control on the size of the holes we obtain the following corollary of Theorem~\ref{thm: dynamics perturb thm}, which provides a formula for the escape rate asymptotics for small random holes.
The scaling of the holes (\ref{EVT style cond}) takes a similar form to %(but is more general than) 
the scaling we will shortly use in the next section for our quenched extreme value theory.
%\gf{Thanks for fixing all this yesterday. I shortened the Cor statement by combining two $\mu$ display equations. I added that $\xi\to 0$.  I think we need more than what is asked of $\xi$ below, namely additionally for all small $\ep$, $|\xi_{\om,\ep}|\le B(\omega)\in L^\infty(m)$;  this is for the final DCT application, otherwise things could blow up for small $\ep$.  This is still slightly weaker than the EVT condition on $\xi$ (where $\xi\to 0$ uniformly in $\om$).}

\begin{corollary}\label{esc rat cor}
	Suppose that \eqref{C1}--\eqref{C8} hold for a random open system $(\mathlist{\bcomma}{\Om, m, \sg, \cJ_0, T, \cB, \cL_0, \nu_0, \phi_0, H_\ep})$ and there exists $A\in L^1(m)$ such that for $m$-a.e. $\om\in\Om$ and all $\ep>0$ sufficiently small we have 
	\begin{align}\label{DCT cond 1*}
		\frac{\log\lm_{\om,0}-\log\lm_{\om,\ep}}{\mu_{\om,0}(H_{\om,\ep})}\leq A(\om).
	\end{align}
	Further suppose that there is some $\kappa(\ep)$ with $\kp(\ep)\to\infty$ as $\ep\to 0$, $t\in L^\infty(m)$ with $t_\om>0$, and $\xi_{\om,\ep}\in L^\infty(m)$ with $\xi_{\om,\ep}\to 0$ as $\ep\to 0$ for $m$-a.e. $\om\in\Om$ such that
	%\begin{align*}
	%    \kp(\ep)\mu_{\om,0}(H_{\om,\ep})t_%\om \quad \text{ as }\ep\to 0.
	%\end{align*}
	%Further assume for each $\ep>0$ there exists  $\xi_{\om,\ep}\in L^\infty(m)$ \gf{do we need this and the previous condition (both?)?  Can't we do as in EVT and say we have (4.13) with $\xi\to 0$ (which seems missing)} such that 
	\begin{align}\label{EVT style cond}
		\mu_{\om,0}(H_{\om,\ep})=\frac{t_\om+\xi_{\om,\ep}}{\kp(\ep)}
	\end{align}
	    %\sv{ I do not see where the measure $\mu_{\om,\ep}(H_{\om,\ep})$ intervenes in the proof. I only see $\mu_{\om,0}(H_{\om,\ep})$} 
	and $|\xi_{\om,\ep}|<Ct_\om$ for all $\ep>0$ sufficiently small for some $C>0$.
	Then for $m$-a.e. $\om\in\Om$ we have
	\begin{align*}
		\lim_{\ep\to 0}\frac{R_\ep(\mu_{\om,0})}{\mu_{\om,0}(H_{\om,\ep})}
		=
		\frac{\int_\Om \ta_{\om,0}t_\om\, dm(\om)}{t_\om}.
	\end{align*}
	In particular, if $t_\om$ is constant $m$-a.e. then 
	\begin{align*}
		\lim_{\ep\to 0}\frac{R_\ep(\mu_{\om,0})}{\mu_{\om,0}(H_{\om,\ep})}
		=
		\int_\Om \ta_{\om,0}\, dm(\om).
	\end{align*}
\end{corollary}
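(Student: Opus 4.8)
The plan is to reduce the statement to Proposition~\ref{prop: escape rates} and Theorem~\ref{thm: dynamics perturb thm} and then pass a limit inside an integral. By Proposition~\ref{prop: escape rates}, for $m$-a.e.\ $\om$ and all small $\ep>0$ the escape rate $R_\ep(\mu_{\om,0})$ is independent of $\om$ and equals $\int_\Om(\log\lm_{\om',0}-\log\lm_{\om',\ep})\,dm(\om')$. Substituting $\mu_{\om,0}(H_{\om,\ep})=(t_\om+\xi_{\om,\ep})/\kp(\ep)$ from \eqref{EVT style cond} in the denominator, and using the same identity in the form $\kp(\ep)\mu_{\om',0}(H_{\om',\ep})=t_{\om'}+\xi_{\om',\ep}$ to move the scalar $\kp(\ep)$ inside the integral, one obtains the identity
\[
\frac{R_\ep(\mu_{\om,0})}{\mu_{\om,0}(H_{\om,\ep})}
=\frac{1}{t_\om+\xi_{\om,\ep}}\int_\Om \frac{\log\lm_{\om',0}-\log\lm_{\om',\ep}}{\mu_{\om',0}(H_{\om',\ep})}\,\bigl(t_{\om'}+\xi_{\om',\ep}\bigr)\,dm(\om').
\]

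Next I would control the integrand. Since $\lm_{\om,\ep}\to\lm_{\om,0}>0$ by \eqref{limit of eigenvalues} and $\mu_{\om,0}(H_{\om,\ep})\to0$ (which follows from \eqref{EVT style cond}, as $\kp(\ep)\to\infty$ while $t_\om$ and $\xi_{\om,\ep}$ stay uniformly bounded, or alternatively from \eqref{C6}), writing $\log\lm_{\om,0}-\log\lm_{\om,\ep}=-\log\bigl(1-(\lm_{\om,0}-\lm_{\om,\ep})/\lm_{\om,0}\bigr)=\tfrac{\lm_{\om,0}-\lm_{\om,\ep}}{\lm_{\om,0}}\bigl(1+o(1)\bigr)$ and invoking Theorem~\ref{thm: dynamics perturb thm} gives
\[
\lim_{\ep\to0}\frac{\log\lm_{\om,0}-\log\lm_{\om,\ep}}{\mu_{\om,0}(H_{\om,\ep})}=\ta_{\om,0}\qquad\text{for }m\text{-a.e.\ }\om .
\]
Together with $\xi_{\om,\ep}\to0$ a.e., the integrand converges pointwise to $\ta_{\om',0}\,t_{\om'}$. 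For domination, \eqref{DCT cond 1*} bounds the first factor by $A(\om')\in L^1(m)$, while $0\le t_{\om'}+\xi_{\om',\ep}\le(1+C)\,\operatorname{ess\,sup}_m t$ by the hypothesis $|\xi_{\om,\ep}|<Ct_\om$ and $t\in L^\infty(m)$ (the lower bound uses $\mu_{\om',0}(H_{\om',\ep})\ge0$, and $\log\lm_{\om',0}\ge\log\lm_{\om',\ep}$ by Remark~\ref{rem lm_0>lm_ep}); hence the integrand is dominated by the fixed $L^1(m)$ function $(1+C)\,(\operatorname{ess\,sup}_m t)\,A(\om')$.

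Applying the dominated convergence theorem to the integral and letting $\xi_{\om,\ep}\to0$ in the prefactor, the displayed identity yields
\[
\lim_{\ep\to0}\frac{R_\ep(\mu_{\om,0})}{\mu_{\om,0}(H_{\om,\ep})}=\frac{\int_\Om\ta_{\om',0}\,t_{\om'}\,dm(\om')}{t_\om}
\]
for $m$-a.e.\ $\om$, which is the asserted formula; when $t_\om\equiv c$ is constant $m$-a.e.\ the constant cancels and the limit equals $\int_\Om\ta_{\om',0}\,dm(\om')$, proving the final statement. The two points requiring a little care are: checking that Theorem~\ref{thm: dynamics perturb thm} delivers the derivative formula rather than its degenerate alternative on the relevant set of $\om$ — here \eqref{EVT style cond} with $t_\om>0$ and $\xi_{\om,\ep}\to0$ forces $\mu_{\om,0}(H_{\om,\ep})>0$ for all small $\ep$, so we are in the nondegenerate case — and justifying the interchange of limit and integral, i.e.\ producing the $L^1(m)$ dominating function, which is exactly the role of \eqref{DCT cond 1*} together with $|\xi_{\om,\ep}|<Ct_\om$. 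I expect this second point to be the main (if modest) obstacle.
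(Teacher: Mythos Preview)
Your proof is correct and follows essentially the same strategy as the paper's. The only cosmetic difference is that you invoke Proposition~\ref{prop: escape rates} directly to write $R_\ep(\mu_{\om,0})$ as an integral and then insert the factor $\kp(\ep)\mu_{\om',0}(H_{\om',\ep})=t_{\om'}+\xi_{\om',\ep}$, whereas the paper first expands $R_\ep(\mu_{\om,0})$ as a Birkhoff average $\tfrac{1}{N}\sum_{j=0}^{N-1}(\log\lm_{\sg^j\om,0}-\log\lm_{\sg^j\om,\ep})$, inserts the ratio $\mu_{\sg^j\om,0}(H_{\sg^j\om,\ep})/\mu_{\om,0}(H_{\om,\ep})=(t_{\sg^j\om}+\xi_{\sg^j\om,\ep})/(t_\om+\xi_{\om,\ep})$ termwise, and then applies Birkhoff's theorem to arrive at the same integral expression; from there both proofs invoke Theorem~\ref{thm: dynamics perturb thm} for the pointwise limit and \eqref{DCT cond 1*} for domination in exactly the same way.
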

\begin{proof}
    First, we note that \eqref{EVT style cond} implies that $\mu_{\om,0}(H_{\om,\ep})>0$ for $m$-a.e. $\om\in\Om$ and all $\ep>0$ sufficiently small, i.e. $m(\Om\bs\Om_+)=0$. 
	Using Theorem \ref{thm: dynamics perturb thm} we have that
	\begin{align}\label{fiber esc rat deriv}
		\lim_{\ep\to 0}\frac{\log\lm_{\om,0}-\log\lm_{\om,\ep}}{\mu_{\om,0}(H_{\om,\ep})}=\ta_{\om,0}
	\end{align}
	since
	\begin{align*}
		\frac{\log\lm_{\om,0}-\log\lm_{\om,\ep}}{\mu_{\om,0}(H_{\om,\ep})}
		&=
		\frac{\log\lm_{\om,0}-\log\lm_{\om,\ep}}{\lm_{\om,0}-\lm_{\om,\ep}}
		\cdot
		\frac{\lm_{\om,0}-\lm_{\om,\ep}}{\mu_{\om,0}(H_{\om,\ep})}
		\lra
		\frac{1}{\lm_{\om,0}}\cdot\lm_{\om,0}\ta_{\om,0}=\ta_{\om,0}
	\end{align*}
	as $\ep\to 0$. 
	In light of \eqref{cons of NulamN} and (\ref{eq: lem escape rate}), we use \eqref{EVT style cond} to get that 
	\begin{align*}
		\lim_{\ep\to 0}\frac{R_\ep(\mu_{\om,0})}{\mu_{\om,0}(H_{\om,\ep})}
		&=
		\lim_{\ep\to 0}\lim_{N\to\infty}\frac{1}{N}\frac{\log\lm_{\om,0}^N-\log\lm_{\om,\ep}^N}{\mu_{\om,0}(H_{\om,\ep})}
		%\nonumber
		\\
		&=
		\lim_{\ep\to 0}\lim_{N\to\infty}\frac{1}{N}\sum_{j=0}^{N-1}\frac{\log\lm_{\sg^j\om,0}-\log\lm_{\sg^j\om,\ep}}{\mu_{\sg^j\om,0}(H_{\sg^j\om,\ep})}\cdot\frac{\mu_{\sg^j\om,0}(H_{\sg^j\om,\ep})}{\mu_{\om,0}(H_{\om,\ep})}
		%\label{esc rate 2 lim*}
		\\
		&=
		\lim_{\ep\to 0}\lim_{N\to\infty}\frac{1}{N}\sum_{j=0}^{N-1}\frac{\log\lm_{\sg^j\om,0}-\log\lm_{\sg^j\om,\ep}}{\mu_{\sg^j\om,0}(H_{\sg^j\om,\ep})}\cdot\frac{t_{\sg^j\om}+\xi_{\sg^j\om,\ep}}{t_{\om}+\xi_{\om,\ep}}
		\\
		&=
		\lim_{\ep\to 0}\frac{1}{t_\om+\xi_{\om,\ep}}\lim_{N\to\infty}\frac{1}{N}\sum_{j=0}^{N-1}\frac{\log\lm_{\sg^j\om,0}-\log\lm_{\sg^j\om,\ep}}{\mu_{\sg^j\om,0}(H_{\sg^j\om,\ep})}\cdot (t_{\sg^j\om}+\xi_{\sg^j\om,\ep})
		\\
		&=
		\lim_{\ep\to 0}\frac{1}{t_\om+\xi_{\om,\ep}}\int_\Om\frac{\log\lm_{\om,0}-\log\lm_{\om,\ep}}{\mu_{\om,0}(H_{\om,\ep})}\cdot (t_{\om}+\xi_{\om,\ep})\, dm(\om),
	\end{align*}
	   % \ja{Do we need $\mu_{\om,0}(H_{\om,\ep})>0$ for a.e. $\om$?}\sv{I think this is already included in (4.11).}
	where the last line follows from Birkhoff (which is applicable thanks to \eqref{DCT cond 1*} and the fact that $t_\om,\xi_{\om,\ep}\in L^\infty(m)$).
	As $\xi_{\om,\ep}\to 0$ by assumption, \eqref{DCT cond 1*} allows us to apply Dominated Convergence which, in view of \eqref{fiber esc rat deriv}, implies
	\begin{align*}
		\lim_{\ep\to 0}\frac{R_\ep(\mu_{\om,0})}{\mu_{\om,0}(H_{\om,\ep})}
		=
		\frac{1}{t_\om}\lim_{\ep\to 0}\int_\Om \frac{\log\lm_{\om,0}-\log\lm_{\om,\ep}}{\mu_{\om,0}(H_{\om,\ep})}\cdot (t_{\om}+\xi_{\om,\ep})\, dm(\om)
		=
		\frac{\int_\Om\ta_{\om,0}t_\om\, dm(\om)}{t_\om},
	\end{align*}
	completing the proof.
\end{proof}
In the next section we will present easily checkable assumptions which will imply the hypotheses of Corollary~\ref{esc rat cor}.

\section{Quenched extreme value law}\label{EEVV}
%\gf{I think this section should open from an extreme value point of view and move toward the recasting in terms of holes and limits of products of eigenvalues.
%	The hypotheses would be better to be delayed a bit than coming at the start...so I started doing this.}

\subsection{Gumbel's law in the quenched regime}
Suppose that $h_\om:\cJ_{\om,0}\to\RR$ is a continuous function for each $\om\in\Om$.
In our quenched random extreme value theory, $h_\omega$ is an observation function that is allowed to depend on $\omega$.
For each $\om\in\Om$ let $\ol{z}_\om$ be the essential supremum of $h_\om$ with respect to $\nu_{\om,0}$, that is
\begin{align*}
	\ol{z}_\om:=\sup\set{z\in\RR\; :\nu_{\om,0}(\set{x\in\cJ_{\om,0}:h_\om(x)\geq z})>0}.
\end{align*}
Similarly we define $\Ul z_\om$ to be the essential infimum of $h_\om$ with respect to $\nu_{\om,0}$. 
Suppose $\Ul{z}_\om<\ol{z}_\om$ for $m$-a.e. $\om\in\Om$, 
    %\ja{Do we need to assume this for every omega?}\sv{Since the result is in distribution probably not. Note that in footnote 1 below we use $ess \inf_{\om}$.}
and for each $z\in[\Ul{z}_\om,\ol{z}_\om]$ we define the set
\begin{align*}
	V_{\om,z_\omega}:=\set{x\in\cJ_{\om,0}:h_\om(x)-z_\omega>0},
\end{align*}
which represents points $x$ in our phase space where the observation $h_\omega$ exceeds a random threshold $z_\omega$ at base configuration $\omega$.
Our theory allows for random thresholds $z_\omega$ so that we may consider both ``anomalous'' and ``absolute'' exceedances.
For example in a real-world application, $h_\omega(x)$ may represent the surface ocean temperature at a spatial location $x$ for an ocean system configuration $\omega$.
Random temperature exceedances above $z_\omega$ allow one to describe extreme value statistics for temperature anomalies, e.g.\ those above a climatological seasonal mean (on average the surface ocean is warmer in summer and colder in winter). 
On the other hand, non-random absolute temperature exceedances above $z$ are more relevant for marine life.
We suppose that
\begin{align*}
	\nu_{\om,0}(V_{\om,\ol{z}_\om})=0.
\end{align*}

As is standard in extreme value theory, to develop an exponential law we will consider an increasing sequence of thresholds $z_{\omega,0}<z_{\omega,1}<\cdots$.
%\textcolor{green}{We use a random scaling function $t\in L^\infty(m)$ with $t_\om>0$ for $m$-a.e. $\om\in\Om$.} %\footnote{Note that even though we require $t_\om>0$, we allow $\essinf_{\om\in\Om} t_\om=0$.  In the deterministic setting, $t$ is fixed and strictly positive.\sv{I do not see why this footnote is relevant.}}
%\gf{I removed the condition $t$ is also uniformly bounded below because of discussion in Example 1, section 7.}
For each $N\geq 0$ we take  $z_{\om,N}\in[\Ul{z}_\om,\ol{z}_\om]$ and the choice will be made explicit in a moment. 
\begin{comment}
\textcolor{green}{be such that
\begin{align}\label{thresholdchoice2}
	\lim_{N\to\infty}N\mu_{\om,0}\left(\set{x\in\cJ_{\om,0}:h_\om(x)> z_{\om,N}}\right)=t_\om.
\end{align}}
\end{comment}
%\flag{We are going to remark somewhere about random vs nonrandom $z$ and $t$.}
For each $k,N\in\NN$ define $G_{\om,N}^{(k)}:\cJ_{\om,0}\to\RR$ by
\begin{align*}
	G_{\om,N}^{(k)}(x):=h_{\sg^N\om}(T_\om^N(x))-z_{\sg^N\om,k}.
\end{align*}
If $G_{\om,N}^{(k)}>0$ then $h_{\sg^N\om}(T_\om^N(x))>z_{\sg^N\om,k}$.
Our extreme value law concerns the large $N$ limit of the likelihood of continued threshold non-exceedances:
\begin{equation}
	\label{evtexpression}
	\nu_{\om,0}\left(\set{x\in\cJ_{\om,0}:\max\left(G_{\om,0}^{(N)}(x), \dots,G_{\om,N-1}^{(N)}(x)\right)\leq 0}\right).
\end{equation}
We may easily transform (\ref{evtexpression}) into the language of random open systems:
one immediately has that $G_{\om,N}^{(k)}>0$ is equivalent to $T_\om^N(x)\in V_{\sg^N\om,z_{\sg^N\om,k}}$.
Therefore, for each $N\geq 0$ we have
\begin{equation}
	\label{evtexpression2}
	(\ref{evtexpression})=
	\nu_{\om,0}\left(\set{x\in\cJ_{\om,0}:T_\om^j(x)\notin V_{\sg^j\om,z_{\sg^j\om,N}} \text{ for }j=0,\dots, N-1}\right)\\
	=\nu_{\om,0}\left(X_{\om,N-1,\ep_N}\right),
\end{equation}
where to obtain the second equality we 
%set $\ep_N:=\ol{z}_\om-z_{\om,N}$ \ja{get rid of this def of $\ep_N$ and let $\ep_N$ just be a parameter} and 
identify the sets $V_{\om,z_{\om,N}}$ with holes $H_{\om,\ep_N}\subset \cJ_{\omega,0}$ for each $N\in\NN$
\footnote{
	In Section~\ref{EEVV} we consider a decreasing sequence of holes, whereas in previous sections we considered a decreasing family of holes $H_{\om,\ep}$ parameterised by $\ep>0$. For the sake of notational continuity, in this section, and in the sequel, we denote a decreasing sequence of holes by $H_{\om,\ep_N}$. 
	Note that $\ep_N$ here is just a parameter and should not be thought of as a real number, but instead as an index and that $H_{\om,\ep_N}$ serves as an alternative notation to $H_{\om,N}$. Furthermore, note that the measure of the hole depends on the fiber $\om$ with $\mu_{\om,0}(H_{\om,\ep_N})\to 0$ as $N\to\infty$. }.
%where $\ep_{\om,N}:=\ol{z}_\om-z_{\om,N}$. 
%\gf{The interchange of $\ep_N$ and $\ep_{\om,N}$ around these lines above and below is a bit confusing.  Do we need to mention $\ep_{\om,N}$?}
%We denote the hole $H_{\om,\ep_{\om,N}}$ by $H_{\om,\ep_N}$ and noting $\ep_N$ is just a parameter placeholder for $\ep_{\om,N}$ and that as $N\to\infty$ we have $\ep_N\to 0$, $z_{\om,N}\to \ol z_\om$, and $\mu_{\om,0}(H_{\om,\ep_N})\to 0$.
%
%\ja{Furthermore, for fixed $\om$, the previous convergences are uniform with respect to $N$.}
%\gf{I included your earlier suggested text, but what do you mean by unif wrt $N$ if $\om$ is fixed? is there a notion of uniformity in that case?}
%\flag{suppose that these holes satisfy conditions...Fill in right conditions.} \gf{We already have measurability from section 4, all the (Cx) conditions from section 5, and the update (C7') below. Do we need more?}
Now using (\ref{NulamN}) and (\ref{evtexpression2}) we may convert  (\ref{evtexpression}) into the spectral expression:
\begin{equation}
	\label{evtexpression3}
	(\ref{evtexpression})=\frac{\lm_{\om,\ep_N}^N}{\lm_{\om,0}^N}
	\lt(\nu_{\om,\ep_N}(\ind)+\nu_{\sg^N\om,0}\lt(Q_{\om,\ep_N}^N( \ind)\rt)\rt).
\end{equation}
Similarly, using \eqref{eq: mu0 of n survivor}, we can write 
\begin{align}
	&\mu_{\om,0}\left(\set{x\in\cJ_{\om,0}:T_\om^j(x)\notin V_{\sg^j\om,z_{\sg^j\om,N}} \text{ for }j=0,\dots, N-1}\right)
	=
	\mu_{\om,0}\left(X_{\om,N-1,\ep_N}\right)
	\nonumber\\
	&\qquad=\frac{\lm_{\om,\ep_N}^N}{\lm_{\om,0}^N}
	\lt(\nu_{\om,\ep_N}(\phi_{\om,0})+\nu_{\sg^N\om,0}\lt(Q_{\om,\ep_N}^N( \phi_{\om,0})\rt)\rt).
	\label{evtexpression2mu}
\end{align}
Before we state our main result concerning the $N\to\infty$ limit, in addition to \eqref{C2}, \eqref{C3}, and \eqref{C8}, 
%\gf{I removed \eqref{C6} because it is implied by our other assumptions}
we make the following uniform adjustments to some of the assumptions in Section \ref{sec:goodrandom} as well as an assumption %\textcolor{green}{on the uniform scaling of the measure of the holes} 
on the choice of the sequence of thresholds, condition \eqref{xibound}. 
At the end of this section, we will compare it with the H\"usler condition, which is the usual prescription for non stationary processes, as we anticipated in the Introduction.
%We define the error term $\xi_{\omega,N}$ by the following assumption: 
%\begin{equation}
%	\label{xidefn}
%	\mu_{\omega,0}(H_{\omega,\epsilon_N})=(t_\omega+\xi_{\omega,N})/N
%\end{equation}
%and assume that 
\begin{enumerate}[align=left,leftmargin=*,labelsep=\parindent]
	\item[\mylabel{S}{xibound}]
For any fixed random scaling function $t\in L^\infty(m)$ with $t>0$, we may find sequences of functions $z_{N},\xi_N\in L^\infty(m)$  and a constant $W<\infty$ satisfying 
%Then for each $N\ge 1$ and a.e.\ $\omega\in\Omega,$ set
	$$\mu_{\omega,0}(\{h_\omega(x)-z_{\omega,N}>0\})=(t_\omega+\xi_{\omega,N})/N, \mbox{ for a.e.\ $\omega$ and each $N\ge 1$}
	$$
where:\\
(i)
 $\lim_{N\to\infty} \xi_{\omega,N}=0$ for a.e.\ $\omega$ and \\
 (ii) $|\xi_{\omega,N}|\le W$ for a.e.\ $\omega$ and all $N\ge 1$.
%  \textcolor{green}{Then 	for each $N\ge 1$ and a.e.\ $\omega\in\Omega$
% 	$$ \mu_{\omega,0}(H_{\omega,\epsilon_N})=(t_\omega+\xi_{\omega,N})/N, \quad |\xi_{\omega,N}|\le\gm_N,\quad\mbox{where $\lim_{N\to\infty}\gm_N=0$}.$$}
% 	%\sv{Put absolute value on $\xi_{\omega,N}$}
\end{enumerate}
%\gf{Insert Sandro's remarks about our EVT condition vs earlier conditions.}
%note that (\ref{thresholdchoice2}) implies 
%\begin{equation}
%    \label{xiaelim}
%\lim_{N\to\infty}\xi_{\omega,N}=0\qquad\mbox{ for a.e.\ $\omega\in\Omega$.}
%\end{equation} 
%We also ask that 
%\begin{equation}
%    \label{L1hole}
%\lim_{N\to\infty}N\int_\Omega |\mu_{\omega,0}(H_{\omega,\epsilon_N})-t_\omega|\ dm(\omega)= 0,
%\end{equation}
%which immediately yields \begin{equation}
%   \label{L1xi}
%\lim_{N\to\infty}\int_\Omega |\xi_{\omega,N}|\ dm(\omega)=0.
%\end{equation}
%We also assume that 
%\begin{equation}
%     \label{L1xi2}
%\lim_{N\to\infty}\int_\Omega\left(\frac{\xi_{\omega,N}}{1-\xi_{\omega,N}}\right)^2\ dm(\omega)=0.
%\end{equation}
\begin{enumerate}[align=left,leftmargin=*,labelsep=\parindent]
	%\item[\mylabel{C0'}{C0'}] $\norm{\spot}_{\infty,\om}\leq \norm{\spot}_{\cB_\om}$
	\item[\mylabel{C1'}{C1'}]There exists $C_1\geq 1$ such that for $m$-e.a. $\om\in\Om$ we have 
	\begin{align*}
		C_1^{-1}\leq \cL_{\om,0}\ind\leq C_1.
	\end{align*}
	
	%	\item[\mylabel{G}{G}]There exists $C_1\geq 1$ such that for $m$-a.e. $\om\in\Om$
	%	\begin{align*}
	%		C_1^{-1}\leq \cL_{\om,0}\ind\leq C_1,
	%	\end{align*}		
	%	\ja{Check that all $C_1$ usages are correct and should not be replaced by $C_1^{-1}$.}\ja{the upper bound follows from \eqref{C1'} but do we ever use \eqref{C1'}?}
	%	which immediately implies that $\essinf_\omega\lm_{\om,0}>C_1.$
	\item[\mylabel{C4'}{C4'}] 
	For each $f\in\cB$ and each $N\in\NN$ there exists $C_f>0$ and $\al(N)>0$ (independent of $\om$) with $\al:=\sum_{N=1}^\infty\al(N)<\infty$ such that for $m$-a.e. $\om\in\Om$, all $N\in\NN$
	\begin{align*}
		\sup_{\ep\geq 0}\norm{Q_{\om,\ep}^N f_{\om}}_{\infty,\sg^N\om}\leq C_f\al(N)\norm{f_{\om}}_{\cB_{\om}}.
	\end{align*}
	\item[\mylabel{C5'}{C5'}] There exists $C_2\geq 1$ such that
	\begin{align*}
		\sup_{\ep\geq 0}\norm{\phi_{\om,\ep}}_{\infty,\om}\leq C_2
		\quad\text{ and }\quad
		\norm{\phi_{\om,0}}_{\cB_\om}\leq C_2
	\end{align*}
	for $m$-a.e. $\om\in\Om$.
	
	\item[\mylabel{C7'}{C7'}] 
	There exists $C_3\geq 1$ such that for all $\ep>0$ sufficiently small we have
	\begin{align*}
		\essinf_\om\inf\phi_{\om,0}\geq C_3^{-1} >0
		\qquad\text{ and }\qquad 
		\essinf_\om\inf\phi_{\om,\ep}\geq 0.
	\end{align*}
	
\begin{comment}
	If for each $\om\in\Om$ and $\ep>0$ we have that $\Dl_{\om,\ep}>0$, then we have that there exists $C_3>0$ such that
	\begin{align*}
	%0<\liminf_{\ep\to 0}\frac{\eta_{\om,\ep}}{\Dl_{\om,\ep}}\leq
	\limsup_{\ep\to 0}\frac{\nu_{\om,0}(H_{\om,\ep})}{\mu_{\om,0}(H_{\om,\ep})}\leq C_3,
	\end{align*}
	for a.e.\ $\omega\in\Omega$.
	where $\set{\mu_{\om,0}}_{\om\in\Om}$ is a random measure which is given by
	\begin{align*}
	\mu_{\om,0}(f):=\nu_{\om,0}(f\cdot \phi_{\om,0}) 	
	\end{align*}
	for all $f\in\cB_\om$.
	If for each $\om\in\Om$ and some $\ep_1>0$ such that for each $\ep\leq \ep_1$ we have that $\Dl_{\om,\ep}=0$ then we also have that $\eta_{\om,\ep}=0$ for each $\ep\leq \ep_1$.
	%\item[\mylabel{U10}{U10}] \flag{We need a lower bound for $\norm{\phi_\om}_{\cB_\om}$ uniform in $\om$. This is not about $\inf q_\om$. Maybe just for closed but possibly uniform in $\ep$.}

	\item[\mylabel{C9'}{C9'}] For each $\ep\geq 0$ there exists $C_\ep\geq 1$ with $C_\ep\to 1$ as $\ep\to 0$ such that
	\begin{align*}
	C_\ep^{-1}\leq \nu_{\om,\ep}(\ind),\, \nu_{\om, \ep}(\phi_{\om,0}) \leq C_\ep
	\end{align*}
	for $m$-a.e.\ $\omega\in\Omega$.\ja{This will be proven as the following lemma}
	
\end{comment}

\end{enumerate}
\begin{remark}\label{zero hole}
    Note that %\eqref{thresholdchoice2} and 
    \eqref{xibound} implies that $\mu_{\om,0}(H_{\om,\ep_N})>0$ for each $N\in\NN$ since $t_\om>0$, and in particular we have $m(\Om\bs\Om_+)=0$. 
\end{remark}
\begin{remark}
	\label{rem61}
	Note that since $\lm_{\om,0}=\nu_{\sg\om,0}(\cL_{\om,0}\ind)$, \eqref{C1'} implies that 
	\begin{align*}
		C_1^{-1}\leq \lm_{\om,0}\leq C_1
	\end{align*}
	for $m$-a.e. $\om$.
\end{remark}
\begin{remark}\label{rem checking esc cor cond}
	Note that conditions \eqref{xibound}, \eqref{C5'}, and \eqref{C7'} together imply \eqref{C6}, thus Theorem~\ref{thm: dynamics perturb thm} applies. 
	Furthermore, these same conditions along with Remark~\ref{rem61} imply that there exists $A\geq 1$ such that 
	\begin{align*}
		\frac{\log\lm_{\om,0}-\log\lm_{\om,\ep}}{\mu_{\om,0}(H_{\om,\ep})}\leq A
	\end{align*}
	for $m$-a.e. $\om\in\Om$ and all $\ep>0$ sufficiently small, and thus \eqref{DCT cond 1*} and \eqref{EVT style cond} hold, meaning that  Corollary~\ref{esc rat cor} applies as well. 
\end{remark}

The following lemma shows that $\nu_{\om,\ep_N}(\phi_{\om,0})$ converges to $1$ uniformly in $\om$ under our assumptions \eqref{C1'}, \eqref{C4'}, \eqref{C5'}, \eqref{C7'}, and \eqref{xibound}.
\begin{lemma}\label{lemC9'}
	For each $N\in\NN$ there exists $C_{\ep_N}\geq 1$ with $C_{\ep_N}\to 1$ as $N\to\infty$ such that
	\begin{align}\label{C9'ineq}
		C_{\ep_N}^{-1}\leq \nu_{\om,\ep_N}(\ind),\, \nu_{\om, \ep_N}(\phi_{\om,0}) \leq C_{\ep_N}
	\end{align}
	for $m$-a.e.\ $\omega\in\Omega$.
\end{lemma}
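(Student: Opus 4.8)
The plan is to invert the two survivor--set identities \eqref{NulamN} and \eqref{eq: mu0 of n survivor}, introducing an auxiliary integer $M\in\NN$: applied with hole parameter $\ep_N$ and $M$ iterates they rearrange to
\begin{align*}
	\nu_{\om,\ep_N}(\ind)&=\frac{\lm_{\om,0}^M}{\lm_{\om,\ep_N}^M}\,\nu_{\om,0}\lt(X_{\om,M-1,\ep_N}\rt)-\nu_{\sg^M\om,0}\lt(Q_{\om,\ep_N}^M(\ind)\rt),\\
	\nu_{\om,\ep_N}(\phi_{\om,0})&=\frac{\lm_{\om,0}^M}{\lm_{\om,\ep_N}^M}\,\mu_{\om,0}\lt(X_{\om,M-1,\ep_N}\rt)-\nu_{\sg^M\om,0}\lt(Q_{\om,\ep_N}^M(\phi_{\om,0})\rt).
\end{align*}
I would then run a two--parameter argument: first choose $M$ large so that the $Q^M$--remainders are uniformly (in $\om$ and $N$) small via \eqref{C4'}, and then let $N\to\infty$ so that the eigenvalue ratio $\lm_{\om,0}^M/\lm_{\om,\ep_N}^M$ tends to $1$ and the surviving sets exhaust almost all of $\cJ_{\om,0}$. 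Morally this is the $\ep$--analogue of Lemma~\ref{lem: checking P7 and P8}; the real content is that the primed hypotheses \eqref{C1'}, \eqref{C4'}, \eqref{C5'}, \eqref{C7'} together with the uniform scaling \eqref{xibound} make every estimate below independent of $\om$.

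For the remainders, \eqref{C4'} applied to $f=\ind$ and $f=\phi_0$ gives $\norm{Q_{\om,\ep_N}^M(\ind)}_{\infty,\sg^M\om}\leq C_{\ind}\al(M)\sup_\om\norm{\ind}_{\cB_\om}$ and $\norm{Q_{\om,\ep_N}^M(\phi_{\om,0})}_{\infty,\sg^M\om}\leq C_{\phi_0}\al(M)\norm{\phi_{\om,0}}_{\cB_\om}\leq C_{\phi_0}\al(M)C_2$ (using \eqref{C5'}), both vanishing as $M\to\infty$ uniformly in $\om,N$ since $\al(N)\to0$. For the eigenvalue ratio, \eqref{xibound} gives $\mu_{\om,0}(H_{\om,\ep_N})=(t_\om+\xi_{\om,N})/N\leq D/N$ with $D:=\norm{t}_\infty+W$, and then by Remark~\ref{rem lm_0>lm_ep}, \eqref{C5'} and \eqref{C7'},
\begin{align*}
	0\leq 1-\frac{\lm_{\om,\ep_N}}{\lm_{\om,0}}&=\nu_{\om,0}\lt(\ind_{H_{\om,\ep_N}}\phi_{\om,\ep_N}\rt)\\
	&\leq C_2\nu_{\om,0}(H_{\om,\ep_N})\leq C_2C_3\mu_{\om,0}(H_{\om,\ep_N})\leq \frac{C_2C_3D}{N},
\end{align*}
where I used $\mu_{\om,0}(H_{\om,\ep_N})=\nu_{\om,0}(\ind_{H_{\om,\ep_N}}\phi_{\om,0})\geq C_3^{-1}\nu_{\om,0}(H_{\om,\ep_N})$. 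Multiplying over $M$ consecutive fibres, $1\leq \lm_{\om,0}^M/\lm_{\om,\ep_N}^M\leq (1-C_2C_3D/N)^{-M}$, which for fixed $M$ tends to $1$ as $N\to\infty$, uniformly in $\om$.

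For the surviving sets write $\cJ_{\om,0}\setminus X_{\om,M-1,\ep_N}=\bigcup_{j=0}^{M-1}T_\om^{-j}(H_{\sg^j\om,\ep_N})$. Fibrewise $T$--invariance \eqref{eq: mu_om,0 T invar} gives $\mu_{\om,0}(T_\om^{-j}H_{\sg^j\om,\ep_N})=\mu_{\sg^j\om,0}(H_{\sg^j\om,\ep_N})\leq D/N$, hence $\mu_{\om,0}(X_{\om,M-1,\ep_N})\geq 1-MD/N$. For $\nu_{\om,0}$, the conformality relation of \eqref{CCM} yields $\nu_{\om,0}(T_\om^{-j}H_{\sg^j\om,\ep_N})=(\lm_{\om,0}^j)^{-1}\nu_{\sg^j\om,0}(\ind_{H_{\sg^j\om,\ep_N}}\cdot\cL_{\om,0}^j\ind)$; since \eqref{C1'} gives $\cL_{\om,0}^j\ind\leq C_1^j$ by induction (positivity of the $\cL$'s) and Remark~\ref{rem61} gives $(\lm_{\om,0}^j)^{-1}\leq C_1^j$, together with $\nu_{\sg^j\om,0}(H_{\sg^j\om,\ep_N})\leq C_3\mu_{\sg^j\om,0}(H_{\sg^j\om,\ep_N})\leq C_3D/N$ we obtain $\nu_{\om,0}(X_{\om,M-1,\ep_N})\geq 1-C_3D\lt(\sum_{j=0}^{M-1}C_1^{2j}\rt)/N$. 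Both lower bounds tend to $1$ as $N\to\infty$ for fixed $M$, uniformly in $\om$.

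Assembling: given $\tau>0$, pick $M$ with $\al(M)\max\set{C_{\ind}\sup_\om\norm{\ind}_{\cB_\om},\,C_{\phi_0}C_2}<\tau/2$, then $N_0$ so that for $N\geq N_0$ one has $(1-C_2C_3D/N)^{-M}-1<\tau/2$ and both survivor--set deficits above are $<\tau/2$; the two displayed identities then force $\absval{\nu_{\om,\ep_N}(\ind)-1}\leq\tau$ and $\absval{\nu_{\om,\ep_N}(\phi_{\om,0})-1}\leq\tau$ for $m$--a.e.\ $\om$ and all $N\geq N_0$. Writing $e_N$ for the resulting $\om$--independent error bound (so $e_N\to0$), set $C_{\ep_N}:=(1-e_N)^{-1}$ once $e_N<1$, and for the finitely many remaining small $N$ simply enlarge $C_{\ep_N}$ using the crude uniform bounds $\nu_{\om,\ep_N}(\ind)\geq\nu_{\om,\ep_N}(\phi_{\om,\ep_N})/C_2=1/C_2$ (recall $\nu_{\om,\ep}(\phi_{\om,\ep})=1$ from \eqref{C2}--\eqref{C3}) and $\nu_{\om,\ep_N}(\phi_{\om,0})\geq C_3^{-1}\nu_{\om,\ep_N}(\ind)$, together with the upper bounds obtained from the displayed identities with $M=1$. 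The step I expect to be the main obstacle is exactly the uniformity in $\om$: one must verify that each place where Lemma~\ref{lem: checking P7 and P8} used an $\om$--dependent quantity ($C_i(\om)$, $K_\om$, $\al_\om(N)$, the bound $\mu_{\om,0}(H_{\om,\ep})\to0$) can be replaced by an honest constant or an $\om$--independent rate, which is precisely what \eqref{C1'}, \eqref{C4'}, \eqref{C5'}, \eqref{C7'} and \eqref{xibound} are designed to provide.
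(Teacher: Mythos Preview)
Your proof is correct and takes a genuinely different route from the paper's. You invert the survivor-set identities \eqref{NulamN} and \eqref{eq: mu0 of n survivor}—exactly the method of Lemma~\ref{lem: checking P7 and P8}—and then verify that the primed hypotheses \eqref{C1'}, \eqref{C4'}, \eqref{C5'}, \eqref{C7'}, \eqref{xibound} make every estimate uniform in $\om$. The paper instead proves a telescoping claim that bounds $|1-\nu_{\om,\ep_N}(\phi_{\om,0})|$ by writing $\phi_{\sg^n\om,0}-\~\cL_{\om,\ep_N}^n(\phi_{\om,0})$ as a sum of intermediate differences $(\~\cL_{\sg^{n-(k+1)}\om,0}-\~\cL_{\sg^{n-(k+1)}\om,\ep_N})(\phi_{\sg^{n-(k+1)}\om,0})$ and estimating each term. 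This relies on the equivariance $\~\cL_{\om,0}\phi_{\om,0}=\phi_{\sg\om,0}$, so it treats $\phi_{\om,0}$ specifically; the paper then asserts the $\ind$ case follows from \eqref{C5'} and \eqref{C7'}.

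Your approach has two advantages. First, it handles $\nu_{\om,\ep_N}(\ind)$ and $\nu_{\om,\ep_N}(\phi_{\om,0})$ symmetrically via the same identity. By contrast, the paper's passage from $\phi_{\om,0}$ to $\ind$ via \eqref{C5'}/\eqref{C7'} as written only yields $C_2^{-1}\nu_{\om,\ep_N}(\phi_{\om,0})\le\nu_{\om,\ep_N}(\ind)\le C_3\nu_{\om,\ep_N}(\phi_{\om,0})$, which is uniform boundedness rather than convergence to $1$; your argument via \eqref{NulamN} fills this gap cleanly. Second, your proof is shorter. The paper's telescoping does give a more explicit closed-form bound in $(n,E_N)$, which is closer in spirit to estimates reused later in Step~1 of Theorem~\ref{evtthm}, but that is not needed for the lemma itself. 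One implicit assumption you make—$\sup_\om\norm{\ind}_{\cB_\om}<\infty$—is also used tacitly by the paper (e.g.\ when invoking \eqref{C4'} for $f=\ind$ in Step~6); it is automatic in the $\BV$ setting of Section~\ref{sec: existence} but not formally among the abstract hypotheses.
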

\begin{proof}
	Note that 
	\begin{equation}
		\label{opdiff}
		\nu_{\sg\om,0}\lt((\cL_{\om,0}-\cL_{\om,\ep_N})(\~\cL_{\sg^{-k}\om,\ep_N}^k \phi_{\sg^{-k}\om,0})	\rt)
		=
		\lm_{\om,0}\nu_{\om,0}\lt(\ind_{H_{\om,\ep_N}}\~\cL_{\sg^{-k}\om,\ep_N}^k\phi_{\sg^{-k}\om,0}\rt).
	\end{equation}
	%%Further, by (\ref{etaineq}), (\ref{C7'}), and (\ref{eq: Dl = mu H}), for sufficiently large $N$
	Further, by (\ref{C7'}) we have
	\begin{equation}
		\label{EVTeta2}
		%%\eta_{\om,\ep_N}\le\lm_{\om,0}
		\nu_{\om,0}(H_{\om,\ep_N})
		%=
		%\lm_{\om,0}\frac{\nu_{\om,0}(H_{\om,\ep_N})}{\mu_{\om,0}(H_{\om,\ep_N})}\cdot\mu_{\om,0}(H_{\om,\ep_N})
		\leq
		C_3
		%%\lm_{\om,0}
		\mu_{\om,0}(H_{\om,\ep_N})
		=
		\frac{C_3
			%%\lm_{\om,0}
			(t_\om+\xi_{\om,N})}{N}
	\end{equation}
	for $m$-a.e. $\om\in\Om$.
	Following the same derivation of \eqref{diff eigenvalues identity unif}, and using \eqref{C2} gives that 
	\begin{align}
		\lm_{\om,0}-\lm_{\om,\ep_N}
		&=\nu_{\sg\om,0}\left((\cL_{\om,0}-\cL_{\om,\ep_N})(\phi_{\om,\ep_N})\right)
		=\lm_{\om,0}\nu_{\om,0}(\ind_{H_{\om,\ep_N}}\phi_{\om,\ep_N}).
		\label{diff eigenvalues identity unif unif}
	\end{align}
	Thus it follows from Remark~\ref{rem lm_0>lm_ep}, \eqref{diff eigenvalues identity unif unif}, \eqref{C5'}, and \eqref{EVTeta2} that
	\begin{equation}
		\label{EVTlamdiff2}
		0\leq \lm_{\om,0}-\lm_{\om,\ep_N}
		%%\le C_2\eta_{\om,\ep_N}
		\leq \frac{C_2C_3\lm_{\om,0}(t_\om+\xi_{\om,N})}{N}.
	\end{equation}    
	%We now find a uniform lower bound $\lm_{\om,\ep_N}/\lm_{\om,0}$. 
	Using Remark~\ref{rem61} and \eqref{EVTlamdiff2}, for all $N$ sufficiently large, we see that %\gf{can we just jump from the second expression to the last using the $C_1$ bound directly?  i.e.\ dividing by $\lambda_0$.}
	\begin{align}\label{LmUniErr}
		1\leq\frac{\lm_{\om,0}}{\lm_{\om,\ep_N}}
		=
		\frac{\lm_{\om,0}}{\lm_{\om,0}-\lm_{\om,0}\nu_{\om,0}(\ind_{H_{\om,\ep_N}}\phi_{\om,\ep_N})}
		\leq 
		\frac{1}{1-C_2\nu_{\om,0}(H_{\om,\ep_N})}
		\leq
		\frac{1}{1-E_N},
		%= 1+ \frac{E_N}{\lm_{\om,0}-E_N}
		%\leq 1+ \frac{C_1E_N}{1-C_1E_N}
		%=,
	\end{align}
	%	\ja{Fix usage of the above bound, which is now correct. This change will only superficially affect the proof. Also this only used a lower bound?} where 
	%\cgt{is last step ok? I expected smthng slightly different, using $\lm_0>1/C_1$}
	%\ja{I see. In that case I think we want $C_1^{-1}-E_N$ in the denominator instead of $C_1-E_N$. Is this what you had in mind?}\cgt{yes, $\frac{C_1E_N}{1-C_1E_N}$}
	where
	\begin{align*}
		E_N:=\frac{C_2C_3(|t|_\infty +W)}{N}\to 0
	\end{align*}
	as $N\to\infty$.

	\begin{claim}\label{claim Lemma 6.1.a}
		For every $n$, $\ep_N$, and $m$-a.e. $\om\in\Om$ we have
		\begin{align*}
			\absval{1-\nu_{\om,\ep_N}(\phi_{\om,0})}
			%&\quad
			\leq
			n\left(\frac{1}{1-E_N}\right)^n\cdot
			\left(C_1E_N+\frac{C_1C_2E_N}{1-E_N}\right)
			+
			C_2C_{\phi_0}\al(n).
		\end{align*}
	\end{claim}
	\begin{subproof}
		Using \eqref{C2} and a telescoping argument, we can write
		\begin{align}
			\absval{1-\nu_{\om,\ep_N}(\phi_{\om,0})}
			%\nonumber\\
			%&\quad
			&=\absval{\nu_{\sg^n\om,0}(\phi_{\sg^n\om,0})-\nu_{\om,\ep_N}(\phi_{\om,0})}
			\nonumber\\
			&
			=\absval{\nu_{\sg^n\om,0}(\phi_{\sg^n\om,0})-\nu_{\sg^n\om,0}\big(\nu_{\om,\ep_N}(\phi_{\om,0})\cdot \phi_{\sg^n\om,\ep_N}\big)}
			\nonumber\\	
			&
			=\absval{\nu_{\sg^n\om,0}\left(\phi_{\sg^n\om,0}-\~\cL_{\om,\ep_N}^n(\phi_{\om,0})+Q_{\om,\ep_N}^n(\phi_{\om,0})\right)}
			\nonumber\\
			&
			\leq\sum_{k=0}^{n-1}\absval{\nu_{\sg^n\om,0}\left(\~\cL_{\sg^{n-k}\om,\ep_N}^k(\phi_{\sg^{n-k}\om,0})-\~\cL_{\sg^{n-(k+1)}\om,\ep_N}^{k+1}(\phi_{\sg^{n-(k+1)}\om,0})\right)}
			\nonumber\\ 	
			&\qquad
			+\absval{\nu_{\sg^n\om,0}\left(Q_{\om,\ep_N}^n(\phi_{\om,0})\right)}
			\nonumber\\
			& 
			=\sum_{k=0}^{n-1}\absval{\nu_{\sg^n\om,0}\left(\left(\~\cL_{\sg^{n-k}\om,\ep_N}^{k}\right)\left(\~\cL_{\sg^{n-(k+1)}\om,0}-\~\cL_{\sg^{n-(k+1)}\om,\ep_N}\right)(\phi_{\sg^{n-(k+1)}\om,0})\right)}
			\label{lemma a sum}\\
			&\qquad
			+\absval{\nu_{\sg^n\om,0}\left(Q_{\om,\ep_N}^n(\phi_{\om,0})\right)}.
			\label{lemma a error term}
		\end{align}
		First we note that we can estimate \eqref{lemma a error term} as
		\begin{align}
			\absval{\nu_{\sg^n\om,0}\left(Q_{\om,\ep_N}^n(\phi_{\om,0})\right)}\leq \norm{Q_{\om,\ep_N}^n(\phi_{\om,0})}_{\infty,\sg^n\om}
			\leq 
			C_2C_{\phi_0}\al(n)
			.
			\label{unifeq0}
		\end{align}	
		Now recall that
		\begin{align*}
			\hat{X}_{\sg^{-k}\om,k,\ep_N}:=\ind_{\bigcap_{j=0}^k T_{\sg^{-k}\om}^{-j}(\cJ_{\sg^{j-k}\om,\ep_N})}.
		\end{align*}
		Using \eqref{C2}, we can write \eqref{lemma a sum} as 
		%\gf{I don't see the inequality just before \eqref{unifeq1},i.e. taking $\lambda$s outside the sum. are you using the ratios are all bigger than 1?}\ja{Yes}
		\begin{align}
			&\sum_{k=0}^{n-1}\absval{\nu_{\sg^n\om,0}\left(\left(\~\cL_{\sg^{n-k}\om,\ep_N}^{k}\right)\left(\~\cL_{\sg^{n-(k+1)}\om,0}-\~\cL_{\sg^{n-(k+1)}\om,\ep_N}\right)(\phi_{\sg^{n-(k+1)}\om,0})\right)}
			\nonumber\\
			&\quad
			=\sum_{k=0}^{n-1}\absval{\nu_{\sg^n\om,0}\left(\left(\lm_{\sg^{n-k}\om,\ep_N}^k\right)^{-1}
				\cL_{\sg^{n-k}\om,0}^k\left(\hat{X}_{\sg^{n-k}\om,k,\ep_N}\cdot\left(\~\cL_{\sg^{n-(k+1)}\om,0}-\~\cL_{\sg^{n-(k+1)}\om,\ep_N}\right)(\phi_{\sg^{n-(k+1)}\om,0})\right)\right)}
			\nonumber\\	
			&\quad
			=\sum_{k=0}^{n-1}\frac{\lm_{\sg^{n-k}\om,0}^k}{\lm_{\sg^{n-k}\om,\ep_N}^k}\absval{\nu_{\sg^{n-k}\om,0}\left(\hat{X}_{\sg^{n-k}\om,k,\ep_N}\cdot\left(\~\cL_{\sg^{n-(k+1)}\om,0}-\~\cL_{\sg^{n-(k+1)}\om,\ep_N}\right)(\phi_{\sg^{n-(k+1)}\om,0})\right)}.
			\label{unifeq1.0}
		\end{align}	
		Now, since $\sfrac{\lm_{\om,0}}{\lm_{\om,\ep_N}}\geq1$ (by Remark~\ref{rem lm_0>lm_ep}) for $m$ a.e. $\om$ and $\ep\geq 0$, and thus $\sfrac{\lm_{\sg^{n-k}\om,0}^k}{\lm_{\sg^{n-k}\om,\ep_N}^k}\geq1$ for each $k\geq 1$, we can write
		\begin{align}
			\eqref{unifeq1.0}&\leq
			\frac{\lm_{\om,0}^n}{\lm_{\om,\ep_N}^n}\sum_{k=0}^{n-1}\absval{\nu_{\sg^{n-k}\om,0}\left(\hat{X}_{\sg^{n-k}\om,k,\ep_N}\cdot\left(\~\cL_{\sg^{n-(k+1)}\om,0}-\~\cL_{\sg^{n-(k+1)}\om,\ep_N}\right)(\phi_{\sg^{n-(k+1)}\om,0})\right)}
			\nonumber\\
			&
			\leq 
			\left(\frac{1}{1-E_N}\right)^n\cdot
			\sum_{k=0}^{n-1}\absval{\nu_{\sg^{n-k}\om,0}\left(\hat{X}_{\sg^{n-k}\om,k,\ep_N}\cdot\left(\~\cL_{\sg^{n-(k+1)}\om,0}-\~\cL_{\sg^{n-(k+1)}\om,\ep_N}\right)(\phi_{\sg^{n-(k+1)}\om,0})\right)},
			\label{unifeq1} 		
		\end{align}
		Using 
		\begin{align*}
			\~\cL_{\om,0}-\~\cL_{\om,\ep}&=\~\cL_{\om,0}-\lm_{\om,0}^{-1}\cL_{\om,\ep}+\lm_{\om,0}^{-1}\cL_{\om,\ep}-\~\cL_{\om,\ep}\nonumber\\
			&=\lm_{\om,0}^{-1}\left(\cL_{\om,0}-\cL_{\om,\ep}\right)+(\lm_{\om,0}^{-1}-\lm_{\om,\ep}^{-1})\cL_{\om,\ep}\nonumber\\
			&=\lm_{\om,0}^{-1}\left(\left(\cL_{\om,0}-\cL_{\om,\ep}\right)+\lm_{\om,0}\lm_{\om,\ep}(\lm_{\om,0}^{-1}-\lm_{\om,\ep}^{-1})\~\cL_{\om,\ep}\right)\nonumber\\
			&=\lm_{\om,0}^{-1}\left(\left(\cL_{\om,0}-\cL_{\om,\ep}\right)+(\lm_{\om,\ep}-\lm_{\om,0})\~\cL_{\om,\ep}\right),%\label{eigen triang ineq}.
		\end{align*}
		the fact that
		\begin{align*}
			\cL_{\om,\ep_N}\left((f_{\sg\om}\circ T_\om)\cdot h_{\om}\right)=f_{\sg\om}\cdot\cL_{\om,\ep_N}(h_{\om}),
		\end{align*}
		for all $\ep_N\geq 0$ and $\om\in\Om$, and Remark~\ref{rem61}, we may estimate the sum in \eqref{unifeq1} by 
		\begin{align}
			&\sum_{k=0}^{n-1}\absval{\nu_{\sg^{n-k}\om,0}\left(\hat{X}_{\sg^{n-k}\om,k,\ep_N}\cdot\left(\~\cL_{\sg^{n-(k+1)}\om,0}-\~\cL_{\sg^{n-(k+1)}\om,\ep_N}\right)(\phi_{\sg^{n-(k+1)}\om,0})\right)}
			\nonumber\\
			&\quad
			\leq\sum_{k=0}^{n-1}\lm_{\sg^{n-(k+1)}\om,0}^{-1}\absval{\nu_{\sg^{n-k}\om,0}\left(\hat{X}_{\sg^{n-k}\om,k,\ep_N}\cdot\left(\cL_{\sg^{n-(k+1)}\om,0}-\cL_{\sg^{n-(k+1)}\om,\ep_N}\right)(\phi_{\sg^{n-(k+1)}\om,0})\right)}
			\nonumber\\
			&\qquad
			+\sum_{k=0}^{n-1}\lm_{\sg^{n-(k+1)}\om,0}^{-1}\absval{(\lm_{\sg^{n-(k+1)}\om,\ep_N}-\lm_{\sg^{n-(k+1)}\om,0})\cdot \nu_{\sg^{n-k}\om,0}\left(\hat{X}_{\sg^{n-k}\om,k,\ep_N}\cdot\~\cL_{\sg^{n-(k+1)}\om,\ep_N}(\phi_{\sg^{n-(k+1)}\om,0})\right)} 
			\nonumber\\
			&\quad
			= \sum_{k=0}^{n-1}\lm_{\sg^{n-(k+1)}\om,0}^{-1}
			\absval{\nu_{\sg^{n-k}\om,0}\left(\left(\cL_{\sg^{n-(k+1)}\om,0}-\cL_{\sg^{n-(k+1)}\om,\ep_N}\right)\left(\left(\hat{X}_{\sg^{n-k}\om,k,\ep_N}\circ T_{\sg^{n-(k+1)}\om}\right)\cdot\phi_{\sg^{n-(k+1)}\om,0}\right)\right)}
			\label{unifeq2.1}\\
			&\qquad
			+\sum_{k=0}^{n-1}\bigg(\lm_{\sg^{n-(k+1)}\om,0}^{-1}\absval{1-\frac{\lm_{\sg^{n-(k+1)}\om,0}}{\lm_{\sg^{n-(k+1)}\om,\ep_N}}}
			\nonumber\\
			&\quad\qquad
			\cdot\absval{ \nu_{\sg^{n-k}\om,0}\left(\cL_{\sg^{n-(k+1)}\om,\ep_N}\left(\left(\hat{X}_{\sg^{n-k}\om,k,\ep_N}\circ T_{\sg^{n-(k+1)}\om}\right)\cdot\phi_{\sg^{n-(k+1)}\om,0}\right)\right)}\bigg).
			\label{unifeq2.2}
		\end{align}
		Using \eqref{diff eigenvalues identity unif unif}, \eqref{EVTlamdiff2}, and Remark~\ref{rem61} we can estimate 
		%\gf{I am not sure how the first inequality below is obtained. it is not with exactly the pair \eqref{diff eigenvalues identity unif unif} and \eqref{EVTlamdiff2} because those don't contain $C_1$. the RHS of the second inequality looks like the RHS of \eqref{EVTlamdiff2} but I also don't see the second inequality following from \eqref{EVTlamdiff2}.}\sv{$C_1$ comes from  remark 5.1 which bounds $\lambda_{\om,0}$}\ja{Fixed. Added ref to Rem 5.1. Actually using the $\lm_0^{-1}$ from above and the $\lm_0$ that we pick up below we don't actually need $C_1$ or $C_1^2$ below. But the equations become even wider...}
		\eqref{unifeq2.1} to get 
		\begin{align}
			&\sum_{k=0}^{n-1}\lm_{\sg^{n-(k+1)}\om,0}^{-1}\absval{\nu_{\sg^{n-k}\om,0}\left(\left(\cL_{\sg^{n-(k+1)}\om,0}-\cL_{\sg^{n-(k+1)}\om,\ep_N}\right)\left(\left(\hat{X}_{\sg^{n-k}\om,k,\ep_N}\circ T_{\sg^{n-(k+1)}\om}\right)\cdot\phi_{\sg^{n-(k+1)}\om,0}\right)\right)}
			\nonumber\\
			&\qquad
			=\sum_{k=0}^{n-1}\absval{\nu_{\sg^{n-(k+1)}\om,0}\left(\ind_{H_{\sg^{n-(k+1)}\om,\ep_N}}\cdot\left(\left(\hat{X}_{\sg^{n-k}\om,k,\ep_N}\circ T_{\sg^{n-(k+1)}\om}\right)\cdot\phi_{\sg^{n-(k+1)}\om,0}\right)\right)}
			\nonumber\\
			&\qquad
			\leq \sum_{k=0}^{n-1}\nu_{\sg^{n-(k+1)}\om,0}\left(\ind_{H_{\sg^{n-(k+1)}\om,\ep_N}}\right)\norm{\phi_{\sg^{n-(k+1)}\om,0}}_{\infty,\sg^{n-(k+1)}\om}
			\leq
			\frac{C_2C_3(t_\om+\xi_{\om,N})\cdot n}{N}\leq nE_N.
			\label{unifeq3}
		\end{align}
		Since $\cL_{\om,\ep}(f)=\cL_{\om,0}(\hat X_{\om,0,\ep}f)$ we can rewrite the second product in the sum in \eqref{unifeq2.2} so that we have 
		%\gf{below, the central subscript is 1, rather than 0 in $\hat X$, why?}\sv{same question}\ja{fixed}
		\begin{align}
			&\nu_{\sg^{n-k}\om,0}\left(\cL_{\sg^{n-(k+1)}\om,\ep_N}\left(\left(\hat{X}_{\sg^{n-k}\om,k,\ep_N}\circ T_{\sg^{n-(k+1)}\om}\right)\cdot\phi_{\sg^{n-(k+1)}\om,0}\right)\right)
			\nonumber\\
			&\qquad=
			\nu_{\sg^{n-k}\om,0}\left(\cL_{\sg^{n-(k+1)}\om,0}\left(\hat{X}_{\sg^{n-(k+1)}\om,0,\ep_N}\left(\hat{X}_{\sg^{n-k}\om,k,\ep_N}\circ T_{\sg^{n-(k+1)}\om}\right)\cdot\phi_{\sg^{n-(k+1)}\om,0}\right)\right)
			\nonumber\\
			&\qquad
			\leq \norm{\cL_{\sg^{n-(k+1)}\om,0}\ind}_{\infty,\sg^{n-k}\om}\norm{\phi_{\sg^{n-(k+1)}\om,0}}_{\infty,\sg^{n-(k+1)}\om}
			\leq C_1C_2.
			\label{unifeq4}
		\end{align}
		Inserting \eqref{unifeq4} into \eqref{unifeq2.2} 
		%\gf{but \eqref{unifeq4} came from \eqref{unifeq2.2}, not \eqref{unifeq2.1}?} 
		and using \eqref{LmUniErr} yields %\gf{\eqref{LmUniErr} is a one-sided inequality but below you need a two-sided inequality.}
		\begin{align}
			&
			\sum_{k=0}^{n-1}\absval{1-\frac{\lm_{\sg^{n-(k+1)}\om,0}}{\lm_{\sg^{n-(k+1)}\om,\ep_N}}}\cdot\absval{ \nu_{\sg^{n-k}\om,0}\left(\cL_{\sg^{n-(k+1)}\om,\ep_N}\left(\left(\hat{X}_{\sg^{n-k}\om,k,\ep_N}\circ T_{\sg^{n-(k+1)}\om}\right)\cdot\phi_{\sg^{n-(k+1)}\om,0}\right)\right)}
			\nonumber\\
			&\qquad
			\leq 
			\sum_{k=0}^{n-1}\frac{C_1C_2E_N}{1-E_N} 
			= n\cdot \frac{C_1C_2E_N}{1-E_N}. 
			\label{unifeq5}
		\end{align}
		Thus, collecting the estimates \eqref{unifeq2.1}-\eqref{unifeq5} together with \eqref{unifeq0} and inserting into \eqref{unifeq1} yields
		\begin{align}
			\absval{1-\nu_{\om,\ep_N}(\phi_{\om,0})}
			\leq
			\left(\frac{1}{1-E_N}\right)^n\cdot
			\left(nE_N+\frac{C_1C_2E_N\cdot n}{1-E_N}\right)
			+
			C_2C_{\phi_0}\al(n),
			\label{nuepsphi0control}
		\end{align}
		which finishes the proof of the claim.
	\end{subproof}

\begin{comment}	
	To finish the proof of Lemma~\ref{lemC9'} we note that 
	
	Claim~\ref{claim Lemma 6.1.a} gives that  
	\begin{align}
	\absval{1-\nu_{\om,\ep_N}(\phi_{\om,0})}
	%&\quad
	\leq
	n\left(\frac{1}{1-C_1E_N}\right)^n\cdot
	\left(E_N+\frac{C_1C_2E_N}{1-C_1E_N}\right)
	+
	C_2C_{\phi_0}\al(n)
	\label{nuepsphi0control}
	\end{align}
\end{comment}
	To finish the proof of Lemma~\ref{lemC9'}, we note that \eqref{nuepsphi0control} holds for $m$-a.e.\ $\omega$, every $N$ sufficiently large, and each $n\ge 1$.
	Given a $\delta>0$, choose and fix $n$ so that $C_2C_{\varphi_0}\alpha(n)<\delta/2$.
	Because $\lim_{N\to\infty}E_N=0$, 
	%and $\lim_{N\to\infty}\gm_N=0$, 
	we may choose $N$ large enough so that first summand in (\ref{nuepsphi0control}) is also smaller than $\delta/2$.
	Thus, $\lim_{N\to \infty}\nu_{\omega,\epsilon_N}(\varphi_{\omega,0})=1$, uniformly in $\omega$.
	This proves \eqref{C9'ineq} for $\nu_{\om,\ep_N}(\varphi_{\omega,0})$;  we immediately obtain the other inequality using (\ref{C5'}) and (\ref{C7'}), and thus the proof of Lemma~\ref{lemC9'} is complete. 
	
\end{proof}

%%%%%%%%%%%%%%%%%%%%%%%%%%
%%%%%%%%%%%%%%%%%%%%%%%%%%%
%%%%%%%%%%%%%%%%%%%%%%%%
%	\gf{as far as I see we don't use $\tau$ anywhere so can we delete all mention of it?}
%	For each $\om\in\Om$ and $\ep\geq 0$ let
%	\begin{align*}
%		\tau_{\om,\ep}:\cJ_{\om,0}\to \NN
%	\end{align*}
%	denote the function which assigns to each point $x\in\cJ_{\om,0}$ the time at which $x$ lands in a hole $H_{\sg^j\om,\ep}$ ($j\geq 0$), which is defined by
%	\begin{align*}
%		\tau_{\om,\ep}(x):=\inf\set{n\geq 0 \;:\; T_\om^n(x)\in H_{\sg^n\om,\ep} }.
%	\end{align*}
%	Note that for $n\geq 1$ we have
%	\begin{align*}
%		X_{\om,n,\ep}=\set{x\in\cJ_{\om,\ep}\,:\, \tau_{\om,\ep}(x)\geq n+1}=\set{x\in\cJ_{\om,\ep}\,:\, \tau_{\om,\ep}(x)> n}
%	\end{align*}
%	for each $\om\in\Om$. 
%	

We now obtain a formula for the explicit form of Gumbel law for the extreme value distribution.
\begin{theorem}
	\label{evtthm}
	Given a random open system $(\mathlist{\bcomma}{\Om, m, \sg, \cJ_0, T, \cB, \cL_0, \nu_0, \phi_0, H_\ep})$ satisfying \eqref{C1'}, \eqref{C2}, \eqref{C3}, \eqref{C4'}, \eqref{C5'}, \eqref{C7'}, \eqref{C8},  and \eqref{xibound}, 
	%\gf{Note that \eqref{C1'},\eqref{C3} not explicitly referenced in the proof. 
	%I would not be surprised if (\ref{C6}) is not used, but rather implied by other assumptions.}
	for almost every $\omega\in\Omega$ one has
	\begin{equation}
		\label{evtthmeqn}
		\lim_{N\to\infty}\nu_{\om,0}\left(X_{\om,N-1,\ep_N}\right)
		=
		\lim_{N\to\infty}\mu_{\om,0}\left(X_{\om,N-1,\ep_N}\right)
		=
		\lim_{N\to\infty}\frac{\lm_{\om,\ep_N}^N}{\lm_{\om,0}^N}
		=
		\exp\left(-\int_\Om t_\om\ta_{\om,0}\, dm(\om)\right).
	\end{equation}
\end{theorem}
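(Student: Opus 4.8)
The plan is to establish the three equalities in (\ref{evtthmeqn}) one at a time, with the middle limit $\lim_{N\to\infty}\lm_{\om,\ep_N}^N/\lm_{\om,0}^N$ as the pivot. First I would handle the two outer equalities: by (\ref{evtexpression3}) and (\ref{evtexpression2mu}) we have
\begin{align*}
\nu_{\om,0}\left(X_{\om,N-1,\ep_N}\right)
&=\frac{\lm_{\om,\ep_N}^N}{\lm_{\om,0}^N}\left(\nu_{\om,\ep_N}(\ind)+\nu_{\sg^N\om,0}\left(Q_{\om,\ep_N}^N(\ind)\right)\right),\\
\mu_{\om,0}\left(X_{\om,N-1,\ep_N}\right)
&=\frac{\lm_{\om,\ep_N}^N}{\lm_{\om,0}^N}\left(\nu_{\om,\ep_N}(\phi_{\om,0})+\nu_{\sg^N\om,0}\left(Q_{\om,\ep_N}^N(\phi_{\om,0})\right)\right),
\end{align*}
so it suffices to show each bracketed factor tends to $1$. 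The $Q$-terms vanish by (\ref{C4'}) (uniformly in $\ep$, with $\al(N)\to 0$, since $\|\phi_{\sg^N\om,0}\|_{\infty}\le C_2$ and $\|\ind\|_{\cB}$ is controlled), while $\nu_{\om,\ep_N}(\ind)\to 1$ and $\nu_{\om,\ep_N}(\phi_{\om,0})\to 1$ uniformly in $\om$ by Lemma~\ref{lemC9'}. This gives the first and second equalities in (\ref{evtthmeqn}).

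**The main step.** The heart of the matter is evaluating $\lim_{N\to\infty}\lm_{\om,\ep_N}^N/\lm_{\om,0}^N$. Writing $\log(\lm_{\om,\ep_N}^N/\lm_{\om,0}^N)=\sum_{j=0}^{N-1}\big(\log\lm_{\sg^j\om,\ep_N}-\log\lm_{\sg^j\om,0}\big)$, I would use the first-order expansion from Theorem~\ref{thm: dynamics perturb thm} (applicable by Remark~\ref{rem checking esc cor cond}), namely
$$
\log\lm_{\om,0}-\log\lm_{\om,\ep_N}=\ta_{\om,0}\,\mu_{\om,0}(H_{\om,\ep_N})+o\big(\mu_{\om,0}(H_{\om,\ep_N})\big),
$$
together with the scaling (\ref{xibound}), $\mu_{\om,0}(H_{\om,\ep_N})=(t_\om+\xi_{\om,N})/N$, to get
$$
-\log\frac{\lm_{\om,\ep_N}^N}{\lm_{\om,0}^N}=\frac1N\sum_{j=0}^{N-1}\left(\ta_{\sg^j\om,0}+o(1)\right)(t_{\sg^j\om}+\xi_{\sg^j\om,N}).
$$
Now I would invoke Birkhoff's ergodic theorem for $\ta_{\om,0}\,t_\om$ (integrable since $\ta_{\om,0}\in[0,1]$ by (\ref{theta in [0,1]}) and $t\in L^\infty(m)$), handle the $\xi$-contribution using $|\xi_{\om,N}|\le W$ and $\xi_{\om,N}\to 0$ a.e.\ (a dominated-convergence / uniform-integrability argument on the Cesàro averages), and control the $o(1)$ error via the uniform bound $\frac{\log\lm_{\om,0}-\log\lm_{\om,\ep_N}}{\mu_{\om,0}(H_{\om,\ep_N})}\le A$ from Remark~\ref{rem checking esc cor cond}. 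The conclusion is
$$
\lim_{N\to\infty}-\log\frac{\lm_{\om,\ep_N}^N}{\lm_{\om,0}^N}=\int_\Om \ta_{\om,0}\,t_\om\,dm(\om),
$$
which after exponentiating yields the third equality in (\ref{evtthmeqn}).

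**The main obstacle.** The subtle point is the interchange of the $N\to\infty$ limit with the Birkhoff sum when the summands $\log\lm_{\sg^j\om,0}-\log\lm_{\sg^j\om,\ep_N}$ themselves depend on $N$ (through $\ep_N$) — this is not a single fixed observable, so plain Birkhoff does not immediately apply. The way around it is precisely the argument already used in the proof of Corollary~\ref{esc rat cor}: rewrite the sum as $\frac1N\sum_{j=0}^{N-1}\frac{\log\lm_{\sg^j\om,0}-\log\lm_{\sg^j\om,\ep_N}}{\mu_{\sg^j\om,0}(H_{\sg^j\om,\ep_N})}\,(t_{\sg^j\om}+\xi_{\sg^j\om,N})$, apply Birkhoff to the $L^\infty$-dominated family, and then pass $\ep_N\to 0$ (equivalently $N\to\infty$) using the pointwise convergence of the ratio to $\ta_{\om,0}$ together with dominated convergence; the uniform domination by $A\in L^1(m)$ is what makes the double limit legitimate. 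I expect this interchange, rather than any individual estimate, to be where the real care is needed; everything else is bookkeeping with the estimates already assembled in Lemma~\ref{lemC9'} and Remark~\ref{rem checking esc cor cond}.
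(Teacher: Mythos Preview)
Your overall strategy is correct and in fact streamlines the paper's argument, but your resolution of the ``main obstacle'' is imprecise in one important respect.

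You rightly identify that $\sum_{j=0}^{N-1}(\log\lm_{\sg^j\om,0}-\log\lm_{\sg^j\om,\ep_N})$ is a Birkhoff sum whose summands depend on $N$, so plain Birkhoff does not apply. However, your proposed fix---``apply Birkhoff to the $L^\infty$-dominated family, and then pass $\ep_N\to 0$ (equivalently $N\to\infty$)''---is not coherent as written, and the reference to Corollary~\ref{esc rat cor} is misleading. In that corollary $\ep$ and the Birkhoff index $N$ are \emph{decoupled}: one first lets $N\to\infty$ with $\ep$ fixed (ordinary Birkhoff), producing an honest integral, and only then sends $\ep\to 0$ via dominated convergence. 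Here $\ep_N$ is coupled to $N$, so that two-step procedure is unavailable. What you actually need is a non-standard ergodic lemma: if $g_N\to g$ $m$-a.e.\ and $\|g_N-g\|_{L^1(m)}\to 0$, then $\frac{1}{N}\sum_{j=0}^{N-1}g_N(\sg^j\om)\to\int g\,dm$ for a.e.\ $\om$. The paper states and proves exactly this as Lemma~\ref{ptwiselemma2} inside the proof. Once that lemma is in hand, your route works cleanly: with $g_N(\om)=N(\log\lm_{\om,0}-\log\lm_{\om,\ep_N})$ one has $g_N\to\ta_{\om,0}t_\om$ a.e.\ by (\ref{fiber esc rat deriv}) and $0\le g_N\le A(|t|_\infty+W)$ by Remark~\ref{rem checking esc cor cond}, so DCT gives the $L^1$ convergence.

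By contrast, the paper does \emph{not} invoke Theorem~\ref{thm: dynamics perturb thm} as a black box. It returns to the identity (\ref{maineq}) from the proof of Theorem~\ref{thm: GRPT}, writes $\lm_{\om,\ep_N}/\lm_{\om,0}=1-Y^{(1)}_{\om,\ep_N,n}-Y^{(2)}_{\om,\ep_N,n}-Y^{(3)}_{\om,\ep_N,n}$ with an auxiliary finite truncation parameter $n$, applies a Taylor expansion of $\log(1-\cdot)$, and then applies Lemma~\ref{ptwiselemma2} separately to each of the three resulting Birkhoff-type sums (Steps~3--5), controlling the Taylor remainder via the explicit uniform bounds (\ref{g1bound})--(\ref{g3bound}). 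Only at the very end is $n\to\infty$ taken. Your approach is shorter because it packages all of Steps~3--5 into a single appeal to the already-established pointwise limit $\ta_{\om,0}$; the paper's approach is more self-contained and makes the uniform-in-$\om$ error control explicit at each stage.
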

% \gf{Once we have some text in section 5 about $\theta_{\omega,0}$ being measurable and bounded between 0 and 1 we can refer to that.}

\begin{proof}
	%\ja{Maybe we should give a broad overview of the steps since there are quite a few and they are getting more complicated}
	%	\gf{Text from here down to Scheffe probably belongs in the proof of Theorem \ref{evtthm}.}
	\quad
	%\sv{We should probably point out that the limit is the same for the two measures thanks to (4.2).}
\begin{comment}
	
	We begin by proving that 
	\begin{equation}
	\label{evtthmeqn}
	\lim_{N\to\infty}\nu_{\om,0}\left(X_{\om,N-1,\ep_N}\right)
	=
	\lim_{N\to\infty}\frac{\lm_{\om,\ep_N}^N}{\lm_{\om,0}^N}
	=
	\exp\left(-\int_\Om t_\om\ta_{\om,0}\, dm(\om)\right),
	\end{equation}
	and then using \eqref{evtexpression2mu} to finally prove the remaining equality.
	
\end{comment}
	
	\textbf{Step 1: Estimating $\lambda_{\omega,\epsilon_N}/\lambda_{\omega,0}$.}
	%	Now we wish to show that
	%	\begin{align}
	%		\lim_{N\to\infty}\nu_{\om,0}\left(X_{\om,N-1,\ep_N}\right)
	%		=
	%		\lim_{N\to\infty}\frac{\lm_{\om,\ep_N}^N}{\lm_{\om,0}^N}
	%		=
	%		\exp\left(-\int_\Om t_\om\ta_{\om,0}\, dm(\om)\right).
	%	\end{align}
	To work towards constructing an estimate for $\lm^N_{\om,\ep_N}/\lm^N_{\om,0}$, we first estimate $\lm_{\om,\ep_N}/\lm_{\om,0}$.
	For brevity, in this step we drop the $N$ subscripts on $\ep$.
	%they will return soon when we estimate $\lm^N_{\om,\ep_N}$.
	Following the proof of Theorem~\ref{thm: GRPT} up to equation (\ref{maineq}), which uses assumptions \eqref{C2} and \eqref{C3}, we have 
	\begin{eqnarray}
		\lefteqn{\nu_{\sg^{-n}\om,\ep}(\phi_{\sg^{-n}\om,0})\frac{\lm_{\om,0}-\lm_{\om,\ep}}{\Dl_{\om,\ep}}}
		\nonumber\\
		\label{def:ta_ep,n}		
		\qquad&=&\underbrace{1-\sum_{k=0}^{n-1} \lm_{\sg^{-(k+1)}\om,0}^{-1}(\lm^k_{\sg^{-k}\om,\ep})^{-1}q^{(k)}_{\om,\ep}}_{=:\ta_{\om,\ep,n}}
		\\
		\label{def:ta'_ep,n}		
		\qquad&+&\Dl^{-1}_{\om,\ep}\underbrace{\sum_{k=1}^n \lm^{-1}_{\sg^{-k}\om,0}(\lm_{\sg^{-k}\om,0}-\lm_{\sg^{-k}\om,\ep})\nu_{\sg\om,0}((\cL_{\om,0}-\cL_{\om,\ep})(\~\cL_{\sg^{-k}\om,\ep}^k)(\phi_{\sg^{-k}\om,0}))}_{=:\ta'_{\om,\ep,n}}
		\\
		\label{def:ta''_ep,n}	
		\qquad&+&\Dl^{-1}_{\om,\ep}\underbrace{\nu_{\sg\om,0}(\cL_{\om,0}-\cL_{\om,\ep})(Q^n_{\sg^{-n}\om,\ep}(\phi_{\sg^{-n}\om,0}))}_{=:\ta''_{\om,\ep,n}}.
	\end{eqnarray}
	By first rearranging to solve for $\lm_{\om,\ep}$ we have
	\begin{eqnarray*}
		\lm_{\om,\ep}&=&\lm_{\om,0}-\frac{\ta_{\om,\ep,n}\Dl_{\om,\ep}+\ta'_{\om,\ep,n}+\ta''_{\om,\ep,n}}{\nu_{\sg^{-n}\om,\ep}(\phi_{\sg^{-n}\om,0})},
	\end{eqnarray*}
	and thus
	\begin{align}
		\frac{\lm_{\om,\ep}}{\lm_{\om,0}}=1-
		\underbrace{
			\frac{\ta_{\om,\ep,n}\Dl_{\om,\ep}}{\lm_{\om,0}\nu_{\sg^{-n}\om,\ep}(\phi_{\sg^{-n}\om,0})}}_{=:Y^{(1)}_{\om,\ep,n}}
			-\underbrace{\frac{\ta'_{\om,\ep,n}}{\lm_{\om,0}\nu_{\sg^{-n}\om,\ep}(\phi_{\sg^{-n}\om,0})}}_{Y^{(2)}_{\om,\ep,n}}
			-\underbrace{\frac{\ta''_{\om,\ep,n}}{\lm_{\om,0}\nu_{\sg^{-n}\om,\ep}(\phi_{\sg^{-n}\om,0})}}_{=:Y^{(3)}_{\om,\ep,n}}.
		\label{Y^ieqn}
	\end{align}
	Setting $Y_{\om,\ep,n}:=Y^{(1)}_{\om,\ep,n}+Y^{(2)}_{\om,\ep,n}+Y^{(3)}_{\om,\ep,n}$ applying Taylor to $\log(1-\cdot)$, we obtain
	\begin{equation}
		\label{Yeqn}
		\frac{\lm_{\om,\ep}}{\lm_{\om,0}}
		=\exp\left(-Y_{\om,\ep,n}-\frac{Y_{\om,\ep,n}^2}{2(1-y)^2}\right),
	\end{equation}
	where $0\le y\le Y_{\om,\ep,n}$.
	Setting $\ep=\ep_N$ in (\ref{Yeqn}) we obtain
	\begin{eqnarray}
		\label{82}
		\frac{\lm^N_{\om,\ep_N}}{\lm_{\om,0}^N}
		&=&
		\exp\left(
		-\sum_{i=0}^{N-1}Y_{\sg^i\om,\ep_N,n} -\sum_{i=0}^{N-1}\frac{Y_{\sg^i\om,\ep_N,n}^2}{2(1-y)^2}
		\right)\\
		\label{82a}	&=&	\exp\left(
		-\frac{1}{N}\sum_{i=0}^{N-1}\left(g^{(1)}_{N,n}(\sg^i\om)+g^{(2)}_{N,n}(\sg^i\om)+g^{(3)}_{N,n}(\sg^i\om)\right) -\sum_{i=0}^{N-1}\frac{Y_{\sg^i\om,\ep_N,n}^2}{2(1-y)^2}
		\right),
	\end{eqnarray}
	where $g^{(j)}_{N,n}(\omega)=NY^{(j)}_{\omega,\epsilon_N,n}$ for $j=1,2,3$ and $0\le y\le Y_{\omega,\epsilon_N,n}$.

	\textbf{Step 2: A non-standard ergodic lemma.}
	In preparation for estimating the products along orbits contained in $\lm^N_{\om,\ep_N}/\lm^N_{\om,0}$, we state and prove a non-standard ergodic lemma.
	%\gf{revise lemma statement and clean up proof.}
	\begin{lemma}
		\label{ptwiselemma2}
		For $N\ge 0$, let $g_N\in L^1(m)$.
		Suppose that as $N\to\infty$, $g_N\to g$ $m$-almost everywhere for some $g\in L^1(m)$ and that $\lim_{N\to\infty}\int_\Om |g_N-g|\,dm =0$.
		Then for $m$-a.e. $\om\in\Om$, $\lim_{N\to\infty}\frac{1}{N}\sum_{i=0}^{N-1} g_N(\sg^i\om)$ exists and equals $\mathbb{E}(g)=\int_\Om g\ dm$.
	\end{lemma}
	\begin{proof}
		We write
		$$
		\left|\frac{1}{N}\sum_{i=0}^{N-1} g_N(\sg^i\om) - \mathbb{E}(g)\right|
		\le
		\frac{1}{N}\sum_{i=0}^{N-1} |g_N(\sg^i\om) - g(\sg^i\om)|
		+\left|\frac{1}{N}\sum_{i=0}^{N-1} g(\sg^i\om) - \mathbb{E}(g)\right|.
		$$
		First, we note that the Birkhoff Ergodic Theorem implies that
		\begin{align*}
			\lim_{N\to\infty}\left|\frac{1}{N}\sum_{i=0}^{N-1} g(\sg^i\om) - \mathbb{E}(g)\right|=0.
		\end{align*}
		Now, to deal with the remaining term, since $g_N\to g$ almost everywhere, given $\dl>0$, we let $N_\dl\in\NN$ be sufficiently large such that
		\begin{align*}
			m\lt(\Om_\dl:=\set{\om\in\Om: |g(\om)-g_N(\om)|<\dl \text{ for all } N\geq N_\dl} \rt)>1-\dl.
		\end{align*}
		Birkhoff applied to $\ind_{\Om_\dl}$ then gives that for each $\dl>0$ and all $N\geq N_\dl$ sufficiently large we have that
		\begin{align}\label{eq: erg dens arg}
			\frac{1}{N}\#\set{0\leq k< N: \sg^k\om\in \Om_\dl}>1-\dl.
		\end{align}	
		Since  $\lim_{N\to\infty}\int_\Om|g_N-g|\, dm(\om)=0$, for each $\dl>0$ there exists $N_\dl'>0$ such that $\int_\Om |g_N-g|\, dm<\dl$ for all $N\geq N_\dl'$. 
		We apply the Birkhoff Ergodic Theorem to the functions $|g_N-g|\cdot\ind_{\Om_\dl}$ and  $|g_N-g|\cdot\ind_{\Om_\dl^c}$.
		Using \eqref{eq: erg dens arg} gives that for $m$-a.e. $\om\in\Om$ and all $N\geq \max(N_\dl,N_\dl')$ sufficiently large (so that the Birkhoff error is less than $\delta$, noting that $N$ depends on $\om$) we have
		\begin{align*}
			&\frac{1}{N}\sum_{i=0}^{N-1} |g_N(\sg^i\om) - g(\sg^i\om)|
			\\
			&\qquad
			=
			\frac{1}{N}\sum_{i=0}^{N-1} |g_N(\sg^i\om) - g(\sg^i\om)|\ind_{\Om_\dl}(\sg^i\om)
			+
			\frac{1}{N}\sum_{i=0}^{N-1} |g_N(\sg^i\om) - g(\sg^i\om)|\ind_{\Om_\dl^c}(\sg^i\om)
			\\
			&\qquad
			<
			\dl+\int_{\Om_\dl^c}|g_N-g|\, dm+\dl
			\leq 2\dl+\int_{\Om}|g_N-g|\, dm <3\dl.
		\end{align*}
		%\gf{where?  the final inequality?  the third $\delta$ comes from the $N'_\delta$ property.}\sv{is fine with $3\delta$}
		As this holds for every $\dl>0$, we must in fact have that
		\begin{align*}
			\lim_{N\to\infty}\left|\frac{1}{N}\sum_{i=0}^{N-1} g_N(\sg^i\om) - \mathbb{E}(g)\right|=0
		\end{align*}
		as desired.
	\end{proof}
	
	\textbf{Step 3: Estimating $g^{(1)}$.}
	
	In this step we construct estimates of $g^{(1)}$ that are required to apply Lemma \ref{ptwiselemma2}.
	
	%\paragraph{\emph{Bounding the denominator of $Y_{\omega,\epsilon_N,n}$} }
	%By Lemma~\ref{lemC9'} we have
	%\begin{eqnarray}
	%\label{Ydenombound}
	%\lm_{\om,0}\nu_{\sg^{-n}\om,\ep_{N}}(\phi_{\sg^{-n}\om,0})\ge \lambda_{\omega,0}/C_{\epsilon_N}.
	%\end{eqnarray}
	In preparation for the first use of Lemma \ref{ptwiselemma2} we recall that 
	$$
	g_{N,n}^{(1)}(\omega):=\frac{N\theta_{\omega,\epsilon_N,n}\Delta_{\omega,\epsilon_N}}{\lambda_{\omega,0}\nu_{\sigma^{-n}\omega,\epsilon_N}(\varphi_{\sigma^{-n}\omega,0})}\\
	=\frac{N\theta_{\omega,\epsilon_N,n}\mu_{\omega,0}(H_{\omega,\epsilon_N})}{\nu_{\sigma^{-n}\omega,\epsilon_N}(\varphi_{\sigma^{-n}\omega,0})}
	=\frac{\theta_{\omega,\epsilon_N,n}(t_\omega+\xi_{\omega,N})}{\nu_{\sigma^{-n}\omega,\epsilon_N}(\varphi_{\sigma^{-n}\omega,0})},$$
	where  $\lim_{N\to\infty}\xi_{\omega,N}=0$ for a.e.\ $\omega$ and $|\xi_{\om,N}|\le W$ by (\ref{xibound}).
	%By (\ref{thresholdchoice2}) we have $\lim_{N\to\infty}N\mu_{\omega,0}(H_{\omega,\epsilon_N})=t_\omega$ for a.e.\ $\omega$.
	%Thus for  sufficiently large, say $N\geq N_{\xi,\om}$, we have
	%	\begin{equation}
	%		\label{EVTdelta2}
	%		\Dl_{\om,\ep_{N}}=\lm_{\om,0}\mu_{\om,0}(H_{\om,\ep_N})\in \lt(\frac{\lm_{\om,0}(t_\om-\xi)}{N}, \frac{\lm_{\om,0}(t_\om+\xi)}{N}\rt).
	%	\end{equation}
	%Thus for each $N\in\NN$ we can write
	%	\begin{align}
	%	\label{mueqn}
	%		\mu_{\om,0}(H_{\om,\ep_N})=\frac{t_\om+\xi_{\om,N}}{N}
	%	\end{align}
	%	where $\xi_{\om,N}\to 0$ as $N\to\infty$.
	%Because $t_\om,\mu_{\om,0}(H_{\om,\ep_N})\in L^1(m)$ we must also have that  
	We also set $g_n^{(1)}(\omega):=\theta_{\omega,0,n}t_\omega$, where 
	\begin{align}\label{theta_0,n def}
		\ta_{\om,0,n}:=1-\sum_{k=0}^{n-1}\left(\lm_{\sg^{-(k+1)}\om,0}^{k+1}\right)^{-1}q_{\om,0}^{(k)}
		=1-\sum_{k=0}^{n-1}\hat q_{\om,0}^{(k)}.
	\end{align}
	
	By Lemma \ref{qbound2} and \eqref{C8} we see that $0\le \hat q_{\om,0}^{(k)},\hat q_{\om,\ep_N}^{(k)}\le 1$ for each $k$, $N$, and $m$-a.e.\ $\omega$. Thus, \eqref{C8} and the Dominated Convergence Theorem imply that $\lim_{N\to\infty}\|\hat q_{\om,\ep_N}^{(k)}-\hat q_{\om,0}^{(k)}\|_1=0$. From \eqref{def:ta_ep,n} we have that 
	\begin{align}
		\ta_{\om,\ep_N,n}
		:=1-\sum_{k=0}^{n-1}\lm_{\sg^{-(k+1)}\om,0}^{-1}(\lm^k_{\sg^{-k}\om,\ep_N})^{-1}q_{\om,\ep_N}^{(k)}
		=1-\sum_{k=0}^{n-1}\frac{\lm_{\sg^{-k}\om,0}^k}{\lm_{\sg^{-k}\om,\ep_N}^k}\hat q_{\om,\ep_N}^{(k)},
	\end{align}
	and thus that $\ta_{\om,0,N}\in[0,1]$ and $\ta_{\om,\ep_N,n}\leq 1$ for each $n$ and $m$-a.e.\ $\omega$.
	Using \eqref{LmUniErr} and the fact that $\sfrac{\lm_{\om,0}}{\lm_{\om,\ep_N}}\geq 1$ (by Remark~\ref{rem lm_0>lm_ep}), we have that 
	\begin{align*}
		\ta_{\om,\ep_N,n}
		\geq 1-\frac{\lm_{\sg^{-(n-1)}\om,0}^{n-1}}{\lm_{\sg^{-(n-1)}\om,\ep_N}^{n-1}}\sum_{k=0}^{n-1}\hat q_{\om,\ep_N}^{(k)}
		\geq
		1-\left(1+\frac{C_1E_N}{C_1-E_N}\right)^n.
	\end{align*}
	For fixed $n$, again we apply Dominated Convergence to get that 
	\begin{align}\label{ta_ep to ta_0 in L1}
	    \lim_{N\to\infty}\|\ta_{\om,\ep_N,n}-\ta_{\om,0,n}\|_1=0.
	\end{align}
	Using Lemma~\ref{lemC9'} and \eqref{xibound} we see that $g_{N,n}^{(1)},g_n^{(1)}\in L^1(m)$ for each $N,n$.
\begin{comment}
	\textcolor{green}{Moreover using these same facts\begin{eqnarray*}
		\|g_{N,n}^{(1)}-g_n^{(1)}\|_1&\le& 
	\int \left(\left|\frac{\theta_{\omega,\epsilon_N,n}t_\omega}{\nu_{\sigma^{-n}\omega,\epsilon_N}(\varphi_{\sigma^{-n}\omega,0})}-\theta_{\om,0,n}t_\om\right| + \left|\frac{\theta_{\omega,\epsilon_N,n}\xi_{\omega,N}}{\nu_{\sigma^{-n}\omega,\epsilon_N}(\varphi_{\sigma^{-n}\omega,0})}\right|\right)\ dm\\
	&\le& |t|_\infty \int \left|\frac{\ta_{\om,\ep_N,n}}{\nu_{\sigma^{-n}\omega,\epsilon_N}(\varphi_{\sigma^{-n}\omega,0})}-\ta_{\om,0,n}\right|\ dm + C_{\ep_N}\gamma_N \\
	&\le& |t|_\infty( C_{\ep_N}\|\ta_{\om,\ep_N,n}-\ta_{\om,0,n}\|_1+(C_{\ep_N}-1))+C_{\ep_N}\gamma_N \to 0
	\end{eqnarray*}
	%\gf{I replaced $\xi$ integral in final term above with $\gamma_N$.}
	as $N\to\infty$ for each $n$}\\%, using (\ref{xibound}).
	\end{comment}	 
Moreover using these same facts	
\begin{eqnarray*}
		\|g_{N,n}^{(1)}-g_n^{(1)}\|_1&\le& 
	\int \left(\left|\frac{\theta_{\omega,\epsilon_N,n}(t_\omega+\xi_{\om,N})}{\nu_{\sigma^{-n}\omega,\epsilon_N}(\varphi_{\sigma^{-n}\omega,0})}-\theta_{\om,0,n}t_\om\right| \right)\ dm.
	\end{eqnarray*}
	The integrand goes to zero $\om$ almost surely and it is bounded by $C_{\ep_N}(|t|_\infty+W)+|t|_\infty.$ We could therefor apply dominated convergence to prove that the $L^1(m)$ difference goes to zero.
\begin{comment}

	------------------------------------------------------------------
	\gf{Jason:  is the above ok?  I previously left out the central $(C_{\ep_N}-1)$ term in the above expression.}\ja{I think it looks good now.}
	\ja{Correction of above calculation}
	
	\begin{align*}
	    &\|g_{N,n}^{(1)}-g_n^{(1)}\|_1
	    =
	    \int\frac{1}{\nu_{\sg^{-n}\om,\ep_N}(\phi_{\sg^{-n}\om,0})}|\nu_{\sg^{-n}\om,\ep_N}(\phi_{\sg^{-n}\om,0})\ta_{\om,0,n}t_\om 
	    - \ta_{\om,\ep_N,n}(t_\om+\xi_{\om,N}|
	    \\
	    &\leq C_{\ep_N}\int|\nu_{\sg^{-n}\om,\ep_N}(\phi_{\sg^{-n}\om,0})\ta_{\om,0,n}t_\om 
	    - \ta_{\om,\ep_N,n}(t_\om+\xi_{\om,N})|
	    \\
	    &= C_{\ep_N}\int|\nu_{\sg^{-n}\om,\ep_N}(\phi_{\sg^{-n}\om,0})
	    \left(\ta_{\om,0,n}t_\om - \ta_{\om,\ep_N,n}(t_\om+\xi_{\om,N})\right)
	    + \ta_{\om,\ep_N,n}(t_\om+\xi_{\om,N})
	    \left(\nu_{\sg^{-n}\om,\ep_N}(\phi_{\sg^{-n}\om,0}) - 1\right)|
	    \\
	    &\leq
	     C_{\ep_N}^2\int|\ta_{\om,0,n}t_\om-\ta_{\om,\ep_N,n}(t_\om+\xi_{\om,N})|
	     +
	     C_{\ep_N}\int|\ta_{\om,\ep_N,n}(t_\om+\xi_{\om,N})||\nu_{\sg^{-n}\om,\ep_N}(\phi_{\sg^{-n}\om,0}) - 1|
	    \\
	    &\leq
	    C_{\ep_N}^2|t|_\infty\int|\ta_{\om,0,n}-\ta_{\om,\ep_N,n}|
	    +
	    C_{\ep_N}^2\gm_N
	    +
	    C_{\ep_N}(|t|_\infty+\gm_N)\int|\nu_{\sg^{-n}\om,\ep_N}(\phi_{\sg^{-n}\om,0}) - 1|.
	\end{align*}
	Using Lemma \ref{lemC9'}, \eqref{ta_ep to ta_0 in L1}, and \eqref{xibound} we see that the right-hand side goes to $0$ as $N\to\infty$ for each $n\in\NN$.

	------------------------------------------------------------------
\end{comment}
	
	Referring to the first term in the $Y$-sum in (\ref{82}), we may now apply Lemma \ref{ptwiselemma2} to conclude that 
	$$\sum_{i=0}^{N-1}\frac{\theta_{\sigma^i\omega,\epsilon_N,n}\Delta_{\sigma^i\omega,\epsilon_N}}{\lambda_{\sigma^i\omega,0}\nu_{\sigma^{-n+i}\omega,\epsilon_N}(\varphi_{\sigma^{-n+i}\omega,0})}=\frac{1}{N}\sum_{i=0}^{N-1}g_{N,n}^{(1)}(\sigma^i\omega)\to \int_\Omega g_n^{(1)}(\omega)\ dm(\omega)=\int_\Omega \theta_{\omega,0,n}t_\omega\ dm(\omega),$$
	as $N\to\infty$ for each $n$ and $m$-a.e.\ $\omega$.
	%Furthermore,
	%	for each $\dl>0$ there exists $N_\dl\in\NN$ such that for each $N\geq N_\dl$ we have
	%	\begin{align*}
	%		\int_\Om |\xi_{\om,N}|\, dm \leq \dl.
	%	\end{align*}
	
	%\gf{I think we are done for this step and don't need everything from here to the end of this step?}
	
	%	\gf{Insert rest of the bound here.}

	\textbf{Step 4: Estimating $g^{(2)}$.} 
	We now perform a similar analysis to the previous step to control the terms in the sum (\ref{82}) corresponding to $\theta'$.
	Again in preparation for applying Lemma \ref{ptwiselemma2}, recall that
	$$g_{N,n}^{(2)}(\omega)=\frac{N\theta'_{\omega,\epsilon_N,n}}{\lambda_{\omega,0}\nu_{\sigma^{-n}\omega,\epsilon_N}(\varphi_{\sigma^{-n}\omega,0})}$$
	and set $g_n^{(2)}(\omega)\equiv 0$ for each $n$.
	%\ja{This ineq only used the sup norm not B norm}
	Using \eqref{def:ta'_ep,n}, (\ref{opdiff}), (\ref{EVTlamdiff2}), and (\ref{C7'}), for sufficiently large $N$ we have  
	%\gf{I think the $\lambda_{\om,0}$ out the front should not be there? It gets cancelled by the one coming from (\ref{opdiff}?}
	%\gf{I inserted a copy of $\lambda_{\omega,0}$ in the first line below as I think it was missing. I also changed summation limit from $N$ to $n$.}
	\begin{eqnarray*}
		\ta_{\om,\ep_N,n}'
		&:=&\sum_{k=1}^n \lm^{-1}_{\sg^{-k}\om,0}(\lm_{\sg^{-k}\om,0}-\lm_{\sg^{-k}\om,\ep})\nu_{\sg\om,0}((\cL_{\om,0}-\cL_{\om,\ep})(\~\cL_{\sg^{-k}\om,\ep}^k)(\phi_{\sg^{-k}\om,0}))
		\\
		&\leq& 
		\frac{C_2C_3\lambda_{\omega,0}}{N}\sum_{k=1}^n\lambda_{\sigma^{-k}\omega,0}^{-1}\left(\lambda_{\sigma^{-k}\omega,0}(t_{\sg^{-k}\om}+\xi_{\sg^{-k}\om,N})\right)
		\nu_{\om,0}\lt(\ind_{H_{\om,\ep_N}}\~\cL_{\sg^{-k}\om,\ep_N}^k\phi_{\sg^{-k}\om,0}\rt)
		\\
		&\leq&
		\frac{C_2C_3\lambda_{\omega,0}}{N}\sum_{k=1}^n(t_{\sg^{-k}\om}+\xi_{\sg^{-k}\om,N})
		\nu_{\om,0}(H_{\om,\ep_N})\norm{\~\cL_{\sg^{-k}\om,\ep_N}^k\phi_{\sg^{-k}\om,0}}_{\infty,\om}
		\\
		&\leq& 
		\frac{C_2C_3\lambda_{\omega,0}}{N}\cdot \frac{C_3(t_\om+\xi_{\om,N})}{N}
		\sum_{k=1}^n(t_{\sg^{-k}\om}+\xi_{\sg^{-k}\om,N})
		\norm{\~\cL_{\sg^{-k}\om,\ep_N}^k\phi_{\sg^{-k}\om,0}}_{\infty,\om}.
	\end{eqnarray*}
	Using Lemma~\ref{lemC9'}, (\ref{C5'}), and (\ref{C4'}) we note that
	\begin{eqnarray}
		\nonumber		\norm{\~\cL_{\sg^{-k}\om,\ep_N}^k\phi_{\sg^{-k}\om,0}}_{\infty,\om}
		&=&
		\norm{\nu_{\sg^{-k}\om,\ep_N}(\phi_{\sg^{-k}\om,0})\phi_{\om,\ep_N}+Q_{\sg^{-k}\om,\ep_N}^k\phi_{\sg^{-k}\om,0}}_{\infty,\om}
		\\
		\nonumber		&\leq &
		\norm{C_{\ep_N}C_2+C_{\phi_0}C_2\al(k)}_{\infty,\om}
		\\
		\label{Lpowerinfbound}		&\leq&
		C_2C_{\ep_N}+C_2C_{\phi_0}\al.
	\end{eqnarray}
	%for some $\alpha<\infty$.
	Finally, we have for $N$ sufficiently large and by (\ref{xibound}) 
	%\gf{can we add ``and by...'' something involving the bound the $1/\Delta$ term?}
	\begin{eqnarray*}
		g_{N,n}^{(2)}(\omega)&\le& \frac{NC_2C_3\lambda_{\omega,0}(C_2C_{\ep_N}+C_2C_{\phi_0}\al)}{N\lambda_{\omega,0}\nu_{\sg^{-n}\om,\ep_{N}}(\phi_{\sg^{-n}\om,0})}\cdot \frac{C_3(t_\om+\xi_{\om,N})}{N}
		\sum_{k=1}^n(t_{\sg^{-k}\om}+\xi_{\sg^{-k}\om,N}).
		\end{eqnarray*}
		%$$\textcolor{green}{\le{C_{\epsilon_N}C_2C_3(C_2C_{\ep_N}+C_2C_{\phi_0}\al)}\cdot \frac{C_3(t_\om+\gm_N)}{N}\sum_{k=1}^n{(t_{\sg^{-k}\om}+\gm_N)}}$$
	
Integrability of $g_{N,n}^{(2)}$ follows from the the fact that $t\in L^\infty(m)$ and $|\xi_{\om,N}|\le W,$ for almost all $\om.$
	Moreover by  (\ref{xibound}), for each $n$ we have that $g_{N,n}^{(2)}\to 0$ almost everywhere as $N\to\infty$.
	%By (\ref{L1xi}) 
	%\gf{need additional hypothesis on $L^1$ decay of $\xi_N$}.
	By dominated convergence  we have again that $\lim_{N\to\infty}\|g_{N,n}^{(2)}-g_n^{(2)}\|_1\to 0$ for each $n$.
	
	Referring to $Y_{\om,\ep_N,n}^{(2)}$ in \eqref{Y^ieqn}, we may now apply Lemma \ref{ptwiselemma2} to conclude that 
	$$
	\sum_{i=0}^{N-1}\frac{\theta'_{\sigma^i\omega,\epsilon_N,n}}{\lambda_{\sigma^i\omega,0}\nu_{\sigma^{-n+i}\omega,\epsilon_N}(\varphi_{\sigma^{-n+i}\omega,0})}=\frac{1}{N}\sum_{i=0}^{N-1}g_{N,n}^{(2)}(\sigma^i\omega)\to \int_\Omega g_n^{(2)}(\omega)\ dm(\omega)=0,
	$$
	as $N\to\infty$ for each $n$ and a.e.\ $\omega$.
	
	%and (\ref{Ydenombound}) we have that for all $N$ sufficiently large
	%	\begin{align}
	%		\frac{\ta_{\om,\ep_N,n}'}{\lm_{\om,0}\nu_{\sg^{-n}\om,\ep_{N}}(\phi_{\sg^{-n}\om,0})}
	%		&\leq 
	%		\frac{C_3(t_\om+\xi_{\om,N})}{N}\cdot
	%		\frac{C_2C_3C_{\ep_N} (C_2C_{\ep_N}+C_2C_{\phi_0}\al)}{N}
	%		\sum_{k=1}^N(t_{\sg^{-k}\om}+\xi_{\sg^{-k}\om,N})
	%		\\
	%	\label{thetaprimeterm}	&\leq 
	%		\frac{C_3(t_\om+\xi_{\om,N})}{N}\cdot
	%		2C_2^2C_3C_{\ep_N} (C_{\ep_N}+C_{\phi_0}\al)
	%		\int_\Om t_\om \, dm(\om)
	%	\end{align}
	%	where the last inequality follows from Lemma~\ref{ptwiselemma2} (with $g=t_\om$,  $g_n^{(2)}=t_\om+\xi_{\om,N}$, and $G=???$ \gf{I am not so sure about this anymore. Yes, if we take $N=1$ then we may bound $|g_1-g|$ but we need to uniformly bound $|g_n^{(2)}-g|$ for all sufficiently large $N$.})
	
	\textbf{Step 5: Estimating $g^{(3)}$.}
	%	For $\ta''$
	We repeat a similar analysis to control the terms in the sum (\ref{82}) corresponding to $\theta''$.
	Again in preparation for applying Lemma \ref{ptwiselemma2}, recall that
	$$g_{N,n}^{(3)}(\omega)=\frac{N\theta''_{\omega,\epsilon_N,n}}{\lambda_{\omega,0}\nu_{\sigma^{-n}\omega,\epsilon_N}(\varphi_{\sigma^{-n}\omega,0})}.$$
	We begin developing an upper bound for $|g_{N,n}^{(3)}|$.
	%\mbox{ and }g_n^{(3)}(\omega)=\frac{??}{\lambda_{\omega,0}\nu_{\sigma^{-n}\omega,0}(\varphi_{\sigma^{-n}\omega,0})}$$ for each $n$,
	%where the numerator in $g_n^{(3)}$ is $\lim_{N\to\infty} N\theta''_{\omega,\epsilon_N,n}$...
	%\gf{What is e.g.\ $\lim_{N\to\infty} N\nu_{\omega,0}(H_{\omega,\epsilon_N})$?  See second display term below.  I couldn't find a nice expression for this.}
	%\ja{This is bounded by equation \eqref{EVTeta2}}
	Using \eqref{def:ta''_ep,n}, (\ref{C2}), (\ref{C4'}), and  (\ref{C7'}) we have for sufficiently large $N$ that 
	%\gf{add equation refs for the indicator on the hole, for $C_{\varphi_0}$ (I couldn't find the constant defined), for $\alpha(n)$, for the introduction of $\mu_{\omega,0}$, and for the use of $t_\omega,\xi_{\omega,N}$.} we have
	%\ja{add an additional step before the first equation}  
	\begin{align*}
		|\ta_{\om,\ep_N,n}''|&:=
		\nu_{\sg\om,0}(\cL_{\om,0}(\ind_{H_{\om,\ep_N}}Q^n_{\sg^{-n}\om,\ep}(\phi_{\sg^{-n}\om,0})))
		\\
		&=
		\lambda_{\omega,0}\nu_{\om,0}(\ind_{H_{\om,\ep_N}}Q_{\sg^{-n}\om,\ep_N}^n\phi_{\sg^{-n}\om,0})		\\
		&\leq 
		\lambda_{\omega,0}\nu_{\om,0}(H_{\om,\ep_N})\norm{Q_{\sg^{-n}\om,\ep_N}^n\phi_{\sg^{-n}\om,0}}_{\infty,\om}
		\\
		&\leq
		\lambda_{\omega,0}\nu_{\om,0}(H_{\om,\ep_N})C_{\phi_0}\norm{\phi_{\sg^{-n}\om,0}}_{\cB_{\sg^{-n}\om}}\al(n)
		\\
		&\leq
		\lambda_{\omega,0}C_{\phi_0}C_2C_3\al(n)\mu_{\om,0}(H_{\om,\ep_N})
		\\
		%&\leq
		&=
		\frac{\lambda_{\omega,0}C_{\phi_0}C_2C_3\al(n) (t_\om+\xi_{\om,N})}{N}.
	\end{align*}
	%\gf{I changed final inequality above to equality.}
	Therefore, using Lemma~\ref{lemC9'}
	$$|g_{N,n}^{(3)}(\omega)|\le \frac{N\lm_{\om,0}C_{\phi_0}C_2C_3\al(n) (t_\om+\xi_{\om,N})}{N\lm_{\om,0}\nu_{\sg^{-n}\om,\ep_{N}}(\phi_{\sg^{-n}\om,0}))}\le C_{\epsilon_N}C_{\phi_0}C_2C_3\al(n) (t_\om+\xi_{\om,N})=:\tilde{g}^{(3)}_{N,n}(\omega).
	$$
	We set $\tilde{g}^{(3)}_n(\omega)=C_{\varphi_0}C_2C_3\alpha(n)t_\omega$. 
	Integrability of $\tilde{g}_{N,n}^{(3)}$ and $\tilde{g}_{n}^{(3)}$ follows from (\ref{xibound}) and the fact $t\in L^\infty(m)$ and $|\xi_{\om,N}|\le W,$ for almost all $\om.$ 
	Similarly, (recalling that $C_{\ep_N}\to 1$ as $N\to\infty$ by Lemma~\ref{lemC9'}) for each $n$, $\tilde{g}_{N,n}^{(3)}\to \tilde{g}_{n}^{(3)}$  almost everywhere as $N\to\infty$.
	For the same reasons
	by \eqref{xibound} and dominated convergence  %\ja{formerly(\ref{L1xi})} 
	%\gf{need additional hypothesis on $L^1$ decay of $\xi_N$}
	we also have that $\lim_{N\to\infty}\|\tilde{g}_{N,n}^{(3)}-\tilde{g}_n^{(3)}\|_1\to 0$ for each $n$.
	
	Referring to $Y_{\om,\ep_N,n}^{(3)}$ in \eqref{Y^ieqn}, we may now apply Lemma \ref{ptwiselemma2} to $\tilde{g}_{N,n}^{(3)}$ and $\tilde{g}_n^{(3)}$ to conclude that 
	%\sum_{i=0}^{N-1}\frac{\theta''_{\sigma^i\omega,\epsilon_N,n}}{\lambda_{\sigma^i\omega,0}\nu_{\sigma^{-n+i}\omega,\epsilon_N}(\varphi_{\sigma^{-n+i}\omega,0})}=
	$$\frac{1}{N}\sum_{i=0}^{N-1}g_{N,n}^{(3)}(\sigma^i\omega)\le\frac{1}{N}\sum_{i=0}^{N-1}\tilde{g}_{N,n}^{(3)}(\sigma^i\omega)
	\to \int_\Omega \tilde{g}_n^{(3)}(\omega)\ dm(\omega)
	=C_{\varphi_0}C_2C_3\alpha(n)\int_\Omega t_\omega\ dm(\omega)$$%C_0
	as $N\to\infty$ for each $n$ and a.e.\ $\omega$.

	\textbf{Step 6: Finishing up.}\label{step6}
	Recall from (\ref{evtexpression3}) that
	\begin{equation*}
		%\label{evtexpression3repeat}
		(\ref{evtexpression})=\frac{\lm_{\om,\ep_N}^N}{\lm_{\om,0}^N}
		\lt(\nu_{\om,\ep_N}(\ind)+\nu_{\sg^N\om,0}\lt(Q_{\om,\ep_N}^N( \ind)\rt)\rt).
	\end{equation*}
	Using (\ref{C4'}) and Lemma~\ref{lemC9'} we see that $\lim_{N\to\infty}\lt(\nu_{\om,\ep_N}(\ind)+\nu_{\sg^N\om,0}\lt(Q_{\om,\ep_N}^N( \ind)\rt)\rt)=1$ for $m$-a.e.\ $\omega\in\Omega$.
	%The three additive terms in $Y_{\omega,\epsilon_N,n}$ have already been estimated in Steps 3 and 4.
	By (\ref{82a}) and Steps 3, 4, and 5 we see that for any $n$ %\gf{ignoring the Taylor error terms for now}
	\begin{eqnarray}
		\label{ratiobound}	\lefteqn{\lim_{N\to\infty}\exp\left(	-\frac{1}{N}\sum_{i=0}^{N-1}\left(g^{(1)}_{N,n}(\sg^i\om)+g^{(2)}_{N,n}(\sg^i\om)+g^{(3)}_{N,n}(\sg^i\om)\right)\right)\exp\left( -\sum_{i=0}^{N-1}\frac{Y_{\sg^i\om,\ep_N,n}^2}{2(1-y)^2}
			\right)}\\
		\nonumber	&&\le \lim_{N\to\infty} \frac{\lambda_{\omega,\epsilon_N}^N}{\lambda_{\omega,\epsilon_N}^N}\\
		\nonumber	&&\le
		\lim_{N\to\infty}\exp\left(
		-\frac{1}{N}\sum_{i=0}^{N-1}\left(g^{(1)}_{N,n}(\sg^i\om)+g^{(2)}_{N,n}(\sg^i\om)-\tilde{g}^{(3)}_{N,n}(\sg^i\om)\right)\right)\exp\left( -\sum_{i=0}^{N-1}\frac{Y_{\sg^i\om,\ep_N,n}^2}{2(1-y)^2}
		\right),
	\end{eqnarray}
	where $0\le y\le Y_{\sg^i\om,\ep_N,n}$.
	We now treat the Taylor remainder terms.
	From Steps 3, 4, and 5 for all $N$ sufficiently large we have and for almost all $\om$: 
	%\gf{should the second bound below have $n+1$ in place of $n$? Unless I am missing something, the first bound is not very clear from what is currently written in Step 3;  we should add something in Step 3 and/or refer to it here.}
	\begin{eqnarray}
		\label{g1bound}|g^{(1)}_{N,n}|&\le& C_{\epsilon_N}(|t|_\infty+W), 
		\\
		\label{g2bound}|g^{(2)}_{N,n}|&\le& {C_{\epsilon_N}C_2C_3(C_2C_{\ep_N}+C_2C_{\phi_0}\al)}\cdot \frac{C_3n(|t|_\infty+W)^2}{N},
		\\
		\label{g3bound}|g^{(3)}_{N,n}|&\le& \~ g^{(3)}_{N,n}:= C_{\phi_0}C_2C_3\al(|t|_\infty+W).%C_0
	\end{eqnarray}
	%	\ja{Do we need an absolute value bound for $g^{(1)}_{N,n}$? If so we need to use the lower bound from Step 3. Actually I think if we just put absolute values and take $N$ sufficiently large these are all true.}
	Further,
	\begin{equation}
		\label{remainderbound}\sum_{i=0}^{N-1}\frac{Y_{\sg^i\om,\ep_N,n}^2}{2(1-y)^2}\le \sum_{i=0}^{N-1}\frac{Y_{\sg^i\om,\ep_N,n}^2}{2(1-Y_{\sg^i\om,\ep_N,n})^2}=\sum_{i=0}^{N-1}\frac{(G_{\sg^i\om,\ep_N,n}/N)^2}{2(1-(G_{\sg^i\om,\ep_N,n}/N))^2},
	\end{equation}
	where $G_{\omega,\ep_N,n}:=g^{(1)}_{N,n}+g^{(2)}_{N,n}+g^{(3)}_{N,n}$.
	Using the bounds (\ref{g1bound})--(\ref{g3bound}) we see that (\ref{remainderbound}) approaches $0$ for almost all $\omega$ for each $n$ as $N\to\infty$.
	Therefore, combining the expressions developed in Steps 3, 4, and 5 for the $N\to\infty$ limits with (\ref{ratiobound}) we have
	\begin{eqnarray*}
		\exp\lt(-
		\int_{\Omega}\theta_{\omega,0,n}t_\omega\ dm(\omega)
		\rt)
		\le 
		\lim_{N\to\infty} \frac{\lambda_{\omega,\epsilon_N}^N}{\lambda_{\omega,0}^N}
		\le 
		\exp\lt(-
		\int_{\Omega}\theta_{\omega,0,n}t_\omega\ dm(\omega)+C_2C_3C_{\varphi_0}\alpha(n)\int_\Omega t_\omega\ dm(\omega)
		\rt).%C_0
	\end{eqnarray*}
	Recalling the definitions of $\theta_{\omega,0,n}$ \eqref{theta_0,n def} and $\theta_{\omega,0}$ \eqref{eq: def of theta_0} and the fact that $0\le\theta_{\omega,0,n}\le 1$ (by \eqref{theta in [0,1]}), we may use dominated convergence to take the $n\to\infty$ limit to obtain
	$$
	\exp\lt(-
	\int_{\Omega}\theta_{\omega,0}t_\omega\ dm(\omega) \rt)
	\le \lim_{N\to\infty} \frac{\lambda_{\omega,\epsilon_N}^N}{\lambda_{\omega,0}^N}
	\le \exp\lt(-
	\int_{\Omega}\theta_{\omega,0}t_\omega\ dm(\omega)\rt),$$
	thus completing the proof that 
	\begin{equation*}
		\lim_{N\to\infty}\nu_{\om,0}\left(X_{\om,N-1,\ep_N}\right)
		=
		\lim_{N\to\infty}\frac{\lm_{\om,\ep_N}^N}{\lm_{\om,0}^N}
		=
		\exp\left(-\int_\Om t_\om\ta_{\om,0}\, dm(\om)\right).
	\end{equation*}
	To see that $\lim_{N\to\infty}\mu_{\om,0}\left(X_{\om,N-1,\ep_N}\right)$ is also equal to this value, we simply recall that \eqref{evtexpression2mu} gives that
	\begin{align*}
		\mu_{\om,0}(X_{\om,N-1,\ep_N})
		=
		\frac{\lm_{\om,\ep_N}^N}{\lm_{\om,0}^N}
		\lt(\nu_{\om,\ep_N}(\phi_{\om,0})+\nu_{\sg^N\om,0}\lt(Q_{\om,\ep_N}^N( \phi_{\om,0})\rt)\rt).
		%\label{evtexpression3repeat}
	\end{align*}
	Now since (\ref{C4'}) and Lemma~\ref{lemC9'} together give that $$\lim_{N\to\infty}\lt(\nu_{\om,\ep_N}(\phi_{\om,0})+\nu_{\sg^N\om,0}\lt(Q_{\om,\ep_N}^N( \phi_{\om,0})\rt)\rt)=1$$ for $m$-a.e.\ $\omega\in\Omega$, we must in fact have that 
	\begin{equation*}
		\lim_{N\to\infty}\mu_{\om,0}\left(X_{\om,N-1,\ep_N}\right)
		=
		\lim_{N\to\infty}\frac{\lm_{\om,\ep_N}^N}{\lm_{\om,0}^N}
		=
		\exp\left(-\int_\Om t_\om\ta_{\om,0}\, dm(\om)\right),
	\end{equation*}
	which completes the proof of Theorem~\ref{evtthm}.
	
	%\ja{Finish proof for $\mu$ limit by referring back to equations 4.1, 4.2, and  \eqref{evtexpression3repeat} and replace $\ind$ with $\phi_{\om,0}$.}
	%	In this short step, we set $N=n$ in the $Q$ term in (\ref{evtexpression3}) for a large $n$.
	%	The $n$ is the truncation level in the sum of the $q$.
	%	We then run Steps 3, 4, and 5 for this value of $n$, sending $N\to\infty$, obtaining the limiting expression for $\lambda_{\omega,\epsilon_{\omega,N}}^N/\lambda_{\omega,0}^N$, which is independent of $n$.
	%	\gf{Need to work in the $Y^2$ terms?}
	%	Since we can make the $Q$ term in (\ref{evtexpression3}) as small as we like for sufficiently large $n$, we have that the $N\to\infty$ limit in (\ref{evtexpression3}) equals $\lim_{N\to\infty}\lambda_{\omega,\epsilon_{\omega,N}}^N/\lambda_{\omega,0}^N=\exp\left(-\int_\Omega t_\omega\theta_{\omega,0}\ dm(\omega)\right).$
\end{proof}

\subsection{The relationship between condition \eqref{os} and the H\"usler condition}
%\gf{Sandro, would you be able to please write this section 5.2 in terms of the observation functions rather than in terms of holes? This would be more consistent with the introduction and also more consistent with Lemma 5.6.  At the moment, it is hard to switch from the observation function notation to holes and there is no good reason I can see to use holes.}
We now return to the discussion initiated in the Introduction to compare our assumption (\ref{os}) for the thresholds $z_{\om, N}$ with the H\"usler type condition (\ref{BLH}). 
\begin{comment}
We already reminded that  
in the nonstationary theory  developed in \cite{FFV017} for quenched random processes, the quantity $t,$  the observable $h$ and the thresholds  were chosen $\omega$-independent. Moreover  a further condition was added, namely all the thresholds  were equal, say equal to $z_N.$  The reason for that  should be found in a previous paper which investigated the statistics of hitting  times for random subshifts, \cite{RSV014}. By using the result of \cite{RSV014}, it is in fact possible to show that the condition (\ref{BLH})  with the constraints just added,
holds if we choose the boundary $z_N$ according to the prescription
\begin{equation}\label{ea}
z_N=\inf\left(z, \mu_X(x; h(x)>z)\le \frac{t}{N}\right),
\end{equation}
where $\mu_X$ is the marginal measure obtained by integrating over $\omega$ as $\mu_X(A)=\int_\Om\mu_{\omega}(A)dm(\omega),$  $A\subset X.$ 
\end{comment}
We show that in the more general situation considered in our paper with random boundary level $t_{\omega}$, the limit (\ref{BLH}) will follow from  the simpler assumption (\ref{os}),  provided we replace $t$ in (\ref{BLH}) with the expectation of $t_{\omega}.$ 
%The assumption (\ref{os})  will imply the limit (\ref{dc}), which is  reminiscent of (\ref{ea}) and still expressed in terms of the marginal measure $\mu_X.$   

%By the equivariance of the measure $\mu_{\omega,0}$ we may rewrite the sum in  (\ref{BLH}) as: 
%\begin{equation}\label{ep}
%	\sum_{i=0}^{N-1}\mu_{\omega,0}\left(h_{\sigma^j\omega}(T^j_{\omega}(x))> z_{\sigma^j\omega, N}\right)=\sum_{i=0}^{N-1}\mu_{\sigma^j\omega,0}(h_{\sigma^j\omega}(x)> z_{\sigma^j\omega, N}).
%\end{equation}
Recall our  assumption for the choice of the thresholds (\ref{xibound}) is $\mu_{\omega,0}(h_{\omega}(x)> z_{\omega, N})=(t_{\omega}+\xi_{\omega, N})/N$,
where $\xi_{\om, N}$ goes to zero almost surely when $N\rightarrow \infty$ and $\xi_{\omega,N}\le W$ for a.e. $\omega$.  It is immediate to see by dominated convergence that: % (\ref{xibound}) imply, 
\begin{equation}\label{dc}
	\lim_{N\rightarrow \infty}\int_\Om |N\mu_{\omega,0}(h_{\omega}(x)> z_{\omega, N})-t_{\omega}|\ dm(\omega)=0.
\end{equation}
%This limit plays the role of (\ref{ea}), and it is expressed in term of the marginal measure over $X.$
Applying our non-standard ergodic  Lemma \ref{ptwiselemma2} with $g_N(\om):=N\mu_{\om,0}(h_{\omega}(x)> z_{\omega, N})$ and $g(\om):=t_\om$, 
%The two limits (\ref{os}) and (\ref{dc}) will imply, by our non-standard ergodic lemma, that
one may transform the sum in  (\ref{BLH}) as follows: 
$$
\lim_{N\rightarrow \infty}\frac{1}{N} \sum_{i=0}^{N-1}N\mu_{\omega,0}(h_{\sigma^j\omega}(T^j_{\omega}(x))> z_{\sigma^j\omega, N})
%=\lim_{N\rightarrow \infty}\frac1N \sum_{i=0}^{N-1}N\mu_{\sigma^i\omega,0}(h_{\sigma^j\omega}(x)> z_{\sigma^j\omega, N})
=\int_\Om t_{\omega}\ dm(\omega),
$$
which is the condition (\ref{BLH}) with $t$ replaced by the expectation of $t_{\omega}.$ 
%%%%%%%%%%%%%%%%%%%%%%%%%%%%%%%%%%%%%%
\subsection{Hitting time statistics}
\label{sec:hts}
It is well known that in the deterministic setting there is a close relationship between extreme value theory and the statistics of first hitting time, see for instance \cite{FFT10, book}. We now show how our Theorem \ref{evtthm}, with a slight modification,  can be interpreted in terms of a suitable definition of (quenched) first hitting time distribution.
Let us consider as in the previous sections, a sequence of small random holes $\mathcal{H}_{\om,N}:=\{H_{\sigma^j\om, \epsilon_N}\}_{j\ge 0},$ and define
the first random hitting time as
$$
\tau_{\om, \mathcal{H}_{\om,N}}(x)=\inf\{k\ge 1, T^k_{\om}(x)\in H_{\sigma^k\om, \epsilon_N}\}.
$$

We recall that the usual statistics of hitting times is written in the form
$
\mu_{\om,0}\left(\tau_{\om, \mathcal{H}_{\om,N}}>t\right),
$
for nonnegative values of $t.$ 
Since the sets $H_{\sigma^j\om, \epsilon_N}$ have measure tending to zero when $N\rightarrow \infty,$ and therefore the first  hitting times could eventually grow to infinity, one needs a rescaling in order to get a meaningful limit distribution. This is achieved in the next Proposition. 
%We have first to modify our condition \eqref{xibound}; 
   % \gf{I removed a statement about modifying condition (S) since I think there is no change.}
In our current setting, condition (\ref{xibound}) reads:
%\gf{What are we modifying here? Isn't it the same?}we remind that it prescribes that  	
$
	\mu_{\om,0}(H_{\om, \epsilon_N})=\frac{t_{\om}+\xi_{\om, N}}{N}, 
	$
	with 
 $\lim_{N\to\infty} \xi_{\omega,N}=0$ for a.e.\ $\omega$ and 
  $|\xi_{\omega,N}|\le W$ for a.e.\ $\omega$ and all $N\ge 1$.

  %and call (S') this new  strengthened condition.\\
  
  %\sv{Notice that even in the previous proof, we needed  $\gamma_N=O(\frac{1}{N^{1+u}}), \ u>0$ and we forgot to put it. This comforts me in thinking that the hitting time requires a much stronger scaling; the others used the annealed scaling, as I pointed out several times. We do not pretend, with our scaling, of getting the equivalence EVT-hitting times, so the next proposition has its legitimacy and we do not claim everything else.}\\
  
    %\gf{I put the following prop in the introduction}
\begin{proposition}\label{hve}
	If our random open system $(\mathlist{\bcomma}{\Om, m, \sg, \cJ_0, T, \cB, \cL_0, \nu_0, \phi_0, H_\ep})$ satisfies the assumptions of Theorem \ref{evtthm} with  the sequence $H_{\om,\ep_N}$ verifying condition (\ref{xibound}),
	%and moreover there exists a positive constant $t_M$ such that for $m$-a.e $\om\in \Omega,$ $t_{\om}\le t_M,$ \gf{The function $t\in L^\infty(m)$ by hypothesis earlier so I don't think we need $t_M$} 
	then the first random hitting time satisfies the limit, for $\om$ $m$-a.e.
	\begin{equation}
		\label{eqhit}
		\lim_{N\to\infty}\mu_{\om,0}\left(\tau_{\om, \mathcal{H}_{\om,N}} \mu_{\om,0}(H_{\om, \epsilon_N})>t_{\om}\right) = \exp\left(-{\int_\Om t_{\om} \theta_{\om,0}dm}\right).
	\end{equation}
\end{proposition}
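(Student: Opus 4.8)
The plan is to reduce Proposition \ref{hve} to Theorem \ref{evtthm} by translating the event $\{\tau_{\om,\cH_{\om,N}}\mu_{\om,0}(H_{\om,\ep_N})>t_\om\}$ into a survivor-set event of the form $X_{\om,M-1,\ep_N}$ for an appropriate integer $M=M(N,\om)$. First I would observe that $\tau_{\om,\cH_{\om,N}}(x)>k$ if and only if $T_\om^j(x)\notin H_{\sg^j\om,\ep_N}$ for all $1\le j\le k$. This is almost the defining condition for $X_{\om,k,\ep_N}$, except that the latter also requires $j=0$, i.e.\ $x\notin H_{\om,\ep_N}$; since $\mu_{\om,0}(H_{\om,\ep_N})=(t_\om+\xi_{\om,N})/N\to 0$, removing or including the $j=0$ constraint changes $\mu_{\om,0}$-measure by at most $\mu_{\om,0}(H_{\om,\ep_N})\to 0$, so it is harmless in the limit. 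Using $\mu_{\om,0}(H_{\om,\ep_N})=(t_\om+\xi_{\om,N})/N$, the inequality $\tau_{\om,\cH_{\om,N}}(x)\,\mu_{\om,0}(H_{\om,\ep_N})>t_\om$ is equivalent to $\tau_{\om,\cH_{\om,N}}(x)> \frac{t_\om N}{t_\om+\xi_{\om,N}}$, i.e.\ $\tau_{\om,\cH_{\om,N}}(x)\ge M_{\om,N}$ where $M_{\om,N}:=\lfloor \frac{t_\om N}{t_\om+\xi_{\om,N}}\rfloor+1$. Note $M_{\om,N}/N\to 1$ as $N\to\infty$ since $\xi_{\om,N}\to 0$.

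The second step is to rescale the hole sequence so that Theorem \ref{evtthm} applies with the new ``time'' $M_{\om,N}$. Concretely, set $\tilde\ep_M$ by reindexing: for the subsequence of $N$ with $M_{\om,N}=M$, we have $\mu_{\om,0}(H_{\om,\ep_N})=(t_\om+\xi_{\om,N})/N$, and we want to write this as $(t_\om+\tilde\xi_{\om,M})/M$ for some error $\tilde\xi_{\om,M}\to 0$ with $|\tilde\xi_{\om,M}|\le \tilde W$. Since $M/N=\frac{t_\om}{t_\om+\xi_{\om,N}}+O(1/N)$, one computes $\mu_{\om,0}(H_{\om,\ep_N})\cdot M = (t_\om+\xi_{\om,N})\cdot\frac{M}{N}=(t_\om+\xi_{\om,N})\big(\tfrac{t_\om}{t_\om+\xi_{\om,N}}+O(1/N)\big)=t_\om+O(1/N)+\xi_{\om,N}O(1)$, hence $\tilde\xi_{\om,M}=\mu_{\om,0}(H_{\om,\ep_N})M-t_\om\to 0$ and is bounded (using $t\in L^\infty(m)$, $|\xi_{\om,N}|\le W$, and $t_\om>0$ bounded below only in the sense needed — actually one should be a little careful here and may instead simply verify directly that the family $\{H_{\sg^j\om,\ep_N}\}$ with the clock $M_{\om,N}$ still satisfies condition \eqref{xibound} with the \emph{same} scaling function $t$). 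Granting this, Theorem \ref{evtthm} yields $\lim_{N\to\infty}\mu_{\om,0}(X_{\om,M_{\om,N}-1,\ep_N})=\exp(-\int_\Om t_\om\theta_{\om,0}\,dm(\om))$ for $m$-a.e.\ $\om$.

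The third step is to combine the pieces:
\begin{align*}
\mu_{\om,0}\!\left(\tau_{\om,\cH_{\om,N}}\mu_{\om,0}(H_{\om,\ep_N})>t_\om\right)
&=\mu_{\om,0}\!\left(\tau_{\om,\cH_{\om,N}}\ge M_{\om,N}\right)\\
&=\mu_{\om,0}\!\left(T_\om^j(x)\notin H_{\sg^j\om,\ep_N},\ 1\le j\le M_{\om,N}-1\right),
\end{align*}
and this differs from $\mu_{\om,0}(X_{\om,M_{\om,N}-1,\ep_N})$ — which additionally imposes $x\notin H_{\om,\ep_N}$ at $j=0$ — by at most $\mu_{\om,0}(H_{\om,\ep_N})\to 0$. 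Taking $N\to\infty$ and using the conclusion of Theorem \ref{evtthm} from the previous step gives \eqref{eqhit}. Finally one should check measurability of $\om\mapsto M_{\om,N}$ (immediate, as $\xi_{\cdot,N},t$ are measurable) so that the a.e.-$\om$ statement makes sense.

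The main obstacle I anticipate is the bookkeeping in the second step: making rigorous that switching from the deterministic clock $N$ to the $\om$-dependent clock $M_{\om,N}$ does not spoil condition \eqref{xibound}, and in particular that the conclusion of Theorem \ref{evtthm} can be invoked ``along the clock $M_{\om,N}$'' rather than along $N$. One clean way to handle this is to prove directly from the proof of Theorem \ref{evtthm} (which already expresses everything through Birkhoff-type averages of $g^{(j)}_{N,n}$) that the same limit holds if $N$ in the exponent $\lm_{\om,\ep_N}^{N}/\lm_{\om,0}^{N}$ is replaced by $M_{\om,N}$ with $M_{\om,N}/N\to 1$; alternatively, a soft argument: monotonicity of $M\mapsto \mu_{\om,0}(X_{\om,M-1,\ep_N})$ in $M$ lets one sandwich $\mu_{\om,0}(X_{\om,M_{\om,N}-1,\ep_N})$ between $\mu_{\om,0}(X_{\om,\lceil(1-\delta)N\rceil,\ep_N})$ and $\mu_{\om,0}(X_{\om,\lfloor(1+\delta)N\rfloor,\ep_N})$ for large $N$, apply Theorem \ref{evtthm} with scalings $(1\mp\delta)t$, and let $\delta\to 0$. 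I would present the sandwich argument as it avoids re-opening the proof of Theorem \ref{evtthm}.
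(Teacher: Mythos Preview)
Your proposal is correct, but the paper takes a shorter route that avoids the random clock $M_{\om,N}$ and the sandwich argument altogether. Instead of solving $\tau\mu_{\om,0}(H_{\om,\ep_N})>t_\om$ for an $\om$-dependent integer threshold, the paper substitutes in the \emph{other} direction: since $N=(t_\om+\xi_{\om,N})/\mu_{\om,0}(H_{\om,\ep_N})$ by \eqref{xibound}, the event $\{\tau_{\om,\cH_{\om,N}}>N\}$ is \emph{literally} $\{\tau_{\om,\cH_{\om,N}}\,\mu_{\om,0}(H_{\om,\ep_N})>t_\om+\xi_{\om,N}\}$. One then only needs to bound
\[
\left|\mu_{\om,0}\!\left(\tau\,\mu_{\om,0}(H_{\om,\ep_N})>t_\om+\xi_{\om,N}\right)-\mu_{\om,0}\!\left(\tau\,\mu_{\om,0}(H_{\om,\ep_N})>t_\om\right)\right|,
\]
which is the $\mu_{\om,0}$-measure of $\tau$ landing in an interval of integer length $\lceil |\xi_{\om,N}|/\mu_{\om,0}(H_{\om,\ep_N})\rceil$. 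Covering this by the union of $T_\om^{-j}(H_{\sg^j\om,\ep_N})$ over that range of $j$ and using equivariance gives a bound of order $|\xi_{\om,N}|\cdot(|t|_\infty+W)/(t_\om-|\xi_{\om,N}|)\to 0$. A second simplification in the paper is that $\{\tau_{\om,\cH_{\om,N}}>N\}=T_\om^{-1}(X_{\sg\om,N-1,\ep_N})$ and equivariance gives $\mu_{\om,0}(\tau>N)=\mu_{\sg\om,0}(X_{\sg\om,N-1,\ep_N})$ \emph{exactly}, so Theorem~\ref{evtthm} applies directly at $\sg\om$ (the limit being nonrandom); this replaces your $j=0$ error term.

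What each approach buys: the paper's argument is a two-line application of Theorem~\ref{evtthm} plus a standard ``gap'' estimate, with no need to re-verify \eqref{xibound} for a modified scaling or to reopen the proof of the theorem. Your sandwich argument is also valid and has the virtue of being self-contained, but the reindexing in your second step (verifying \eqref{xibound} along the clock $M_{\om,N}$, or equivalently along $\lfloor(1\pm\delta)N\rfloor$) is exactly the bookkeeping the paper's substitution avoids.
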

\begin{proof}
	%The next considerations until eq. (\ref{li}) will be quite general  and they will involve only the equivariance of the sample measures.
	%Hence, consider  t
	For $N\ge 1$ the event, 
	%for $N\ge1:$\gf{This should be I think $N-1$ below in (7.11), otherwise it does not match Thm 5.3 when that theorem is used later...also see my later comment about equivariance.}
	\begin{equation}\label{E2}
		\{\tau_{\om,  \mathcal{H}_{\om,N}}>N\}=\{x\in I; T_{\om}(x)\in H_{\sigma\om, \epsilon_N}^c, \dots, T^N_{\om}(x)\in H_{\sigma^N\om, \epsilon_N}^c\}
	\end{equation}
	is also equal to
	$$
	T^{-1}_{\om}\left(x\in I; x\in H_{\sigma\om, \epsilon_N}^c, T_{\sigma\om}(x)\in H_{\sigma^2\om, \epsilon_N}^c,\dots, T_{\sigma\om}^{N-1}(x)\in H_{\sigma^N\om, \epsilon_N}^c\right).
	$$
	Then, by  equivariance of $\mu_0$ we obtain the link between the statistics of hitting time and extreme value theory:
	\begin{eqnarray}\label{li}
		\mu_{\om,0}\left(\tau_{\om, \mathcal{H}_{\om,N}}>N\right)&=&\mu_{\om,0}\left(T^{-1}_{\om}\left(x\in I: x\in H_{\sigma\om, \epsilon_N}^c, T_{\sigma\om}(x)\in H_{\sigma^2\om, \epsilon_N}^c,\dots, T_{\sigma\om}^{N-1}(x)\in H_{\sigma^N\om, \epsilon_N}^c\right)\right)\\
		\nonumber&=&
		\mu_{\sigma\om,0}\left(x\in H_{\sigma\om, \epsilon_N}^c, T_{\sigma\om}(x)\in H_{\sigma^2\om, \epsilon_N}^c,\dots, T_{\sigma\om}^{N-1}(x)\in H_{\sigma^N\om, \epsilon_N}^c\right)\\
		\label{X}&=&\mu_{\sigma\omega,0}(X_{\sigma\om,N-1,\ep_N}).
	\end{eqnarray}
	%\gf{are you using equivariance to reduce the $N$ by 1?}
	
	In order to rescale the eventually growing first random  hitting times, we invoke the condition  \eqref{xibound};  by substituting $N=(t_{\om}+\xi_{\om, N})/\mu_{\om,0}(H_{\om, \epsilon_N})$ in the LHS of (\ref{li})
	%and noting (\ref{X})  
	we have
	\begin{equation}\label{ev}
		\mu_{\om,0}\left(\tau_{\om, \mathcal{H}_{\om,N} }>N\right)=\mu_{\om,0}\left(\tau_{\om, \mathcal{H}_{\om,N}} \mu_{\om,0}(H_{\om, \epsilon_N})>t_{\om}+\xi_{\om, N}\right).
		%=\mu_{\sigma\omega}(X_{\sigma\om,N-1,\ep_N}).
	\end{equation}
	%\gf{Sandro, there is still an issue above with $\om$ vs $\sigma\om$.}
	%\sv{Gary, where is the issue?}
	%Thanks to the identification (\ref{li}), we can get the limiting distribution of the central quantity in (\ref{ev}) by applying  Theorem \ref{evtthm}. What we really need is to handle the same quantity without the additional term $\xi_{\om, N};$ this will be achieved if 
	%we require the uniform bound (\ref{xibound}), namely $\xi_{\om,N}\to 0$ as $N\to\infty$, uniformly in $\omega$.
	%To obtain this,
	
	Our final preparation before applying Theorem \ref{evtthm} is to show that 
	%\gf{Is this right? Why does showing \ref{ee} (uniformly) demonstrate \ref{EVT}?}
	\begin{equation}\label{ee}
		|\mu_{\om,0}\left(\tau_{\om, \mathcal{H}_{\om,N}} \mu_{\om,0}(H_{\om, \epsilon_N})>t_{\om}+\xi_{\om, N}\right)-\mu_{\om,0}\left(\tau_{\om, \mathcal{H}_{\om,N}} \mu_{\om,0}(H_{\om, \epsilon_N})>t_{\om}\right)|\rightarrow 0, \ N\rightarrow \infty.
	\end{equation}
	%uniformly in $\omega$.
	%\gf{Sandro, note uniformity condition here.}
	Since by a standard trick, see for instance eq. 5.3.6 in \cite{book},
	$$
	\{\tau_{\om, \mathcal{H}_{\om,N}} \mu_{\om,0}(H_{\om, \epsilon_N})>t_{\om}\}\backslash \{\tau_{\om, \mathcal{H}_{\om,N}} \mu_{\om,0}(H_{\om, \epsilon_N})>t_{\om}+\xi_{\om, N}\}\subset \bigcup_{j=\left\lceil{\frac{t_{\om}}{\mu_{\omega,0}(H_{\om, \epsilon_N})}}\right\rceil}^{\left\lceil{\frac{t_{\om}+\xi_{\omega,n}}{\mu_{\omega,0}(H_{\om, \epsilon_N})}}\right\rceil}
	T_{\om}^{-j}(H_{\sigma^j\om, \epsilon_N})
	$$
	we have by equivariance
	$$
	|\mu_{\om,0}\left(\tau_{\om, \mathcal{H}_{\om,N}} \mu_{\om,0}(H_{\om, \epsilon_N})>t_{\om}+\xi_{\om, N}\right)-\mu_{\om,0}\left(\tau_{\om, \mathcal{H}_{\om,N}} \mu_{\om,0}(H_{\om, \epsilon_N})>t_{\om}\right)|\le \sum_{j=\left\lceil{\frac{t_{\om}}{\mu_{\omega,0}(H_{\om, \epsilon_N})}}\right\rceil}^{\left\lceil{\frac{t_{\om}+\xi_{\omega,n}}{\mu_{\omega,0}(H_{\om, \epsilon_N})}}\right\rceil}
	\mu_{\sigma^j\om,0}({H}_{\sigma^j\om,N}).
	$$
	For $N$ large enough:%\gf{We can't use the uniform $\gamma_N$ bound below because that is what we need to prove.}
	$$
	\sum_{j=\left\lceil{\frac{t_{\om}}{\mu_{\omega,0}(H_{\om, \epsilon_N})}}\right\rceil}^{\left\lceil{\frac{t_{\om}+\xi_{\omega,n}}{\mu_{\omega,0}(H_{\om, \epsilon_N})}}\right\rceil}
	\mu_{\sigma^j\om,0}({H}_{\sigma^j\om,N})
	\le \left\lceil{\frac{|\xi_{\om, N}|}{\mu_{\omega,0}(H_{\om,\epsilon_N})}}\right\rceil\frac{|t|_\infty+W}
	{N}
	\le \left\lceil{\frac{|\xi_{\om, N}|\ N}{t_{\om}-|\xi_{\om, N}|}}\right\rceil\frac{|t|_\infty+W}{N},
	$$
	which goes to zero by \eqref{xibound}.
	%\gf{added $-\gamma_N$ to the $t_\om$ in denominator of last term above, and briefly edited text below.}\sv{I agree Gary, previously  (\ref{xibound}) was  written without the absolute value for  $\xi_{\om,N}$.}
	Recalling $t_\om>0$ for a.e.\ $\om$, the final expression above goes to zero as $N\rightarrow \infty$  for $\om$ $m$-a.e. 
	
	Using \eqref{X}--\eqref{ee} and noting that $\lim_{N\to\infty}\mu_{\sigma\om,0}(X_{\sigma\om,N-1,\ep_N})$ is nonrandom, applying Theorem \ref{evtthm} yields
	%to obtain our final result:\gf{need to comment on the use of $\mu_\om$ vs $\nu_{\om,0}$ in Thm \ref{evtthm}.}\sv{I agree, I think we discussed it last time}
	\begin{equation}\label{hts}
		\lim_{N\to\infty}\mu_{\om,0}\left(\tau_{\om, \mathcal{H}_{\om,N}} \mu_{\om,0}(H_{\om, \epsilon_N})>t_{\om}\right) = \exp\left(-\int_\Om t_{\om} \theta_{\om,0}\ dm\right).
	\end{equation}
\end{proof}

\section{Quenched Thermodynamic Formalism for Random Open Interval Maps via Perturbation}\label{sec: existence}
In this section we present an explicit class of random piecewise-monotonic interval maps for which our Theorem~\ref{thm: dynamics perturb thm}, Corollary~\ref{esc rat cor},  and Theorem~\ref{evtthm}  apply. 
Using a perturbative approach, we introduce a family of small random holes parameterised by $\epsilon>0$ into a random closed dynamical system, and for every small $\ep$ we prove (i) the existence of a unique random conformal measure $\set{\nu_{\om,\ep}}_{\om\in\Om}$ with fiberwise support in $X_{\om,\infty,\ep}$ and (ii) a unique random absolutely continuous invariant measure $\set{\mu_{\om,\ep}}_{\om\in\Om}$ which satisfies an exponential decay of correlations and is the unique relative equilibirum state for the random open system $(\mathlist{\bcomma}{\Om, m, \sg, \cJ_0, T, \cB, \cL_0, \nu_0, \phi_0, H_\ep})$. In addition, we prove the existence of a random absolutely continuous (with respect to $\nu_{\om,0}$) conditionally invariant probability measure $\set{\vrho_{\om,\ep}}_{\om\in\Om}$ with fiberwise support in $[0,1]\bs H_{\om,\ep}$.
%\ja{More intro}

%and which satisfy our stricter uniform hypotheses  illustrate the quenched extremal index formula in Theorem \ref{evtthm}. We must verify conditions \eqref{C1'}, \eqref{C2}, \eqref{C3}, \eqref{C4'}, \eqref{C5'}, \eqref{C7'}, \eqref{C8}, as well as \eqref{xibound}, in explicit examples of piecewise-monotonic interval map cocycles, 
%whose transfer operators act on $\BV:=\BV([0,1])$.
%We will take $\mathcal{B}_\omega=\BV$ and $\mathcal{G}_\omega=L^1([0,1])$ for a.e.\ $\omega$.
%The norm on $\BV$ is $\|\cdot\|_{\BV_1}:=\var(\cdot)+\|\cdot\|_1$.
%Now we move away from Perron--Frobenius operator cocycles to transfer operator cocycles with general weights.
%We must verify conditions \eqref{C0}--\eqref{C7} in explicit examples of Lasota--Yorke map cocycles,
We now suppose that the spaces $\cJ_{\om,0}=[0,1]$ for each $\om\in\Om$ and the maps $T_\om:[0,1]\to[0,1]$ are surjective, finitely-branched, piecewise monotone, nonsingular (with respect to Lebesgue), and that there exists $C\geq 1$ such that 
\begin{align}\label{E1}\tag{E1}
	\esssup_\om |T_\om'|\leq C
	\qquad\text{ and }\qquad
	\esssup_\om D(T_\om)
	\leq C,
\end{align}
where $D(T_\om):=\sup_{y\in[0,1]}\# T_\om^{-1}(y)$.
We let $\cZ_{\om,0}$ denote the (finite) monotonicity partition of $T_\om$ and for each $n\geq 2$ we let $\cZ_{\om,0}^{(n)}$ denote the partition of monotonicity of $T_\om^n$.
    %\gf{I think this measurability condition on $T$ (below) should go before all other conditions, i.e.\ right before the definition of $P_\omega$. This will be compatible with section 3. I don't really like just ``assuming'' the part about $\mathcal{L}_{\omega,\epsilon}$. This should follow from properties of the maps and holes that we already have. If we need more assumptions on maps or holes, we should make them and then say $m$-continuity follows. At the start of section 7 we should say that $\sigma$ is a homeo.}\ja{I moved the measurability condition to here.}
\begin{enumerate}[align=left,leftmargin=*,labelsep=\parindent]
	\item[\mylabel{MC}{M}]
	The map $\sg:\Om\to\Om$ is a homeomorphism, the skew-product map $T:\Om\times [0,1]\to\Om\times [0,1]$ is measurable, and $\omega\mapsto T_\omega$ has countable range. 
	%In addition, in view of Proposition~\ref{prop surv nonemp}, \eqref{E7} implies \eqref{cond X}.
\end{enumerate}
\begin{remark}\label{M2 holds}
Under assumption (\ref{M}), the family of transfer operator cocycles $\{\mathcal{L}_{\omega,\epsilon}\}_{\epsilon\ge 0}$ satisfies the conditions of Theorem 17 \cite{FLQ2} ($m$-continuity and $\sigma$ a homeomorphism). Note that condition \eqref{M}
	implies that $T$ satisfies \eqref{M1} and the cocycle generated by $\cL_0$ satisfies condition \eqref{M2}.
\end{remark}

%\cgt{ is deg the same as \# branches?}\ja{I will change so that deg is defined to be the number of branches}

Let the \emph{variation} of $f:[0,1]\to\mathbb{R}_+$ on $Z\subset [0,1]$ be
\begin{align*}
    \var_{Z}(f)=\underset{x_0<\dots <x_k, \, x_j\in Z}{\sup}\sum_{j=0}^{k-1}\absval{f(x_{j+1})-f(x_j)}, 
\end{align*}
and  $\var(f):=\var_{[0,1]}(f)$. We let $\BV=\BV([0,1])$ denote the set of functions on $[0,1]$ that have bounded variation. 
Given a non-atomic and fully supported measure $\nu$ (i.e.\ for any nondegenerate interval $J\sub [0,1]$ we have $\nu(J)>0$) we let $\BV_\nu\sub L^\infty(\nu)$ be the set of (equivalence classes of) functions of \emph{bounded variation} on $[0,1]$, with norm given by
\begin{align*}
    \|f\|_{\BV_\nu}:=\underset{\tilde{f}=f \ \nu\text{ a.e.}}{\inf} \var(\tilde{f})+\nu(|f|).    
\end{align*} 
If we require to emphasise that elements of $\BV_\nu$ are equivalence classes, we denote these by $[f]_\nu$ (resp. $[f]_1$).
Note that if $f\in\BV$ is a function of bounded variation, then it is always possible to choose a representative of minimal variation from the equivalence class $[f]_\nu$.
We define $\BV_{1}\sub L^\infty(\Leb)$ and $\|\spot\|_{\BV_1}$ similarly, with the measure $\nu$ replaced with Lebesgue measure.  We denote the supremum norm on $L^\infty(\Leb)$ by $\|\spot\|_{\infty,1}$. 
It follows from Rychlik \cite{rychlik_bounded_1983} that $\BV_\nu$ and $\BV_1$ are Banach spaces. 
The following proposition gives the equivalence of the norms $\|\spot\|_{\BV_\nu}$ and $\|\spot\|_{\BV_1}$.
\begin{proposition}\label{prop norm equiv}
Given a fully supported and non-atomic measure $\nu$ on $[0,1]$ and $f\in\BV$ we have that 
\begin{equation*}
	(1/2) \|f\|_{\BV_1}\le \|f\|_{\BV_\nu}\le 2\|f\|_{\BV_1}.
\end{equation*}
\end{proposition}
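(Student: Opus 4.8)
\textbf{Proof proposal for Proposition \ref{prop norm equiv}.}

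The plan is to compare the three ingredients that make up the two norms: the total variation $\var(\tilde f)$ (minimized over representatives), the $L^1(\nu)$-mass $\nu(|f|)$, and the $L^1(\Leb)$-mass $\Leb(|f|)$. Since $\nu$ is non-atomic and fully supported, and since a function of bounded variation has at most countably many discontinuities, the key observation is that the same representative $\tilde f$ of minimal variation works for both equivalence classes $[f]_\nu$ and $[f]_1$. Indeed, if $\tilde f$ is the representative of $f$ of minimal variation (say, right-continuous), then $\tilde f$ equals $f$ both $\nu$-a.e.\ and $\Leb$-a.e., because changing $\tilde f$ on a $\nu$-null or $\Leb$-null set can only fail to decrease variation, so $\inf_{g=f\ \nu\text{-a.e.}}\var(g)=\var(\tilde f)=\inf_{g=f\ \Leb\text{-a.e.}}\var(g)$. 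This reduces the problem to comparing $\nu(|\tilde f|)$ with $\Leb(|\tilde f|)$ for a fixed BV representative $\tilde f$.

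The main step is therefore to show that for any $\tilde f\in\BV$ of bounded variation,
\begin{align*}
	\tfrac12\,\Leb(|\tilde f|)\ \le\ \nu(|\tilde f|)+\tfrac12\var(\tilde f)
	\qquad\text{and}\qquad
	\tfrac12\,\nu(|\tilde f|)\ \le\ \Leb(|\tilde f|)+\tfrac12\var(\tilde f),
\end{align*}
or some comparable pair of inequalities with the stated constants $1/2$ and $2$. The mechanism is the standard one: for a BV function, the pointwise oscillation is controlled by the variation, namely $\sup_{[0,1]}|\tilde f|\le \inf_{[0,1]}|\tilde f|+\var(\tilde f)$, and since both $\nu$ and $\Leb$ are probability measures on $[0,1]$, the infimum of $|\tilde f|$ is bounded above by both $\nu(|\tilde f|)$ and $\Leb(|\tilde f|)$. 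Combining $\inf|\tilde f|\le \nu(|\tilde f|)$ with $\Leb(|\tilde f|)\le \sup|\tilde f|\le \inf|\tilde f|+\var(\tilde f)\le \nu(|\tilde f|)+\var(\tilde f)$ gives $\Leb(|\tilde f|)\le \nu(|\tilde f|)+\var(\tilde f)$, and symmetrically $\nu(|\tilde f|)\le \Leb(|\tilde f|)+\var(\tilde f)$. Adding $\var(\tilde f)$ to both sides and taking the infimum over representatives then yields $\|f\|_{\BV_1}\le \|f\|_{\BV_\nu}+\var(\tilde f)\le 2\|f\|_{\BV_\nu}$ and likewise $\|f\|_{\BV_\nu}\le 2\|f\|_{\BV_1}$, which is exactly the claimed two-sided bound.

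I expect the only real subtlety — the ``hard part'' — to be the justification that a single minimal-variation representative is simultaneously a valid representative modulo $\nu$-null sets and modulo $\Leb$-null sets; this is where non-atomicity and full support of $\nu$ enter, ensuring that $\nu$-null sets cannot contain any nondegenerate interval and hence that altering $\tilde f$ on such a set to lower the variation is impossible (a genuine jump of $\tilde f$ persists under $\nu$-a.e.\ modification exactly as it does under $\Leb$-a.e.\ modification). Everything after that is the routine oscillation estimate above. I would present the argument by first fixing the minimal representative, then proving the two oscillation inequalities, and finally assembling the norm comparison, being careful that the constants match the statement (one may need to absorb a factor using $\var(\tilde f)\le \|f\|_{\BV_\nu}$ and $\var(\tilde f)\le \|f\|_{\BV_1}$, which hold trivially since variation is one of the two summands in each norm).
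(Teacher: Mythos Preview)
Your proposal is correct and follows essentially the same route as the paper. Both arguments rest on the key fact that $[f]_\nu\cap\BV=[f]_1\cap\BV$ (hence the infimum of $\var(\tilde f)$ over representatives is the same in both norms); the paper proves this by observing that if two $\BV$ functions agree $\nu$-a.e.\ with $\nu$ fully supported and non-atomic, their difference is a $\BV$ function vanishing off a set containing no interval, hence vanishing except at countably many points. The only cosmetic difference is in comparing the $L^1$ pieces: you use $\inf|\tilde f|\le\nu(|\tilde f|)$ and $\Leb(|\tilde f|)\le\sup|\tilde f|$ directly, whereas the paper first establishes the stronger statement $\Lebessinf|f|=\nuessinf|f|$ (and the corresponding $\esssup$ equality), which it needs elsewhere (see \eqref{sup norm equal}). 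Your route is slightly more economical for this proposition alone; the paper's route yields the auxiliary equality of essential suprema as a byproduct.
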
 

\begin{proof}
We first show that for $f\in\BV$ we have 
\begin{align}\label{eq class cap BV}
        [f]_\nu\cap \BV = [f]_1\cap\BV.
\end{align}
To see this let $\~f\in[f]_\nu\cap \BV$. As $\nu$ is a fully supported and non-atomic measure, we must have that the set $\{x: f(x)\neq\~f(x)\}$ is countable. Thus $\~f\in[f]_1\cap \BV$. As $\Leb$ is also fully supported and non-atomic the same reasoning implies that the reverse inclusion also holds, proving \eqref{eq class cap BV}. As a direct consequence of \eqref{eq class cap BV} we have that 
\begin{align}\label{equiv cl var eq}
    \underset{\tilde{f}=f \ \nu\text{ a.e.}}{\inf} \var(\tilde{f})
    =
    \underset{\tilde{f}=f \ \Leb\text{ a.e.}}{\inf} \var(\tilde{f}).
\end{align}
 Since $f$ is continuous everywhere except on a set of at most countably many points, letting $\cC$ denote the set of intervals of continuity for $f$,  we have 
    \begin{align}\label{nu Leb ineq 1}
        \Lebessinf_{[0,1]} f 
        &=\inf_{J \in\cC } \Lebessinf_J f
        =\inf_{J \in\cC } \inf_J f
        =\inf_{J \in\cC } \nuessinf_J f
        = \nuessinf_{[0,1]} f. 
    \end{align}
    Using similar reasoning we must also have 
    \begin{align}\label{nu Leb ineq 2}
         \Lebesssup f = \nuesssup f.
    \end{align}
    Combining \eqref{equiv cl var eq} and \eqref{nu Leb ineq 1} we have 
        %   \gf{lowercase var below?}\ja{See addition above. $\Var$ is their common value}
    \begin{align*}
        \|f\|_{\BV_1}=\inf_{\tilde{f}=f\ \Leb\ a.e.} \var(\tilde{f})+\Leb(|f|) &\leq 
        2\inf_{\tilde{f}=f\ \Leb\ a.e.} \var(\tilde{f})+\Lebessinf |f|
        \\
        &= 2\inf_{\tilde{f}=f\ \nu\ a.e.} \var(\tilde{f})+\nuessinf |f|
        \\
        &\leq 2\inf_{\tilde{f}=f\ \nu\ a.e.} \var(\tilde{f})+\nu(|f|)
        \leq 2\|f\|_{\BV_\nu}.
    \end{align*}
    
\begin{comment} 
DON'T DELETE THIS COMMENT
    The inequality $\inf_{\tilde{f}=f\ \Leb\ a.e.} \var(\tilde{f})+\Leb(|f|) &\leq 
        2\inf_{\tilde{f}=f\ \Leb\ a.e.} \var(\tilde{f})+\Lebessinf f$ is true since for any $\~f = f$ a.e. with $\~f\in\BV$ we have that 
        $$
        \var(\tilde{f})+\Leb(|f|) 
        \leq 
        2\var(\tilde{f})+\inf |\~f| 
        \leq 
        2\var(\tilde{f})+\Lebessinf |f|
        $$
        Since this holds for all $\~f=f$ a.e. with $\~f\in\BV$ it must hold with the infima taken over the variation.
DON'T DELETE THIS COMMENT        
\end{comment}
    Similarly, using \eqref{equiv cl var eq} and \eqref{nu Leb ineq 2} we have $\|f\|_{\BV_\nu}\leq 2 \|f\|_{\BV_1}$, and thus the proof is complete. 

\end{proof}
Proposition \ref{prop norm equiv} will be used later to provide (non-random) equivalence of $\|\spot\|_{\BV_{\nu_{\omega,0}}}$ and $\|\spot\|_{\BV_1}$ for each $\omega\in\Omega$.
%	but to establish the required $\epsilon$-perturbation results, we will use the (non-randomly equivalent \gf{Justify explicitly here why non-randomly equivalent}) non-random norm $\|\cdot\|_{\BV_1}:=\var(\cdot)+\|\cdot\|_1$.
%Throughout we work with the normalised transfer operator $\tilde{\mathcal{L}}_{\omega,\epsilon}f=\mathcal{L}_{\omega,\epsilon} f/\nu_{\sigma\omega,\epsilon}(\mathcal{L}_{\omega,\epsilon}\ind)$.
%, and we will shortly make use of the fact that $\|\cdot\|_{\BV_1}:=\var(\cdot)+\|\cdot\|_1$ is (non-randomly) equivalent to $\|\cdot\|_\BV$ since $\var(\cdot)+\|\cdot\|_1\le  \var(\cdot)+\|\cdot\|_\infty\le 2\var(\cdot)+ \nu_{\omega,0}(|\cdot|)$ (and similarly for the reverse inequality, interchanging $\|\cdot\|_1$ and $\nu_{\omega,0}(|\cdot|)$).
We set $J_\om:=|T_\om'|$ and define the random Perron--Frobenius operator, acting on functions in $BV$
$$
P_\om(f)(x):=\sum_{y\in T_\om^{-1}(x)}\frac{f(y)}{J_\om(y)}.
$$
The operator $P$ satisfies the well-known property that 
\begin{align}\label{PF prop}
	\int_{[0,1]}P_\om(f)\,d\Leb=\int_{[0,1]}f\,d\Leb
\end{align}
for $m$-a.e. $\om\in\Om$ and all $f\in\BV$.
Recall from Section~\ref{sec:ROS} that $g_0=\set{g_{\om,0}}_{\om\in\Om}$ and that
$$\cL_{\om,0}(f)(x):=\sum_{y\in T_\om^{-1}(x)}g_{\om,0}(y)f(y),\quad f\in\BV.$$
We assume that the weight function $g_{\om,0}$ lies in $\BV$ for each $\om\in\Om$ and satisfies
\begin{equation}
	\label{E2}\tag{E2}
	\esssup_\omega \| g_{\omega,0}\|_{\infty,1}<\infty,
	%\quad\mbox{and}\quad \essinf_\omega\rho_{\omega,\epsilon}>0\mbox{ for all small $\epsilon>0$}.
\end{equation}
and 
\begin{equation}
	\label{E3}\tag{E3}
	\essinf_\omega \inf g_{\omega,0}>0.
	%\mbox{ and }\esssup_\omega |\mathcal{L}_{\omega,0}\ind|_{\infty,1}<\infty.
\end{equation}
Note that \eqref{E1} and \eqref{E2} together imply 
\begin{align}\label{fin sup L1}
	\esssup_\om\norm{\cL_{\om,0}\ind}_{\infty,1}\leq \esssup_\om D(T_\om)\norm{g_{\om,0}}_{\infty,1}<\infty
\end{align}
and 
\begin{align}\label{fin gJ}
	\esssup_\om\norm{g_{\om,0}J_\om}_{\infty,1}<\infty.
\end{align}
    %\gf{suggest removing this remark as it is never referenced, and stated anyway in Examples.}
    %\begin{remark}\label{C1' holds}
    %	Since $\essinf_\om\inf\cL_{\om,0}\ind\geq \essinf_\om\inf g_{\om,0}$, \eqref{E3} and \eqref{fin sup L1} together imply that \eqref{C1'} holds.
    %\end{remark}
We also assume a uniform covering condition\footnote{We could replace the covering condition with the assumption of a strongly contracting potential. See \cite{AFGTV-IVC} for details.}
:
\begin{enumerate}[align=left,leftmargin=*,labelsep=\parindent]
	\item[\mylabel{E4}{E4}]
	For every subinterval $J\subset [0,1]$ there is a $k=k(J)$ such that for a.e.\ $\omega$ one has $T^k_\omega(J)=[0,1]$.
\end{enumerate}
Concerning the open system we assume that the holes $H_{\om,\ep}\sub [0,1]$ are chosen so that assumption
%\eqref{A1} and \eqref{A2} 
\eqref{A} %and \eqref{cond X}
holds. We also assume for each $\om\in\Om$ and each $\ep>0$ that $H_{\om,\ep}$ is composed of a finite union of intervals such that 
\begin{enumerate}[align=left,leftmargin=*,labelsep=\parindent]
	\item[\mylabel{E5}{E5}]
	There is a uniform-in-$\epsilon$ and uniform-in-$\omega$ upper bound on the number of connected components of $H_{\om,\ep}$,
\end{enumerate}
and
\begin{align}\label{E6}\tag{E6}
	\lim_{\ep\to 0}\esssup_\om \Leb(H_{\om,\ep})=0,
\end{align}
and 
    %\gf{owing condition a bit odd. To satisfy the condition, it seems advantageous to make $k$ as large as possible, i.e.\ just make $U_\omega$ the union of closures of \emph{all} elements of $\mathcal{Z}_{\omega,0}$.  Is there a reason why we cannot state the condition more simply in this way, eliminating the $k$ function?} 
    %\ja{ok we can do that. I don't think there will be any problems from rewriting the proof of the proposition }% Is it because we still want to consider countable partitions of monotonicity? }%Also, can we have a little text prior to the condition to either say what it is for or give some idea of what it does (or both)?}
    
\begin{enumerate}[align=left,leftmargin=*,labelsep=\parindent]
	\item[\mylabel{EX}{EX}]
	There exists an $\ep>0$ and an open neighborhood $\~H_{\om,\ep}\bus H_{\om,\ep}$ such that  
    $T_\om(U_\om)\bus \~H_{\sg\om,\ep}^c$, where $U_\om:=\cup_{Z\in\cZ_{\om,0}}\ol{Z}_{\ep}$ and $\ol{Z_{\ep}}$ denotes the closure of $Z_\ep\in A_{\om,\ep}:=\{Z\cap \~H_{\om,\ep}^c:Z\in\cZ_{\om,0}\}$ with $m(\set{\om\in\Om: \#A_{\om,\ep}\geq 2})>0$.
\end{enumerate}
\begin{remark}\label{rem check cond X}
    Assumption \eqref{EX} is satisfied for any random open system such that each map contains at least two intervals of monotonicity, the holes are contained in the interior of exactly one interval of monotonicity, and the image of the complement of the hole is the full interval, i.e. $T_\om(H_{\om,\ep}^c)=[0,1]$. In particular, \eqref{EX} is satisfied if there exists a full branch outside of the hole. 
\end{remark}

The following proposition ensures that condition \eqref{cond X} holds.
\begin{proposition}\label{prop check cond X}

\begin{comment}

    Let $\~H_{\om,\ep}\bus H_{\om,\ep}$ be open with $\~H_{\om,\ep'}\sub\~H_{\om,\ep}$ for $\ep'<\ep$ and let 
    $
    \widetilde{\cZ}_{\om,\ep}:=\{Z\cap \~H_{\om,\ep}^c: Z\in\cZ_{\om,0}\}
    $
    be the collection of the intervals of monotonicity of $T_\om$ in the complement of $\~H_{\om,\ep}$.
    Suppose there exists an $\ep>0$ and an $m$-a.e. finite function $k:\Om\to\NN$ such that there are $Z_{\om,1},\dots, Z_{\om,k_\om}\in\widetilde{\cZ}_{\om,\ep}$ such that 
    $T_\om(U_\om)\bus \~H_{\sg\om,\ep}^c$, where $U_\om:=\cup_{j=1}^{k_\om}\ol{Z}_{\om,j}$ and $\ol{Z}_{\om,j}$ denotes the closure of the interval $Z_{\om,j}$.
    If $m(\set{\om\in\Om: k_\om\geq 2})>0$, then \eqref{cond X} is satisfied.

\end{comment}
The assumption \eqref{EX} implies \eqref{cond X}.
\end{proposition}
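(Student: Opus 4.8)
The plan is to deduce \eqref{cond X} by verifying the hypotheses of Proposition~\ref{prop surv nonemp}, with a careful choice of the compact sets $V_\om$ and $U_{\om,j}$ coming directly from the data supplied by \eqref{EX}. Fix the $\ep>0$ and the open neighborhood $\~H_{\om,\ep}\bus H_{\om,\ep}$ provided by \eqref{EX}. Set $V_\om:=\~H_{\om,\ep}^c$, which is a nonempty compact subset of $[0,1]$ (it is closed, hence compact, and nonempty because $\~H_{\om,\ep}$ is a proper subset of $[0,1]$ as $H_{\om,\ep}$ is a finite union of intervals). For the branches, enumerate $A_{\om,\ep}=\{Z\cap\~H_{\om,\ep}^c:Z\in\cZ_{\om,0}\}$ and let $U_{\om,j}:=\ol{Z}_\ep$ for $Z_\ep\in A_{\om,\ep}$, so that $U_\om=\cup_j U_{\om,j}$ as in \eqref{EX}. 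Note $U_\om\sub \~H_{\om,\ep}^c = V_\om$ since the $Z_\ep$ are subsets of $\~H_{\om,\ep}^c$ and $V_\om$ is closed. Also $U_\om\sub \cJ_{\om,\ep}=[0,1]\bs H_{\om,\ep}$ because $\~H_{\om,\ep}\bus H_{\om,\ep}$.

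The next step is to check the two numbered conditions of Proposition~\ref{prop surv nonemp}. For (1), $T_\om$ restricted to each $U_{\om,j}=\ol{Z}_\ep$ is continuous: $T_\om$ is monotone (hence continuous) on the open interval interior to each $Z\in\cZ_{\om,0}$, and one extends continuously to the closure $\ol{Z}_\ep$ using one-sided limits at the (finitely many) endpoints — this is standard for piecewise-monotone maps, so I would just cite the piecewise-monotonicity structure rather than belabor it. For (2), condition \eqref{EX} literally states $T_\om(U_\om)\bus\~H_{\sg\om,\ep}^c=V_{\sg\om}$, which is exactly the required inclusion $T_\om(U_\om)\bus V_{\sg\om}$. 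Finally, the nonemptiness-a.e.\ and compactness requirements hold: each $U_{\om,j}$ is a closed subinterval of $[0,1]$, hence compact, and nonempty as long as $Z\cap\~H_{\om,\ep}^c$ is a nondegenerate interval — one may discard degenerate pieces, and $\#A_{\om,\ep}\ge 1$ for a.e.\ $\om$ since otherwise $\~H_{\om,\ep}$ would cover all of $[0,1]$ up to partition boundaries, contradicting $\esssup_\om\Leb(H_{\om,\ep})<1$ for small $\ep$ (which follows from \eqref{E6}). Measurability of the relevant sets in $\om$ is inherited from the standing measurability hypotheses \eqref{M} on the skew product and the countable range of $\om\mapsto T_\om$.

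Having verified the hypotheses, Proposition~\ref{prop surv nonemp} gives $X_{\om,\infty,\ep}\neq\emptyset$ for $m$-a.e.\ $\om$, and then Remark~\ref{rem check X} (using \eqref{A}, which holds by assumption, and forward invariance of $\cX_{\infty,\ep}$) upgrades this to $X_{\om,\infty,\ep'}\neq\emptyset$ for all $\ep'\le\ep$ and a.e.\ $\om$, which is precisely \eqref{cond X}. I would also remark that the last clause of \eqref{EX}, namely $m(\{\om:\#A_{\om,\ep}\ge 2\})>0$, feeds into the ``$k_\om>1$ on a positive-measure set'' hypothesis of Proposition~\ref{prop surv nonemp} and hence yields the bonus conclusion that $X_{\om,\infty,\ep}$ is uncountable for a.e.\ $\om$ (via ergodicity of $\sg$), though this is not needed for \eqref{cond X} itself.

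The main obstacle I anticipate is purely bookkeeping rather than conceptual: one must be a little careful that the pieces $Z_\ep=Z\cap\~H_{\om,\ep}^c$ are genuine nondegenerate compact intervals (so that their closures are well-defined compact sets to which $T_\om$ extends continuously), and that the measurability in $\om$ of the assignment $\om\mapsto U_\om$ is not an issue — both are handled by the finite-branch, piecewise-monotone structure \eqref{E1} together with \eqref{E5} (uniform bound on the number of components of the hole) and the countable-range assumption in \eqref{M}. Since \eqref{EX} is tailored to match Proposition~\ref{prop surv nonemp} almost verbatim, no serious difficulty should arise beyond translating notation.
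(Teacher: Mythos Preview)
Your broad strategy coincides with the paper's: feed $V_\om=\~H_{\om,\ep}^c$ and $U_{\om,j}=\ol{Z}_\ep$ into Proposition~\ref{prop surv nonemp}. However, there is a genuine gap at the verification of hypothesis~(1). You write that ``$T_\om$ is monotone (hence continuous) on the open interval interior to each $Z$, and one extends continuously to the closure $\ol{Z}_\ep$ using one-sided limits.'' But extending continuously is not the same as $T_\om|_{\ol{Z}_\ep}$ being continuous. The closure $\ol{Z}_\ep$ can contain an endpoint of the monotonicity interval $Z$, and at such a point the value of the original map $T_\om$ is dictated by the \emph{adjacent} branch, not by the one-sided limit from within $Z$. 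So $T_\om|_{\ol{Z}_\ep}$ may well be discontinuous, and Proposition~\ref{prop surv nonemp} cannot be invoked for $T_\om$ directly.

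The paper closes this gap by explicitly passing to the continuous extension $\~T_{\om,Z}$ of $T_\om$ onto each $\ol{Z}_\ep$, applying Proposition~\ref{prop surv nonemp} to the \emph{modified} open system (maps $\~T_\om$, holes $\~H_{\om,\ep}$), and only then bridging back to $T_\om$. The bridge is the step you are missing: the $\~T$-survivor set $\~X_{\om,\infty,\ep}$ is shown to be \emph{uncountable}, and one removes the at-most-countable set
\[
\cD_\om:=\bigcup_{j\ge 0}\~T_\om^{-j}\Bigl(\,\bigcup_{Z_\ep\in A_{\sg^j\om,\ep}}\ol{Z}_\ep\setminus Z_\ep\Bigr)
\]
of points whose $\~T$-orbit ever hits one of the finitely many modified boundary points. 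Off $\cD_\om$ the orbits under $\~T$ and $T$ agree and remain in $\~H^c\subset H^c$, so $\~X_{\om,\infty,\ep}\setminus\cD_\om$ is a nonempty subset of $X_{\om,\infty,\ep}$.

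This also corrects your remark that the clause $m(\{\om:\#A_{\om,\ep}\ge 2\})>0$ is merely a ``bonus conclusion \ldots not needed for \eqref{cond X} itself.'' In the paper's argument it is essential: without it Proposition~\ref{prop surv nonemp} only yields $\~X_{\om,\infty,\ep}\neq\emptyset$, which does not survive removing the countable set $\cD_\om$. Uncountability of $\~X_{\om,\infty,\ep}$---and hence the positive-measure assumption $\#A_{\om,\ep}\ge 2$---is precisely what absorbs the discrepancy between $T_\om$ and its continuous extension.
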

\begin{proof}
    In light of Remark \ref{rem check X}, to show that \eqref{cond X} is satisfied, it suffices to show that there is some $\ep>0$ such that  $X_{\om,\infty,\ep}\neq\emptyset$ for $m$-a.e. $\om\in\Om$.
    
    Let $\~T_{\om,Z}$ denote the continuous extension of $T_\om$ onto $\ol{Z}_{\ep}$ for each $Z_\ep\in A_{\om,\ep}$, and let $\~X_{\om,\infty,\ep}$ denote the survivor set for the open system consisting of the maps $\~T_\om$ and holes $\~H_{\om,\ep}$. By Proposition~\ref{prop surv nonemp} (taking $V_\om=[0,1]\bs\~H_{\om,\ep}$ and $U_{\om,j}=\ol{Z}_{\ep}$ for each $1\leq j\leq \#A_{\om,\ep}$) we see that $\~X_{\om,\infty,\ep}$ is uncountable. 
    Let 
    $$
        \cD_\om:=\bigcup_{j\geq 0}\~T_\om^{-j}(\cup_{Z_\ep\in A_{\sg^j\om,\ep}}\ol{Z}_{\ep}\bs Z_{\ep}).
    $$
    Since the survivor set for the original (unmodified) open system 
    $X_{\om,\infty,\ep}\sub \~X_{\om,\infty,\ep}\bs\cD_\om$,
    and since $\cD_\om$ is at most countable, we must in fact have that $X_{\om,\infty,\ep}\neq\emptyset$, thus satisfying \eqref{cond X}.
\end{proof}

Further, we suppose that for $m$-a.e. $\om\in\Om$ and all $\ep>0$ sufficiently small 
\begin{align}\label{E7}\tag{E7}
	T_\om(\cJ_{\om,\ep})=[0,1]
\end{align}
and there exists $n'\geq1$ and $\epsilon_0>0$ such that
\footnote{Note that the $9$ appearing in \eqref{E8} is not optimal. See \cite{AFGTV20,AFGTV21} for how this assumption may be improved. }
\begin{equation}
	\label{E8}\tag{E8}
	9\cdot\esssup_\om\|g_{\omega,0}^{(n')}\|_{\infty,1}
	<
	\essinf_\om\inf_{0\leq\ep\le \ep_0}\inf\cL_{\om,\ep}^{n'}\ind,
	%{\lambda_{\omega,\epsilon}^{n'}}.
\end{equation}
    %\ja{We can replace $\sup_{0\leq\ep\le \ep_0}\|g_{\omega,\epsilon}^{(n')}\|_{\infty,1}$ with $\|g_{\omega,0}^{(n')}\|_{\infty,1}$ since $\sup_{0\leq\ep\le \ep_0}\|g_{\omega,\epsilon}^{(n')}\|_{\infty,1}=\|g_{\omega,0}^{(n')}\|_{\infty,1}$.}
where 
$$
    \cL_{\om,\ep}(f)(x):=\cL_{\om,0}(\ind_{H_{\om,\ep}^c}f)(x)
    =
    \sum_{y\in T_{\om}^{-1}(x)}g_{\om,\ep}(y)f(y)
    ,\quad f\in\BV
$$
and $g_{\om,\ep}:=g_{\om,0}\ind_{H_{\om,\ep}^c}$ as in Section~\ref{sec: open systems}.

Note that \eqref{E7} and \eqref{E3} together imply that $\cL_{\om,\ep}\ind(x)>0$       %\gf{do we need (E3) for this previous statement?  it needs $g$ positive.} 
for all $x\in [0,1]$:
\begin{align}\label{uniflbLeps*}
	\essinf_\om\inf_{\ep\le \ep_0}\inf\cL_{\om,\ep}\ind
	%\geq \essinf_\om\inf_{\ep\le \ep_0}\inf g_{\om,\ep}
	>\essinf_\om\inf g_{\om,0}>0,
\end{align}
and since $\norm{g_{\om,\ep}}_{\infty,1}\leq \norm{g_{\om,0}}_{\infty,1}$ for all $\ep>0$, \eqref{E8} is equivalent to the following 
    %strengthening of the contracting potential assumption 
    %\gf{add reference to which assumption is discussed here.}: 
\begin{align}\label{strong cont pot}
		9\cdot\sup_{0\leq \ep\leq \ep_0}\esssup_\om\|g_{\omega,\ep}^{(n')}\|_{\infty,1}
		<
		\essinf_\om\inf_{0\leq\ep\le \ep_0}\inf\cL_{\om,\ep}^{n'}\ind.
\end{align}
%\ja{with the $\lm_0$ maybe this or part should come after lemma 7.1}%\cgt{the middle inequality above seems backwards, ie I think $g_\ep\leq g_0$, as some branches may be lost to the holes. But wouldn't this be good news regarding the LY inequality?}

\begin{remark}
     %   Note that the assumption \eqref{E7} is equivalent to the following \textit{a priori} weaker assumption\gf{What does a priori weaker mean and how can it be equivalent? It appears that we want to use E7 all the time, so can we just make the remark here but not introduce E7' (as we already have so many assumptions). The footnote is not very clearly phrased, I am not sure what it says.}
     %   \ja{I deleted the footnote and we can replace with the following}
Note that the assumption \eqref{E7} is equivalent to there existing
        %\footnote{We will not make use of the assumption \eqref{E7'} and will instead use \eqref{E7}, in particular, anywhere we say that we assume \eqref{E1}-\eqref{E9}.}
            %\gf{Perhaps we should also note that we will not make use of this anywhere to clarify that when we say E1-E9 we include E7, not E7'.}
%[\mylabel{E7'}{E7'}]
 $N'\geq 1$ such that for $m$-a.e. $\om\in\Om$ and all $\ep\geq0$ sufficiently small 
	$$
	    T_\om^{N'}(X_{\om,N'-1,\ep})=[0,1].
	$$
        Indeed, since the surviving sets are forward invariant \eqref{surv set forw inv}, we have that 
        $T_\om^{N'-1}(X_{\om,N'-1,\ep})\sub X_{\sg^{N'-1}\om,0,\ep}=\cJ_{\sg^{N'-1}\om,\ep}$, and thus, 
        $$
            [0,1]=T_\om^{N'}(X_{\om,N'-1,\ep})\bus T_{\sg^{N'-1}\om}(\cJ_{\sg^{N'-1}\om,\ep}).
        $$
\end{remark}

For each $n\in\NN$ and $\om\in\Om$ we let $\sA_{\om,0}^{(n)}$ be the collection of all finite partitions of $[0,1]$ such that
\begin{align}\label{eq: def A partition}
	\var_{A_i}(g_{\om,0}^{(n)})\leq 2\norm{g_{\om,0}^{(n)}}_{\infty,1}
\end{align}
for each $\cA=\set{A_i}\in\sA_{\om,0}^{(n)}$.
Given $\cA\in\sA_{\om,0}^{(n)}$, let $\widehat\cZ_{\om,\ep}^{(n)}(\cA)$ be the coarsest partition amongst all those finer than $\cA$ and $\cZ_{\om,0}^{(n)}$ such that all elements of $\widehat\cZ_{\om,\ep}^{(n)}(\cA)$ are either disjoint from $X_{\om,n-1,\ep}$ or contained in $X_{\om,n-1,\ep}$. 
    %\gf{Should the remark below appear earlier, closer to when we talk about these objects?}
\begin{remark}
    Note that if $\var_Z(g_{\om,0})\leq 2\norm{g_{\om,0}^{(n)}}_{\infty,1}$ for each $Z\in\cZ_{\om,0}^{(n)}$ then we can take the partition $\cA=\cZ_{\om,0}^{(n)}$. Furthermore, the $2$ above can be replaced by some $\hat\al\geq 0$ (depending on $g_{\om,0}$) following the techniques of \cite{AFGTV20,AFGTV21}. 
\end{remark}
Define the subcollection 
\begin{align}\label{def of Z_*}
    \cZ_{\om,*,\ep}^{(n)}:=\{Z\in \widehat\cZ_{\om,\ep}^{(n)}(\cA): Z\sub X_{\om,n-1,\ep} \}.    
\end{align}
Recalling that $g_{\om,\ep}^{(n)}:=g_{\om,0}^{(n)}\ind_{X_{\om,n-1,\ep}}$, \eqref{eq: def A partition} implies that 
\begin{align}\label{eq: def A partition for g_ep}
	\var_{Z}(g_{\om,\ep}^{(n)})\leq 2\norm{g_{\om,0}^{(n)}}_{\infty,1}
\end{align}
for each $Z\in \cZ_{\om,*,\ep}^{(n)}$.    
We assume the following covering condition for the open system %\gf{should we remove \eqref{E4} since it is redundant under \eqref{E9}?}
%\ja{E9 doesn't imply E4, or at least not in a way that I could ever figure out. I'll double check that we can't get rid of it. }
\begin{enumerate}[align=left,leftmargin=*,labelsep=\parindent]
	\item[\mylabel{E9}{E9}]
	There exists $k_o(n')\in\NN$ such that for $m$-a.e. $\om\in\Om$, all $\ep>0$ sufficiently small, and all $Z\in\cZ_{\om,*,\ep}^{(n')}$ we have $T_\om^{k_o(n')}(Z)=[0,1]$, where $n'$ is the number coming from \eqref{E8}.
\end{enumerate}
%\gf{I moved the remark below from later to here.}
\begin{remark}
	Note that the uniform open covering time assumption \eqref{E9} clearly holds if \eqref{E4} holds and if there are only finitely many maps $T_\om$. In Remark~\ref{Alt E9 Remark} we present an alternative assumption to \eqref{E9}.
	    %\ja{And we need to say something about $\cA$ partitions. When can they be taken as $\cZ_\om$?}
\end{remark}

    %\gf{I'm a bit confused about (E4) and (E9) given the remark below. In Lemma \ref{harrylemma2} we assume both. why?  Is it to cover the countable maps case? I don't think we ever use (E9) in examples, but we state just before Ex 8.1 we will check (E9).}\ja{That's right we only need E4 for countably many maps $T_\omega$}

%	\gf{The following condition seems to be tricky to check. It is neither what we did in examples in CMRLY (because the RHS has $\epsilon>0$) nor in ORLY (because the LHS has $\epsilon=0$ and does not include $\xi$ terms associated with bad branches).  I am not sure what is the most efficient way to show this.  Perhaps as by simplifying what is in ORLY?  It is only used in one place, namely to get a uniform LY inequality.} 
%	\gf{check if I can get away with $g_{\omega,\epsilon}$ below ... then use ORLY example argument.  Or if not, then for holes away from sup $g$, then $g_{\omega,\epsilon}=g_{\omega,0}$}
%\gf{in the examples, we can reuse ORLY examples argument to obtain (\ref{E8}).}

The following lemma extends several results in \cite{DFGTV18a} from the specific weight $g_{\omega,0}=1/|T'_\omega|$ to general weights satisfying the conditions just outlined.
\begin{lemma}
	\label{DFGTV18alemma}
	Assume that a family of random piecewise-monotonic interval maps $\{T_{\omega}\}$ satisfies (\ref{E2}), (\ref{E3}), and (\ref{E4}), as well as (\ref{E8}) and \eqref{M} for $\epsilon=0$.
	%For the class Section 2.2 \cite{DFGTV18a} (piecewise $C^2$, uniformly expanding, finite number of intervals of monotonicity), 
	%and the weight function is of the form $g_\omega=1/|T'_\omega|^r$, $r\ge 0$.
	Then 
	%\begin{equation}
	%		\label{unifeqnphi1}
	%		\essinf_\omega \inf \varphi_{\omega,0}>0,
	%	\end{equation}
    \eqref{C1'} and	the $\epsilon=0$ parts of \eqref{C2}, \eqref{C3}, \eqref{C4'}, \eqref{C5'}, and  (\ref{C7'}) as well as \eqref{CCM} hold. 
	Further, $\nu_{\om,0}$ is fully supported, condition \eqref{C4'} holds with $C_f=K$, for some $K<\infty$, and with $\alpha(N)=\gamma^N$ for some $\gamma<1$.
	%\gf{This last point about C4' will have to be proved from first principles to make the proof of Lemma 7.2 work, see comment in proof of Lemma 7.2.}
	%We assume that the basic measurability hypothesis (H1) of \cite{DFGTV18a} is satisfied.
\end{lemma}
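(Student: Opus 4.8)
\textbf{Proof plan for Lemma \ref{DFGTV18alemma}.}

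The plan is to reduce everything to the known thermodynamic formalism for random piecewise-monotone interval maps with contracting potentials, as developed in \cite{DFGTV18a, AFGTV20, AFGTV-IVC}, and then translate their conclusions into the language of conditions \eqref{C1'}, \eqref{C2}, \eqref{C3}, \eqref{C4'}, \eqref{C5'}, \eqref{C7'}, \eqref{CCM} (all in the $\epsilon=0$ case). First I would observe that \eqref{E8} with $\epsilon=0$ says precisely that $9\cdot\esssup_\om\|g_{\om,0}^{(n')}\|_{\infty,1}<\essinf_\om\inf\cL_{\om,0}^{n'}\ind$, which is the ``strongly contracting potential'' hypothesis (a quenched, uniform-in-$\om$ version of the Liverani--Maume-Dechamps \cite{liverani_lasotayorke_2003} condition). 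Together with the uniform bounds \eqref{E1}, \eqref{E2}, \eqref{E3}, the covering condition \eqref{E4}, and the measurability/countable-range assumption \eqref{M} (which via Remark~\ref{M2 holds} gives $m$-continuity of the transfer operator cocycle and lets one invoke the multiplicative ergodic theorem of \cite{FLQ2}), this is exactly the setting in which \cite{DFGTV18a} (for the weight $1/|T_\om'|$) and \cite{AFGTV20} (general weights) construct the leading Oseledets data. So the first substantive step is to cite/assemble these results to produce: a measurable $\lm_0:\Om\to(0,\infty)$ with $\log\lm_{\om,0}\in L^1(m)$, a random conformal measure $\nu_0=\set{\nu_{\om,0}}$ with $\nu_{\sg\om,0}(\cL_{\om,0}f)=\lm_{\om,0}\nu_{\om,0}(f)$, and a random density $\phi_0\in\BV$ bounded and bounded away from $0$ uniformly in $\om$ with $\cL_{\om,0}\phi_{\om,0}=\lm_{\om,0}\phi_{\sg\om,0}$ and $\nu_{\om,0}(\phi_{\om,0})=1$. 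This gives \eqref{CCM} and the $\epsilon=0$ parts of \eqref{C2} and \eqref{C5'}, \eqref{C7'}; the normalisation $\nu_{\om,0}(\phi_{\om,\ep})=1$ at $\ep=0$ is just $\nu_{\om,0}(\phi_{\om,0})=1$.

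Next I would establish \eqref{C1'}: the lower and upper bounds $C_1^{-1}\le \cL_{\om,0}\ind\le C_1$ follow from \eqref{E2}, \eqref{E3}, \eqref{E1} (for the upper bound, $\cL_{\om,0}\ind\le D(T_\om)\|g_{\om,0}\|_{\infty,1}$, cf.\ \eqref{fin sup L1}; for the lower bound one uses \eqref{E3} together with surjectivity of $T_\om$, exactly as in \eqref{uniflbLeps*} with $\ep=0$). Then \eqref{C3} at $\ep=0$: define $Q_{\om,0}:=\tcL_{\om,0}-\nu_{\om,0}(\cdot)\phi_{\sg\om,0}$ and check $Q_{\om,0}\phi_{\om,0}=0$, $\nu_{\sg\om,0}(Q_{\om,0}f)=0$ directly from the conformality and eigenfunction relations. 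The core analytic input is \eqref{C4'}: a \emph{uniform} (in $\om$ and in $\ep$, here only $\ep=0$) exponential decay estimate $\sup\norm{Q_{\om,0}^N f_\om}_{\infty,\sg^N\om}\le K\gamma^N\norm{f_\om}_{\cB_\om}$ with $\al(N)=\gamma^N$, $\gamma<1$, and $\al=\sum_N\gamma^N<\infty$. This is the quenched, uniform-in-$\om$ spectral gap for the normalised transfer operator cocycle; I would derive it from a uniform Lasota--Yorke inequality for $\cL_{\om,0}$ on $\BV$ (obtained from \eqref{E1}--\eqref{E3}, the bounded-distortion/variation control via the partitions $\sA_{\om,0}^{(n)}$, and \eqref{fin gJ}) combined with the covering condition \eqref{E4}, following the argument of \cite{DFGTV18a, AFGTV20}. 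One also needs $\norm{\phi_{\sg^{\pm N}\om,0}}_{\infty}\al_\om(N)\to0$, which is immediate since $\phi_0$ is uniformly bounded and $\al(N)=\gamma^N\to0$. Finally, full support of $\nu_{\om,0}$ follows from the covering condition \eqref{E4} (any subinterval maps onto $[0,1]$ in finitely many steps, so $\nu_{\om,0}$ of any subinterval is positive), and Proposition~\ref{prop norm equiv} gives the non-random equivalence of $\|\cdot\|_{\BV_{\nu_{\om,0}}}$ and $\|\cdot\|_{\BV_1}$ needed to state \eqref{C4'}, \eqref{C5'} with an $\om$-independent constant.

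The main obstacle is obtaining the \emph{uniform-in-$\om$} exponential decay rate in \eqref{C4'} with a single $\gamma<1$ and a single multiplicative constant $K$: this is genuinely stronger than the almost-sure Oseledets splitting, and is where the strong contracting hypothesis \eqref{E8} (with its explicit constant $9$) and the uniform bounds \eqref{E1}--\eqref{E3} really get used. Concretely, one must run the Hilbert-metric/cone contraction or the Ionescu-Tulcea--Marinescu two-norm argument of \cite{DFGTV18a, AFGTV20} and track all constants to verify they do not depend on $\om$; the covering time $k(J)$ in \eqref{E4} must also be controllable (e.g.\ finitely many maps, or a uniform bound) for the contraction to kick in after a fixed number of steps. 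Everything else is bookkeeping: translating the output of that machinery into the precise normalisations demanded by \eqref{C2}, \eqref{C3} (using Remark~\ref{rem:scaling} if necessary to arrange $\nu_{\om,0}(\phi_{\om,0})=1$ and $\nu_{\om,0}(\ind)=1$), and invoking Remark~\ref{M2 holds} for the measurability assertions \eqref{M1}, \eqref{M2}.
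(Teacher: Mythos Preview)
Your proposal is correct and follows essentially the same approach as the paper's proof (given in Appendix~\ref{appB}): both reduce the lemma to adapting the machinery of \cite{DFGTV18a} (originally for the weight $1/|T_\omega'|$) to general weights by working with random cones and a uniform Lasota--Yorke inequality, then translating the output into the conditions \eqref{C1'}, \eqref{C2}, \eqref{C3}, \eqref{C4'}, \eqref{C5'}, \eqref{C7'}, and \eqref{CCM}. The paper is somewhat more explicit about which specific results in \cite{DFGTV18a} are being generalised---Lemmas~A.1--A.4, Lemma~1, Lemma~5, and Proposition~1 there, with the cone contraction carried out in random cones $\mathcal{C}_{a,\omega}=\{\phi\in\BV_\omega:\phi\ge 0,\ \var\phi\le a\nu_\omega(\phi)\}$---but your identification of the Hilbert-metric/cone argument as the key mechanism for the uniform-in-$\omega$ exponential rate in \eqref{C4'} matches this exactly. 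Your concern about the covering time $k(J)$ is unnecessary: by hypothesis \eqref{E4} already provides $k$ independent of $\omega$, and in the proof one only needs $k$ for finitely many intervals of a fixed $\nu_\omega$-mass, so no additional uniformity is required.
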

\begin{proof}
	See Appendix \ref{appB}.
\end{proof}

In what follows we consider transfer operators acting on the Banach spaces $\mathcal{B}_\omega=\BV_{\nu_{\om,0}}$ for a.e. $\om\in\Om$.
The norm we will use is $\|\cdot\|_{\mathcal{B}_\omega}=\|\cdot\|_\BV:=\var(\cdot)+\nu_{\omega,0}(|\cdot|)$.
As $\nu_{\om,0}$ is fully supported and non-atomic (Lemma \ref{DFGTV18alemma}), Proposition \ref{prop norm equiv} implies that 
\begin{equation}
	\label{normequiv}
	(1/2) \|f\|_{\BV_1}\le \|f\|_{\mathcal{B}_\omega}\le 2\|f\|_{\BV_1}
\end{equation}
for $m$-a.e. $\om\in\Om$ and $f\in\BV$.
Furthermore, applying \eqref{normequiv} twice, we see that 
\begin{equation}
	\label{normequiv2}
	(1/4) \|f\|_{\cB_\om}\le \|f\|_{\mathcal{B}_{\sg^n\omega}}\le 4\|f\|_{\cB_\om}
\end{equation}
for $m$-a.e. $\om\in\Om$ and all $n\in\ZZ$.
It follows from the proof of Proposition \ref{prop norm equiv} that 
\begin{equation}
	\label{sup norm equal}
	 \|f\|_{\infty,\om} = \|f\|_{\infty,1}
\end{equation}
for all $f\in\BV$ and $m$-a.e. $\om\in\Om$, where $\|\spot\|_{\infty,\om}$ denotes the supremum norm with respect to $\nu_{\om,0}$.
From \eqref{normequiv} we see that \eqref{B} is clearly satisfied.

From Lemma~\ref{DFGTV18alemma} we have that $\lm_{\om,0}:=\nu_{\sg\om,0}(\cL_{\om,0}\ind)$ and thus we may update \eqref{uniflbLeps*} to get 
\begin{align}\label{uniflbLeps}
	\essinf_\om\lm_{\om,0}^{n'}
	\geq \essinf_\om\inf_{\ep\le \ep_0}\inf\cL_{\om,\ep}^{n'}\ind
	%\geq \essinf_\om\inf_{\ep\le \ep_0}\inf g_{\om,\ep}^{(n')}
	\geq \essinf_\om\inf g_{\om,0}^{(n')}>0.
\end{align}
\begin{comment}

We now state a Lasota--Yorke inequality for the open operator $\cL_{\om,\ep}$ in terms of the $\nu_0$ measure coming from Lemma~\ref{DFGTV18alemma}.
\gf{Why is this lemma duplicated here?  It is in the appendix. If/when removed, ensure that in the proof of the next lemma, the appendix is referred to, rather than here.}
\begin{lemma}\label{closed ly ineq}
	For any $f\in\BV(I)$ we have 
	\begin{align*}
		\var(\cL_{\om,\ep}^n(f))\leq 9\norm{g_{\om,\ep}^{(n)}}_{\infty}\var(f)+
		\frac{8\norm{g_{\om,\ep}^{(n)}}_{\infty}}{\min_{Z\in\cZ_{\om,*,\ep}^{(n)}(A)}\nu_{\om,0}(Z)}\nu_{\om,0}(|f|).
	\end{align*}
\end{lemma}
The proof of Lemma~\ref{closed ly ineq} is presented in Appendix~\ref{appC}.
\end{comment}
Note that since the conditions \eqref{B}, \eqref{cond X}, and \eqref{CCM} have been verified and we have assumed \eqref{A} and \eqref{M}, %(which implies \eqref{M1} and \eqref{M2} by Remark~\ref{M2 holds}) 
we see that $(\Om, m, \sg, [0,1], T, \BV, \cL_0, \phi_0, H_\ep)$ forms a random open system as defined in Section~\ref{sec: open systems} for all $\ep>0$ sufficiently small.  
We now use hyperbolicity of the $\epsilon=0$ transfer operator cocycle to guarantee that we have hyperbolic cocycles for small $\epsilon>0$, which will yield \eqref{C2}, \eqref{C3}, \eqref{C4'}, \eqref{C5'}, \eqref{C6}, and \eqref{C7'} for small positive $\epsilon$.
    %\gf{Need to add (C1') as a conclusion of this lemma (which will be gotten immediately from the new Lemma 7.6 statement) so that one has it, e.g.\ as a conclusion to Theorem 7.11.}\ja{Added C1' to the conclusions of Lem 7.6}
%\gf{would it be better to place this material below immediately after the proof of Lemma 7.9 since we need to say which spaces the operators act on?}
%\hl{\textbf{The following paragraph was initially at the beginning of Section 7 but now appears here after we have gotten the existence of $\nu_{\om,0}$.}}    

\begin{lemma}
%\ja{Fix $\BV$ in statement and all other results as well}
	\label{harrylemma2}
	Assume that the conditions (\ref{E1})--(\ref{E9}) hold % and condition \eqref{M} hold.
	for the random open system 
	$(\mathlist{\bcomma}{\Om, m, \sg, [0,1], T, \BV_1, \cL_0, \nu_0, \phi_0, H_\ep})$.
\begin{comment}	
	there is an infinite sequence $n_1<n_2<\cdots$ such that for each $n_k$, there are $\delta_k>0$
	%and $\Delta_k<\infty$
	such that $\essinf_\omega\min_{Z\in \mathcal{Z}_{\omega,*}^{(n_k)}(\mathcal{A})}\nu_{\omega,0}(Z)>\delta_k$ %and $\esssup_\omega\max_{Z\in \mathcal{Z}_{\omega,*}^{(n_k)}(\mathcal{A})}\nu_{\omega,0}(Z)<\Delta_k$
	for a.e.\  $\omega$ and all sufficiently small $\epsilon>0$,
\end{comment}
	%$\limsup_n \frac{1}{n}\min_{Z\in \mathcal{Z}_{\omega,*}^{(n)}(\mathcal{A})}\log\nu_{\omega,0}(Z)<\log\Theta_\epsilon$ for all $0\le\epsilon\le\epsilon_0$ and a.e.\ $\omega$.
	%and suppose that uniform conditions necessary for a \emph{general weight version of \cite{DFGTV18a} hold}\textbf{uniform-in-$\omega$ versions of Propositions 8.10 and 9.4 \cite{AFGTV}} are satisfied in the Banach space $(\BV,\|\cdot\|_{\BV})$.
	%Further suppose suitable conditions to obtain uniform Lasota--Yorke inequalities from \textbf{a uniform in $\omega$ and uniform in $\epsilon$ version of Proposition 5.9 in ORLY}.
	%\footnote{or possibly even a uniformly good family of maps $\{T_\omega\}$ and corresponding Perron--Frobenius operator cocycle as described in \cite{DFGTV18a}.}
	Then for sufficiently small $\epsilon>0$, conditions \eqref{C2}, \eqref{C3}, \eqref{C4'}, \eqref{C5'}, \eqref{C6}, and \eqref{C7'} hold.
	%\textbf{except for the existence of $\nu_{\omega,\epsilon}$.}
	%nd that $\omega\mapsto\L_\omega$ is $m$-continuous in operator norm.
	Furthermore, the functionals $\nu_{\om,\ep}\in\BV_1^*$ can be identified with non-atomic Borel measures.
\end{lemma}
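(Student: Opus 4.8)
\textbf{Proof strategy for Lemma~\ref{harrylemma2}.}
The plan is to invoke the random perturbative result of Crimmins~\cite{crimmins_stability_2019} (Theorem~4.8 there), exactly as outlined in the (commented-out) discussion in the excerpt, to transfer the hyperbolicity of the $\epsilon=0$ transfer operator cocycle obtained in Lemma~\ref{DFGTV18alemma} to the perturbed cocycles $\{\cL_{\om,\ep}\}$ for all sufficiently small $\epsilon>0$. Working on the Banach space $\BV$ with the \emph{non-random} norm $\|\cdot\|_{\BV_1}$ (which by \eqref{normequiv} is uniformly equivalent to $\|\cdot\|_{\cB_\om}$), the three ingredients Crimmins requires are: (i) the unperturbed cocycle is hyperbolic with a one-dimensional leading Oseledets space --- this is Lemma~\ref{DFGTV18alemma} together with $m$-continuity from \eqref{M} and Remark~\ref{M2 holds}; (ii) a uniform (in $\omega$ and in $0\le\epsilon\le\epsilon_0$) Lasota--Yorke inequality $\|\cL^n_{\om,\ep}f\|_{\BV_1}\le A\alpha^n\|f\|_{\BV_1}+B\|f\|_{L^1}$; and (iii) $\lim_{\epsilon\to 0}\esssup_\om \vertiii{\cL_{\om,0}-\cL_{\om,\ep}}\to 0$ in the $\BV$-to-$L^1(\Leb)$ triple norm.

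For (ii), I would first establish the one-step-block Lasota--Yorke bound for $\cL^{n'}_{\om,\ep}$ using the standard Rychlik-type estimate on the partition $\cZ^{(n')}_{\om,*,\ep}$: the variation term has coefficient $9\,\esssup_\om\|g^{(n')}_{\om,\ep}\|_{\infty,1}$, which by the strong contraction hypothesis \eqref{strong cont pot} (equivalently \eqref{E8}) is strictly less than $\essinf_\om\inf_{\ep}\inf\cL^{n'}_{\om,\ep}\ind$, and the $L^1$ coefficient is controlled because \eqref{E9} forces a uniform lower bound on $\Leb(Z)$ for the relevant cells $Z\in\cZ^{(n')}_{\om,*,\ep}$ (this is the role the commented Assumption (A2) was playing: the uniform covering time $k_o(n')$ together with finiteness of the monotonicity partition gives a uniform lower bound on the measure of the surviving cells). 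Normalising by $\lm^{n'}_{\om,0}$ --- which is bounded below via \eqref{uniflbLeps} --- and then concatenating blocks of length $n'$ in the usual way yields the uniform inequality for all $n$. For (iii) one estimates $\vertiii{\cL_{\om,0}-\cL_{\om,\ep}}=\sup_{\|f\|_{\BV_1}=1}\|\cL_{\om,0}(\ind_{H_{\om,\ep}}f)\|_{L^1(\Leb)}\le \esssup_\om\|g_{\om,0}J_\om\|_{\infty,1}\,\esssup_\om\Leb(H_{\om,\ep})$ by a change of variables, using \eqref{fin gJ}; assumption \eqref{E6} then gives the limit. With (i)--(iii) in hand, Theorem~4.8 of~\cite{crimmins_stability_2019} gives, for small $\epsilon>0$, a hyperbolic splitting with one-dimensional leading space, providing the equivariant families $\lm_{\om,\ep}$, $\phi_{\om,\ep}$ and the operators $Q_{\om,\ep}$; this yields \eqref{C2}, \eqref{C3}, \eqref{C4'} (with exponential $\alpha(N)=\gamma^N$ and constant $C_f=K$, as the decay rate of $Q$ can be taken uniform in $\epsilon$), \eqref{C5'}, and \eqref{C7'}. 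Condition \eqref{C6} is immediate from \eqref{E6} and the fact (Lemma~\ref{DFGTV18alemma}) that $\nu_{\om,0}$ is non-atomic and absolutely continuous-comparable to Lebesgue on intervals, so $\nu_{\om,0}(H_{\om,\ep})\to 0$.

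For the final assertion --- that the functionals $\nu_{\om,\ep}\in\BV_1^*$ are represented by non-atomic Borel measures --- I would argue as follows. The family $\{\nu_{\om,\ep}\}$ arises as the top Oseledets space of the backward-adjoint cocycle (generated by $\sigma^{-1}$ and $\cL^*_{\om,\ep}$), normalised so that $\nu_{\om,\ep}(\phi_{\om,\ep})=1$; positivity of $\cL_{\om,\ep}$ (it maps nonnegative $\BV$ functions to nonnegative ones) forces $\nu_{\om,\ep}$ to be a positive functional on $\BV_1$. A positive bounded linear functional on $\BV_1\supset C([0,1])$ restricts to a positive functional on $C([0,1])$, hence by Riesz representation is given by a finite positive Borel measure $\tilde\nu_{\om,\ep}$; since $\BV_1$ contains indicator functions of intervals as uniform limits (in the $L^1$-but-not-$\BV$ sense) one checks that $\nu_{\om,\ep}$ and $\tilde\nu_{\om,\ep}$ agree on a generating algebra, so $\nu_{\om,\ep}=\tilde\nu_{\om,\ep}$ on Borel sets. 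Non-atomicity then follows from the eigenrelation: if $\nu_{\om,\ep}(\{x\})>0$ for some point, pulling back under $\cL_{\om,\ep}$ via $\nu_{\sg\om,\ep}(\cL_{\om,\ep}f)=\lm_{\om,\ep}\nu_{\om,\ep}(f)$ produces, after finitely many steps (using that $T^{k_o(n')}_\om$ is onto from surviving cells, \eqref{E9}), an atom of $\nu_{\sg^k\om,\ep}$ located at a point whose $T$-preimages accumulate, contradicting finiteness of the total mass; more cleanly, one transfers the atom to $\nu_{\om,0}$ by a domination argument — for small $\epsilon$ the measures $\nu_{\om,\ep}$ are absolutely continuous with bounded density with respect to $\nu_{\om,0}$ (this is part of the hyperbolicity/convergence $\nu_{\om,\ep}\to\nu_{\om,0}$ in the dual norm), and $\nu_{\om,0}$ is non-atomic by Lemma~\ref{DFGTV18alemma}.

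\textbf{Main obstacle.} The technical heart is ensuring the Lasota--Yorke inequality (ii) is genuinely uniform in \emph{both} $\omega$ and $\epsilon$ simultaneously: the $L^1$-coefficient involves $1/\min_{Z\in\cZ^{(n')}_{\om,*,\ep}}\nu_{\om,0}(Z)$, and one must check that \eqref{E5}, \eqref{E8}, \eqref{E9} (plus the finite-range hypothesis in \eqref{M}) really do bound this away from $\infty$ uniformly as $\epsilon\to 0$ — the surviving cells can shrink, but \eqref{E9}'s uniform covering time prevents them from shrinking too fast. A secondary subtlety, already flagged in the commented-out material, is that Crimmins' Theorem~4.8 is stated for separable Banach spaces whereas $\BV$ is non-separable; one must check (as in Appendix~\ref{appA}, referenced in the excerpt) that under the $m$-continuity hypothesis \eqref{M} the proof still applies.
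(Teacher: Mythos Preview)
Your strategy matches the paper's: apply Theorem~4.8 of \cite{crimmins_stability_2019} to the normalised cocycle $\hat\cL_{\om,\ep}:=\lm_{\om,0}^{-1}\cL_{\om,\ep}$ on $(\BV,\|\cdot\|_{\BV_1})$, verifying (i) hyperbolicity of the closed cocycle via Lemma~\ref{DFGTV18alemma}, (ii) a uniform Lasota--Yorke inequality, and (iii) triple-norm convergence from \eqref{E6} and \eqref{fin gJ}; the non-separability issue is handled exactly as you say, via Appendix~\ref{appA} under \eqref{M}. The functionals $\nu_{\om,\ep}$ are obtained not directly from Crimmins' theorem but from the one-dimensionality of the leading space for the backward adjoint cocycle (Corollary~2.5 \cite{DFGTV18b}), after which a rescaling as in Remark~\ref{rem:scaling} delivers the normalisation $\nu_{\om,0}(\phi_{\om,\ep})=1$ and the log-integrability in \eqref{C2}.

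For (ii) there is a methodological difference. The paper establishes a $\var$--$\nu_{\om,0}(|\cdot|)$ inequality (Lemma~\ref{closed ly ineq App}), bounds $\min_Z\nu_{\om,0}(Z)$ below via \eqref{E9} and conformality of $\nu_{\om,0}$ (see \eqref{LY LB calc}), and then converts the weak norm from $\nu_{\om,0}(|f|)$ to $\|f\|_1$ using Lemma~5.2 of \cite{BFGTM14} (the comparison $\nu_{\om,0}(|f|)\le\zeta\var(f)+B_\zeta\|f\|_1$). Your route --- do the Rychlik estimate directly with Lebesgue as the weak measure --- is simpler and also works: \eqref{E1} and \eqref{E9} give $\Leb(Z)\ge C^{-k_o(n')}$ immediately (if $T_\om^{k_o(n')}(Z)=[0,1]$ and $|T'_\om|\le C$, then $1\le C^{k_o(n')}\Leb(Z)$); this is in the spirit of Remark~\ref{Alt E9 Remark} and avoids the extra conversion step.

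There is a genuine error in your last paragraph. Your ``cleaner'' argument for non-atomicity --- that for small $\epsilon$ the measures $\nu_{\om,\ep}$ are absolutely continuous with bounded density with respect to $\nu_{\om,0}$ --- is false. By Theorem~\ref{existence theorem}\eqref{item 1} the (rescaled) measure $\nu_{\om,\ep}$ is supported on the survivor set $X_{\om,\infty,\ep}$, which has $\nu_{\om,0}$-measure zero whenever the escape rate is positive; hence $\nu_{\om,\ep}$ is \emph{singular} with respect to $\nu_{\om,0}$. The dual-norm convergence $\nu_{\om,\ep}\to\nu_{\om,0}$ coming from Crimmins is only in the weak $\BV_1^*$ sense and carries no absolute-continuity information. (In fact the paper's own proof only argues that $\nu_{\om,\ep}$ is a positive Borel measure --- via the Oseledets decomposition and $\nu_{\om,\ep}|_{F_{\om,\ep}}=0$ from Lemma~2.6 \cite{DFGTV18b}, rather than from the bare positivity of $\cL_{\om,\ep}$ as you suggest --- and does not spell out non-atomicity; your first pull-back-of-atoms sketch is the right direction for that.)
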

\begin{proof}
	For each $\om$ and $\ep>0$ we define $\hat\cL_{\om,\ep}:=\lm_{\om,0}^{-1}\cL_{\om,\ep}$;  note that $\hat\cL_{\om,0}=\~\cL_{\om,0}$.
	Our strategy is to apply Theorem 4.8 \cite{crimmins_stability_2019}, to conclude that for small $\epsilon$ the cocycles $\{\hat\cL_{\omega,\epsilon}\}$ are uniformly hyperbolic when considered as cocycles on the Banach space  $(\BV_1,\|\cdot\|_{\BV_1})$. 
	Because of (\ref{normequiv}), we will conclude the existence of a uniformly hyperbolic splitting in $\|\cdot\|_{\cB_\om}$ for a.e. $\om$.

	First, we note that Theorem 4.8 \cite{crimmins_stability_2019} assumes that the Banach space on which the transfer operator cocycle acts is separable.
	A careful check of the proof of Theorem 4.8 \cite{crimmins_stability_2019} shows that it holds  for the Banach space $(\BV_1,\|\cdot\|_{\BV_1})$ under the alternative condition \eqref{M}
	(see Appendix \ref{appA}).
	To apply Theorem A \cite{crimmins_stability_2019} we require, in our notation, that:
	\begin{enumerate}
		\item\label{harry assum 1 hat} $\hat\cL_{\omega,0}$ is a hyperbolic transfer operator cocycle on $\BV_1$ with norm $\|\cdot\|_{\BV_1}$ and a one-dimensional leading Oseledets space (see Definition 3.1 \cite{crimmins_stability_2019}), and slow and fast growth rates $0<\gamma<\Gamma$, respectively. We will construct $\gamma$ and $\Gamma$ shortly.
		\item\label{harry assum 2 hat} The family of cocycles $\{\hat\cL_{\omega,\epsilon}\}_{0\le \epsilon\le\epsilon_0}$ satisfy a uniform Lasota--Yorke inequality 
		\begin{equation*}
			\|\hat\cL^k_{\omega,\epsilon}f\|_{\BV_1}\le A\alpha^k\|f\|_{\BV_1}+B^k\|f\|_{1}
		\end{equation*}
		for a.e.\ $\omega$ and $0\le\epsilon\le\epsilon_0$, where $\alpha\le \gamma<\Gamma\le B$.
		\item\label{harry assum 3 hat} $\lim_{\epsilon\to 0}\esssup_{\omega}\trinorm{\hat\cL_{\omega,0}-\hat\cL_{\omega,\epsilon}}= 0$, where $\vertiii{\spot}$ is the $\BV-L^1(\Leb)$ triple norm.
	\end{enumerate}
	By Lemma \ref{DFGTV18alemma}
	we obtain a unique measurable family of equivariant functions $\{\varphi_{\omega,0}\}$ satisfying (\ref{C7'}) and (\ref{C5'}) for $\epsilon=0$.
	We have the equivariant splitting $\spn\{\varphi_{\omega,0}\}\oplus V_\om$, where $V_\om=\{f\in \BV_1: \nu_{\omega,0}(f)=0\}$.
	We claim that this splitting is hyperbolic in the sense of Definition 3.1 \cite{crimmins_stability_2019};  this will yield item \eqref{harry assum 1 hat} above.
	To show this, we verify conditions (H1)--(H3) in \cite{crimmins_stability_2019}.
	In our setting, Condition (H1) \cite{crimmins_stability_2019} requires the norm of the projection onto the top space spanned by $\varphi_{\omega,0}$, along the annihilator of $\nu_{\omega,0}$, to be uniformly bounded in $\omega$.
	This is true because this projection acting on $f\in \BV_1$ is $\nu_{\omega,0}(f)\varphi_{\omega,0}$ and therefore 
	$$
	\|\nu_{\omega,0}(f)\varphi_{\omega,0}\|_{\BV_1}\le\esssup_\omega\|\varphi_{\omega,0}\|_{\BV_1}\cdot \nu_{\omega,0}(f)\le 2\esssup_\omega\|\varphi_{\omega,0}\|_{\BV_1}\cdot\|f\|_{\BV_1},
	$$
	using (\ref{C5'}) and equivalence of $\|\cdot\|_{\BV_1}$ and $\|\cdot\|_{\cB_\om}$ \eqref{normequiv}.
	Next, we define %we select $n'$ so that 
	$$
	\alpha^{n'}:=\frac{9\esssup_\om \|g_{\om,0}^{( n')}\|_{\infty,1}}{\essinf_\om\inf_{\ep\geq 0} \inf\mathcal{L}^{ n'}_{\om,0}\ind}
	<1,
	$$ 
	which is possible by (\ref{E8}).
	%\gf{[Maybe we have to define $\Gamma^{\hat n}:=\essinf_\om\inf_{\ep}\inf \mathcal{L}^{\hat n}_{\om,\ep}\ind$, but let's see.}
	Condition (H2) requires $\|\hat\cL^n_{\omega,0}\varphi_{\omega,0}\|_{\BV_1}\ge C\Gamma^n\|\varphi_{\omega,0}\|_{\BV_1}$ for some $C>0$, $\Gamma>0$, all $n$ and a.e.\ $\omega$.
	By (\ref{C7'}) one has 
	$$
	\|\hat\cL^n_{\omega,0}\varphi_{\omega,0}\|_{\BV_1}=\|\varphi_{\sigma^n\omega,0}\|_{\BV_1}\ge \essinf_\omega \inf\varphi_{\sigma^n\omega,0}>0,
	$$
	%	Further, 
	%	$$
	%	\essinf_\omega\lambda_{\omega,0}=\essinf_\omega\nu_{\sigma\omega,0}(\mathcal{L}_{\omega,0}\ind)\ge \essinf_\omega\inf \mathcal{L}_{\omega,0}\ind\ge \essinf_\omega\inf g_{\omega,0}>0
	%	$$
	%	by (\ref{E3});  
	and thus we obtain (H2) with $C=\Gm=1$.
	Condition (H3) requires $\|\hat\cL^n_{\omega,0}|_{V_\om}\|_{\BV_1}\le  K\gamma^n$ for some $K<\infty$, $\al\leq \gamma<1$, all $n$ and a.e.\ $\omega$.
	This is provided by the  $\epsilon=0$ part of (\ref{C4'})---specifically the stronger exponential version guaranteed by Lemma \ref{DFGTV18alemma}---and the equivalence of $\|\cdot\|_{\BV_1}$ and $\|\cdot\|_{\cB_{\sg^n\om}}$.
	
	For item \eqref{harry assum 2 hat} we begin with the Lasota--Yorke inequality for $\var(\cdot)$ and $\nu_{\omega,0}(|\cdot|)$ provided by the final line of the proof of Lemma \ref{closed ly ineq App} (equation (\ref{ORLYLY})).
	Dividing through by $\lm_{\om,0}^{n'}$ we obtain 
	\begin{align}
		\var(\hat\cL_{\omega,\epsilon}^{n'} f)
		&\le 
		\frac{9\|g_{\omega,\epsilon}^{({n'})}\|_{\infty,1}}{\lm_{\om,0}^{n'}}\var(f)
		+\frac{8\|g_{\omega,\epsilon}^{(n')}\|_{\infty,1}}{\lm_{\om,0}^{n'}\min_{Z\in \mathcal{Z}_{\omega,*,\ep}^{(n')}(\mathcal{A})}\nu_{\omega,0}(Z)}\nu_{\omega,0}(|f|)
		\nonumber\\
		&\le 
		\alpha^{n'}\var(f)+\frac{\al^{n'}}{\min_{Z\in \mathcal{Z}_{\omega,*,\ep}^{(n')}(\mathcal{A})}\nu_{\omega,0}(Z)}\nu_{\omega,0}(|f|),
		\label{LYineqorly hat}
	\end{align}
	noting that $\essinf_\om\min_{Z\in\mathcal{Z}_{\omega,*,\ep}^{(n')}(\mathcal{A})}\nu_{\omega,0}(Z)>0$ since the uniform open covering assumption \eqref{E9} together with \eqref{E3} and \eqref{uniflbLeps} imply that 
	and equivariance of the backward adjoint cocycle together imply that 
	for $Z\in\cZ_{\om,*,\ep}^{(n')}$ we have
	\begin{align}\label{LY LB calc}
		\nu_{\om,0}(Z)
		=
		\nu_{\sg^{k_o(n')}\om,0}\left(\left(\lm_{\om,0}^{k_o(n')}\right)^{-1}\cL_{\om,0}^{k_o(n')}\ind_Z\right)
		\geq
		\frac{\inf g_{\om,0}^{k_o(n')}}{\lm_{\om,0}^{k_o(n')}}>0.
	\end{align}
	As the holes $H_{\om,\ep}$ are composed of finite unions of disjoint intervals, assumption \eqref{E6} implies that the radii of each of these intervals must go to zero as $\ep\to 0$. Thus, using \eqref{E6} together with the fact that $\nu_{\om,0}$ is fully supported and non-atomic,
	%aforementioned non-atomicicty of $\nu_{\om,0}$ and the fact that $\nu_{\om,0}(J)>0$ for any non-degenerate interval $J\sub [0,1]$ 
	we see that \eqref{C6} must hold.
	
	%By (\ref{E8}) we may iterate (\ref{LYineqorly hat}) $j\ge 0$ times so that $jn'$ is large enough for the first coefficient on the RHS to be less than 1 for a.e.\  $\omega$.
	We construct a uniform Lasota--Yorke inequality for all $n$ in the usual way by using blocks of length $jn'$;  we write this as
	\begin{equation}
		\label{LYfull hat}
		\var(\hat\cL_{\omega,\epsilon}^n f)\le A_1\alpha^n\var(f)+A_2^n\nu_{\omega,0}(|f|)
	\end{equation}
	for some $A_2>\alpha$.
	We now wish to convert this to an inequality
	\begin{equation}
		\label{targetinequality hat}
		\var(\hat\cL_{\omega,\epsilon}^n f)\le A_1'\alpha^n\var(f)+(A_2')^n\|f\|_{1}.
	\end{equation}
	%Since \eqref{E7}, together with \eqref{E3}, implies that 
	%\ja{The following is referenced above. Replace preceding sentence and equation with appropriate eqn ref. Also make sure the $\lm_{\om,0}$--$\inf\cL_{\om,\ep}\ind$ comparison is accurate or fix to whatever is needed}
	%\begin{align}\label{uniflbLeps}
	%	\lm_{\om,0}^{n'}\geq \inf\cL_{\om,\ep}^{n'}\ind\geq \inf g_{\om,\ep}^{(n')}\geq \essinf_\om\inf g_{\om,0}^{(n')}>0,
	%\end{align}
	In light of \eqref{E2}, \eqref{uniflbLeps}, \eqref{E1}, and \eqref{PF prop}, we see that there is a constant $B$ so that for $m$-a.e.\ $\omega\in\Om$ we have
	\begin{align*}
		\norm{\hat\cL_{\om,\ep}^{n'}f}_1
		&=
		\left(\lm_{\om,0}^{n'}\right)^{-1}\int_{[0,1]} \left|\sum_{y\in T_\om^{-n'}x} g_{\om,0}^{(n')}(y) \hat X_{\om,n'-1,\ep}(y)f(y)\right| d\Leb(x)
		\\
		&\leq
		\frac{\norm{g_{\om,0}^{(n')} J_\om^{(n')}}_{\infty,1}}{\lm_{\om,0}^{n'}} \int_{[0,1]} \left|\sum_{y\in T_\om^{-n'}(x)} \frac{f(y)}{J_\om^{(n')}(y)} \right| d\Leb(x)
		\\
		&=
		\frac{\norm{g_{\om,0}^{(n')} J_\om^{(n')}}_{\infty,1}}{\lm_{\om,0}^{n'}} \int_{[0,1]} \left| P_\om^{n'}(f) \right| d\Leb(x)
		\\
		&\leq
		\frac{\norm{g_{\om,0}^{(n')} J_\om^{(n')}}_{\infty,1}}{\lm_{\om,0}^{n'}}\norm{f}_1
		\leq B^{n'}\norm{f}_1.
	\end{align*}
	    %\ja{I think we are using/need to use that $\nu_{\om,0}$-$\esssup J_\om=\Leb$-$\esssup J_\om$ which also requires that $J_\om\in\BV$}\\
	Using the non-atomicicty of $\nu_{\om,0}$ from \eqref{CCM} (shown in \eqref{DFGTV18alemma}) and the fact that $\var(|f|)\le \var(f)$, we may apply Lemma 5.2 \cite{BFGTM14} to $\nu_{\omega,0}$ to obtain that for each $\zeta>0$, there is a $B_\zeta<\infty$ such that $\nu_{\omega,0}(|f|)\le \zeta\var(f)+B_\zeta\|f\|_1$.
	Now using (\ref{LYfull hat}) we see that
	\begin{align*}
		\|\hat{\mathcal{L}}_{\omega,\epsilon}^nf\|_{\BV_1}
		&=\var(\hat{\mathcal{L}}_{\omega,\epsilon}^nf)+\|\hat{\mathcal{L}}_{\omega,\epsilon}^n(f)\|_1
		\\
		&\le A_1\alpha^n\var(f)+A_2^n\nu_{\omega,0}(|f|)+B^n\|f\|_1
		\\
		&\le(A_1\alpha^n+A_2^n\zeta)\var(f)+(B^n+A_2^nB_\zeta)\|f\|_1
		\\
		&\le(A_1\alpha^n+A_2^n\zeta)\|f\|_{\BV_1}+(B^n+A_2^n(B_\zeta-\zeta)-A_1\alpha^n))\|f\|_1.
	\end{align*}
	Selecting $\zeta$ sufficiently small and $n''$ sufficiently large so that $C(\alpha^{n''}+\zeta)<1$ we again (by proceeding in blocks of $n''$) arrive at a uniform Lasota--Yorke inequality of the form (\ref{targetinequality hat}) for all $n\ge 0$.
	
	For item \eqref{harry assum 3 hat} we note that 
	\begin{align*}
		\vertiii{\hat\cL_{\omega,0}-\hat\cL_{\omega,\epsilon}}
		&:=\sup_{\|f\|_{\BV_1}=1}\|(\hat\cL_{\omega,0}-\hat\cL_{\omega,\epsilon})f\|_1
		=\sup_{\|f\|_{\BV_1}=1}\|\hat\cL_{\omega,0}(f\ind_{H_{\om,\ep}})\|_1
		\le 
		\|\hat\cL_{\omega,0}(\ind_{H_{\om,\ep}})\|_1
		\\
		&=
		\lm_{\om,0}^{-1}\int_{[0,1]} \left|\sum_{y\in T_\om^{-1}(x)} g_{\om,0}(y) \ind_{H_{\om,\ep}}(y)\right| d\Leb(x)
		\\
		&\leq
		\frac{\norm{g_{\om,0} J_\om}_{\infty,1}}{\lm_{\om,0}} \int_{[0,1]} \left|\sum_{y\in T_\om^{-1}(x)} \frac{\ind_{H_{\om,\ep}}(y)}{J_\om(y)} \right| d\Leb(x)
		\\
		&=
		\frac{\norm{g_{\om,0} J_\om}_{\infty,1}}{\lm_{\om,0}}\int_{[0,1]} \left| P_\om(\ind_{H_{\om,\ep}}) \right| d\Leb(x)
		\\
		&\le 
		\esssup_\omega \frac{\norm{g_{\om,0} J_\om}_{\infty,1}}{\lm_{\om,0}}\cdot\esssup_\omega\mathrm{Leb}(H_{\omega,\epsilon}).
	\end{align*}
	Because \eqref{E2}, \eqref{E1} and \eqref{uniflbLeps} imply $\esssup_\omega\sfrac{\norm{g_{\om,0} J_\om}_{\infty,1}}{\lm_{\om,0}}<\infty$, and since \eqref{E6} implies that $\lim_{\epsilon\to 0}\esssup_\omega\mathrm{Leb}(H_{\omega,\epsilon})=0$, we obtain item \eqref{harry assum 3 hat}.
	
	We may now  apply Theorem 4.8 \cite{crimmins_stability_2019} to conclude that given $\delta>0$ there is an $\epsilon_0>0$ such that for all $\epsilon\le \epsilon_0$ the cocycle generated by $\hat\cL_\epsilon$ is hyperbolic, with 
	\begin{enumerate}[i]
		\item[\mylabel{i}{h1}] the existence of an equivariant family $\hat\phi_{\om,\ep}\in\BV$ with $\esssup_\omega\|\hat\phi_{\om,\ep}-\varphi_{\om,0}\|_{\BV_1}<\delta$,
		\item[\mylabel{ii}{h2}] existence of corresponding Lyapunov multipliers $\hat\lambda_{\om,\ep}$ satisfying $|\hat\lambda_{\om,\ep}-1|<\delta$,
		\item[\mylabel{iii}{h3}] operators $\hat Q_{\om,\ep}$ satisfying $\|(\hat Q_{\om,\ep})^n\|_{\BV_1}\le K'(\gamma+\delta)^n$, where $\gamma$ is the decay rate for $Q_{\om,0}$ from the proof of Lemma \ref{DFGTV18alemma}.
	\end{enumerate}
	To obtain an $\hat\cL_{\om,\ep}^*$-equivariant family of linear functionals $\hat\nu_{\omega,\epsilon}\in\BV_1^*$  we apply Corollary 2.5 \cite{DFGTV18b}.
	Using the one-dimensionality of the leading Oseledets space for the forward cocycle, this result shows
	that the leading Oseledets space for the backward adjoint cocycle  is also one-dimensional.
	This leading Oseledets space is spanned by some $\hat\nu_{\omega,\epsilon}\in \BV_1^*$, satisfying $\hat\nu_{\sigma\om,\ep}(\hat\cL_{\om,\ep}(f))=\hat\vta_{\om,\ep}\hat\nu_{\om,\ep}(f)$, for Lyapunov multipliers $\hat\vta_{\om,\ep}$.
	By  Lemma 2.6 \cite{DFGTV18b}, we may scale the $\hat\nu_{\om,\ep}$ so that $\hat\nu_{\om,\ep}(\hat\phi_{\om,\ep})=1$ for a.e.\ $\omega$.
	We show that in fact $\hat\vta_{\om,\ep}=\hat\lambda_{\om,\ep}$ $m$-a.e.
	Indeed, 
	$$
	1=\hat\nu_{\sg\om,\ep}(\hat\varphi_{\sg\om,\ep})=\hat\nu_{\sg\om,\ep}(\hat\cL_{\om,\ep}\hat\phi_{\om,\ep}/\hat\lambda_{\om,\ep})=(\hat\vta_{\om,\ep}/\hat\lambda_{\om,\ep})\hat\nu_{\om,\ep}(\hat\phi_{\om,\ep})=\hat\vta_{\om,\ep}/\hat\lambda_{\om,\ep}.
	$$
	Note that $\cL_{\om,\ep} = \lm_{\om,0}\hat\cL_{\om,\ep}$, and we now define $\varphi_{\om,\ep}, \lambda_{\om,\ep}, Q_{\om,\ep}$, and $\nu_{\om,\ep}$ by the following:
	\begin{align*}
		\phi_{\om,\ep}&:=\frac{1}{\nu_{\om,0}(\hat\phi_{\om,\ep})}\cdot \hat\phi_{\om,\ep},
		&
		\nu_{\om,\ep}(f)&:=\nu_{\om,0}(\hat\phi_{\om,\ep})\hat\nu_{\om,\ep}(f),
		\\
		\lm_{\om,\ep}&:=\lm_{\om,0}\frac{\nu_{\sg\om,0}(\hat\phi_{\sg\om,\ep})}{\nu_{\om,0}(\hat\phi_{\om,\ep})}\hat\lm_{\om,\ep},
		&
		Q_{\om,\ep}(f)&:=\frac{\nu_{\om,0}(\hat\phi_{\om,\ep})}{\nu_{\sg\om,0}(\hat\phi_{\sg\om,\ep})}\hat Q_{\om,\ep}(f)
	\end{align*}
	Clearly all of the properties of \eqref{C2} and \eqref{C3} are now satisfied except for the log-integrability of $\lambda_{\om,\ep}$ in \eqref{C2}.
	To demonstrate this last point, we note that by uniform hyperbolicity of the perturbed cocycles, $\lambda'_{\om,\ep}$ are uniformly bounded below and are therefore log-integrable.
	Since 
	\begin{align*}%\label{phi_eps close}
		|1-\nu_{\omega,0}(\hat\varphi_{\omega,\epsilon})|=|\nu_{\omega,0}(\varphi_{\omega,0})-\nu_{\omega,0}(\hat\varphi_{\omega,\epsilon})|\le \|\hat\phi_{\om,\ep}-\varphi_{\om,0}\|_{\BV_1}<\delta,
	\end{align*}
	the $\lambda_{\omega,\epsilon}$ are uniformly small perturbations of the $\hat\lambda_{\omega,\epsilon}$, and since $\lm_{\om,0}$ is $\log$-integrable by \eqref{E2} and \eqref{uniflbLeps}, we must therefore have that the log integrability condition on $\lambda_{\om,\ep}$ in \eqref{C2} is satisfied. 
	Point \eqref{h1} above, combined with the $\ep=0$ part of \eqref{C5'} (resp. \eqref{C7'}) and the uniform estimate for $|1-\nu_{\om,0}(\hat\phi_{\om,\ep})|$, immediately yields the $\ep>0$ part of \eqref{C5'} (resp. \eqref{C7'}).
	Point \eqref{h3} above combined with the same estimates also ensures that the norm of $\|Q_{\om,\ep}^n\|_{\infty,1}$ decays exponentially fast, uniformly in $\om$ and $\ep$, satisfying the stronger exponential version of \eqref{C4'}. In fact point \eqref{h3} implies the stronger statement that $\|Q_{\om,\ep}^n\|_{\BV_1}$ decays exponentially fast, uniformly in $\om$ and $\ep$.

	Finally we show that $\nu_{\om,\ep}:C^0([0,1])\to\mathbb{C}$ is a positive linear functional with $\nu_{\om,\ep}(f)\in \mathbb{R}$ if $f$ is real.
	From this fact it will follow by Riesz-Markov (e.g.\ Theorem A.3.11 \cite{viana_oliveira}) that $\nu_{\om,\ep}$ can be identified with a real finite Borel measure on $[0,1]$.
	By linearity we may consider the two cases: (i) $f=\varphi_{\om,\ep}>0$ (the generator of the leading Oseledets space) and (ii) $f\in F_{\omega,\ep}$, where $F_{\omega,\ep}$ is the Oseledets space complementary to $\mathrm{span\{\varphi_{\om,\ep}\}}$.
	In case (i)  $\nu_{\om,\ep}(\varphi_{\om,\ep})=1>0$.
	In case (ii), Lemma 2.6 \cite{DFGTV18b} implies $\nu_{\om,\ep}(f)=0$. 
	In summary we see that $\nu_{\om,\ep}$ is positive.
\end{proof}

    %\gf{I prefer not to have all these assumptions spread across various remarks, lemmas, etc...  My original aim for this section was to encapsulate everything in what is now Lemma \ref{harrylemma2}.  I would rather absorb the arguments for (C1'), (CCM) in Remarks 7.1 and 7.7 into Lemma \ref{DFGTV18alemma} (everything about $\epsilon=0$) and absorb the (C6) argument in Remark 7.7 into Lemma \ref{harrylemma2} (everything about $\epsilon>0$). Then in the examples we should refer only directly to ``lettered'' assumptions and these two lemmas.}
   % \gf{Jason, you took out my comment and partly implemented the suggestions, thanks. but
   %\gf{The ``by assumption'' part in the remark below is a bit inconsistent, see earlier comments. Can we proceed as per my second comment on p43?}  
\begin{remark}
	As we have just shown that assumptions \eqref{C1'}, \eqref{C2}, \eqref{C3}, \eqref{C4'}, \eqref{C5'}, \eqref{C6}, \eqref{C7'} (Lemmas~\ref{DFGTV18alemma} and \ref{harrylemma2}), we see that Proposition~\ref{prop: escape rates} holds as well as Theorem~\ref{thm: dynamics perturb thm} for the random open system $(\Om, m, \sg, [0,1], T, \BV, \cL_0, \nu_0, \phi_0, H_\ep)$ under the additional assumption of \eqref{C8}.
	    %, \eqref{CCM} (Lemmas~\ref{DFGTV18alemma} and \eqref{harrylemma2});   \eqref{M2} (Remark~\ref{M2 holds}); and \eqref{M1}, \eqref{B}, \eqref{A}, and \eqref{cond X} by assumption, we see that Proposition~\ref{prop: escape rates} holds as well as Theorem~\ref{thm: dynamics perturb thm} under the additional assumption of \eqref{C8}.
	In light of Remark~\ref{rem checking esc cor cond}, if we assume \eqref{xibound} in addition to \eqref{C8} then both Corollary~\ref{esc rat cor} and Theorem~\ref{evtthm} apply. 
	    %\ja{Assumption \eqref{cond X}}
\end{remark}

The following theorem is the main result of this section and elaborates on the dynamical significance of the perturbed objects produced in Lemma~\ref{harrylemma2}.
\begin{theorem}\label{existence theorem}
%\ja{Thoroughly check each statement for correct notation}
	Suppose $(\Om, m, \sg, [0,1], T, \cB, \cL_0, \nu_0, \phi_0, H_\ep)$ is a random open system and that the assumptions of Lemma~\ref{harrylemma2} hold. 
	%that is we assume (\ref{E2})--(\ref{E8}), and (\ref{xibound}) and that the operator cocycles $\{\mathcal{L}_{\omega,\epsilon}\}_{\epsilon\ge 0}$ satisfy the conditions of Theorem 17 \cite{FLQ2} ($m$-continuity and $\sigma$ a homeomorphism).
	Then there exists $\ep_0>0$ sufficiently small such that for every $0\leq\ep<\ep_0$ we have the following:
	\begin{enumerate}
		\item\label{item 1}
		There exists a unique random probability measure $\zt_\ep=\set{\zt_{\om,\ep}}_{\om\in\Om}$ on $[0,1]$ such that, for $\ep>0$, $\zt_{\om,\ep}$ is supported in $X_{\om,\infty,\ep}$ and 
		\begin{align*}
			\zt_{\sg\om,\ep}(\cL_{\om,\ep} f)=\rho_{\om,\ep}\zt_{\om,\ep}(f),
		\end{align*}
		for $m$-a.e. $\om\in\Om$ and each $f\in\BV$, where 
		\begin{align*}
			\rho_{\om,\ep}:=\zt_{\sg\om,\ep}(\cL_{\om,\ep}\ind).
		\end{align*} 	
		Furthermore, for $\ep=0$ we have $\zt_{\om,0}=\nu_{\om,0}$ and $\rho_{\om,0} = \lm_{\om,0}$ and for $\ep>0$ we have that $C_1^{-1}\leq \rho_{\om,\ep}\leq C_1$ for $m$-a.e. $\om\in\Om$. 
		
		\
		
		\item\label{item 2}
		There exists a measurable function $h_\ep:\Om\times[0,1]\to(0,\infty)$ %\in\BV_\Om(I)$ 
		such that $\zt_{\om,\ep}(h_{\om,\ep})=1$ and  
		\begin{align*}
			\cL_{\om,\ep} h_{\om,\ep}=\rho_{\om,\ep} h_{\sg\om,\ep}
		\end{align*}
		for $m$-a.e. $\om\in\Om$.
		Moreover, $h_\ep$ is unique modulo $\zt_\ep$, and there exists $C\geq 1$ such that $C^{-1}\leq h_{\om,\ep}\leq C$ for $m$-a.e. $\om\in\Om$. Furthermore, for $m$-a.e. $\om\in\Om$ we have that $\rho_{\om,\ep}\to\rho_{\om,0}=\lm_{\om,0}$ and 
		$h_{\om,\ep}\to h_{\om,0}=\phi_{\om,0}$ (in $\cB_\om$) as $\ep\to 0$, where $\phi_{\om,0}$ and $\lm_{\om,0}$ are defined in Lemma~\ref{DFGTV18alemma}.
		%\ja{This last statement will need additional justification}\gf{True, but there is a template for that now in the proof of Lem 6.4.}
		%\gf{Jason, do we get uniform-in-$\omega$ and $\epsilon$ bounds on $h_{\om,\ep}$?  Or better for $\varphi_{\omega,\epsilon}$?  For example to verify (\ref{C5'})?}
		
		\
		
		\item\label{item 3} 
		The random measure $\mu_\ep=\set{\mu_{\om,\ep}:=h_{\om,\ep}\zt_{\om,\ep}}_{\om\in\Om}$ is a $T$-invariant and ergodic random probability measure whose fiberwise support, for $\ep>0$, is contained in $X_{\om,\infty,\ep}$. 
		Furthermore, $\mu_\ep$ is the unique relative equilibrium state, i.e. 
		    %\gf{do we want an $\ep$ on the $g$ below? Maybe you intend to ``match'' $g$ with $T$?  But then maybe $g_0$ for consistency with earlier notation? }
		\begin{align*}
			\int_\Om\log\rho_{\om,\ep}\ dm(\om)&=:\cEP_\ep(\log g_0)
			=
			h_{\mu_\ep}(T)+\int_{\cJ_0} \log g_0 \,d{\mu_\ep}
			=
			\sup_{\eta_\ep\in\cP_{T,m}^{H_\ep}(\cJ_0)} h_{\eta_\ep}(T)+\int_{\cJ_0} \log g_0 \,d\eta_\ep,
		\end{align*}
		where $h_{\eta_\ep}(T)$ is the entropy of the measure $\eta_\ep$, $\cEP_\ep(\log g_0)$ is the expected pressure of the weight function $g_0=\set{g_{\om,0}}_{\om\in\Om}$, % (where $g_\om:=\sfrac{1}{|T_\om'|^r}$)
		and $\cP_{T,m}^{H_\ep}(\cJ_0)$ denotes the collection of $T$-invariant random probability measures $\eta_\ep$ on $\cJ_0$ whose disintegration $\{\eta_{\om,\ep}\}_{\om\in\Om}$ satisfies $\eta_{\om,\ep}(H_{\om,\ep})=0$ for $m$-a.e. $\om\in\Om$. Furthermore, $\lim_{\ep\to 0}\cEP_\ep(\log g_0)=\cEP_0(\log g_0):=\int_\Om\log \lm_{\om,0}\,dm.$
		
		\
		
		\item\label{item 4}
		For $\ep>0$, let $\vrho_\ep=\set{\varrho_{\om,\ep}}_{\om\in\Om}$ be the random probability measure with fiberwise support in $[0,1]\bs H_{\om,\ep}$ whose disintegrations are given by
		\begin{align*}
			\varrho_{\om,\ep}(f):=\frac{\nu_{\om,0}\lt(\ind_{H_{\om,\ep}^c} h_{\om,\ep} f\rt)}{\nu_{\om,0}\lt(\ind_{H_{\om,\ep}^c} h_{\om,\ep}\rt)}
		\end{align*} 
		for all $f\in\BV$.  $\vrho_{\om,\ep}$ is the unique random conditionally invariant probability measure that is absolutely continuous (with respect to $\set{\nu_{\om,0}}_{\om\in\Om}$) with density of bounded variation. 
		    %\ja{Check that the restatement of the uniquenss is now correct}

\begin{comment}
		There exists a unique random absolutely continuous conditionally invariant probability measure $\set{\varrho_{\om,\ep}}_{\om\in\Om}$, with fiberwise support in $[0,1]\bs H_{\om,\ep}$, and whose disintegrations are given by
		\begin{align*}
		\varrho_{\om,\ep}(f):=\frac{\nu_{\om,0}\lt(\ind_{H_{\om,\ep}^c} h_{\om,\ep} f\rt)}{\nu_{\om,0}\lt(\ind_{H_{\om,\ep}^c} h_{\om,\ep}\rt)}
		\end{align*} 
		for all $f\in\BV$.
\end{comment}

		\
		
		\item\label{item 5}
		For each $f\in\BV$ there exists $D>0$ and $\kp_\ep\in(0,1)$ such that for $m$-a.e. $\om\in\Om$ and all $n\in\NN$ we have
		\begin{align*}
			\norm{\lt(\rho_{\om,\ep}^n\rt)^{-1}\cL_{\om,\ep}^n f - \zt_{\om,\ep}(f)h_{\sg^n\om,\ep}}_{\cB_{\sg^n\om}}\leq D\norm{f}_{\cB_\om}\kp_\ep^n.
		\end{align*}
		    %\ja{I think this should be $\BV$ norm rather than sup}
		Furthermore, for all $A\in\sB$ and $f\in\BV$ we have 
		\begin{align*}
			\absval{
				\nu_{\om,0}\lt(T_\om^{-n}(A)\,\rvert\, X_{\om,n,\ep}\rt)
				-
				\varrho_{\sg^n\om,\ep}(A)	
			}
			\leq 
			D\kp_\ep^n,
		\end{align*}
		and 
		    %\ja{Note that the following statement has been updated from the previous version. We now use $f\in\BV$ rather than a set $A$. This is because the statement is false for general $A$. Take $f=\ind_A=\ind_{X_{\om,\infty,\ep}}$ and we end up with LHS equal to 1. This change also required changing and fixing the proof of Corollary 12.8 in ORLY}
		\begin{align*}
			\absval{
				\frac{\varrho_{\om,\ep}\lt(f\rvert_{X_{\om,n,\ep}}\rt)}
				{\varrho_{\om,\ep}(X_{\om,n,\ep})}
				-
				\mu_{\om,\ep}(f)	
			}
			\leq 
			D\norm{f}_{\cB_\om}\kp_\ep^n	.		
		\end{align*}
		In addition, we have $\lim_{\ep\to 0}\kp_\ep=\kp_0$, where $\kp_0$ is defined in Lemma~\ref{DFGTV18alemma}.
		
		\
		
		\item\label{item 6}
		    %\ja{The proof of this statement is more complicated than originally thought. A corrected proof has been added to Appendix D.}
		There exists $C>0$ such that for every $f,h\in \BV$,
		    %L^1_{\mu_\ep}(\Om\times [0,1])\cap L^1_{\nu_0}(\Om\times [0,1])$, 
	  every $n\in\NN$ sufficiently large, and for $m$-a.e. $\om\in\Om$ we have 
		\begin{align*}
			\absval{
				\mu_{\om,\ep}
				\lt(\lt(f\circ T_{\om}^n\rt)h \rt)
				-
				\mu_{\sg^{n}\om,\ep}(f)\mu_{\om,\ep}(h)
			}
			\leq C
			\norm{f}_{\infty,\om}\norm{h}_{\cB_\om}\kp_\ep^n.
		\end{align*} 
		
		%\
		
		%\item\label{item 7} \ja{ASIP and Large Deviations for $\mu_{\om,\ep}$ and maybe $\zt_{\om,\ep}$?}
	\end{enumerate}
	
\end{theorem}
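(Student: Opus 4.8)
The proof is a synthesis: almost every object in the statement has already been constructed in Lemma~\ref{harrylemma2} (the perturbed Oseledets data $\phi_{\om,\ep}$, $\lm_{\om,\ep}$, $Q_{\om,\ep}$, $\nu_{\om,\ep}$, with the convergence-to-$\ep=0$ estimates and the uniform spectral gap $\|Q_{\om,\ep}^n\|_{\BV_1}\le K'\kp_\ep^n$ with $\kp_\ep\to\kp_0$), so the task is mainly to reinterpret these spectral data dynamically and to verify the support, uniqueness, equilibrium-state, and conditional-invariance assertions. The plan is to proceed item by item. For item~\eqref{item 1}, set $\zt_{\om,\ep}:=\nu_{\om,\ep}$ (the functional from Lemma~\ref{harrylemma2}, now identified with a non-atomic Borel measure), $h_{\om,\ep}:=\phi_{\om,\ep}$, and $\rho_{\om,\ep}:=\lm_{\om,\ep}$; the eigen-equations are then exactly \eqref{C2}. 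One must check that $\zt_{\om,\ep}$ is a \emph{probability} measure supported in $X_{\om,\infty,\ep}$: positivity and the identification come from Lemma~\ref{harrylemma2}; the normalization $\zt_{\om,\ep}([0,1])=1$ can be arranged, and support in $X_{\om,\infty,\ep}$ follows because $\cL_{\om,\ep}$ kills mass on $H_{\om,\ep}$, so $\nu_{\om,\ep}(\ind_{H_{\sg^j\om,\ep}}\circ T_\om^{\,j}\cdots)=0$ for every $j$ by iterating the duality $\nu_{\sg\om,\ep}(\cL_{\om,\ep}f)=\lm_{\om,\ep}\nu_{\om,\ep}(f)$ together with $\cL_{\om,\ep}f=\cL_{\om,0}(\ind_{\cJ_{\om,\ep}}f)$. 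The bound $C_1^{-1}\le\rho_{\om,\ep}\le C_1$ comes from \eqref{C1'} and Remark~\ref{rem61}.

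For items~\eqref{item 2} and~\eqref{item 3}: the density bounds $C^{-1}\le h_{\om,\ep}\le C$ and $h_{\om,\ep}\to\phi_{\om,0}$, $\rho_{\om,\ep}\to\lm_{\om,0}$ are restatements of \eqref{C5'}, \eqref{C7'} for $\ep>0$ plus the convergence estimates $\esssup_\om\|\hat\phi_{\om,\ep}-\phi_{\om,0}\|_{\BV_1}<\dl$, $|\hat\lm_{\om,\ep}-1|<\dl$ from Lemma~\ref{harrylemma2}. $T$-invariance of $\mu_{\om,\ep}=h_{\om,\ep}\zt_{\om,\ep}$ is the standard computation \eqref{eq: mu_om,0 T invar} using both eigen-equations; ergodicity follows from the one-dimensionality of the leading Oseledets space (equivalently the exponential decay in item~\eqref{item 5}, which forces mixing hence ergodicity). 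The variational (relative equilibrium state) statement is the genuinely substantive part of item~\eqref{item 3}: I would invoke the quenched thermodynamic formalism / Rokhlin-type formula as developed for random piecewise-monotone maps (e.g.\ the arguments of \cite{AFGTV20,AFGTV21,DFGTV18a}), applied to the closed cocycle generated by the weight $g_{\om,\ep}=g_{\om,0}\ind_{\cJ_{\om,\ep}}$ over the sub-skew-product restricted to the survivor set; the expected pressure equals $\int_\Om\log\rho_{\om,\ep}\,dm$ by definition, and the supremum over $\cP_{T,m}^{H_\ep}(\cJ_0)$ is attained uniquely at $\mu_\ep$ because measures charging $H_\ep$ are excluded and the transfer operator has a spectral gap. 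The continuity $\cEP_\ep(\log g_0)\to\cEP_0(\log g_0)$ is then immediate from $\rho_{\om,\ep}\to\lm_{\om,0}$ plus dominated convergence (using the uniform bound $C_1^{-1}\le\rho_{\om,\ep}\le C_1$).

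For item~\eqref{item 4}, one checks directly that $\vrho_{\om,\ep}$ as defined is a probability measure with support in $[0,1]\setminus H_{\om,\ep}$ and satisfies the conditional invariance relation $\frac{(T_\om)_*\vrho_{\om,\ep}|_{\cJ_{\sg\om,\ep}}}{\vrho_{\om,\ep}(X_{\om,1,\ep})}=\vrho_{\sg\om,\ep}$; this is a computation using $\cL_{\om,\ep}h_{\om,\ep}=\rho_{\om,\ep}h_{\sg\om,\ep}$ and the change-of-variables defining $\cL_{\om,\ep}$. Uniqueness among absolutely-continuous (w.r.t.\ $\nu_{\om,0}$) conditionally invariant measures with $\BV$ density reduces, via the density, to uniqueness of the leading eigenfunction of $\cL_{\om,\ep}$, which is again the one-dimensionality from Lemma~\ref{harrylemma2}. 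Items~\eqref{item 5} and~\eqref{item 6} are quantitative corollaries: the first displayed inequality of item~\eqref{item 5} is precisely the decomposition $(\rho_{\om,\ep}^n)^{-1}\cL_{\om,\ep}^n f=\zt_{\om,\ep}(f)h_{\sg^n\om,\ep}+Q_{\om,\ep}^n f$ together with $\|Q_{\om,\ep}^n\|_{\BV_1}\le K'\kp_\ep^n$ and norm-equivalence \eqref{normequiv}; the conditional-measure estimates follow by writing $\nu_{\om,0}(T_\om^{-n}(A)\mid X_{\om,n,\ep})$ and $\vrho_{\om,\ep}(f|_{X_{\om,n,\ep}})/\vrho_{\om,\ep}(X_{\om,n,\ep})$ in terms of $\cL_{\om,\ep}^n$ acting on suitable indicators/densities and then applying the spectral-gap estimate, exactly as in \cite{liverani_lasotayorke_2003, AFGTV21}; item~\eqref{item 6} is the usual derivation of decay of correlations for an invariant measure from the spectral gap, using $\mu_{\om,\ep}((f\circ T_\om^n)h)=\rho_{\om,\ep}^{-n}\zt_{\sg^n\om,\ep}(f\cL_{\om,\ep}^n(h h_{\om,\ep}))/\dots$ and splitting off the leading term. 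The main obstacle is item~\eqref{item 3}: establishing the full variational principle (rather than just existence of the eigendata) requires the random Rokhlin formula and an argument that no invariant measure avoiding the hole can beat $\mu_\ep$ — this is where one must either cite the closed-system quenched thermodynamic formalism and adapt it to the survivor-set sub-system, or reprove the relevant entropy/pressure inequalities; everything else is bookkeeping on top of Lemmas~\ref{DFGTV18alemma} and~\ref{harrylemma2}.
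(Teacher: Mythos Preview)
Your proposal is correct and follows essentially the same route as the paper's proof: both treat the $\ep=0$ case via Lemma~\ref{DFGTV18alemma}, extract the $\ep>0$ eigendata from Lemma~\ref{harrylemma2}, and then invoke the machinery of \cite{AFGTV20,AFGTV21} for the support, equilibrium-state, conditionally-invariant-measure, and decay-of-correlations statements. The only point worth tightening is the normalization: the functionals $\nu_{\om,\ep}$ produced by Lemma~\ref{harrylemma2} satisfy $\nu_{\om,0}(\phi_{\om,\ep})=1$, not $\nu_{\om,\ep}(\ind)=1$, so one cannot simply set $\zt_{\om,\ep}=\nu_{\om,\ep}$; the paper gives the explicit rescaling $\zt_{\om,\ep}(f)=\nu_{\om,\ep}(f)/\nu_{\om,\ep}(\ind)$, $h_{\om,\ep}=\nu_{\om,\ep}(\phi_{\om,\ep})\phi_{\om,\ep}$, $\rho_{\om,\ep}=\lm_{\om,\ep}\,\nu_{\om,\ep}(\ind)/\nu_{\sg\om,\ep}(\ind)$, and the bound $C_1^{-1}\le\rho_{\om,\ep}\le C_1$ then follows from $\rho_{\om,\ep}=\zt_{\sg\om,\ep}(\cL_{\om,\ep}\ind)$ together with \eqref{C1'} and \eqref{uniflbLeps*} (not Remark~\ref{rem61}, which concerns only $\ep=0$).
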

\begin{proof}
	
\begin{comment}
	
	sketch of ideas:
	\begin{enumerate}
	\item From Harry we get a functional $\nu_{\om,\ep}\in\cB_\om^*$ which is unique and equivariant.
	\item We construct a limiting functional $\Lm_{\om,\ep}$ as in \cite{AFGTV21} which we can identify with measure since it is continuous in the sup norm (This will need to be a constraint on the Banach space $\cB_\om$ to make this happen. This may follow already from the assumption that $\norm{\spot}_\infty\leq\norm{\spot}_{\cB_\om}$.)
	\item Since $\nu_{\om,\ep}$ is unique in $\cB_\om^*$ we must have that $\nu_{\om,\ep}(f)=\Lm_{\om,\ep}(f)$ for all step functions $f$. (This requires that all step functions $f\in\cB_\om$)
	\end{enumerate}
\end{comment}
	%\ja{Need to prove measurability as in Thm 4.6}
	
	First we note that the claims of items \eqref{item 1} -- \eqref{item 3} and \eqref{item 5} -- \eqref{item 6} above for $\ep=0$ follow immediately from Lemma~\ref{DFGTV18alemma}. Now we are left to prove each of the claims for $\ep>0$.

	Claims \eqref{item 1} -- \eqref{item 3} follow from Lemma~\ref{harrylemma2} with the scaling:
	\begin{align*}
		h_{\om,\ep}&:=\nu_{\om,\ep}(\phi_{\om,\ep}) \phi_{\om,\ep},
		&%\\
		\zt_{\om,\ep}(f)&:=\frac{\nu_{\om,\ep}(f)}{\nu_{\om,\ep}(\ind)},
		&%\\
		\rho_{\om,\ep}&:=\frac{\nu_{\om,\ep}(\ind)}{\nu_{\sg\om,\ep}(\ind)}\lm_{\om,\ep}.
	\end{align*}
	The uniform boundedness on $\rho_{\om,\ep}$ and $h_{\om,\ep}$ follows from the uniform boundedness on $\lm_{\om,\ep}$ and $\phi_{\om,\ep}$ coming from Lemma~\ref{harrylemma2}.
	The fact that $\mu_\ep$ is the unique relative equilibrium state follows similarly to the proof of Theorem 2.23 in \cite{AFGTV20} (see also Remark 2.24, Lemma 12.2 and Lemma 12.3).
	    %\gf{How about support of $\zeta_\epsilon$? (claim (1)).}
	The claim that $\supp(\zt_{\om,\ep})\sub X_{\om,\infty,\ep}$ follows similarly to Lemma 11.11 of \cite{AFGTV21}. Noting that $\|f\|_{\infty,\om}=\|f\|_{\infty,1}$ for $f\in\BV$ by the proof of Proposition \ref{prop norm equiv}, we now proof Claims \eqref{item 4} -- \eqref{item 6}.
	
	Claim \eqref{item 4} follows from Lemma 3.5 \cite{AFGTV21}  and the uniqueness of the density $h_{\om,\ep}\in\BV$.
	
	The first item of Claim \eqref{item 5} follows from Lemma \ref{harrylemma2} and the remaining items are proved similarly to Corollary 12.8 \cite{AFGTV21}.
	
	Claim \eqref{item 6} 
	    %\gf{I changed the item number from 5 to 6 here...I assume that 6 was meant?  Also, after the Monday discussion do we need to clarify what is meant by $\|\cdot\|_\infty$ generally?  I think that we use it as essential supremum, and as far as I see, it is used for the first time in (C4)  without definition.} 
	%follows from Lemma \ref{harrylemma2} as well as Theorem 12.3 \cite{AFGTV21}.
	follows from Claim \eqref{item 5} and is proven in Appendix \ref{appDec}.
	    %\ja{I think all $\|\spot\|_\infty$ need to be $\infty$-norms with respect to $\nu_{\om,0}$ except for item 5 above which should be sup norm with respect to Lebesgue, but these sup norms are equal so it could also be with respect to $\nu_{\om,0}$ (on the appropriate fiber). }
\end{proof}
    %\begin{remark}
    %{\bf Limit theorems} 
    %   \ja{Do we need this %remark now?}
    %Note that using the %approaches presented in %\cite{DFGTV18a,atnipASIP,D%Hasip} one could prove an %almost sure invariance %principle for the random %open measure %$\mu_{\om,\ep}$ (and by %equivalence %$\nu_{\om,\ep}$). %Moreover, the ASIP implies %that the random open %measures $\mu_{\om,\ep}$ %satisfy the central limit %theorem as well as the law %of the iterated %logarithm.% A few %of the %previous results %are %based on a martingale %approach which allows us %to get a dynamical %Borel-Cantelli lemma.\\ We %could get other limit %theorems by adapting to %our case the spectral %approach {\em \`a la %Nagaev-Guivarc'h} %developed in %\cite{dragicevic_spectral_%2018}; besides the central %limit theorem, we could   %  prove a quenched large %deviation theorem and a %quenched local central %limit theorem. We will %give an overview of those approaches and results in %Section \ref{sec: limit %theorems}.
    %\end{remark}
\begin{remark}\label{rem seq exist}
If one considers a two-sided (bi-infinite) sequential analogue of random open systems, then because Theorem 4.8 \cite{crimmins_stability_2019} also applies to two-sided sequential systems,  one could prove similar results to items \eqref{item 1}, \eqref{item 2}, \eqref{item 4}, \eqref{item 5}, and \eqref{item 6} of Theorem \ref{existence theorem}.
\end{remark}

\section{Limit theorems}\label{sec: limit theorems}
 In this section we prove  a few  limit theorems   for the {\em closed} systems  discussed in Section~\ref{sec: existence} $(\Om, m, \sg, [0,1], T, \BV, \cL_0, \nu_0, \phi_0)$. We will in fact show that such systems   are  {\em admissible} in the sense of \cite{dragicevic_spectral_2018}. This will allow us to adapt  to our setting the spectral approach {\em \`a la Nagaev-Guivarc'h} developed in \cite{dragicevic_spectral_2018} and get a quenched  central limit theorem, a quenched large deviation theorem and a quenched local central limit theorem. We will also present an alternative approach based on martingale techniques \cite{DFGTV18a,atnipASIP,DHasip}, which produces an  almost sure invariance principle (ASIP) for the random measure $\mu_0=\set{\mu_{\om,0}}_{\om\in\Om}$. Moreover, the ASIP implies that  $\mu_{0}$ satisfies the central limit theorem as well as the law of the iterated logarithm. The martingale approach will also give an upper bound for any (large) deviation from the expected value and a Borel-Cantelli dynamical lemma. At the moment we could not extend the previous limit theorems to the {\em open} systems investigated in Section~\ref{sec: existence}. There are  a few  reasons for that which concern the Banach space  $\mathcal{B}_{\om,\ep}$ associated to those systems and  defined by the norm: $||\cdot||_{\mathcal{B}_{\om,\ep}}=\text{var}(\cdot)+\zeta_{\om,\ep}(|\cdot|).$ First of all,  we do not know if the  random cocycle $\mathcal{R}_{\eps}=(\Omega, m, \sigma, \mathcal{B}_{\om,\ep}, \tcL_{\om, \epsilon})$ is quasi-compact which is an essential requirement for admissibility. 
 Second, the results of Theorem \ref{existence theorem} are not particularly compatible with the Banach spaces  $\mathcal{B}_{\om,\ep}$, $\ep>0$, since the inequalities of items \eqref{item 5} and \eqref{item 6} are in terms of the norms $\|\spot\|_{\infty,\om}$ and $\|\spot\|_{\cB_\om}$ which are defined modulo $\nu_{\om,0}$ and the $\mathcal{B}_{\om,\ep}$ norm is defined via $\zt_{\om,\ep}$, a measure which is supported on a $\nu_{\om,0}$-null set.

\subsection {The Nagaev-Guivarc'h approach}\label{NG approach} The paper \cite{dragicevic_spectral_2018} developed a general scheme to adapt  the  Nagaev-Guivarc'h approach to random quenched dynamical systems,  allowing one to prove limit theorems by exploring the connection between a {\em twisted} operator cocycle and the distribution of the Birkhoff sums. The results in \cite{dragicevic_spectral_2018} were confined to the geometric potential $|\det(DT_{\om})|^{-1}$ and the associated conformal measure, Lebesgue measure. We now show how  to extend those results to the systems verifying the assumptions stated in Section \ref{sec: existence} and the results of Theorem \ref{existence theorem}, whenever $\epsilon=0,$ that is we will consider random closed systems  for a larger class of potentials. 

    %\gf{After discussion with Jason, I (i) reinstated all 0 subscripts in $\mathcal{L}$ and $\mathcal{B}$, $\nu$, $\|\cdot\|_\infty$, etc..., and (ii) changed all $\zeta_{\om,0}$ to $\nu_{\om,0}$.}\ja{I've also replaced densities $h_{\om,0}$ with $\phi_{\om,0}$}
 The starting point is to replace the linear operator $\cL_{\om,0}$ associated to the geometric potential and the (conformal) Lebesgue measure introduced in \cite{dragicevic_spectral_2018}, with our operator $\cL_{\om, 0}$ and the associated conformal measures $\nu_{\om, 0}.$ In particular, if we work with the normalized operator $\tcL_{\om, 0} := \lm_{\om,0}^{-1}\cL_{\om,0}$ the results in \cite{dragicevic_spectral_2018} are reproducible almost verbatim with a few precautions which we are going to explain. 
 %To fit with the terminology of the aforementioned paper,   we rename and shorten our random open system by calling it the random cocycle $\mathcal{R}=(\Omega, m, \sigma, \mathcal{B}_{\om,0}, \tcL_{\om, 0}),$ \gf{Note that just before the start of section 8.1 we already introduced this notation. Jason to review when either keeping or removing this text at end of section 8.} where 
 As before, let $\mathcal{B}_{\om}$ be the Banach space defined by $||\cdot||_{\mathcal{B}_{\om}}=\text{var}(\cdot)+\nu_{\om,0}(|\cdot|),$ where the variation is defined using equivalence classes mod-$\nu_{\om, 0}.$ 
 %From now on we will use the simpler notation $\mathcal{B}_{\om}:=\mathcal{B}_{\om,0}$ and $\cL_{\om,0}:=\cL_{\om, 0}.$
%    \ja{Can we use the Banach space? given by 
 %   $$
  %      \BV_*:=\set{[f]_*:=[f]_1\cap \BV=[f]_\om\cap\BV: f\in\BV},
   % $$
    %which is the space of %equivalence classes of %$\BV$ functions which %differ on a countable %set. Then we have 
    %$$
      % \zt_{\om,\ep}(f)=\zt%_{\om,\ep}(\~f) 
   %$$
   % for all $\~f\in[f]_*$ %since $\zt_{\om,\ep}$ is %non-atomic. Then %$\|\cdot\|_{*,\om}=\text%{var}(\cdot)+\nu_{\om,%\ep}(|\cdot|)$ is an %$\om$-dependent norm on %$\BV_*$. $\BV_*$ is just %the space $\BV\bs\cN$ %that Baladi works with %on page 148 of her book. % 
   % \\
   %Do (V4) and (V6) from %\cite{DFGTV18a,DFGTV18b} %need to be with respect %to Lebesgue or can we %replace the measures %with $\zt_{\om,\ep}$? If %we must stick with %Lebesgue then I don't %see how this could work %with the space %$\cB_{\om,\ep}$ if we %are thinking about %equivalence classes with %respect to %$\zt_{\om,\ep}$ where %you would be allowed to %alter a $\BV$ function %on an entire interval %and produce another %$\BV$ function. Note %that this cannot be done %in $\BV_1$ where a %function can only be %altered on a countable %set to have a chance at %being in $\BV$. 
   % \\
   % If we only want/can %talk about closed %systems then I think we %can just stick with %$\BV_1$.}
 In order to apply the theory in \cite{dragicevic_spectral_2018} we must show that our random cocycle is {\em admissible}. This  reduces to check two sets of properties which were listed in  \cite{dragicevic_spectral_2018} respectively as conditions (V1) to (V9) and conditions  (C0) up to (C4).  The first set of conditions reproduces the classical properties of the total variation of a function and its relationship with the $L^1(\Leb)$ norm. We should emphasize that in our case the variation is defined using equivalence classes mod-$\nu_{\om, 0}.$ 
 %On the other hand we know that the norm $||\cdot||_{\mathcal{B}_{\om}},$ is equivalent to $||\cdot||_{\BV_1},$ see (\ref{normequiv}).
 It is easy to  check that properties (V1, V2, V3, V5, V8, V9) hold  with respect to this variation. In particular (V3) asserts that for any $f\in \mathcal{B}_{\om}$ we have $||f||_{L^{\infty}(\nu_{\om, 0})}\le ||f||_{\mathcal{B}_{\om}};$ we will refer to it in the following just as the (V3) property.
 {\em Notation}: recall that given an element $f\in \mathcal{B}_{\om}$, the $L^{\infty}$ norm of $f$ with respect to the measure $\nu_{\om, 0}$ is denoted by $||f||_{\infty,\om}$ in the rest of this section.
 
 Property (V7) is not used in the current paper; (V6) is a general density embedding result proved in  Hofbauer and Keller (Lemma 5 \cite{HHKK}).
 We elaborate on property (V4). To obtain (V4) in our situation one needs to prove that  the unit ball of $\mathcal{B}_{\om}$ is compactly injected  into $L^{1}(\nu_{\om, 0})$. As we will see, this is used  to get the quasi-compactness of the random cocycle. The result follows easily by adapting Proposition 2.3.4 in \cite{gora} to our conformal measure $\nu_{\om, 0},$ which is fully supported and non-atomic. 
 We now rename the other set of  properties (C0)--(C4) in \cite{dragicevic_spectral_2018} as  ($\mathcal{C}$0) to ($\mathcal{C}$4) to distinguish them from our (C) properties stated earlier in Sections \ref{sec:goodrandom} and \ref{EEVV}.  \begin{itemize}
    \item 
 Assumption ($\mathcal{C}$0) coincides with our condition \eqref{M}.
 %\gf{This is now correct.  I absorbed $\sigma$ a homeo into \eqref{M} at the defn of this condition.}
    %\sv{here is something unclear here. (MC) states that $\cL_{\ep}$ verifies $m$-continuity for any $\ep.$ I would expect that we have (MC) for the unperturbed operator and then we prove the $m$-continuity for $\cL_{\ep}$ as a consequence of the "closeness" of the two operators: is it the case? And if so: is it obvious? }
    %\cgt{The $\ep$ perturbation is through the addition of holes, so I think $m$-continuity for $\cL_\ep$ should follow directly from that for $\cL_0$, provided the holes are constant in each continuity piece for $\cL_0$.}
    %\gf{That's right, for each element of the range of $\omega\mapsto T_\omega$ we have a single hole, yielding a single corresponding open map $T_{\omega,\epsilon}$.} %$\mathcal{L}_\epsilon$
    %\item \gf{General comment for this dot point:  I think we should state up front exactly what our *current* versions of Conditions (C1) and (C2) are in the URP setting. 
    %Then we have a reference point for talking about differences to (C1) and (C2) in CMP.}
    %\gf{Note (C1) in CMP follows using our (C1') in URP, it might be better to take care of (C1) separately before moving on to (C2).} 
\item 
%\gf{I altered the subscript $\omega$ to $\sigma\omega$ on the norm on the LHS.}
Condition ($\mathcal{C}$1)  requires us to prove in our case that \begin{equation}\label{C1CMP}
||\tilde{\mathcal{L}}_{\om,0}f||_{\mathcal{B}_{\sigma\om}}\le K ||f||_{\mathcal{B}_{\om}},
\end{equation}
for every $f\in\mathcal{B}_{\om}$ and for $m$-a.e. $\omega$, with $\om$-independent $K$.
\item 
%\gf{I altered the subscript $\omega$ to $\sigma\omega$ on the norm on the LHS.} 
Condition ($\mathcal{C}$2) asks that there exists $N\in \mathbb{N}$ and measurable $\tilde{\alpha}^N, \tilde{\beta}^N:\Omega\rightarrow (0, \infty),$ with $\int_{\Omega}\log\tilde{\alpha}^N(\om)d m(\om)<0,$ such that for every $f\in \mathcal{B}_{\om}$ and $m$-a.e. $\om\in \Omega,$
\begin{equation}\label{C2CMP}
    ||\tilde{\mathcal{L}}^N_{\om,0}f||_{\mathcal{B}_{\sigma^N\om}}\le \tilde{\alpha}^N(\om)||f||_{\mathcal{B}_{\om}}+\tilde{\beta}^N(\om)||f||_{L^1(\nu_{\om, 0})}.
\end{equation}
\item Condition ($\mathcal{C}$3)  is  the content of the first display equation in item  \ref{item 5} in the statement of Theorem \ref{existence theorem}. 
   % \ja{C3 is not the same as item 5. C3 asks for exponential decay in terms of the $\|\cdot\|_{\cB_{\om,\ep}}$ norm but we only has exponential decay in terms of $\|\cdot\|_{\om,\infty}$, the supremum norm with respect to $\nu_{\om,0}$. Maybe this can be obtained similarly to the way item 6 is obtained via the proof in Appendix D.}
    %\gf{More directly, (C3) in CMP is the stronger form of (C4') in URP guaranteed by Prop \ref{DFGTV18alemma}.}
\item Condition ($\mathcal{C}$4) is only used to obtain Lemma 2.11 in \cite{dragicevic_spectral_2018}. There are three results in that Lemma which we now compare with our situation. The third result is the decay of correlations stated in item \ref{item 6} of Theorem \ref{existence theorem}. The second result  is the almost-sure strictly positive lower bound for the density $\phi_{\om, 0}$ stated in item \ref{item 2} of Theorem \ref{existence theorem}. The first result requires that $\esssup_{\om\in \Omega} ||\phi_{\om,0}||_{\mathcal{B}_{\om}}<\infty.$ This follows by checking that the proof of Proposition 1 in \cite{DFGTV18a} works in our current setting with the obvious modifications;  we note that Proposition 1 \cite{DFGTV18a} only assumes conditions  ($\mathcal{C}$1) and ($\mathcal{C}$3).
%\gf{In Prop 1 in [DFGTV18], the Banach space did not change with $\omega$;  in our setting now it does. Can you check carefully the proof of Prop 1 for this more complicated setup?}
\end{itemize}
We are thus left with showing conditions ($\mathcal{C}$1) and ($\mathcal{C}$2) in our setting. We will get both at the same time as a consequence of the following argument, which consists in adapting to our current situation  the final part 
of Lemma \ref{closed ly ineq App}. Our starting point will be the inequality (\ref{closed var ineq over partition}). 
%\gf{We need to state what $Z$ is, namely an element of $\cZ_{\om,*,\ep}^{(n)}$.}
%$$
%\var\lt(\ind_{T_\om^n(Z)} \lt((f g_{\om, 0}^{(n)})\circ T_{\om,Z}^{-n}\rt)\rt)
%\le9\norm{g_{\om, 0}^{(n)}}_{\infty}\var_Z(f)+8\norm{g_{\om, 0}^{(n)}}_{\infty}\frac{\nu_{\om,0}(|f\rvert_Z|)}{\nu_{\om,0}(Z)}.
%$$
Since we are working with the normalized operator cocycle $\tilde{\mathcal{L}}_{\omega,0}^N$, we have to divide the expression in  (\ref{closed var ineq over partition}) by $\lm^{n'}_{\om, 0};$ moreover  we have to replace the measure $\nu_{\om, 0}$  with the equivalent conformal probability measure $\nu_{\om, 0}.$
Therefore (\ref{closed var ineq over partition}) now becomes
%%%%%%%%%%%%%%%%%%%%%%%%%%%
\begin{equation}\label{zero}
(\lm_{\om, 0}^n)^{-1}\var\lt(\ind_{T_\om^n(Z)} \lt((f g_{\om, 0}^{(n)})\circ T_{\om,Z}^{-n}\rt)\rt)
\le 9\frac{\norm{g_{\om, 0}^{(n)}}_{\infty,\om}}{\lm^n_{\om, 0 }}\text{var}_Z(f)+
		\frac{8\norm{g_{\om, 0}^{(n)}}_{\infty,\om}}
{\lm^n_{\om, 0}\nu_{\om, 0}(Z)}\nu_{\om, 0}(|f|_Z),
	\end{equation}
	where $Z$ is an element of $\mathcal{Z}^{(n)}_{\om, 0}.$ Since each  element  of the partition  $\mathcal{Z}^{(n)}_{\om, 0}$ has nonempty interior and the measure $\nu_{\om, 0}$  charges open intervals, if we set
	$$
\overline{\nu}^{(n)}_{\om,0}:=\min_{Z\in\cZ_{\om,0}^{(n)}}
\nu_{\om,0}(Z)>0,
$$
and we take the sum over the $Z\in \mathcal{Z}^{(n)}_{\om, 0}$ we finally get
\begin{equation}\label{p}
\text{var}(\tilde{\mathcal{L}}^n_{\om,0}f)\le 9\frac{\norm{g_{\om,0}^{(n)}}_{\infty,\om}}{\lm^{n}_{\om, 0}}\text{var}(f)+8\frac{\norm{g_{\om,0}^{(n)}}_{\infty,\om}\nu_{\om,0}(|f|)}{\lm^n_{\om, 0}\overline{\nu}_{\om,0}^{(n)}}.
\end{equation}Notice that  condition ($\mathcal{C}$2) requires that  $\tilde{\alpha}^N<1$, which in our case becomes
\begin{equation}\label{fgr}
9\frac{\esssup_{\om}\norm{g_{\om,0}^{(n)}}_{\infty,\om}}{\essinf_{\om}\lm^{n}_{\om, 0}}<1, 
\end{equation}
or equivalently 
$$
9\frac{\esssup_{\om}\norm{g_{\om,0}^{(n)}}_{\infty,\om}}{\essinf_{\om}\inf\mathcal{L}^n_{\om,0}\ind }<1,
$$
which is guaranteed by (E8).
%\gf{The $BV_{\nu_{\om,\ep}}$ norm is not equivalent to the $BV_{Leb}$ norm for $\ep>0$, so I think we do need (8.8) or (8.9). The argument concerning index of compactness only works for equivalent norms.}\sv{I agree with that but it's not even clear that $BV_{Leb}$ and $\nu$ norms are equivalent. I referred to the argument written  in eq (7.1) before we became aware that there was a problem. }
We now move on to check condition ($\mathcal{C}$1).
%\sv{I do not see it;  (C1') and Rem 5.2 deal with $f=1.$}
%\gf{Is ($\mathcal{C}$1) covered by (C1') and Rem 5.2?}
From (\ref{p}), setting $n=1$ we  see that a sufficient condition for ($\mathcal{C}$1) is
%\gf{Note the change I made to ($\mathcal{C}$1).}
\begin{equation}\label{dd}
  \frac{\esssup_{\om}\norm{g_{\om, 0}^{(1)}}_{\infty,\om}}{\essinf_{\om}\lm^1_{\om, 0}\overline{\nu}_{\om,0}}<\infty.
\end{equation}
or equivalently
\begin{equation}\label{d}
\frac{\esssup_{\om}\norm{g_{\om,0}^{(1)}}_{\infty,\om}}{\essinf_{\om}\inf\mathcal{L}_{\omega,0}\ind\overline{\nu}_{\om,0}}<\infty
\end{equation}
%\gf{I commented out the redefinition of $\bar{\nu}_\om$.}%where now
%$$
%\overline{\nu}_{\om}:=\min_{Z_g\in\cZ_{\om,*,\ep}^{(1)}}
%\nu_{\om,\ep}(Z)>0.
%$$
   % \ja{Is the next sentence meant to be in the text or just copy/paste from a previous email?}
   % \sv{I would like to put in the text; I will refer to this admissibility condition in the statement of the limit theorems, but it must contain condition like (\ref{d}) and eventually (\ref{ss}), if it is really needed.}
Condition (\ref{d}) is in principle checkable ($n=1$) and we could {\em assume} it as a part of the admissibility condition for our random cocycle $\mathcal{R}=(\Omega, m, \sigma, \mathcal{B}_{\om}, \tcL_{\om,0}).$ 

The admissibility conditions stated in \cite{dragicevic_spectral_2018}, in particular ($\mathcal{C}$2) and  ($\mathcal{C}$3), were sufficient to prove the quasi-compactness of the random cocycle introduced in  \cite{dragicevic_spectral_2018},  but they rely on another assumption, which was a part of the Banach space construction, namely that  $\BV_1$ was compactly injected into $L^1(\text{Leb}).$ We saw above that the same result holds for our Banach space $\mathcal{B}_{\om}$ and our measure $\nu_{\om, 0}.$ In order to prove quasi-compactness following Lemma 2.1 in \cite{dragicevic_spectral_2018}, we use condition  
($\mathcal{C}$2) with the almost-sure bound  $\tilde{\alpha}^N(\om)<1.$
We must additionally prove that the top Lyapunov exponent $\Lambda(\mathcal{R})$ defined by the limit 
$\Lambda(\mathcal{R})=\lim_{n\rightarrow \infty}\frac1n \log||\tcL_{\om,0}^n||_{\mathcal{B}_{\om}}$ is not less than zero.
This will follow by applying item \ref{item 5} and the bound on the density $\phi_{\om,0}$ in item \ref{item 2}, both in Theorem \ref{existence theorem}.
Since the density $\phi_{\om, 0}$ is for $m$-a.e. $\om\in \Omega$ bounded from below by the constant $C^{-1},$ we have that $\limsup_{n\rightarrow \infty}\frac{1}{n}\log \|\phi_{\sigma^n\om,0}\|_{\mathcal{B}_{\sg^n\om}}\ge\limsup_{n\rightarrow \infty}\frac{1}{n}\log C^{-1}=0$.
%\gf{Up to here I think it is fine, but I don't follow how $\Lambda(\mathcal{R})\le 0$ is arrived at next. We don't act} 
%This allows us to get by item \ref{item 5} in Theorem \ref{existence theorem} that $\Lambda(\mathcal{R})\le 0.$ On the other hand we have just seen that $\limsup_{n\rightarrow \infty}\frac{1}{n}\log ||\cL_{\om,0}^n h_{\om, 0}||_{\mathcal{B}_{\om}}=\limsup_{n\rightarrow \infty}\frac{1}{n}\log ||h_{\sigma^n\om, 0}||_{\mathcal{B}_{\om}}>0,$ which will implies that $\Lambda(\mathcal{R})= 0.$ 
Now we continue as in the proof of Theorem 3.2 in \cite{dragicevic_spectral_2018}. Let $N$ be the integer for which 
$\tilde{\alpha}^N(\om)<1$  $\om$ a.s. We then consider the cocycle $\mathcal{R}_N$  generated by the map $\om\rightarrow \cL_{\om,0}^N.$ It is easy to verify that
 $\Lambda(\mathcal{R}_N)=N\Lambda(\mathcal{R})$ and the index of compactness
$\kappa$ (see section 2.1 in \cite{dragicevic_spectral_2018} for the definition) 
of the two cocycles also satisfies
$\kappa(\mathcal{R}_N)=N\kappa(\mathcal{R}).$ Because $\int \log \tilde{\alpha}^N(\om)\ 
dm < 0 \le \Lambda(\mathcal{R}_N)$, Lemma 2.1 in \cite{dragicevic_spectral_2018} guarantees that $\kappa(\mathcal{R}_N) \le \int \log \tilde{\alpha}^N(\om)\ 
dm$, 
%\Lambda(\mathcal{R}_N)$ since  Then  $\kappa(\mathcal{R})<\mathcal{R},$ which
proving
 quasicompactness of $\mathcal{R}.$

As we said at the beginning of this section, we could now follow almost verbatim the proofs in \cite{dragicevic_spectral_2018} by using  our operators $\cL_{\om,0}$ and by replacing the Lebesgue measure with  the conformal measures $\nu_{\om, 0}.$ We briefly sketch the main steps of the approach; we first define the  {\em observable} $v$ as a measurable map $v:\Omega\times [0,1] \rightarrow \mathbb{R}$ with the additional properties 
%that $||v(\omega, x)||_{L^1(\m\times \nu_{\om,0})}<\infty$ and  
that $||v(\omega, x)||_{L^1(\nu_0)}<\infty$ and 
$\text{ess}\sup_{\om}||v_{\om}||_{\infty,\om}<\infty,$ where we set  $v_{\om}(x)=v(\om, x).$ 
Moreover we assume $v_\omega$ is fibrewise centered: $\int v_{\om}(x)d\mu_{\om,0}(x)=0,$ for $m$-a.e. $\om\in \Omega.$ We then define the twisted (normalized) transfer operator cocycle as:
$$
\tcL_{\om,0}^{\theta}(f)=\tcL_{\om,0}(e^{\theta v_{\om}(\cdot)}f), \ f\in\mathcal{B}_{\om}. 
$$
The link with the limit theorems is provided by the following equality which follows easily by the duality property of the operator:
$$
\int \tcL_{\om,0}^{\theta,n}(f)d\nu_{\sigma^n\om, 0}=\int e^{\theta S_n v_{\om}(\cdot)}f d\nu_{\om, 0},
$$
where $\tcL_{\om,0}^{\theta,n}=\tcL_{\sigma^{n-1}\om,0}^{\theta}\circ \cdots \circ \tcL_{\om,0}^{\theta}.$ The adaptation of Theorem 3.12 in \cite{dragicevic_spectral_2018} to our case, shows that the twisted operator is quasi-compact (in $\mathcal{B}_{\om}$) for $\theta$ close to zero. Moreover, by denoting with $\Lambda(\theta)=\lim_{n\rightarrow \infty}\frac1n\log ||\tcL_{\om,0}^{\theta,n}||_{\mathcal{B}_{\om}}$ the top Lyapunov exponent of the cocycle, the map $\theta\rightarrow \Lambda(\theta)$ is of class $C^2$ and strictly convex in a neighborhood of zero. We have also the analog of Lemma 4.3 in \cite{dragicevic_spectral_2018}, linking the asymptotic behavior of characteristic functions associated to Birkhoff sums with $\Lambda(\theta):$
$$
\lim_{n\rightarrow \infty}\frac1n\log\left|\int e^{\theta S_nv_{\om}(x)}d\mu_{\om, 0}\right|=\Lambda(\theta).
$$
We are now ready to collect our results on a few limit theorems; we first define the variance $\Sigma^2=\int v_{\om}(x)^2\ d\mu_{\om, 0}\ dm+2\sum_{n=1}^{\infty}\int v_{\om}(x)v(T_\om^n(x))\ d\mu_{\om, 0}\ dm.$
%\sv{I forgot to add $dm.$}
%\gf{I edited the variance equation. I also note that in \cite{dragicevic_spectral_2018} we defined by further integrating wrt $m$;  if this is a definition here, then strictly speaking $\Sigma$ depends on $\omega$ until we prove otherwise. Perhaps simplest fix is just to perform the $m$-integration in the defn here?} 
We also define the {\em aperiodicity condition} by asking that for $m$-a.e. $\om\in \Omega$ and for every compact interval $J\subset \mathbb{R}\setminus \{0\},$ there exists $C(\om)>0$ and $\rho\in(0,1)$ such that $||\tcL_{\om,0}^{it,n}||_{\mathcal{B}_{\om}}\le C(\omega)  \rho^n,$ for $t\in J$ and $n\ge 0.$
\begin{theorem}
Suppose that our random  cocycle $\mathcal{R}=(\Omega, m, \sigma, \mathcal{B}_{\om}, \tcL_{\om,0})$ is admissible and take the centered observable $v$ verifying  $\text{ess}\sup_{\om}||v_{\om}||_{\infty,\om}<\infty.$ Then:
\begin{itemize}
\item ({\bf Large deviations}). There exists $\varkappa_0>0$ and a non-random function $c:(-\varkappa_0, \varkappa_0)\rightarrow\mathbb{R}$ which is nonnegative, continuous, strictly convex, vanishing only at $0$ and such that
$$
\lim_{n\rightarrow \infty} \frac1n\log \mu_{\om, 0}(S_nv_{\om}(\cdot)>n\varkappa)=-c(\varkappa), \ \text{for}\ 0<\varkappa<\varkappa_0, \ \text{and}\ m-\text{a.e.} \ \om\in \Omega.
$$
\item ({\bf Central limit theorem}). Assume that $\Sigma^2>0.$ Then for every bounded and continuous function $\phi:\mathbb{R}\rightarrow \mathbb{R}$ and $m$-\text{a.e.}  $\om\in \Omega$, we have
$$
\lim_{n\rightarrow \infty}\int \phi\left(\frac{S_nv_{\om}(x)}{\sqrt{n}}\right)d\mu_{\om, 0}=\int \phi\ d\mathcal{N}(0, \Sigma^2).
$$
\item ({\bf Local central limit theorem}). Suppose the aperiodicity condition holds. Then for $m$-a.e. $\om\in \Omega$  and every bounded interval $J\subset \mathbb{R},$ we have
$$
\lim_{n\rightarrow \infty}\sup_{s\in \mathbb{R}}\left|\Sigma\sqrt{n}\mu_{\om,0}(s+S_nv_{\om}(\cdot)\in J)-\frac{1}{\sqrt{2\pi}}e^{-\frac{s^2}{2n\Sigma^2}}|J|\right|=0.
$$

    \end{itemize}
\end{theorem}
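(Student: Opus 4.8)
The plan is to derive all three statements from the corresponding results of \cite{dragicevic_spectral_2018} once we have verified that the random cocycle $\mathcal{R}=(\Om,m,\sg,\mathcal{B}_\om,\tcL_{\om,0})$ is \emph{admissible} in the sense of that paper and quasi-compact. Most of this verification has already been carried out in the discussion above: the variation properties (V1)--(V9) hold for the variation taken modulo $\nu_{\om,0}$, with (V4) --- the compact injection of the unit ball of $\mathcal{B}_\om$ into $L^1(\nu_{\om,0})$ --- following by adapting Proposition 2.3.4 of \cite{gora} to the fully supported non-atomic measure $\nu_{\om,0}$; condition $(\mathcal{C}0)$ is \eqref{M}; conditions $(\mathcal{C}1)$ and $(\mathcal{C}2)$ follow from the Lasota--Yorke inequality \eqref{p} together with \eqref{E8} and the assumed bound \eqref{d}; condition $(\mathcal{C}3)$ is the first display equation in item \ref{item 5} of Theorem \ref{existence theorem}; and condition $(\mathcal{C}4)$ is supplied by items \ref{item 2} and \ref{item 6} of Theorem \ref{existence theorem} together with $\esssup_\om\|\phi_{\om,0}\|_{\mathcal{B}_\om}<\infty$ (obtained as in Proposition 1 of \cite{DFGTV18a}, which uses only $(\mathcal{C}1)$ and $(\mathcal{C}3)$). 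Quasi-compactness of $\mathcal{R}$ then follows from Lemma 2.1 of \cite{dragicevic_spectral_2018} applied to the power cocycle $\mathcal{R}_N$, with $N$ chosen so that $\tilde\alpha^N<1$ $m$-a.s., using $\int_\Om\log\tilde\alpha^N\,dm<0\le\Lambda(\mathcal{R})$, where $\Lambda(\mathcal{R})\ge 0$ because the densities $\phi_{\om,0}$ are uniformly bounded below (item \ref{item 2}).

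The main step is the perturbative analysis of the twisted cocycle $\tcL_{\om,0}^\theta(f):=\tcL_{\om,0}(e^{\theta v_\om(\cdot)}f)$. Adapting Theorem 3.12 of \cite{dragicevic_spectral_2018}, one shows that for $\theta$ in a complex neighbourhood of $0$ this cocycle is quasi-compact with a one-dimensional leading Oseledets space depending analytically on $\theta$, so that $\theta\mapsto\Lambda(\theta):=\lim_n\tfrac{1}{n}\log\|\tcL_{\om,0}^{\theta,n}\|_{\mathcal{B}_\om}$ is real-analytic near $0$; in particular it is $C^2$, with $\Lambda(0)=0$ and, by the fibrewise centering of $v$, $\Lambda'(0)=0$ and $\Lambda''(0)=\Sigma^2$, and it is strictly convex near $0$ whenever $\Sigma^2>0$. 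The bridge to Birkhoff sums is the duality identity $\int\tcL_{\om,0}^{\theta,n}(f)\,d\nu_{\sigma^n\om,0}=\int e^{\theta S_nv_\om(\cdot)}f\,d\nu_{\om,0}$; taking $f=\phi_{\om,0}$ gives $\int e^{\theta S_nv_\om}\,d\mu_{\om,0}$, and the adaptation of Lemma 4.3 of \cite{dragicevic_spectral_2018} yields $\lim_n\tfrac{1}{n}\log\left|\int e^{\theta S_nv_\om(x)}\,d\mu_{\om,0}\right|=\Lambda(\theta)$ for $m$-a.e.\ $\om$.

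Given these facts, the three conclusions are obtained exactly as in \cite{dragicevic_spectral_2018}. The large deviation principle follows from the G\"artner--Ellis theorem applied to the non-random limiting cumulant generating function $\Lambda(\theta)$, with rate function $c(\varkappa)=\sup_\theta(\theta\varkappa-\Lambda(\theta))$, which is nonnegative, continuous, strictly convex, and vanishes only at $0$ since $\Lambda$ is smooth and strictly convex with $\Lambda(0)=\Lambda'(0)=0$. The central limit theorem follows from a characteristic-function argument: expressing $\int e^{itS_nv_\om/\sqrt n}\,d\mu_{\om,0}$ through the leading eigenvalue of the twisted cocycle at $\theta=it/\sqrt n$, expanding using $\Lambda(0)=\Lambda'(0)=0$ and $\Lambda''(0)=\Sigma^2$, and invoking L\'evy continuity gives convergence to $e^{-\Sigma^2 t^2/2}$. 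For the local central limit theorem one additionally uses the aperiodicity condition, which supplies the $\om$-pointwise decay $\|\tcL_{\om,0}^{it,n}\|_{\mathcal{B}_\om}\le C(\om)\rho^n$ for $t$ in compact subsets of $\RR\setminus\{0\}$, and then a Fourier-inversion (Gnedenko-type) argument yields the stated uniform asymptotics.

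I expect the main obstacle to lie not in the Nagaev--Guivarc'h machinery itself --- which is a faithful transcription of \cite{dragicevic_spectral_2018} --- but in checking that every piece of the admissibility framework survives the passage from its original setting (Lebesgue conformal measure, geometric potential $|\det DT_\om|^{-1}$) to ours (a general weight $g_{\om,0}$ and a general conformal measure $\nu_{\om,0}$, with $\mathcal{B}_\om$ defined via equivalence classes modulo $\nu_{\om,0}$). The delicate point is condition $(\mathcal{C}1)$, the $\om$-uniform operator bound \eqref{C1CMP}: the Lasota--Yorke estimate \eqref{p} only gives it under the additional hypothesis \eqref{d}, so this has to be folded into the admissibility assumptions rather than being automatic; once that is granted, the remaining verification and the three limit theorems follow by the arguments indicated above.
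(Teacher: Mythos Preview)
Your proposal is correct and follows essentially the same approach as the paper: the paper's proof is precisely the verification of admissibility and quasi-compactness carried out in the discussion preceding the theorem statement, after which it asserts that the three limit theorems follow ``almost verbatim'' from \cite{dragicevic_spectral_2018} with the obvious replacements of Lebesgue by $\nu_{\om,0}$ and of the Perron--Frobenius operators by $\tcL_{\om,0}$. You have also correctly identified the one delicate point --- condition $(\mathcal{C}1)$ requires the additional checkable hypothesis \eqref{d}, which the paper explicitly folds into the admissibility assumptions rather than deriving.
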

\subsection{The martingale approach}
The  deviation result quoted above allows to control, asymptotically, deviations of order $\varkappa,$ for $\varkappa$ in a sufficiently small bounded interval around $0.$ We now show how to extend that result to   any $\varkappa$ by getting an exponential bound on the deviation of the distribution function instead of an asymptotic expansion; in particular our bound shows that deviation probability stays small for finite $n$,which is a typical concentration property.  
%\gf{Sandro, could you explain a bit more why Thm 8.2 is not a concentration inequality? Is it because it doesn't hold for all $n$ but only $n>n_0$ where $n_0$ depends on $\varkappa$?} \sv{my answer in a mail}
%\sv{You are right that such a result is reminiscent of concentration inequalities, but we do not have them...I think Davor and Hafouta have some results in this direction for quenched, but I do not know the details.}\sv{The point is that in Th 8.1, we have a fixed $\varkappa_O$ which could be small, while in th. 8.2 we could take $\varkappa$ as we want. I do not see how to better formulate it.}
%\gf{In the line below we mention ``larger deviations'', but these deviations are also of order $\varkappa$, so in what sense are they larger than before?  I would rather remove all of this size comparison;  isn't the point that we develop concentration inequalities here rather than limit laws?}
%\sv{removed "small"}
%\gf{Don't we have an LDP above?  aren't they large deivations?}
 We now derive this result since it will provide us with the martingale that is used to obtain the ASIP. 
 %Until the end of this section  we will simply write $\mu_{\om}$ for $\mu_{\om, 0};$ 
 Recall that the equivariant measure $\mu_{\om,0}$ is equivalent to $\nu_{\om, 0}.$ We consider again the fibrewise centered observable $v$ from the previous section, and 
we wish to estimate
$$\mu_{\om,0}\left(\left|\frac1n\sum_{k=0}^{n-1}v_{\sigma^k\om}\circ
T^k_{\om}\right|>\varkappa\right).
$$
We will use the following result (Azuma, \cite{AZ}):
Let $\{M_i\}_{i\in \mathbb{N}}$ be a sequence of martingale differences.
If there is $a>0$ such that $||M_i||_{\infty}<a$ for all $i,$ then we have
for all $b\in \mathbb{R}:$
$$
\mu_{\om,0}\left(\sum_{i=1}^nM_i\ge nb\right)\le e^{-n\frac{b^2}{2a^2}}.
$$

%\underline{Notations}: in the following to simplify the notations we will write the objects $\tcL_{\om,\ep}, \rho_{\om, \ep}, \mu_{\om, \ep}, \kappa_{\ep},$ etc. simply as $\tcL_{\om,0}, \rho_{\om}, \mu_{\om,0}, \kappa,$ etc.\\
If we denote  $\mathcal{F}^k_{\om}:=(T^k_{\om})^{-1}(\mathcal{F}),$
we can easily prove that for a measurable map
 $\phi:[0,1]\rightarrow \mathbb{{R}},$ we have (the expectations $\mathbb{E}_{\om}$
  will be
 taken with respect to $\mu_{\om,0}$):
\begin{equation}\label{one}
\mathbb{E}_{\om}(\phi\circ T^l_{\o}|\mathcal{F}^n_{\om})=
\left(\frac{\tcL_{\sigma^l\om,0}^{n-l}(\lm_{\sigma^l\om,0}\phi)}{\lm_{\sigma^n\om,0}}\right)
\circ T_{\om}^n
\end{equation}
We now set:
 $$M_n:= v_{\sigma^n\om}+G_n-G_{n+1}\circ T_{\sigma^n\om},$$ with $G_0=0$ and
 \begin{equation}\label{two}
 G_{n+1}=\frac{\tcL_{\sigma^n\om,0}(v_{\sigma^n\om}\lm_{\sigma^n\om,0}+G_n \lm_{\sigma^n\om,0})} {\lm_{\sigma^{k+1}\om,0}}
 \end{equation}
   It is easy to check that
   $$
   \mathbb{E}_{\om}(M_n\circ T_{\om}^n| \mathcal{F}^{n+1}_{\om} )=0,
   $$
   which means that the sequence $(M_n\circ T_{\om}^n)$ is a reversed martingale difference with respect to the filtration $\mathcal{F}^{n}_{\om}.$
   By iterating (\ref{two}) we get
   \begin{equation}\label{three}
   G_n=\frac{1}{\lm_{\sigma^n\om,0}}\sum_{j=0}^{n-1}
   \tcL^{(n-j)}_{\sigma^j\om,0}
   (v_{\sigma^j\om}
   \lm_{\sigma^j\om,0}),
   \end{equation}
  and by a telescopic trick we have
   $$\sum_{k=0}^{n-1}M_k\circ T_{\om}^k=\sum_{k=0}^{n-1}v_{\sigma^k\om}T_{\om}^n-G_n\circ T_{\om}^n.$$
Suppose for the moment we could bound $G_n$ uniformly in $n$, but not necessarily in $\om$, by $||G_n||_{\infty,\om}\le C_1(\om).$ Since by assumption there exists a constant $C_2$ such that $\text{ess}\sup_{\om}||v_{\om}||_{\infty,\om}\le C_2,$  we have $||M_n||_{\infty,\om}\le C_2+2C_1({\om}),$ and by Azuma:
$$
\mu_{\om,0}\left(\left|\frac1n\sum_{k=0}^{n-1}M_k\circ
T^k_{\om}\right|>\frac{\varkappa}{2}\right)\le 2\exp\left\{-\frac{\varkappa^2}{8(C_2+2C_1({\om}))^2}n\right\}.
$$
Therefore
\begin{align*}
\mu_{\om,0}\left(\left|\frac1n\sum_{k=0}^{n-1}v_{\sigma^k\om}\circ
T^k_{\om}\right|>\varkappa\right)
&\le 
\mu_{\om,0}\left(\left|\frac1n\sum_{k=0}^{n-1}M_k\circ
T^k_{\om}\right|+\frac1n C_1(\om)>\varkappa\right)
\\
&\le
\mu_{\om,0}\left(\left|\frac1n\sum_{k=0}^{n-1}M_k\circ
T^k_{\om}\right|>\frac{\varkappa}{2}\right)
\\
&\le 
2\exp\left\{-\frac{\varkappa^2}{8(C_2+2C_1({\om}))^2}n\right\},
\end{align*}
provided $n>n_0,$ where $n_0$ verifies $\frac1n_0 C_1(\om)\le\frac{\varkappa}{2}.$\\
 In order to estimate $C_1,$ we proceed in the following manner. We have
\begin{equation}\label{four}
||G_n||_{\infty}\le \frac{1}{\lm_{\sigma^n\om,0}}\sum_{j=0}^{n-1}
   ||\tcL^{(n-j)}_{\sigma^j\om,0}
   (v_{\sigma^j\om}
   \lm_{\sigma^j\om,0})||_{\infty,\om}.
\end{equation}
The multiplier $\lm_{\om,0}$ %\sv{fixed}\gf{$\rho_\om$ is a scalar?} 
is bounded from above  and  from below $\om$ a.s. respectively by, say, $U$ and $1/U$ by  conditions $(\mathcal{C}1)$ and item \ref{item 1} Theorem \ref{existence theorem}, respectively. Then we use item  \ref{item 5} of Theorem \ref{existence theorem} and property (V3) to bound the term into the sum. In conclusion we get:

$$
||G_n||_{\infty,\om}\le U^2DC_2 \sum_{j=0}^{\infty} \kappa^{n-j}:=C_1.
$$
We summarize  this result in the following
%\sv{fixed}
%\gf{``Resume''?}
\begin{theorem}({\bf Large deviations bound})\\
Suppose that our random  cocycle $\mathcal{R}=(\Omega, m, \sigma, \mathcal{B}_{\om}, \tcL_{\om,0})$ is admissible and take the fibrewise centered observable $v$ satisfying  $\text{ess}\sup_{\om}||v_{\om}||_{\infty,\om}=C_2<\infty.$ Then there exists $C_1>0$ such that for $m$-a.e. $\om\in \Omega$
and  for any $\varkappa>0:$
$$
\mu_{\om,0}\left(\left|\frac1n\sum_{k=0}^{n-1}v_{\sigma^k\om}\circ
T^k_{\om}\right|>\varkappa\right)\le2\exp\left\{-\frac{\varkappa^2}{8(C_2+2C_1)^2}n\right\},
$$
provided $n>n_0,$ where $n_0$ is any integer number satisfying $\frac{1}{n_0} C_1\le\frac{\varkappa}{2}.$  
%\gf{I am confused about $n_0$. Is $n_0$ any number satisfying $n_0\ge 2C_1/\varkappa$? If so, we should state it that way.  If not, I don't know what $n_0$ is.}\sv{I fixed it}
\end{theorem}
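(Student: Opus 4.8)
This theorem is a large-deviations concentration bound derived via the martingale method, and in fact the entire proof has essentially been carried out in the preceding discussion; the theorem statement is just the summary. So the "proof" is mostly an assembly job.

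\medskip

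The plan is to assemble the pieces already developed in the martingale subsection into a clean argument. First I would recall the Azuma inequality for a sequence of martingale differences $\{M_i\}$ with $\|M_i\|_\infty < a$: namely $\mu_{\om,0}\left(\sum_{i=1}^n M_i \ge nb\right) \le e^{-nb^2/(2a^2)}$, applied also to $-M_i$ to get the two-sided estimate. Next I would introduce the reversed martingale difference scheme: set $M_n := v_{\sigma^n\om} + G_n - G_{n+1}\circ T_{\sigma^n\om}$ with $G_0 = 0$ and $G_{n+1}$ defined by the recursion in \eqref{two}, so that $(M_n\circ T_\om^n)$ is a reversed martingale difference sequence with respect to the filtration $\mathcal{F}^n_\om = (T^n_\om)^{-1}(\mathcal{F})$, using identity \eqref{one}. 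The telescoping identity $\sum_{k=0}^{n-1} M_k\circ T_\om^k = \sum_{k=0}^{n-1} v_{\sigma^k\om}\circ T_\om^k - G_n\circ T_\om^n$ then reduces the Birkhoff sum estimate to an estimate on the martingale sum plus a uniform control of the correction term $G_n$.

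\medskip

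The key technical step — which I expect to be the main (and only nontrivial) obstacle — is the uniform-in-$n$ bound $\|G_n\|_{\infty,\om} \le C_1$ for $m$-a.e.\ $\om$. From the explicit formula \eqref{three}, $G_n = \lm_{\sigma^n\om,0}^{-1}\sum_{j=0}^{n-1}\tcL^{(n-j)}_{\sigma^j\om,0}(v_{\sigma^j\om}\lm_{\sigma^j\om,0})$, and one bounds each summand using: (i) the $\om$-a.s.\ upper and lower bounds $1/U \le \lm_{\om,0}\le U$ coming from admissibility condition $(\mathcal{C}1)$ and item \eqref{item 1} of Theorem \ref{existence theorem}; (ii) the exponential decay estimate of item \eqref{item 5} of Theorem \ref{existence theorem}, which controls $\|\tcL^{(n-j)}_{\sigma^j\om,0}(\cdot) - \nu_{\sigma^j\om,0}(\cdot)\phi_{\sigma^n\om,0}\|$ — combined with the fibrewise centering $\int v_{\sigma^j\om}\,d\mu_{\sigma^j\om,0} = 0$ (equivalently $\nu_{\sigma^j\om,0}(v_{\sigma^j\om}\phi_{\sigma^j\om,0}) = 0$) to kill the leading term; and (iii) property (V3), i.e.\ $\|f\|_{\infty,\om}\le\|f\|_{\mathcal B_\om}$, together with $\operatorname*{ess\,sup}_\om\|v_\om\|_{\infty,\om} = C_2$. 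This yields $\|G_n\|_{\infty,\om} \le U^2 D C_2 \sum_{j\ge 0}\kp^{j} =: C_1 < \infty$, a geometric series.

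\medskip

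With $C_1$ in hand, $\|M_n\|_{\infty,\om}\le C_2 + 2C_1 =: a$, so Azuma applied to $\frac1n\sum_{k=0}^{n-1} M_k\circ T_\om^k$ with threshold $b = \varkappa/2$ gives $\mu_{\om,0}(|\frac1n\sum M_k\circ T_\om^k| > \varkappa/2) \le 2\exp(-\varkappa^2 n/(8(C_2+2C_1)^2))$. Finally, since $|\frac1n\sum_{k=0}^{n-1} v_{\sigma^k\om}\circ T_\om^k| \le |\frac1n\sum_{k=0}^{n-1} M_k\circ T_\om^k| + \frac1n\|G_n\circ T_\om^n\|_\infty \le |\frac1n\sum M_k\circ T_\om^k| + \frac{C_1}{n}$, whenever $n > n_0$ with $\frac{C_1}{n_0}\le\frac{\varkappa}{2}$ the event $\{|\frac1n\sum v_{\sigma^k\om}\circ T_\om^k| > \varkappa\}$ is contained in $\{|\frac1n\sum M_k\circ T_\om^k| > \varkappa/2\}$, and the claimed bound follows. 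I would conclude by noting that the admissibility of $\mathcal{R}$, verified earlier in Section \ref{NG approach}, is what licenses the use of the cited items of Theorem \ref{existence theorem} and of properties (V1)--(V9).
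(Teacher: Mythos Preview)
Your proposal is correct and follows essentially the same approach as the paper: the martingale decomposition $M_n = v_{\sigma^n\om} + G_n - G_{n+1}\circ T_{\sigma^n\om}$, the telescoping identity, the uniform bound on $G_n$ via the explicit formula \eqref{three} together with item~\eqref{item 5} of Theorem~\ref{existence theorem} and (V3), and the final application of Azuma with threshold $\varkappa/2$ are exactly what the paper does. Your remark that the fibrewise centering kills the leading term in the item~\eqref{item 5} decomposition is a useful clarification that the paper leaves implicit.
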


We point out that whenever our random cocycle  $\mathcal{R}=(\Omega, m, \sigma, \mathcal{B}_{\om}, \tcL_{\om,0})$ is admissible, we satisfy all the assumptions in the paper \cite{DFGTV18a}, which allows us to get the almost sure invariance principle. We  should simply replace the (conformal) Lebesgue measure  with our random conformal measure $\nu_{\om, 0},$ and use our operators $\tcL_{\om,0},$ as we already did in Section \ref{NG approach} for the Nagaev-Guivarc'h approach. It is worthwhile to observe that \cite{DFGTV18a} relies on the construction of a suitable martingale, which in our case is precisely  the reversed martingale difference $M_n\circ T^n_{\om}$ obtained above in the proof of the large deviation bound. We denote with $\Sigma^2_n$ the variance $\Sigma^2_n=\mathbb{E}_{\om}\left(\sum_{k=0}^{\infty}v_{\sigma^k\om}\circ T_{\om}^k\right)^2,$ where $v_{\om}$ is a centered observable, and with $\Sigma^2$ the quantity introduced above, in the statement of the central limit theorem. We have the following:
\begin{theorem}({\bf Almost sure invariance principle})

Suppose that our random  cocycle $\mathcal{R}=(\Omega, m, \sigma, \mathcal{B}_{\om}, \tcL_{\om,0})$ is admissible and consider  a centered   observable $v_{\om}.$ Then $\lim_{n\rightarrow \infty}\frac1n\Sigma^2_n=\Sigma^2.$ Moreover one of the following cases hold:

(i) $\Sigma=0,$ and this is equivalent to the existence of $\varphi\in L^2(\mu_0)$
%\footnote{Here $L^2$ is intended with respect to the measure $\mu$ introduced in Definition 3.1 and invariant for the skew-product map $T$.} such that (co-boundary condition):
$$
v=\varphi-\varphi\circ T,
$$
where $T$ is the induced skew product map and $\mu$ its invariant measure, see Definition \ref{def: random prob measures}.
%\sv{fixed}
%\gf{I don't think we use $T$ as a skew product in previous sections? Also should we make a collective notation for the invariant measure on $\Omega\times [0,1]$ since $m\times \mu_\om$ is not strictly correct?}

(ii) $\Sigma^2>0$ and in this case for $m$ a.e. $\om\in \Omega,$ $\forall \varrho>\frac54$, by enlarging the probability space $(X, \mathcal{F}, \mu_{\om,0})$ if necessary, it is possible to find a sequence $(B_k)_k$ of independent centered Gaussian random variables such that
$$
\sup_{1\le k \le n} \left|\sum_{i=1}^k(v_{\sigma^i\om}\circ T_{\om}^i)-\sum_{i=1}^k B_i\right|=O(n^{1/4}\log^{\varrho}(n)),\ \mu_{\om,0} \ \text{a.s.}.
$$

\end{theorem}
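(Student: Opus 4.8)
The plan is to follow the martingale route of \cite{DFGTV18a}, transplanted from the geometric potential and Lebesgue measure to our operators $\tcL_{\om,0}$ and conformal measures $\nu_{\om, 0}$, exactly as was done for the Nagaev--Guivarc'h approach in Section~\ref{NG approach}. The first point is that once $\mathcal{R}=(\Omega, m, \sigma, \mathcal{B}_{\om}, \tcL_{\om,0})$ is admissible, every structural ingredient used in \cite{DFGTV18a} is available: conditions $(\mathcal{C}1)$--$(\mathcal{C}4)$ have been verified above, quasi-compactness of $\mathcal{R}$ was established in Section~\ref{NG approach}, the density $\phi_{\om,0}$ is bounded above and below $m$-a.e.\ by item~\ref{item 2} of Theorem~\ref{existence theorem}, and the decay of correlations in item~\ref{item 6} of Theorem~\ref{existence theorem} holds. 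The reversed martingale required is precisely the object $M_n\circ T^n_{\om}$ built in the proof of the large deviations bound: with $G_n$ as in \eqref{three}, one has the telescoping identity $\sum_{k=0}^{n-1}M_k\circ T_{\om}^k=\sum_{k=0}^{n-1}v_{\sigma^k\om}\circ T_{\om}^k-G_n\circ T_{\om}^n$, and we showed $\|G_n\|_{\infty,\om}\le C_1$ uniformly in $n$. Hence the correction $G_n\circ T^n_\om$ is uniformly bounded, so it suffices to prove the ASIP for the reversed martingale $\sum_{k=0}^{n-1}M_k\circ T^k_\om$; a term of size $O(1)$ is absorbed into the rate $O(n^{1/4}\log^\varrho n)$.

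Next I would prove the variance statement $\lim_{n\to\infty}\frac1n\Sigma^2_n=\Sigma^2$. Writing $S_n v_\om:=\sum_{k=0}^{n-1} v_{\sigma^k\om}\circ T^k_\om$, expand $\mathbb{E}_\om(S_n v_\om)^2$ using the $T$-invariance of $\mu_0$ and the $\sigma$-invariance of $m$, and bound the off-diagonal terms $\int v_{\om}\,(v_{\sigma^j\om}\circ T^j_\om)\, d\mu_{\om,0}$ by the exponential decay of correlations of item~\ref{item 6} of Theorem~\ref{existence theorem}, which applies since $\esssup_\om\|v_\om\|_{\infty,\om}=C_2<\infty$ and $v_\om\in\mathcal{B}_\om$. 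A Cesàro/Birkhoff argument then identifies the limit with $\Sigma^2$, the series $2\sum_{n\ge1}\int v_{\om}(v_{\sigma^n\om}\circ T^n_\om)\,d\mu_{\om,0}\,dm$ being absolutely convergent; the same computation shows that $\Sigma^2$ is the asymptotic variance of the martingale differences $M_k\circ T^k_\om$.

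For the dichotomy: if $\Sigma^2=0$, the asymptotic variance of the reversed martingale vanishes, forcing $M_n\circ T^n_\om=0$ in $L^2(\mu_{\om,0})$; unwinding $M_n=v_{\sigma^n\om}+G_n-G_{n+1}\circ T_{\sigma^n\om}$ exhibits $v$ as an $L^2(\mu_0)$-coboundary $v=\varphi-\varphi\circ T$ with $\varphi$ assembled from the $G_n$ (the bound $\|G_n\|_{\infty,\om}\le C_1$ gives $\varphi\in L^\infty(\mu_0)\subset L^2(\mu_0)$), and the converse—that a coboundary has zero variance—is standard. If $\Sigma^2>0$, I would invoke the martingale ASIP for reversed martingales with bounded differences of the type used in \cite{DFGTV18a}, applied fibrewise to $(M_k\circ T^k_\om)_k$ on $(X,\mathcal{F},\mu_{\om,0})$ (enlarged if necessary), using $\|M_k\|_{\infty,\om}\le C_2+2C_1$ to supply the moment bounds and the variance convergence just proved; this produces the sequence $(B_k)$ of independent centered Gaussians with $\sup_{1\le k\le n}|\sum_{i=1}^k(v_{\sigma^i\om}\circ T^i_\om)-\sum_{i=1}^k B_i|=O(n^{1/4}\log^\varrho n)$ for every $\varrho>5/4$, after adding back the uniformly bounded term $G_n\circ T^n_\om$, which does not affect the rate.

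The main obstacle I anticipate is not a single estimate but the bookkeeping needed to confirm that admissibility of $\mathcal{R}$ really delivers, with sufficient uniformity in the randomness, all hypotheses of the martingale ASIP of \cite{DFGTV18a}: in particular that the a.e.-in-$\om$ control $\|G_n\|_{\infty,\om}\le C_1$ suffices, that the conditional-variance and variance-convergence requirements of the abstract theorem are genuine consequences of item~\ref{item 6} of Theorem~\ref{existence theorem} together with $\esssup_\om\|v_\om\|_{\infty,\om}<\infty$, and that the replacement of Lebesgue measure by the non-atomic fully supported $\nu_{\om,0}$ does not disturb any step. Once these are in place, the argument is a faithful transcription of \cite{DFGTV18a} with $\Leb$ replaced by $\nu_{\om,0}$ and the geometric operators replaced by $\tcL_{\om,0}$.
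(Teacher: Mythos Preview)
Your proposal is correct and follows essentially the same approach as the paper: the paper's argument consists precisely of observing that admissibility of $\mathcal{R}$ delivers all the hypotheses of \cite{DFGTV18a} (after replacing Lebesgue by $\nu_{\om,0}$ and the geometric operators by $\tcL_{\om,0}$), and that the required martingale is exactly the reversed martingale difference $M_n\circ T^n_{\om}$ built in the large deviations bound. Your write-up is in fact more detailed than the paper's own treatment, which simply points to \cite{DFGTV18a} with these substitutions.
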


We show in Section \ref{hts} that the distribution of the first hitting random time in a decreasing sequence of holes follows an exponential law, see Proposition \ref{hve}. 
We now prove a recurrence result by giving a quantitative estimate of the {\em asymptotic number of entry times in a descending sequence of balls}. This is known as the {\em shrinking target problem}. 
\begin{proposition}\label{shri}
Suppose that our random cocycle $\mathcal{R}=(\Omega, m, \sigma, \mathcal{B}_{\om}, \tcL_{\om,0})$ is admissible.
%\gf{At least we have to say for which $\omega$ the statement holds.}
%\sv{For the cocycle we have not to precise the a.s. nature of $\om.$ For the next results, that was done in the statements.}\gf{somewhere we need to state $m$-a.e. $\omega$...}
Take $B_{\xi_{\sigma^k\om}}(p)$ as a  descending sequence of balls centered at the point $p$ and with  radius $\xi_{\sigma^k\om},$  such that 
$$
E_{\om, n}:=\sum_{k=0}^{n-1}\mu_{\sigma^k_\om,0}(B_{\xi_{\sigma^k\om}}(p))\rightarrow \infty 
\quad \mbox{ for $m$-a.e.\ $\omega$}.
$$
Then for $m$-a.e. $\om$ and $\mu_{\om,0}$-almost all $x,$ $T^k_{\om}(x)\in B_{\xi_{\sigma^k\om}}(p)$ for infinitely many $k$ and
$$
\frac{1}{E_{\om, n}}\#\{k\ge 0, T^k_{\om}(x)\in B_{\xi_{\sigma^k\om}}(p)\}\rightarrow 1.
$$
\end{proposition}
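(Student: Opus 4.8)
The plan is to reduce Proposition~\ref{shri} to a fibrewise quantitative Borel--Cantelli (Gál--Koksma) argument, whose crucial analytic input is the exponential decay of correlations furnished by Theorem~\ref{existence theorem}\eqref{item 5} at $\ep=0$ (equivalently, the exponential version of \eqref{C4'} from Lemma~\ref{DFGTV18alemma}). Fix $\om$ in the full $m$-measure set on which $E_{\om,n}\to\infty$, on which the conclusions of Lemma~\ref{DFGTV18alemma} and Theorem~\ref{existence theorem} hold, and on which $\inf\phi_{\om,0}\ge C_3^{-1}$. Write $B_k:=B_{\xi_{\sg^k\om}}(p)$, and on the probability space $([0,1],\mu_{\om,0})$ set $f_k:=\ind_{B_k}\circ T^k_\om$ and $a_k:=\mu_{\sg^k\om,0}(B_k)$, so that $S_n(x):=\#\{0\le k<n:T^k_\om(x)\in B_k\}=\sum_{k=0}^{n-1}f_k(x)$. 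Iterating the equivariance \eqref{eq: mu_om,0 T invar} gives $\int f_k\,d\mu_{\om,0}=a_k$, hence $\sum_{k=0}^{n-1}\int f_k\,d\mu_{\om,0}=E_{\om,n}$. It therefore suffices to verify the Gál--Koksma hypothesis: there is a constant $C$ (independent of $\om$) with
\[
\int_{[0,1]}\Big(\sum_{m<k\le n}(f_k-a_k)\Big)^2\,d\mu_{\om,0}\le C\,(E_{\om,n}-E_{\om,m})\qquad\text{for all }0\le m<n,
\]
since the lemma then yields, for every $\delta>0$ and $\mu_{\om,0}$-a.e.\ $x$, that $S_n(x)=E_{\om,n}+O\big(E_{\om,n}^{1/2}(\log E_{\om,n})^{3/2+\delta}\big)$, and $E_{\om,n}\to\infty$ gives both the infinitely-many-returns claim and $S_n(x)/E_{\om,n}\to1$.

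The heart of the argument is the covariance estimate. Each $\ind_{B_k}$ is an indicator of an interval, so $\var(\ind_{B_k})\le2$ and $\|\ind_{B_k}\|_{\cB_\om}\le3$ uniformly in $k,p,\om$, and since $\|\phi_{\om,0}\|_{\cB_\om},\|\phi_{\om,0}\|_{\infty,\om}\le C_2$ one also gets $\|\ind_{B_j}\phi_{\sg^j\om,0}\|_{\cB_{\sg^j\om}}\le C''$ uniformly (algebra property of $\BV$). For $j<k$, pushing forward by $T^j_\om$, using $d\mu_{\sg^j\om,0}=\phi_{\sg^j\om,0}\,d\nu_{\sg^j\om,0}$, the conformality relation, and the pull-out identity $\cL^{\ell}_{\om',0}((h\circ T^\ell_{\om'})g)=h\,\cL^{\ell}_{\om',0}(g)$ (with $\om'=\sg^j\om$, $\ell=k-j$), one obtains
\[
\int f_jf_k\,d\mu_{\om,0}=\nu_{\sg^k\om,0}\big(\ind_{B_k}\cdot\~\cL^{k-j}_{\sg^j\om,0}(\ind_{B_j}\phi_{\sg^j\om,0})\big).
\]
Inserting $\~\cL^{k-j}_{\sg^j\om,0}(\ind_{B_j}\phi_{\sg^j\om,0})=\nu_{\sg^j\om,0}(\ind_{B_j}\phi_{\sg^j\om,0})\,\phi_{\sg^k\om,0}+Q^{k-j}_{\sg^j\om,0}(\ind_{B_j}\phi_{\sg^j\om,0})$ together with $\|Q^n_{\om,0}g\|_{\infty,\om}\le K\gm^n\|g\|_{\cB_\om}$ (Lemma~\ref{DFGTV18alemma}) and $\nu_{\sg^j\om,0}(\ind_{B_j}\phi_{\sg^j\om,0})=a_j$ yields $\int f_jf_k\,d\mu_{\om,0}=a_ja_k+O\big(\gm^{k-j}\nu_{\sg^k\om,0}(B_k)\big)$; and $\inf\phi_{\sg^k\om,0}\ge C_3^{-1}$ gives $\nu_{\sg^k\om,0}(B_k)\le C_3\,a_k$. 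Since $\int f_k\,d\mu_{\om,0}=a_k$, this is exactly
\[
\big|\mathrm{Cov}_{\mu_{\om,0}}(f_j,f_k)\big|\le C_6\,\gm^{k-j}\,a_k,\qquad C_6=K\,C''\,C_3.
\]

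Expanding the square, using $\mathrm{Var}_{\mu_{\om,0}}(f_k)=a_k-a_k^2\le a_k$ and the bound above,
\[
\int\Big(\sum_{m<k\le n}(f_k-a_k)\Big)^2 d\mu_{\om,0}\le\sum_{m<k\le n}a_k+2C_6\sum_{m<k\le n}a_k\sum_{j<k}\gm^{k-j}\le\Big(1+\tfrac{2C_6\gm}{1-\gm}\Big)(E_{\om,n}-E_{\om,m}),
\]
which is the required hypothesis. Applying the Gál--Koksma lemma on $([0,1],\mu_{\om,0})$ then produces, for each good $\om$, a set $G_\om\subset[0,1]$ with $\mu_{\om,0}(G_\om)=1$ on which $S_n(x)/E_{\om,n}\to1$ and hence $T^k_\om(x)\in B_k$ for infinitely many $k$. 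As the set of good $\om$ has full $m$-measure, this proves the proposition; via Proposition~\ref{prop: random measure equiv} the conclusion also holds for $\mu_0$-a.e.\ $(\om,x)$.

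I expect the only genuine obstacle to be the covariance estimate, specifically extracting the factor $a_k$ on the right-hand side. A black-box use of the decay of correlations of Theorem~\ref{existence theorem}\eqref{item 6} with the uniformly-$\BV$ observables $\ind_{B_j}$ only gives $|\mathrm{Cov}(f_j,f_k)|\lesssim\gm^{k-j}$, whose double sum is $O(n)$ and useless here because no growth rate for $E_{\om,n}$ is assumed. Obtaining the extra $a_k$ requires the transfer-operator unfolding above: one must keep the conformal measure $\nu_{\sg^k\om,0}$ in front of the remainder term and only then compare it to $\mu_{\sg^k\om,0}$ using the uniform two-sided bounds on $\phi_{\om,0}$. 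The remaining ingredients --- equivariance, the algebra property of $\BV$, and the classical Gál--Koksma strong law --- are routine.
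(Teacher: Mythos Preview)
Your argument is correct and follows the same strategy as the paper: deduce the proposition from a quantitative Borel--Cantelli statement obtained via the Sprind\v{z}uk/G\'al--Koksma lemma, whose key input is a covariance bound of the form $|\mathrm{Cov}_{\mu_{\om,0}}(f_j,f_k)|\lesssim \kappa^{k-j}a_k$ (with the crucial factor $a_k=\mu_{\sg^k\om,0}(B_k)$). The paper states the proposition as an immediate corollary of its Theorem~\ref{BC}, whose proof uses Sprind\v{z}uk's theorem exactly as you use G\'al--Koksma.

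The only difference is how the covariance bound is obtained. The paper invokes a decay of correlations result from \cite{AFGTV20} (Theorem~2.21 there) in which one observable is taken in $L^1(\mu_{\om,0})$, which directly produces the $a_k$ factor on the right-hand side; it explicitly notes that item~\eqref{item 6} of Theorem~\ref{existence theorem} would not suffice for this purpose. You instead unfold the calculation by hand using the decomposition $\~\cL^{k-j}_{\sg^j\om,0}=\nu_{\sg^j\om,0}(\cdot)\phi_{\sg^k\om,0}+Q^{k-j}_{\sg^j\om,0}$, the bound $\|Q^n_{\om,0}\|_{\infty}\le K\gamma^n$ from Lemma~\ref{DFGTV18alemma}, and $\nu_{\sg^k\om,0}(B_k)\le C_3\,a_k$ from \eqref{C7'}. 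This is more self-contained and makes transparent exactly where the $a_k$ comes from, at the cost of a few extra lines; the paper's version is shorter but relies on the external citation. Both routes are equivalent in content.
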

The proposition is a simple consequence of the following {\em Borel-Cantelli} like property, whenever we put the observable $v$ in the theorem below as $v_{\sigma^k\om}=\ind_{B_{\xi_{\sigma^k\om}}(p)}.$
\begin{theorem}\label{BC}
Suppose that our random cocycle $\mathcal{R}=(\Omega, m, \sigma, \mathcal{B}_{\om}, \tcL_{\om,0})$ is admissible. Take a nonnegative  observable $v,$  satisfying  $\text{ess}\sup_{\om}||v_{\om}||_{\mathcal{B}_{\om}}\le M.$
Suppose that for $m$ a.e. $\om\in \Omega$
$$
E_{\om, n}:=\sum_{k=0}^{n-1} \int v_{\sigma^k\om}(x)d\mu_{\sigma^k_{\om},0}(x)\rightarrow \infty, \ n\rightarrow \infty.
$$
Then for $m$ a.e. $\om\in \Omega$
$$
\lim_{n\rightarrow\infty}\frac{1}{E_{\om, n}}\sum_{k=0}^{n-1}v_{\sigma^k\om}(T^k_{\om}x)=1,
$$
for $\mu_{\om,0}$-almost all $x$.
\end{theorem}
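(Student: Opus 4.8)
The statement is a dynamical Borel--Cantelli lemma for the random cocycle, and the natural route is to convert the Birkhoff-type sum into a martingale-plus-coboundary decomposition and then invoke the large deviation bound already established for admissible cocycles (the ``Large deviations bound'' theorem just above). First I would recenter: write $\bar v_{\om}:=v_{\om}-\mu_{\om,0}(v_{\om})$, so that $\bar v$ is a fibrewise centered observable with $\esssup_{\om}\|\bar v_{\om}\|_{\infty,\om}\le 2M$ (using property (V3), $\|v_{\om}\|_{\infty,\om}\le\|v_{\om}\|_{\cB_\om}\le M$). The quantity to control is then
\[
S_n\bar v_{\om}(x):=\sum_{k=0}^{n-1}v_{\sigma^k\om}(T^k_{\om}x)-E_{\om,n},
\]
and the goal is to show $S_n\bar v_{\om}(x)=o(E_{\om,n})$ for $\mu_{\om,0}$-a.e.\ $x$ and $m$-a.e.\ $\om$.

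The key step is a quantitative tail estimate: for fixed $\varkappa>0$ the large deviations bound gives
\[
\mu_{\om,0}\!\left(\left|\tfrac1n S_n\bar v_{\om}\right|>\varkappa\right)\le 2\exp\!\left(-\tfrac{\varkappa^2}{8(2M+2C_1)^2}\,n\right)
\]
for all $n$ large enough (depending on $\om$ and $\varkappa$), where $C_1$ is the uniform coboundary bound from the martingale construction. However, since $E_{\om,n}$ may grow slower than $n$, one cannot directly apply this along the full sequence; instead I would follow the standard Gal--Koksma / Sprindžuk strategy. One shows that for a suitably chosen increasing sequence of times (e.g.\ $n_j$ defined by $E_{\om,n_j}\asymp j^2$, or a dyadic-type blocking adapted to the growth of $E_{\om,n}$), the deviations $|S_{n_j}\bar v_{\om}|$ along the subsequence are summable in $j$ after applying Chebyshev to the second moment $\mathbb E_{\om}(S_n\bar v_{\om})^2$. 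The second-moment estimate $\mathbb E_{\om}(S_n\bar v_{\om})^2=O(E_{\om,n})$ follows from the exponential decay of correlations in item \eqref{item 5}/\eqref{item 6} of Theorem \ref{existence theorem} together with $0\le v\le M$: the diagonal terms contribute $O(E_{\om,n})$ and the off-diagonal correlation sums are dominated by $\sum_k \mu_{\sigma^k\om,0}(v_{\sigma^k\om})\sum_{\ell\ge1}\kappa^\ell = O(E_{\om,n})$. Borel--Cantelli along the subsequence then gives $S_{n_j}\bar v_{\om}=o(E_{\om,n_j})$ a.s., and monotonicity of the partial sums (here one uses $v\ge 0$, so $E_{\om,n}$ and the sums $\sum_{k<n}v_{\sigma^k\om}\circ T^k_{\om}$ are nondecreasing in $n$) bridges the gaps $n_j\le n<n_{j+1}$, upgrading the subsequential statement to the full limit
\[
\lim_{n\to\infty}\frac{1}{E_{\om,n}}\sum_{k=0}^{n-1}v_{\sigma^k\om}(T^k_{\om}x)=1,\qquad \mu_{\om,0}\text{-a.e.\ }x.
\]

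The main obstacle I anticipate is handling the $\om$-dependence carefully in the blocking argument: the time $n_0(\om,\varkappa)$ after which the large deviations bound is valid, and the coboundary constant $C_1$, must be controlled measurably in $\om$, and the subsequence $n_j=n_j(\om)$ depends on the (possibly very irregular) growth of $E_{\om,n}$. One must check that the exceptional $\mu_{\om,0}$-null set can be chosen to depend measurably on $\om$ and that the $m$-a.e.\ statement survives --- this is where invoking the already-proved large deviation bound (rather than redoing Azuma from scratch) pays off, since it packages the $\om$-uniformity of $M$ and the $m$-a.s.\ finiteness of $C_1$. A secondary technical point is that if $E_{\om,n}$ grows only subexponentially the naive exponential tail is more than enough, but if along some fibres $E_{\om,n}$ were bounded the hypothesis $E_{\om,n}\to\infty$ is explicitly assumed, so no degenerate case arises; the Gal--Koksma lemma (or its random version used in \cite{DFGTV18a}) then applies verbatim with the second-moment input above. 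Finally, Proposition \ref{shri} follows immediately by specializing $v_{\sigma^k\om}=\ind_{B_{\xi_{\sigma^k\om}}(p)}$, noting these indicators lie in $\BV$ with uniformly bounded $\cB_\om$-norm since each ball is an interval.
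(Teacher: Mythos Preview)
Your approach is essentially the paper's: bound the second moment of the centered sums by a constant times $\sum_{m<k\le n}\int v_{\sigma^k\om}\,d\mu_{\sigma^k\om,0}$ using exponential decay of correlations, then feed this into Sprind\v{z}uk's theorem. The paper does exactly this, citing Sprind\v{z}uk as a black box rather than reconstructing it via subsequences and monotonicity; your $\om$-measurability worries about the blocking sequence $n_j(\om)$ are therefore unnecessary, since Sprind\v{z}uk is applied for each fixed $\om$ and the exceptional $\mu_{\om,0}$-null set comes out of that fiberwise argument. The initial detour through the Azuma-based large deviations bound is also unnecessary and the paper does not use it.

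One point to be careful about: the correlation bound you need is
\[
\Big|\mu_{\om,0}\big((v_{\sigma^j\om}\circ T^j_\om)(v_{\sigma^i\om}\circ T^i_\om)\big)-\mu_{\sigma^i\om,0}(v_{\sigma^i\om})\mu_{\sigma^j\om,0}(v_{\sigma^j\om})\Big|\le CM\,\kappa^{j-i}\,\mu_{\sigma^j\om,0}(v_{\sigma^j\om}),
\]
so that the off-diagonal sum is $O(E_{\om,n})$. Item~\eqref{item 6} of Theorem~\ref{existence theorem} as stated gives only a $\|\cdot\|_{\infty}\times\|\cdot\|_{\cB}$ bound, which yields $CM^2\kappa^{j-i}$ and hence a variance bound $O(n)$ rather than $O(E_{\om,n})$; under the sole hypothesis $E_{\om,n}\to\infty$ this is not enough for Sprind\v{z}uk to conclude. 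The paper explicitly flags this and invokes instead the $\cB_\om\times L^1(\mu_{\om,0})$ decay (Theorem~2.21 of \cite{AFGTV20}); equivalently, one can extract this $L^1$ bound directly from item~\eqref{item 5} (or \eqref{exp conv fn trop}) by the standard duality $\int(f\circ T^n)h\,d\mu_{\om,0}=\int f\,\sL_{\om,0}^n h\,d\mu_{\sg^n\om,0}$ and $\|\sL^n_{\om,0}\hat h\|_{\infty}\le D\|h\|_{\cB_\om}\kappa^n$. Your citation of ``item \eqref{item 5}/\eqref{item 6}'' covers this, but you should be explicit that it is the $L^1$ form (via item~\eqref{item 5}) that is doing the work.
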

\begin{proof}
Let us write 
$$
S_{\om}:=\int \left(\sum_{m<k\le n}\left(v_{\sigma^k\om}(T^k_{\om}y)-\int v_{\sigma^k\om}d\mu_{\sigma^k_{\om},0}\right)\right)^2d\mu_{\om,0}(y).
$$
If we could prove that $S_{\om}\le \text{constant}\sum_{m<k\le n} \int v_{\sigma^k\om}d\mu_{\sigma^k\om,0},$
then the result will follow by applying Sprindzuk theorem \cite{Spri}\footnote{We recall here the Sprindzuk theorem. Let $(\Omega, \mathcal{B}, \mu_0)$ be a probability space and let $(f_k)_k$ be a sequence of nonnegative and measurable functions on $\Omega.$ Moreover, let $(g_k)_k$ and $(h_k)_k$ be bounded sequences of real numbers such that $0\le g_k\le h_k.$ Assume that there exists $C>0$ such that
$$
\int \left(\sum_{m<k\le n}(f_k(x)-g_k)\right)^2d\mu_0(x)\le C\sum_{m<k\le n}h_k
$$
for $m<n.$ Then, for every $\eps>0$ we have
$$
\sum_{1\le k\le n}f_k(x)=\sum_{1\le k\le n}g_k+O(\Theta^{1/2}(n)\log^{3/2+\eps}\Theta(n)),
$$
for $\mu$ a.e. $x\in \Omega,$ where $\Theta(n)=\sum_{1\le k\le n}h_k.$
}, where we identify $\int v_{\sigma^k\om}d\mu_{\sigma^k\om,0}$ with the functions $g_k$ and $h_k$ in the previous footnote. So we have
\begin{align*}
S_{\om}&=\int \left[\sum_{m<k\le n}v_{\sigma^k\om}(T^k_{\om}y)\right]^2d\mu_{\om,0}(y)-\left[\sum_{m<k\le n}\int v_{\sigma^k\om}d\mu_{\sigma^k_{\om},0}\right]^2
\\
&=
\sum_{m<k\le n}\int v_{\sigma^k\om}(T^k_{\om}y)^2d\mu_{\om,0}+2\sum_{m<i<j\le n}\int v_{\sigma^i\om}(T^i_{\om}y)v_{\sigma^j\om}(T^j_{\om}y)d\mu_{\om,0}
\\
&\qquad-
\sum_{m<k\le n}\left[\int v_{\sigma^k\om}d\mu_{\sigma^k\om,0}\right]^2-2\sum_{m<i<j\le n}\int v_{\sigma^i\om}d\mu_{\sigma^i\om,0}\int v_{\sigma^j\om}d\mu_{\sigma^j\om,0}.
\end{align*}
We now discard the third negative piece and bound the first by H\"older inequality as
$$
\sum_{m<k\le n}\int v_{\sigma^k\om}(T^k_{\om}y)^2d\mu_{\om,0}\le \sum_{m<k\le n}M \int v_{\sigma^k\om}d\mu_{\sigma^k\om,0}.
$$
Then for the remaining two pieces we use the decay of correlations given in Theorem 2.21 of our paper \cite{AFGTV20}, where the observables are taken in $\mathcal{B}_{\om}$ and in $L^1(\mu_{\om,0}).$  We can apply it to our case since the two observables coincide with $v_{\om}$, and the measures $\mu_{\om,0}$ and $\nu_{\om,0}$ are equivalent. We point out that this result improves the decay bound given in item \eqref{item 6} of Theorem \ref{existence theorem} where the presence of holes for $\epsilon>0$ forced us to use $L^{\infty}(\nu_{\om,\eps})$ instead of $L^1(\nu_{\om,\eps}).$ 
Hence:
$$
\left|\int v_{\sigma^i\om}(T^i_{\om}y)v_{\sigma^j\om}(T^j_{\om}y)d\mu_{\om,0}-\int v_{\sigma^i\om}d\mu_{\sigma^i\om,0}\int v_{\sigma^j\om}d\mu_{\sigma^j\om,0}\right|\le
CM \kappa^{j-i}\int v_{\sigma^j\om}d\mu_{\sigma^j\om,0}.
$$
In conclusion, we get
\begin{align*}
S_{\om}&\le \sum_{m<k\le n}M \int v_{\sigma^k\om}d\mu_{\sigma^k\om,0}+C M\sum_{m<i<j\le n}\kappa^{j-i}\int v_{\sigma^j\om}d\mu_{\sigma^j\om,0}
\\
&\le
M\left(\frac{C}{1-\kappa}+1\right) \sum_{m<k\le n} \int v_{\sigma^k\om}d\mu_{\sigma^k\om,0},
\end{align*}
which satisfies Sprindzuk.
\end{proof}

\section{Examples}
\label{sec:examples}
%\ja{New intro needed}
%\gf{A lot of this first para can appear before all examples as general commentary. I will move it once Jason stabilises section 7.}
    %\ja{Add references to Limit Theorems to all examples}
In this section we present explicit examples of random dynamical systems to illustrate the quenched extremal index formula in Theorem \ref{evtthm}.
In all cases, 
%\remove{Proposition \ref{hve} can be applied to yield quenched hitting time statistics for the collection of random holes, and} 
Theorem \ref{existence theorem} can be applied to guarantee the existence of all objects therein for the perturbed random open system (in brief, the thermodynamic formalism for the closed random system implies a thermodynamic formalism for the perturbed random open system).
In Examples \hyperref[example1]{1}--\hyperref[example3]{3} we derive explicit expressions for the quenched extreme value laws. 
%\remove{and the quenched hitting time distributions.} %\ja{In \hyperref[example4]{4}....?}

%\ja{Need to check \eqref{E1}, \eqref{E9}, and \eqref{cond X} as well. Should hold easily for our examples.}
%\gf{also need to check M1, M2, CCM, A, B.}
Our examples are piecewise-monotonic interval map cocycles 
%whose transfer operators act on $\BV:=\BV([0,1])$.
%We will take $\mathcal{B}_\omega=\BV$ and $\mathcal{G}_\omega=L^1([0,1])$ for a.e.\ $\omega$.
%The norm on $\BV$ is $\|\cdot\|_{\BV_1}:=\var(\cdot)+\|\cdot\|_1$.
%Now we move away from Perron--Frobenius operator cocycles to transfer operator cocycles with general weights.
%We must verify conditions \eqref{C0}--\eqref{C7} in explicit examples of Lasota--Yorke map cocycles, 
whose transfer operators act on $\mathcal{B}_\omega=\BV:=\BV([0,1])$ for a.e.\ $\omega$.
The norm we will use is $\|\cdot\|_{\mathcal{B}_\omega}=\|\cdot\|_\BV:=\var(\cdot)+\nu_{\omega,0}(|\cdot|)$.
In Section \ref{sec: existence} we noted that (\ref{B}) is automatically satisfied.
Lemma \ref{DFGTV18alemma} shows that (\ref{CCM}) holds under assumptions (\ref{E2}), (\ref{E3}), (\ref{E4}), and (\ref{E8}).
To obtain a random open system, in addition to (\ref{B}) and (\ref{CCM}) we need (\ref{M}) (implying \eqref{M1} and  (\ref{M2})), (\ref{A}), and (\ref{cond X}).
Therefore, in each example we verify conditions (\ref{A}), (\ref{cond X}) (Section~\ref{sec: open systems})  and (\ref{M}) and (\ref{E1})--(\ref{E9}) (Section~\ref{sec: existence});  and where required (\ref{xibound}). %\eqref{C1'},\eqref{C2},\eqref{C3},\eqref{C4'},\eqref{C5'},\eqref{C7'},\eqref{C8}, as well as \eqref{xibound}, 
%\gf{Sandro, can you please add condition checks to your two examples as required to ensure all conditions in the previous sentence are checked?}\sv{done, with further comments}

    %\ja{Update examples to reference Prop \ref{prop check cond X} and Rem \ref{rem check cond X} or make appropriate blanket assumption}
\subsection{Example 1\xlabel[1]{example1}:  Random maps and random holes centred on a non-random fixed point (random observations with non-random maximum)}
%\gf{Ex 1 does not show M1, M2, CCM, A, B, X, E9.}
We show that nontrivial quenched extremal indices occur even for holes centred on a non-random fixed point $x_0$.
%Let a family of maps 
    %\gf{need to ensure condition \eqref{M}. Jason, do you have a natural base map on a countable space to replace the one below, or a natural way to make countable range for $T_\omega$?} \sv{I think one could make the choice as I did in Ex. 4.}
    To define a random (open) map, let $(\Om, m)$ be a complete probability space,
and $\sg:(\Om, m)\to (\Om, m)$ be an ergodic, invertible, probability-preserving transformation. For example, one could consider an irrational rotation on the circle. 
Let $\Om=\cup_{j=1}^\infty \Om_j$ be a countable partition of $\Om$ into measurable sets on which $\omega\mapsto T_\omega$ is constant. 
This ensures that \eqref{M} is satisfied.
For each $\om \in \Om$ let $\cJ_{\om,0} = [0,1]$ and let
$T_\omega:\mathcal{J}_{\omega,0}\to \mathcal{J}_{\sigma\omega,0}$ be random, with all maps fixing $x_0\in [0,1]$.
The observation functions $h_\omega:\mathcal{J}_{\omega,0}\to\mathbb{R}$ have a unique maximum at $x_0$ for a.e.\ $\omega$.

To make all of the objects and calculations as simple as possible and illustrate some of the underlying mechanisms for nontrivial extremal indices we use the following specific family of maps $\{T_\omega\}$:
\begin{equation}
	\label{eg1}
	T_\omega(x)=\left\{
	\begin{array}{ll}
		1-2x/(1-1/s_\omega), & \hbox{$0\le x\le (1-1/s_\omega)/2$;} \\
		s_\omega x-(s_\omega-1)/2, & \hbox{$(1-1/s_\omega)/2\le x\le (1+1/s_\omega)/2$;} \\ 1-(2x-(1+1/s_\omega))/(1-1/s_\omega)& \hbox{$(1+1/s_\omega)/2\le x\le 1$,}
	\end{array}
	\right.
\end{equation}
where $1<s\le s_\omega\le S<\infty$ and $s\rvert_{\Om_j}$ is constant for each $j\geq 1$;  thus (\ref{M1}) holds. 
These maps have three full linear branches, and therefore Lebesgue measure is preserved by all maps $T_\omega$;  i.e.\ $\mu_{\omega,0}=\mathrm{Leb}$ for a.e. $\omega$.
The central branch has slope $s_\omega$ and passes through the fixed point $x_0=1/2$, which lies at the centre of the central branch;  see Figure \ref{fig:map}.
\begin{figure}[hbt]
	\centering
	\includegraphics[width=6cm]{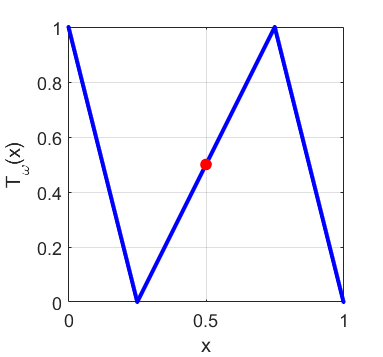}
	\caption{Graph of map $T_\omega$, with $s_\omega=2$.}\label{fig:map}
\end{figure}
A specific random driving could be $\sigma:S^1\to S^1$ given by $\sigma(\omega)=\omega+\alpha$ for some $\alpha\notin\mathbb{Q}$ and $s_\omega=s^0+s^1\cdot\omega$ for $1< s^0<\infty$ and $0<s^1<\infty$, but only the ergodicity of $\sigma$ will be important for us.
Setting $g_{\omega,0}=1/|T'_\omega|$ it is immediate that (\ref{E1}), (\ref{E2}), (\ref{E3}), and (\ref{E4}) hold. 
%Combining \eqref{E2} and \eqref{E3} we see that \eqref{C1'} holds. 

We select a measurable family of  observation functions $h:\Omega\times [0,1]\to\mathbb{R}$.
For a.e.\ $\omega\in\Omega$, $h_\omega$ is $C^1$, has a unique maximum at $x_0=1/2$,  and is a locally even function about $x_0$.
%The other branches have slopes so that they fill out linear onto branches.
%	In particular, if $s_\omega=3$, then all branch $T_\omega$ is $x\mapsto 3x\pmod 1$.
For small $\epsilon_N$ we will then have that $H_{\omega,\epsilon_N}=\{x\in [0,1]:h_\omega(x)>z_{\omega,N}\}$ is a small neighbourhood of $x_0$, satisfying (\ref{E5}) and (\ref{A}). 
    In light of Remark~\ref{rem check cond X} and Proposition \ref{prop check cond X} we see that \eqref{cond X} holds.
%and (\ref{cond X})
    %\sv{I do not see why (\ref{cond X}) is satisfied.}. 
The corresponding cocycle of open operators $\mathcal{L}_{\om,\ep}$ satisfy (\ref{M}).
Given an essentially bounded function $t$, the $z_{\omega,N}$ are chosen to satisfy (\ref{xibound}). %\jaadd{with $\xi_{\om,N}=\gamma_N$ independent of $\om$}.
Because the $h_\omega$ are $C^1$ with unique maxima and $\mu_{\omega,0}$ is Lebesgue for all $\omega$ we can make such a choice of $z_{\omega,N}$ with\footnote{In the situation where $z_{\omega,N}$ is constant a.e.\ for each $N\ge 1$, in general we cannot satisfy (\ref{xibound}) for a given fixed scaling function $t$.  In the case where $t$ is also constant a.e.\ if $h_\omega$ is random but sufficiently close to a non-random observation $h$ for each $\omega$, we can satisfy (\ref{xibound}) by incorporating the error term $\xi_{\omega,N}$. This situation corresponds to a decreasing family of holes whose length fluctuates slightly in $\omega$ with the fluctation decreasing to zero sufficiently fast.  A similar situation can occur if the scaling function is a general measurable function.  These situations motivate the use of the error term $\xi_{\omega,N}$ in condition (\ref{xibound}).} $\xi_{\omega,N}\equiv 0$. 
Furthermore, as $\mu_{\om,0}$ is Lebesgue we have that \eqref{xibound} implies that assumption \eqref{E6} is automatically satisfied.
With the above choices,  (\ref{E7}) is clearly satisfied for $n'=1$.
To show condition (\ref{E8}) with $n'=1$ we note that 
%is clearly implied by
%$\|g_{\omega,\epsilon}\|_\infty%<\lambda_{\omega,\epsilon}$ for a.e.\ $\omega$ and all small $\epsilon$.
%In our example, we have 
$\|g_{\omega,0}\|_\infty=(1-1/s_\omega)/2<(1-1/S)/2$, while
%By (\ref{EVTlamdiff2}), (\ref{xibound}) and Remark \ref{rem61}, we see that given $\delta>0$, for sufficiently small $\epsilon_N$ that for a.e.\ $\omega$ one has $\lambda_{\omega,\epsilon_N}\ge \lambda_{\omega,0}-\delta$, and that $\lambda_{\omega,0}
%\gf{following not right, but can use $\lambda_{\omega,\epsilon}\to 1$ for small $\epsilon$}
$\inf\mathcal{L}_{\omega,\ep}\ind\ge 2\cdot (1-1/s_\omega)/2\ge 1-1/s$.
Therefore we require $(1-1/S)/2<1-1/s$, which by elementary rearrangement is always true for $0<s<S<\infty$.
Because there is a full branch outside the branch with the hole, (\ref{E9}) is satisfied with $n'=1$ and $k_o(n')=1$.
%\gf{I would like to bring forward to somehowere around here the ``existence'' theorem application, because it does not need (C8) nor (S).  But it seems to control $\lambda$}

%and $h_\omega$ is $C^1$ for each $\omega$.
At this point we have checked all of the hypotheses of Theorem \ref{existence theorem} and we obtain that for sufficiently small holes as defined above, the corresponding random open dynamical system has a quenched thermodynamic formalism and quenched decay of correlations. 
We note that this result does not require (\ref{xibound});  the holes $H_{\epsilon_N}$ do not need to scale in any particular way with $N$, they simply need to be sufficiently small.
In order to next apply Theorem \ref{evtthm} 
%\remove{and Proposition \ref{hve}} 
we do require (\ref{xibound}) and additionally \eqref{C8}.

To finish the example, we verify (\ref{C8}).
We claim that for each fixed $k>0$, one has $\hat{q}_{\omega,\epsilon_N}^{(k)}=0$ for sufficiently sufficiently large $N$ (sufficiently small $\epsilon_N$).
If this were not the case, there must be a positive $\mu_{\omega,0}$-measure set of points that (i) lie in $H_{\sigma^{-k}\omega,\epsilon_N}$, (ii)  land outside the sequence of holes $H_{\sigma^{-(k-1)}\omega,\epsilon_N},\ldots,H_{\sigma^{-1}\omega,\epsilon_N}$ for the next $k-1$ iterations, and then (iii) land in $H_{\omega,\epsilon_N}$ on the $k^{\mathrm{th}}$ iteration.
In this example, because all $H_{\omega,\epsilon_N}$ are neighbourhoods of $x_0$ of diameter smaller than $(|t|_\infty+\gm_N)/N$, and the maps are locally expanding about $x_0$, for fixed $k$ one can find a large enough $N$ so that it is impossible to leave a small neighbourhood of $x_0$ and return after $k$ iterations.
For $k=0$, by definition \begin{equation}
	\label{q0}
	\hat{q}_{\omega,\epsilon_N}^{(0)}=\frac{\mu_{\sigma^{-1}\omega,0}(T^{-1}_{\sigma^{-1}\omega} H_{\omega,\epsilon_N}\cap H_{\sigma^{-1}\omega,\epsilon_N})}{\mu_{\sigma^{-1}\omega,0}(T^{-1}_{\sigma^{-1}\omega}H_{\omega,\epsilon_N})}=\frac{\mu_{\sigma^{-1}\omega,0}(T^{-1}_{\sigma^{-1}\omega} H_{\omega,\epsilon_N}\cap H_{\sigma^{-1}\omega,\epsilon_N})}{\mu_{\omega,0}(H_{\omega,\epsilon_N})}.
\end{equation}
There are two cases to consider:

\textit{Case 1:} $H_{\sigma^{-1}\omega,\epsilon_N}$ is larger than the local preimage of $H_{\omega,\epsilon_N}$;  that is, $T^{-1}_{\sigma^{-1}\omega} H_{\omega,\epsilon_N}\cap H_{\omega,\epsilon_N}\subset  H_{\sigma^{-1}\omega,\epsilon_N}$. Because of the linearity of the branch containing $x_0$, one has $\hat{q}_{\omega,\epsilon_N}^{(0)}=1/T'_{\sigma^{-1}\omega}(x_0)$.

\textit{Case 2:} $H_{\sigma^{-1}\omega,\epsilon_N}$ is smaller than the local preimage of $H_{\omega,\epsilon_N}$;  that is, $T^{-1}_{\sigma^{-1}\omega} H_{\omega,\epsilon_N}\cap H_{\omega,\epsilon_N}\supset  H_{\sigma^{-1}\omega,\epsilon_N}$.
By (\ref{q0}) and (\ref{xibound}) we then have $$\frac{t_{\sigma^{-1}\omega}-\gm_N}{t_\omega+\gm_N}\le \hat{q}_{\omega,\epsilon_N}^{(0)}\le \frac{t_{\sigma^{-1}\omega}+\gm_N}{t_\omega-\gm_N}$$ and thus for such an $\omega$, $\lim_{N\to\infty}\hat{q}_{\omega,\epsilon_N}^{(0)}=t_{\sigma^{-1}\omega}/t_{\omega}$.
%\gf{Jason, did you need $t$ is uniformly bounded below in the recent theory additions?  Note that we do not need here that $t_\omega$ is uniformly bounded below since we take the limit for each $\omega$ separately.  Also below when we integrate, because of the $\min$, it doesn't matter if the ratio $t_{\sigma^{-1}\omega}/t_{\omega}$ blows up.}\ja{No I only every used that $t$ is bounded above. }

%\begin{eqnarray*} \lefteqn{\left|q_{\omega,\epsilon_N}^{(0)}-\min\left\{\frac{t_{\sigma^{-1}\omega}}{t_\omega},\frac{1}{|T'_{\sigma^{-1}\omega}(x_0)|}\right\}\right|}\\
%&=&\left|\frac{\mu_{\sigma^{-1}\omega,0}(T^{-1}_{\sigma^{-1}\omega} H_{\omega,\epsilon_N}\cap H_{\sigma^{-1}\omega,\epsilon})}{\mu_{\omega,0}(H_{\omega,\epsilon_N})}-\min\left\{\frac{t_{\sigma^{-1}\omega}}{t_\omega},\frac{1}{|T'_{\sigma^{-1}\omega}(x_0)|}\right\}\right| \\
%&=&\frac{\min\left\{t_{\sigma^{-1}\omega},\frac{t_\omega}{|T'_{\sigma^{-1}\omega}(x_0)|}\right\}}{t_\omega}\right|\\ &\le&\min\left\{\frac{t_{\sigma^{-1}\omega}}{t_\omega},\frac{1}{|T'_{\sigma^{-1}\omega}(x_0)|}\right\}+o(1/N),
%\end{eqnarray*}
%where the $o(1/N)$ arises from the $t_\omega$ terms
%and from estimating the length of local preimages of small intervals using $T'$.
%\todo{Note to self:  The convergence above for the $q$ is not uniform in $\omega$, unlike pretty much everything else.}
Thus, combining the two cases,
$$\hat{q}^{(0)}_{\omega,0}:=\lim_{N\to\infty}\hat{q}^{(0)}_{\omega,\epsilon_N}=\min\left\{\frac{t_{\sigma^{-1}\omega}}{t_\omega},\frac{1}{|T'_{\sigma^{-1}\omega}(x_0)|}\right\}$$ exists for a.e.\ $\omega$, verifying $\eqref{C8}$.

Recalling that $\theta_{\omega,0}=1-\sum_{k=0}^\infty \hat{q}^{(k)}_{\omega,0}=1-\hat{q}^{(0)}_{\omega,0}$, we may now apply Theorem \ref{evtthm} to obtain the quenched extreme value law:
\begin{eqnarray}
	\nonumber
	\lim_{N\to\infty}\nu_{\om,0}\left(X_{\om,N-1,\ep_N}\right)
	&	=&
	\exp\left(-\int_\Om t_\om\ta_{\om,0}\, dm(\om)\right)\\
	\label{evtlaweg1}&=&\exp\left(-\int_\Om t_\omega\left(1- \min\left\{\frac{t_{\sigma^{-1}\omega}}{t_\omega},\frac{1}{|T'_{\sigma^{-1}\omega}(x_0)|}\right\}\right)\, dm(\om)\right)
\end{eqnarray}

This formula is a generalisation of the formula in Remark 8 \cite{keller_rare_2012}, where we may create nontrivial laws from either the random dynamics $T_\omega$, or the random scalings $t_\omega$, or both.
The following two special cases consider these mechanisms separately.
\begin{enumerate}
	\item \textit{Random maps, non-random scaling ($t_\omega$ takes a constant value $t>0$):}  Since $|T_\omega'|>1$, in this case (\ref{evtlaweg1}) becomes
	\begin{equation}
		\label{fixedscale}
		\lim_{N\to\infty}\nu_{\om,0}\left(X_{\om,N-1,\ep_N}\right)=\exp\left(-t\left[1-\int_\Omega \frac{1}{|T_\omega'(x_0)|}\ dm(\omega)\right]\right),
	\end{equation}
	and we see that we can interpret $\theta=1-\int_\Omega \frac{1}{|T_\omega'(x_0)|}\ dm(\omega)$ as an extremal index.
	
	\item \textit{Fixed map, random scaling:}
	Suppose $T_\omega\equiv T$;  then we may replace $T_\omega'(x_0)$ with $T'(x_0)$ in (\ref{evtlaweg1}),  
	%becomes
	%$$\theta=1-\int_\Om\min\left\{\frac{t_{\sigma^{-1}\omega}}{t_\omega},\frac{1}{|T'(x_0)|}\right\}\ dm(\omega),$$
	and we see we the extremal index  depends on the choice of random scalings $t_\omega$ alone;  of course the thresholds $z_{\omega,N}$ depend on the chosen $t_\omega$.
\end{enumerate}

Similar results could be obtained with the $T_\omega$ possessing nonlinear branches.
The arguments above can also be extended to the case where $x_0$ is a periodic point of prime period $p$ for all maps;   the formula (\ref{evtlaweg1}) now includes $(T_\omega^p)'(x_0)$ and $t_\omega,   t_{\sigma^{1}\omega},\ldots,t_{\sigma^{-(p-1)}\omega}$.
If the scaling $t_\omega=t$ is non-random, one would simply replace $T'_\omega(x_0)$ in (\ref{fixedscale}) with $(T_\omega^p)'(x_0)$.
%\remove{Furthermore, by Proposition \ref{hve}, in each of the above situations, the limiting quenched random first hitting times (the LHS of (\ref{eqhit})) equal the corresponding expressions above.}

We recall that for deterministic $T$ (including non-uniformly hyperbolic maps), %\gf{Sandro, citation?} \sv{ \cite{book} and references therein}
the extremal index enjoys a dichotomy, in the sense that it is equal to $1$ when a single hole $H_{\ep_N}$ shrinks to an aperiodic point, and it is strictly smaller than $1$ when the hole shrinks to a periodic point.
In the latter case the extremal index can be expressed in terms of the period; see \cite{book} for a general account of the above facts. 
Example \hyperref[example1]{1} shows that in a simple random setting there are many more ways to obtain nontrivial exponential limit laws, e.g.\ by random scaling, or by the existence of periodic orbits for only a positive measure set of $\omega$.
%\gf{add short text at the end of each example to reinforce that we can apply the hitting time and existence results.}

\subsection{Example 2\xlabel[2]{example2}: Random $\beta$-maps, random holes containing a non-random fixed point (random observations with non-random maximum), general geometric potential}
We show how a nontrivial extremal index can arise from random $\beta$-maps where statistics are generated by an equilibrium state of a general geometric potential.
%\gf{There are notation clashes with $\gm_N$ used previously and $\beta_\omega$ here.  Also the $t$ in $1/|T_\omega'|^r$ and the scaling function $t$.}
%\flag{GF to see \texttt{submitted-CMRLY.pdf} for notes on uniform versions of uniform hypotheses.}
%Following the proof of Lemma \ref{harrylemma} we require a one-dimensional hyperbolic splitting.
%In the general transfer operator setting, assuming all of the hypotheses in \cite{atnip_thermodynamic_2020}, Corollary 8.3 and Proposition 8.10 \cite{atnip_thermodynamic_2020} yield a unique measurable family of equivariant functions $\{\varphi_{\omega,0}\}$ and measurable functions  $u,U:\Omega\to(0,\infty)$ satisfying (i) $\|\varphi_{\omega,0}\|_{\BV}<\infty$, (ii) $\nu_{\omega,0}(\varphi_{\omega,0})=1$, and $u(\omega)\le
%This is the basis of our one-dimensional splitting, however, we need uniform-in-$\omega$ estimates for several objects.
Consider the ``no short branches'' random $\beta$-map example of Section 13.2 \cite{AFGTV20}, where $\beta_\omega\in \{2\}\cup\bigcup_{2\le k\le K}[k+\delta,k+1]$ for a.e.\ $\omega$, and some $\delta>0$ and $K<\infty$.
The measurability of $\omega\mapsto\beta_\omega$ yields (\ref{M1}).
We use the weight $g_\omega=1/|T'_\omega|^r$, $r\geq 0$.
%We additionally assume
%\footnote{In CMRLY we assumed that $\essinf_\omega\lfloor\beta_\omega\rfloor>3$.  This was because we needed simple one-step estimates in the Lasota--Yorke inequality.  However, here we don't care about one-step estimates, so I think this 3 can be relaxed back to 1.}  
%that $\essinf_\omega\lfloor\beta_\omega\rfloor\ge 2=\beta_-$ and 
%$\esssup_\omega\beta_\omega=\beta_+<\infty$.
To obtain (\ref{M}) the base dynamics is driven by an ergodic homeomorphism $\sigma$ on a Borel subset $\Omega$ of a separable, complete metric space, assuming that $\omega\mapsto \mathcal{
	L}_{\omega,0}$ has countable range, as in Example~\ref{example1}.
Our random observation function $h:\Omega\times [0,1]\to \mathbb{R}$ is measurable and for a.e.\ $\omega$, $h_\omega$ is $C^1$ with a unique maximum at $x=0$.
We select a measurable scaling function $t_\omega$;  either or both of $h_\omega$ and $t_\omega$ could be $\omega$-independent.
By assigning thresholds $z_{\omega,N}$ (which could also be $\omega$-independent) we obtain a decreasing family of holes $H_{\om,\ep_N}$, which are decreasing intervals with left endpoint at $x=0$.

Clearly (\ref{E2}) and (\ref{E3}) hold and Lemma 13.5 \cite{AFGTV20} provides (\ref{E4}).
Conditions (\ref{E5}) and (\ref{E6}), and (\ref{A}) are immediate, as is (\ref{E7}) with $n'=1$.	
Condition (\ref{EX}) holds because there is at least one full branch outside the branch with the hole.
%$D(T_\omega)\le \lceil\beta_\omega\rceil\le \beta_{\omega}^{1-t}$ and $g_{\omega,0}=|T_\omega'|^{-r}=\beta_\omega^{-r}$.
%	Therefore, (\ref{E2}) is satisfied for all $r\geq 0$.
%	Similarly, (\ref{E3}) holds.
Regarding (\ref{E8}), %\gf{fix this as per previous example, noting the estimate below works for the closed system, still choosing $\epsilon$ suff small}
$\|g_{\omega,\epsilon}\|_\infty\le \beta_\omega^{-r}$ and arguing as in the previous example, 
%given $\delta>0$, for sufficiently small $\epsilon_N$, one has  $\lambda_{\omega,\epsilon}\ge \lambda_{\omega,0}-\delta$ and $\lambda_{\omega,0}\ge
since $\inf\mathcal{L}_{\omega,\epsilon}\ind\ge \lfloor\beta_\omega\rfloor/\beta_\omega^r$, we see (\ref{E8}) holds with $n'=1$ because $\beta_\omega\ge 2$.
Because there is at least one full branch outside the branch with the hole, (\ref{E9}) holds with $n'=1$ and $k_o(n')=1$.
We have now checked all of the hypotheses of Theorem \ref{existence theorem}.
By this theorem, for sufficiently small holes $H_\epsilon$, the corresponding random open dynamical system has a quenched thermodynamic formalism and quenched decay of correlations. 
As in the previous example, we emphasise that this result does not require (\ref{xibound}).

Because $\varphi_{\omega,0}$ is uniformly bounded above and below, as long as $z_{\omega,N}$ is random, we may adjust $z_{\omega,N}$ to obtain (\ref{xibound}).
We now verify (\ref{C8}) for our holes, which are of the form $H_{\omega,\epsilon_N}=[0,r_{\omega,N}]$, 
    %\cgt{open interval? so that we can apply Prop. \ref{prop surv nonemp}. we also need to recast as maps on the circle}
    in preparation to apply Theorem \ref{evtthm}. 
    %\remove{and Proposition \ref{hve}.}
%\gf{in this situation it is harder to control the $\mu_{\omega,0}$ measure of the holes than say the Leb measure of the holes. Lemma \ref{harrylemma2} uses (\ref{xibound}) in two places:  to obtain $\lambda_{\omega,\epsilon}$ is uniformly bounded below, and to show $\esssup_\omega Leb(H_{\omega,\epsilon})$ goes to zero. The latter could of course be obtained more easily directly.  Do we think it is easier to assume something about $Leb(H_{\omega,\epsilon})$ or $\mu_{\omega,0}(H_{\omega,\epsilon})$?  Then we would need a replacement argument for $\lambda_{\omega,\epsilon}$.} 
%Finally, we need to address (\ref{C8}) 
The same arguments from Case 1 and Case 2 of the previous example apply.
Case 2 is unchanged.
In Case 1 we have
%\gf{Jason, I switched the fiber on the denominator below from $\omega$ to $\sigma^{-1}\omega$ because I think it is too hard to compare $\nu_{\omega,0}$ with $\nu_{\sigma^{-1}\omega,0}$...if you have an idea, let me know;  what we have now is a little different to KL because the derivative $\beta_{\omega}$ no longer shows up as it does in the KL formulae...but they didn't have different fibers to contend with.}
%\gf{Emphasise we are in case 1 and the consequences of that regarding existence of limit.}
$$\hat{q}^{(0)}_{\omega,\epsilon_N}=\frac{\mu_{\sigma^{-1}\omega,0}(T^{-1}_{\sigma^{-1}\omega} H_{\omega,\epsilon_N}\cap H_{\sigma^{-1}\omega,\epsilon_N})}{\mu_{\sigma^{-1}\omega,0}(T^{-1}_{\sigma^{-1}\omega}H_{\omega,\epsilon_N})}.$$
Because $T^{-1}_{\sigma^{-1}\omega} H_{\omega,\epsilon_N}$ is a finite union of left-closed intervals, using the fact that $\varphi_{\sigma^{-1}\omega,0}\in BV$, and therefore has left- and right-hand limits everywhere, we may redefine $\varphi_{\sigma^{-1}\omega,0}(y)$ at the finite collection of points $y\in T^{-1}_{\sigma^{-1}\omega}(0)$ so that 
$$\lim_{N\to\infty}\frac{\mu_{\sigma^{-1}\omega,0}(T^{-1}_{\sigma^{-1}\omega} H_{\omega,\epsilon_N}\cap H_{\sigma^{-1}\omega,\epsilon_N})}{\mu_{\sigma^{-1}\omega,0}(T^{-1}_{\sigma^{-1}\omega}H_{\omega,\epsilon_N})}=\frac{\varphi_{\sigma^{-1}\omega,0}(0)}{\sum_{y\in T^{-1}_{\sigma^{-1}\omega}(0)} \varphi_{\sigma^{-1}\omega,0}(y)},$$
recalling that in Case 1, 
$T^{-1}_{\sigma^{-1}\omega} H_{\omega,\epsilon_N}\subset H_{\sigma^{-1}\omega,\epsilon_N}$.
Taking the minimum of Cases 1 and 2 as in Example 1, we see that
%\gf{From here to the next two display equations is not right because we need to compare $\nu_{\omega,0}$ and $\nu_{\sigma^{-1}\omega,0}$, which seems difficult
%By \eqref{C5'} and \eqref{C7'}, 
$\hat{q}_{\omega,0}^{(0)}=\lim_{N\to\infty} \hat{q}_{\omega,\epsilon_N}^{(0)}$ exists, which verifies \eqref{C8}.
%Denoting  $$\ell_{\omega}=\lim_{N\to\infty}\frac{\mu_{\omega,0}(H_{\omega,\epsilon_N})}{\mu_{\omega,0}(H_{\omega,\epsilon_N})},$$
%If $\varphi_{\sigma^{-1}\omega,0} $ is continuous at the points in $T^{-1}_{\sigma^{-1}\omega}(0)$, then
%$$\hat{q}_{\omega,0}^{(0)}=\frac{\varphi_{\sigma^{-1}\omega,0}(0)}{\sum_{y\in T^{-1}_{\sigma^{-1}\omega}(0)} \varphi_{\sigma^{-1}\omega,0}(y)},$$
%$\ell_\omega=\varphi_{\sigma^{-1}\omega,0}/\varphi_{\omega,0}$, 
%but for general ergodic driving and general measurable maps $\omega\mapsto\beta_\omega$, we cannot ensure this.
%is continuous \gf{We may have a problem here. If we think about $\tilde{\mathcal{L}}^n_{\sigma^{-n}\omega,0}\ind$ converging to $\varphi_{\omega,0}$, it is possible for the discontinuities introduced by the last $\beta$ map branch to propagate toward $x_0=0$ arbitrarily close to $x_0=0$ for certain orbits of $\omega$. We don't have a problem with the limit $\lim_{N\to\infty}\mu_{\omega,0}(H_{\omega,\epsilon_N})$ existing, but the limit may not be $\varphi_{\omega,0}(0)$.} at $x_0=0$ for a.e.\ $\omega$ we obtain
%\ja{Above we only need that $\phi_0$ is right continuous and not (fully) continuous. As Cecilia  pointed out in our meeting, since $\phi_0\in\BV$, left and right hand limits exist at every point. Since there are only finitely many elements of $T_{\sg^{-1}\om}^{-1}$ we can ensure that $\phi_0$ is right continuous at these points by redefining $\phi_0$ at these points.}
Finally,
\begin{equation}
	\label{evtlaweg2}
	\lim_{N\to\infty}\nu_{\om,0}\left(X_{\om,N-1,\ep_N}\right)
	=\exp\left(-\int_\Om t_\omega\left(1- \min\left\{\frac{t_{\sigma^{-1}\omega}}{t_\omega},\frac{\varphi_{\sigma^{-1}\omega,0}(0)}{\sum_{y\in T^{-1}_{\sigma^{-1}\omega}(0)} \varphi_{\sigma^{-1}\omega,0}(y)}\right\}\right)\, dm(\om)\right)
\end{equation}
%\remove{Further, by Proposition \ref{hve}, the limiting quenched first random hitting times (the LHS of (\ref{eqhit}) equal the RHS of (\ref{evtlaweg2}). }

\subsection{Example 3\xlabel[3]{example3}: A fixed map with random holes containing a fixed point (random observations with a non-random maximum)}
%\sv{ I will elaborate an example which  I gave in the annealed case; it could probably suggest situations where the EI could not exists.}

Let us now consider more closely the case of a fixed map and holes moving randomly around a fixed point $z$, a situation previously considered in \cite{bahsoun-vaienti-2013} in the annealed framework. 
%In the latter case the holes where chosen in an i.i.d. way with probability distribution $m$ and the underlying probability for the skew-product process was the direct product of $m$ with the invariant  measure for the fixed map.\gf{I miss the punchline here...why do we mention \cite{bahsoun-vaienti-2013}?  where is the comparison to the results we obtain below?  do our results generalise \cite{bahsoun-vaienti-2013} and if so, how?}\sv{In [6] the annealed case was considered, here we study the quenched one}
    %\ja{We need to assume that $T$ and the holes satisfies the assumptions of Prop \ref{prop surv nonemp}.}
    %\sv{I think we have really to assume it, I do not see how to get it for this example.}

We take a piecewise uniformly expanding map $T$ of the unit interval $I$ of class $C^2$, and such that $T$ is surjective on the domains of local injectivity. Moreover $T$  preserves a mixing measure $\mu$ equivalent to the Lebesgue measure $\text{Leb},$ with strictly positive density $\rho$.  
We moreover assume that $T$ and $\rho$ are continuous at a fixed point $x_0$. 
%another  assumption  will be added in a moment. 
The observation functions $h_\omega$ have a common maximum at $x_0$, leading to holes $H_{\omega,\epsilon_N}$ that are closed intervals  containing the point $x_0$ for any $N$.
In Example \hyperref[example1]{1} the holes were centred on $x_0$ but could vary dramatically in diameter between $\omega$-fibers.
In this example the holes need not be centred on $x_0$ but must become more identical as they shrink.
Specifically, we assume
%\footnote{The fact that this condition is expressed in terms  of the Lebesgue measure will  become clear at the end of the section.} %\footnote{Notice that this condition is  clearly violated in the previous example 1, where the ratio (\ref{rf}) for $p=1$ converges to $\frac{t_{\sigma^{-1}\omega}}{t_{\omega}}$ when $N\rightarrow \infty.$ In this case we in fact get $\theta_{\omega,0}=1-\frac{t_{\sigma^{-1}\omega}}{t_{\omega}}.$}
\begin{equation}\label{rf}
	\sup_k\frac{\text{Leb}(H_{\omega,\epsilon_N}\Delta H_{\sigma^{-k}\omega,\epsilon_N})}{\text{Leb}(H_{\omega,\epsilon_N})}\rightarrow 0, \ N\rightarrow \infty,
\end{equation} 
where the use of Lebesgue is for simplicity.
%\ja{Should the numerator be $\Leb(T_{\sigma^{-k}\omega}^{-k}H_{\omega,\epsilon_N}\Delta H_{\sigma^{-k}\omega,\epsilon_N})$?}
%satisfy an asymptotic 

Since the sample measures $\mu_{\omega,0}$ coincide with $\mu,$ 
%\gf{Sandro, you said only equivalent above, not coincident.} \sv{Is different. Above we declared the invariant measure for $T,$ now we are saying that the sample measures are equal to $\mu$.}
we may easily verify  condition (\ref{xibound}) by choosing conveniently the size of the hole $H_{\omega,\epsilon_N}$.  
    Moreover, by choosing the holes to be contained entirely within exactly one interval of monotonicity, we see that \eqref{EX} holds, and thus \eqref{cond X} holds via Remark \ref{rem check cond X} and Proposition \ref{prop check cond X}. 
Since the weights $g_\omega$ are nonrandom and equal to $1/|T'|,$ conditions (\ref{E1}) to (\ref{E7}) clearly hold. 
    Since $T$ is continuous in $x_0,$, the holes will belong to one branch of $T.$  Therefore if $D(T)$ will denote the number of branches of $T$ and $\lambda_m:=\min_{I} |T'|,$ $\lambda_M:=\max_{I} |T'|,$ it will be enough to have $D(T)-1>\frac{\lambda_M}{\lambda_m}$ in order to satisfy (\ref{E9}) with $n'=1.$ Moreover, still keeping $n'=1$ and since we have finitely many branches, condition (\ref{E9a}) in Remark \ref{Alt E9 Remark} is satisfied with $k_o(n')=1$.
We may now apply Theorem \ref{existence theorem} to obtain a quenched thermodynamical formalism for our fixed map with sufficiently small random holes.

Whenever the point $x_0$ is not periodic, one obtains that all the $\hat{q}^{(k)}_{\om,\epsilon_N}$ are zero by repeating the argument given in Example \hyperref[example1]{1} for $k\ge 0$ and using the continuity of $T$ at $x_0$. 
We now take $x_0$ as a periodic point of minimal period $p$ and we assume that $T^p$ and $\rho$ are continuous at $x_0$.
We now begin to evaluate 
%\gf{Leb should be $\mu_{\omega,0}$ and $\mu_{\sigma^{-p}\omega,0}$ below. Generally, in this example, I don't see where the scaling $t$ enters.  Is this example essentially the periodic extension of the previous example discussed there, except here we have some invariant density $\rho$?}
%\sv{In this example we have only one measure $\mu$ but since it's equivalent to $\text{Leb}$ I used $\text{Leb}$ to control the diameters. The scaling $t$ enter as in the previous example. The attempt here was to get non-symmetric holes with the possibility to prove the non-existence of the EI.In fact I should add a sufficient condition in order to get the result. But we could eventually remove it. }
\begin{equation}\label{fed}
	\hat{q}_{\omega,\epsilon_N}^{(p-1)}=\frac{\mu(T^{-p} H_{\omega,\epsilon_N}\cap H_{\sigma^{-p}\omega,\epsilon_N})}{\mu(H_{\omega,\epsilon_N})}. 
\end{equation}
Since $T^p$ is continuous and expanding in the neighborhood of  $x_0$, by taking $N$ large enough, the set $T^{-p} H_{\omega,\epsilon_N}\cap H_{\sigma^{-p}\omega,\epsilon_N}$ 
has only one connected component.
Denote the local branch of $T$ through $x_0$ by $T_{x_0}$.
%of $T^{-p} H_{\omega,\epsilon_N},$ say $H_{\omega,\epsilon_N,p}:=T_{x_0}^{-p} H_{\omega,\epsilon_N},$ which contains $x_0$
%and intersects $H_{\sigma^{-p}\omega,\epsilon_N}$.  
Therefore by a local change of variable  we have for the upper bound of the numerator 
\begin{eqnarray}
	\label{eg3eq1}
	\mu(T^{-p} H_{\omega,\epsilon_N}\cap H_{\sigma^{-p}\omega,\epsilon_N})&=& \int_{T^p(T^{-p} H_{\omega,\epsilon_N}\cap H_{\sigma^{-p}\omega,\epsilon_N})}\rho(T^{-p}_{x_0}y)|DT^p(T^{-p}_{x_0}y)|^{-1}d\text{Leb}(y)\\
	%\int_{H_{\omega,\epsilon_N}\cap T^pH_{\sigma^{-p}\omega,\epsilon_N}}\rho(T^{-p}_{x_0}y)|DT^p(T^{-p}_{x_0}y)|^{-1}d\text{Leb}(y)
	\nonumber &\le& \sup_{H_{\omega,\epsilon_N}}|DT^p|^{-1}\sup_{H_{\omega,\epsilon_N}}\rho\ \text{Leb}(H_{\omega,\epsilon_N}).
\end{eqnarray}
%\gf{Sandro, I cleaned up the change of variable argument above.  But why is the change of variables necessary? we can simply bound $\mu(T^{-p} H_{\omega,\epsilon_N}\cap H_{\sigma^{-p}\omega,\epsilon_N})\le \mu(H_{\omega,\epsilon_N})$ by $T$-invariance.} 
For the lower bound of the numerator, since $T^p$ is locally expanding, $T^p(H_{\sigma^{-p}\omega,\epsilon_N})\supset H_{\sigma^{-p}\omega,\epsilon_N}$, and by (\ref{eg3eq1})
\begin{align*}
	&\mu(T^{-p} H_{\omega,\epsilon_N}\cap H_{\sigma^{-p}\omega,\epsilon_N})
	%=\int_{H_{\omega,\epsilon_N}\cap T^pH_{\sigma^{-p}\omega,\epsilon_N}}\rho(T^{-p}_{x_0}y)|DT^p(T^{-p}_{x_0}y)|^{-1}d\text{Leb}(y)\ge$$
	\ge\int_{H_{\omega,\epsilon_N}\cap H_{\sigma^{-p}\omega,\epsilon_N}}\rho(T^{-p}_{x_0}y)|DT^p(T^{-p}_{x_0}y))|^{-1}d\text{Leb}(y)
	\\
	& \quad
	\ge\inf_{H_{\omega,\epsilon_N}}|DT^p|^{-1}\inf_{H_{\omega,\epsilon_N}}\rho\ \text{Leb}(H_{\omega,\epsilon_N})-\int_{H_{\omega,\epsilon_N}\setminus H_{\sigma^{-p}\omega,\epsilon_N}}\rho(T^{-p}_{x_0}y)|DT^p(T^{-p}_{x_0}y)|^{-1}d\text{Leb}(y)    
\end{align*}

We can bound the second negative term as
$$
\int_{H_{\omega,\epsilon_N}\setminus H_{\sigma^{-p}\omega,\epsilon_N}}\rho(T^{-p}_{x_0}y)|DT^p(T^{-p}_{x_0}y)|^{-1}(y)d\text{Leb}(y)\le \sup_{H_{\omega,\epsilon_N}}\rho \  \sup_{H_{\omega,\epsilon_N}}|DT^p|^{-1}\ \text{Leb}(H_{\omega,\epsilon_N}\Delta H_{\sigma^{-p}\omega,\epsilon_N}).
$$

%\ja{Should the right-hand side be $\Leb(T_{\sigma^{-p}\omega}^{-p}H_{\omega,\epsilon_N}\Delta H_{\sigma^{-p}\omega,\epsilon_N})$?}
%\sv{I think it's correct}

We are now in a position to verify the existence of the limit $\lim_{\epsilon_N\to 0} \hat{q}_{\omega,\epsilon_N}^{(p-1)}$.
Using the above bounds we have 
$$ \frac{\inf_{H_{\omega,\epsilon_N}}|DT^p|^{-1}\inf_{H_{\omega,\epsilon_N}}\rho\ \text{Leb}(H_{\omega,\epsilon_N})}{\sup_{H_{\omega,\epsilon_N}}\rho\text{Leb}(H_{\omega,\epsilon_N})}-\frac{\  \sup_{H_{\omega,\epsilon_N}}|DT^p|^{-1}\ \text{Leb}(H_{\omega,\epsilon_N}\Delta H_{\sigma^{-p}\omega,\epsilon_N})}{\text{Leb}(H_{\omega,\epsilon_N})}
$$
$$
\le \hat{q}_{\omega,\epsilon_N}^{(p-1)}\le \frac{\sup_{H_{\omega,\epsilon_N}}|DT^p|^{-1}\sup_{H_{\omega,\epsilon_N}}\rho\ \text{Leb}(H_{\omega,\epsilon_N})}{\inf_{H_{\omega,\epsilon_N}}\rho\text{Leb}(H_{\omega,\epsilon_N})}.$$

Since the map $T^p$ and the density $\rho$  are continuous at ${x_0},$  and by using the assumption (\ref{rf}), we will finally obtain the $\omega$-independent value
$$
\theta_{\omega, 0}=1-\frac{1}{|DT^p({x_0})|}.
$$
Theorem \ref{evtthm}  may now be applied to obtain the  quenched extreme value law. 

\subsection{Example 4\xlabel[4]{example4}: Random maps  with random holes}

%\gf{Sandro is going to check his example and see if he can compute expectatoins of $q^{(k)}_\omega$ for low $k$.  Then we maybe can show the extremal index is nontrivial }

%\sv{add your example here }\\
% It has been pointed out in \cite{Caby_et_al_2020} that for annealed random systems with lack  of periodicity,  the extremal index could be smaller than one.
We saw in Examples \hyperref[example1]{1} and \hyperref[example3]{3} that an extremal index less than one could be obtained for observables reaching their maximum around a point which was periodic for all the random maps, or, for a fixed map, when the holes shrink around the periodic point.
We now produce an example where periodicity is not responsible for getting an extremal index less than one. 
This example is the quenched version of the annealed cases investigated  in sections 4.1.2 and  4.2.1 of the paper \cite{Caby_et_al_2020}.
%\gf{I am confused about what exactly our contribution is here.  The opening sentence above says that Caby pointed out when you lack periodicity in the annealed random case you can get extremal indices smaller than one. But our italicised text above just says the same thing.  So what is new in our example?}
%\sv{Our contribution is having done the quenched case in the context of fiberd systems}

%$T$ preserves the Lebesgue measure $\text{Leb}.$
Let $\Omega=\{0,\dots,l-1\}^{\mathbb{Z}},$ with $\sigma$ the bilateral shift map, and $m$ an invariant ergodic measure.
%a particular case will be considered at the end of the section. 
To each letter $j=0,\dots,l-1$ we associate a point $v(j)$ in the unit circle  $S^1$ and we consider the well-known observable in the extreme value theory literature:
$$
h_{\omega}(x)=-\log|x-v(\omega_0)|, \ x\in S
$$
where $\omega_0$ denotes the $0$-th  coordinate of $\omega\in \Omega.$

In this case the hole $H_{\omega, \epsilon_N}$ will be the ball $B(v(\omega_0), e^{-z_N(\om)}),$ of center $v(\omega_0)$ and radius $e^{-z_N(\om)};$.
For each $\omega\in \Omega$ we associate a map $T_{\omega_0},$ where $T_0,\dots,T_{l-1}$ are maps  of the circle which we will take as $\beta$-maps of the form\footnote{The reason for this choice is that,  in order to compute the quantities  $\hat{q}_{\omega,0}^{(k)}$, we have to follow the itinerary of the points $v(i)$ under the composition of the maps $T_i$ and compare with their predecessors.  
	As it will be clear in the computation below for $k=0$ and $1,$ 
	the task will be relatively easy
	and generalisable to any $k > 0$, if all the maps are at least $C^1$ which in particular means that the image of the point $T_i(v_j), i,j=0,\dots,l-1$ is not a discontinuous point of the $T_i, i=0,\dots, l-1,$. 
	It will also be true that all the maps $T_i, 0=1,\dots, l-1$ are differentiable with bounded derivative in small neighbourhoods of any $v_i, i=0,\dots, l-1$.
	%\gf{Sandro, I edited previous 2 sentences, please check.}
	If these conditions are relaxed, it could be that the  limit defining the  $\hat{q}_{\omega,0}^{(k)}$ for some $k$ is not immediately computable, we defer to section 3.3 and Proposition 3.4 in \cite{AFV15} for a detailed discussion of the computation of the extremal index in presence of discontinuities. Another advantage of our choice is that, as in Example \hyperref[example1]{1}, all the sample measure $\mu_{\om,0}$ are equivalent to Lebesgue, $\text{Leb}.$} $T_i(x)=\beta_ix+r_i\pmod 1$, with $\beta_i\in \mathbb{N}$, $\bt_i\geq 3$, and $0\le r_i<1$. Since the range of $\om\mapsto \mathcal{L}_{\omega,0}$ is finite and the shift is a homeomorphism on $\Omega$ with respect to the usual metrics for $\Omega$,  assumption (\ref{M}) is verified.
Since the potential is equal to $1/|T_\om'|,$ conditions (\ref{E1}) to (\ref{E7}) clearly hold.
    Condition (\ref{E8}) holds as in Example \hyperref[example1]{1}, which uses similar piecewise linear expanding maps. Condition (\ref{E9}) is a consequence of  the fact that we have finitely many maps each of which is full branches; it will be therefore enough to invoke (\ref{E9a}) with $n'=k_o(n')=1.$
    %\sv{Even for this example I do not know how to get condition X. Should we assume it?}
    As we have chosen $\bt_i\geq 3$ we have that \eqref{EX} holds and thus \eqref{cond X} follows via Proposition \ref{prop check cond X}.

At this point we may apply Theorem \ref{existence theorem} to obtain a quenched thermodynamic formalism for sufficiently small holes.

%\sv{I think it is important to leave this footnote which says why the computation can be carry over for any $k$ and why we need to avoid discontinuities}\\

%The reason for this choice is that,  in order to compute the quantities  $\hat{q}_{\omega,0}^{(k)}$, we have to follow the itinerary of the points $v(i)$ under the composition of the maps $T_i$ and compare with their predecessors.  
%As it will be clear in the computation below for $k=0,$ the task will be relatively easy and generalisable to any $k>0,$ 
%More generally, one could also consider $C^1$ maps 
%are at least $C^1$ which in particular  means 
%so that the image of the point $T_i(v_j), i,j=0,\dots,l-1$ is not a discontinuous point of the $T_i, i=0,\dots, l-1,$ and any $v_i, i=0,\dots, l-1$ has a small neighborhood.
%where all the maps $T_i, 0=1,\dots, l-1$ are differentiable with bounded derivative. 
%If these conditions are relaxed, it could be that the  limit defining the  $\hat{q}_{\omega,0}^{(k)}$ for some $k$ is not immediately computable, we defer to section 3.3 and Proposition 3.4 in \cite{AFV15} for a detailed discussion of the computation of the extremal index in presence of discontinuities. Another advantage of our choice is that, as in the example 1, all the sample measure $\mu_{\om,0}$ are Lebesgue, $\text{Leb}.$
As a more concrete example, we will consider an alphabet of four letters $\mathcal{A}:=\{0,1,2,3\}$  and we set the associations
$$
i\rightarrow v_i:=v(i), \  i=0,1,2,3,
$$
where the points $v_i\in (0,1)$, are chosen on the unit interval  according to the following prescriptions:
\begin{equation}\label{grammar}
	T_1(v_1)=T_2(v_2)=v_0; \ T_0(v_0)=v_3; \ T_3(v_i)\neq v_3, i=0,1,2,3.
\end{equation}
%\footnote{ A concrete example is for instance given by: $v_0=\frac12, v_1=\frac14, v_2=\frac34, v_3=\frac18$ and $T_0(x)=4x-\frac{15}{8}$-mod$1$; $T_1(x)=2x$-mod$1$; $T_2(x)=3x-\frac{7}{4}$-mod$1$; $T_3(x)=5x$-mod$1$.}
%\gf{I suppose one could make a simpler version of this with just $v_1,v_2$, where $T(v_1)=v_2$ and $T(v_2)\neq v_1$, and $v_2$ is aperiodic. I like this example, it is a specific case of what we planned in (1) in section 7.5 below.  What about periodic holes?}
Since the sample measures coincide with the Lebesgue measure, condition (\ref{xibound}) reduces to 
%\gf{Sandro, I added factor 2} 
$2e^{-z_N(\om)}=\frac{t_{\om}+\xi_{\om, N}}{N}$ and we can solve for $z_N(\om)$ by setting $\xi_{\om,N}\equiv0$. 
The prescription (\ref{grammar}) clearly avoids any sort of periodicity when we take the first iteration of the random maps needed to compute $\hat{q}_{\omega,0}^{(0)};$ nevertheless we will show that  $\hat{q}_{\omega,0}^{(0)}>0$ and this is sufficient to conclude that $\theta_{\om,0}$ is smaller than $1$. 
Of course we need to prove that the limits defining  all the other 
$\hat{q}_{\omega,0}^{(k)}$ for any $k>0$ exist.   We will show it for $k=1$ and the same arguments could be generalized to any $k>1.$

Using $T$-invariance of Lebesgue, we have (for simplicity we write $z_N$ instead of $z_N(\om)$):
$$
\hat{q}_{\omega,0}^{(0)}= \lim_{N\rightarrow \infty}\frac{\text{Leb}\left(T^{-1}_{\sigma^{-1}\om}
	B(v(\omega_0), e^{-z_N})\cap B(v((\sigma^{-1}\omega)_0), e^{-z_N})\right)}{\text{Leb}(B(v(\omega_0), e^{-z_N}))},
$$
provided the limit  exists, which we are going to establish.
Consider first a point $\omega$ in the cylinder $[\omega_{-1}=0, \omega_0=3]$; the quantity we have to compute is therefore
\begin{equation}\label{ratio}
	\hat{q}_{\omega,0}^{(0)}=\frac{\text{Leb}\left(T^{-1}_{0}
		B(v_3, e^{-z_N})\cap B(v_0, e^{-z_N})\right)}{\text{Leb}(B(v_3, e^{-z_N}))}
\end{equation}
If $N$ is large enough, by the prescription (\ref{grammar}), $T_0(v_0)= v_3,$ we see that the local preimage of $B(v_3, e^{-z_N})$ under $T_0^{-1}$ will be strictly included into $B(v_0, e^{-z_N})$ and its length will be contracted by a factor $\beta_0^{-1}.$ Therefore the ratio (\ref{ratio}) will be simply $\beta_0^{-1}.$ The same happens with the cylinders $[\omega_{-1}=1, \omega_0=0]$ and $[\omega_{-1}=2, \omega_0=0],$  producing  respectively the quantities $\beta_1^{-1}$ and $\beta_2^{-1}.$ For all the other cylinders the ratio will be zero since the sets entering the numerator of (\ref{ratio}) will be disjoint for $N$ large enough. In conclusion we have
$$
\int_{\Omega}t_{\om}\hat{q}_{\omega,0}^{(0)}dm=\int_{[\omega_{-1}=0, \omega_0=3]}t_{\om}\beta_0^{-1}dm+\int_{[\omega_{-1}=1, \omega_0=0]}t_{\om}\beta_1^{-1}dm+\int_{[\omega_{-1}=2, \omega_0=0]}t_{\om}\beta_2^{-1}dm.
$$
If we now take $t_{\om}=t$, $\om$-independent, and we choose $m$ as the Bernoulli measure with equal weights $1/4$, the preceding expression assumes the simpler form
$$
\int_{\Omega}t\hat{q}_{\omega,0}^{(0)}dm=\frac{t}{16}(\beta_0^{-1}
+\beta_1^{-1}+\beta_2^{-1}).
$$
To compute $\hat{q}_{\omega,0}^{(1)}$ we have to split the integral over cylinders of length three. As a concrete example let us consider the cylinder $[\om_{-2}=i, \om_{-1}=j, \om_0=k],$ where $i,j,k\in \mathcal{A}.$ By using the preceding notations, we have to control the set
$$
T^{-1}_{i}T^{-1}_{j}
B(v_k, e^{-z_N})\cap T^{-1}_{i}B^c(v_j, e^{-z_N})\cap B(v_i, e^{-z_N})=
$$
$$
T^{-1}_{i}\left(T^{-1}_{j}
B(v_k, e^{-z_N})\cap B^c(v_j, e^{-z_N})\right)\cap B(v_i, e^{-z_N}).
$$
Let us first consider the intersection $T^{-1}_{j}
B(v_k, e^{-z_N})\cap B^c(v_j, e^{-z_N}).$ Call $u$ one of the preimages of $T^{-1}_{j}(v_k)$ and $T^{-1}_{j,u}$ the inverse branch giving $T^{-1}_{j,u}(v_k)=u$. If $u=v_j$ then the intersection $T^{-1}_{j,u}
B(v_k, e^{-z_N})\cap B^c(v_j, e^{-z_N})$ is  empty. Otherwise, 
by taking $N$ large enough, the set  $T^{-1}_{j,u}
B(v_k, e^{-z_N})$ will be completely included in $B^c(v_j, e^{-z_N})$ and moreover we have, by the linearity of the maps, $T^{-1}_{j,u}
B(v_k, e^{-z_N})=B(u, \beta_j^{-1}e^{-z_N}).$ We are therefore left with the computation of $T^{-1}_{i}B(u, \beta_j^{-1}e^{-z_N})\cap B(v_i, e^{-z_N}).$ If $u\in \mathcal{A}$ we proceed as in the computation of $\hat{q}_{\omega,0}^{(0)}$ by using the prescriptions (\ref{grammar}); otherwise such an intersection will be empty for $N$ large enough. For instance, if, in the example we are considering, we take $k=0, j=2, i=0$ and we suppose that among the $\beta_2$ preimages of $v_0$ there is, besides $v_0,$ $v_3$ too, namely $T_2(v_3)=v_0,$ and no other element of $\mathcal{A},$  \footnote{This condition does not intervene to compute $\hat{q}_{\omega,0}^{(0)}$ and show that it was strictly less than one.} then we get the contribution for $\hat{q}_{\omega,0}^{(1)}:$
$$
\int_{[\om_{-2}=0, \om_{-1}=2, \om_0=0]}t_{\om}\beta_2^{-1}\beta_0^{-1}dm.
$$
From the above it follows that $\theta_{\om,0}$ exists for a.e.\ $\omega$, and that we may apply Theorem \ref{evtthm}.\\

It is not difficult to give an example where all the $\hat{q}_{\omega,0}^{(k)}$ can be explicitly  computed. Let us take our  beta maps  $T_i(x)=\beta_ix+r_i$-$\text{mod}\ 1$  in the particular case where all the $r_i$ are  equal to the irrational number $r.$ Then  take a sequence of random balls  $B(v((\sigma^{k}\om)_0), e^{-z_N}), k\ge 0$ with the centers $v((\sigma^{k}\om)_0), k\ge 0$ which are rational numbers. From what we discussed above, it follows that  a necessary condition to get a $\hat{q}_{\om,0}^{(k)}\neq0$ is that the center $v((\sigma^{-(k+1)}\om)_0)$ will be sent to the center $v(\om_0).$  Let $z$ one of this rational centers; the iterate $T^n_{\om}(z)$ has the form $T^n_{\om}(z)=\beta_{\om_{n-1}}\cdots \beta_{\om_0}z+k_nr,$-mod$1,$ where $k_n$ is an integer number.
Therefore such an iterate will be never a rational number which 
shows that all the $\hat{q}_{\om,0}^{(k)}=0$ for any $k\ge 0$ and $\om,$ and therefore $\theta_{\om,0}=1.$

\section*{Acknowledgments}
The authors thank Harry Crimmins for helpful discussions.
JA, GF, and CG-T thank the Centro de Giorgi in Pisa and CIRM in Luminy for their support and hospitality.
JA is supported by the ARC Discovery projects DP180101223 and DP210100357, and thanks the School of Mathematics and Physics at the University of Queensland  for their hospitality.
GF, CG-T, and SV are partially supported by the ARC Discovery Project DP180101223.
The research of SV was supported by the project {\em Dynamics and Information Research Institute} within the agreement between UniCredit Bank and Scuola Normale Superiore di Pisa and by the Laboratoire International Associé LIA LYSM, of the French CNRS and  INdAM (Italy).

\appendix

\section{A version of \cite{DFGTV18a} with general weights}
\label{appB}
%\ja{Fix all $\BV$ notaton and make sure we are using the correct space}

In this appendix we outline how to extend relevant results from \cite{DFGTV18a} from the Perron--Frobenius weight $g_\omega=1/|T_\omega'|$ to the  general class of weights $g_\omega$ in Section \ref{sec: existence}.
To begin, we note that there is a unique measurable equivariant family of functions $\{\varphi_{\omega,0}\}_{\omega\in\Omega}$ guaranteed by Theorem 2.19 \cite{AFGTV20} (called $q_\omega$ there).
We wish to obtain uniform control on the essential infimum and essential supremum of $\varphi_{\omega,0}$ for a suitable class of maps.

In \cite{DFGTV18a} we work with the space $\BV=\{h\in L^1(\Leb) :\var(h)<\infty\}$ and use the norm $\|\cdot\|_{\BV}=\var(\cdot)+\|\cdot\|_1$.
Here we have a measurable family of random conformal probability measures $\nu_{\omega,0}$ (guaranteed by Theorem 2.19 \cite{AFGTV20}) and we work with the random spaces $\BV_\omega=\{h\in L^1(\nu_{\omega,0}):\var(h)<\infty\}$ and the random norms $\|\cdot\|_{\BV_\omega}=\var(\cdot)+\|\cdot\|_{L^1(\nu_{\omega,0})}$.
We also work with the normalised transfer operator $\tilde{\mathcal{L}}_{\omega,0}(f):=\mathcal{L}_{\omega,0}(f)/\nu_{\omega,0}(\mathcal{L}_{\omega,0}\ind)$;  throughout Appendix \ref{appB} we simply denote $\mathcal{L}_{\omega,0}$ and $\tilde{\mathcal{L}}_{\omega,0}$ by $\mathcal{L}_{\omega}$ and $\tilde{\mathcal{L}}_{\omega}$ as we only deal with the closed random system. Similarly we denote $g_{\om,0}$, $\nu_{\om,0}$, and $\phi_{\om,0}$ by $g_\om$, $\nu_\om$, and $\phi_\om$ respectively. 
All of the variation axioms (V1)--(V8) in \cite{DFGTV18a} hold with the obvious replacements.

\begin{proof}[Proof of Lemma \ref{DFGTV18alemma}]
	\mbox{ }
	
	\textbf{C1':}
	Since $\essinf_\om\inf\cL_{\om,0}\ind\geq \essinf_\om\inf g_{\om}$, \eqref{E3} and \eqref{fin sup L1} together imply that \eqref{C1'} holds.
	\vspace{.25cm}
	
	\textbf{C7' ($\epsilon=0$):} We show $\essinf_\omega\inf\varphi_{\omega}>0$; this will give us \eqref{C7'}.
	The statement $\essinf_\omega\inf\varphi_{\omega}>0$ is a generalised version of Lemmas 1 and 5 \cite{DFGTV18a} and we follow the strategy in \cite{DFGTV18a}.  The result follows from Lemma \ref{lemma1} below, which in turn depends on (\ref{LY20}) and Lemma \ref{lemmaA1}.
	%\gf{Previously I had written we also needed Proposition \ref{prop1}, Lemmas \ref{lemmaA2}--\ref{lemmaA4}; check.}.
	\vspace{.25cm}
	
	\textbf{C4' ($\epsilon=0$):}  It will be sufficient to show that there is a $K<\infty$ and $0<\gamma<1$ such that for all $f\in \BV$ with $\nu_\omega(f)=0$ and a.e.\ $\omega$, one has
	\begin{equation}
		\label{DECgeneral}
		\|\tilde{\mathcal{L}}_\omega^n f\|_{\BV_{\sigma^n\omega}}\le K\gamma^n\|f\|_{\BV_\omega}\mbox{ for all $n\ge 0$}.
	\end{equation}
	This is a generalised version of Lemma 2 \cite{DFGTV18a}, which has an identical proof, making the replacements outlined in the proof of Lemma \ref{lemmaA1} below, and using Lemmas \ref{lemmaA1}--\ref{lemmaA4} and (\ref{LY20}).
	We also use the non-random equivalence (\ref{normequiv}) of the $\BV_\omega$ norm to the usual $\BV$ norm.
	%we immediately obtain an equivalent inequality for the usual $\BV$ norm.
	%This will give us \eqref{C4'}.
	\vspace{.25cm}
	
	\textbf{C2 ($\epsilon=0$), C3 ($\epsilon=0$), C5' ($\epsilon=0$):}
	We wish to show that 
	%which states that 
	there is a unique measurable, nonnegative family $\varphi_{\omega}$ with the property that $\varphi_{\omega}\in \BV_\omega$, $\int \varphi_{\omega}\ d\nu_{\omega}=1$, $\tilde{\mathcal{L}}_{\omega} \varphi_{\omega}=\varphi_{\sigma\omega}$ for a.e. $\omega$, $\esssup_\omega \|\varphi_{\omega}\|_{\BV_\omega}<\infty$, and
	\begin{equation}
		\label{unifeqnphi2}
		\esssup_\omega \|\varphi_{\omega}\|_{\BV_\omega}.
	\end{equation} 
	We note that again the norm equivalence (\ref{normequiv}).
	%$\|\varphi_{\omega,0}\|_{\BV_\omega}$ is non-randomly equivalent to the usual $\BV$-$L^1$ norm.
	This is a generalised version of Proposition 1 \cite{DFGTV18a}.
	To obtain this generalisation, in the proof of Proposition 1 \cite{DFGTV18a}, one modifies the space $Y$ to become $Y=\{v:\Omega\times X\to\mathbb{R}: v\mbox{ measurable,} v_\omega:=v(\omega,\cdot)\in \BV_\omega\mbox{ and }\esssup_\omega\|v_\omega\|_{\BV_\omega}<\infty\}$.
	All of the arguments go through as per \cite{DFGTV18a} with the appropriate substitutions.
	%which states that there is a unique measurable and nonnegative function %$q:\Omega\times X\to \mathbb{R}$ with the property that $q_\omega:=q(\omega,\cdot)\in\BV_\omega$, $\int q_\omega\ d\nu_\omega=1$, $\tilde{\mathcal{L}}_\omega q_\omega=q_{\sigma\omega}$ for a.e. $\omega$ and $\esssup_\omega \|q_\omega\|_{\BV_\omega}<\infty$.
	Our modified proof of Proposition 1 \cite{DFGTV18a} will also use the modified Lemmas \ref{lemmaA1}--\ref{lemmaA4}, and inequality (\ref{LY20}) below.
	
	\textbf{CCM:} Finally, we note that \eqref{CCM} follows from \eqref{C2} ($\ep=0$) together with non-atomicity of $\nu_{\om}$. But non-atomicity of $\nu_{\om}$ follows from the random covering assumption as in the proof of Proposition 3.1 \cite{AFGTV20}
\begin{comment}
	
			Note that it follows from \eqref{E8} that for each $\zt>0$ there must exist some $n=n(\zt)\in\NN$ such that $\nu_{\om}(Z)\leq \zt/2$ for all $Z\in\cZ_{\om}^{(n)}$.
	Indeed, since for each $Z\in\cZ_{\om}^{(n)}$ we have 
	\begin{align*}
		\nu_{\om}(Z)
		=\nu_{\sg^n\om}(\~\cL_{\om,0}^n\ind_Z)
		\leq\frac{\norm{\cL_{\om,0}^n\ind_Z}_\infty}{\lm_{\om}^n}
		\leq \frac{\sup_Zg_{\om}^{(n)}}{\lm_{\om}^n}
		\leq \frac{\esssup_\om \norm{g_{\om}^{(n)}}_\infty}{\essinf_\om \lm_{\om}^n},
	\end{align*}
	where the second inequality follows from the fact that $Z\in\cZ_{\om}^{(n)}$ is an interval of monotonicity for $T_\om^n$, and as the right-hand side decays exponentially quickly by \eqref{E8} the claim follows. The conclusion of Lemma~\ref{DFGTV18alemma} and the non-atomicity of $\nu_{\om}$ imply that assumption \eqref{CCM} holds.
\end{comment}
    
    \textbf{Full Support:}
    Following the proof of Claim 3.1.1 of \cite{AFGTV20}, we are able to show that $\nu_\om$ is fully supported on $[0,1]$, i.e. $\nu_{\om}(J)>0$ for any non-degenerate interval $J\sub [0,1]$.
\end{proof}

We note that by (\ref{LYfull hat}) 
    %\gf{check back that this equation still shows what is needed here or if modifications needed.} 
with $\epsilon=0$, we have our uniform Lasota--Yorke equality.
\begin{equation}
	\label{LY20}
	\var(\tilde{\mathcal{L}}^{n}_\omega\phi)\le A \alpha^n\var(\phi)+B\nu_\omega(|\phi|),
\end{equation}
for all $n\ge 1$ and a.e.\ $\omega$.
This immediately provides a suitable general version of (H2) \cite{DFGTV18a}, which is that there is a $C<\infty$ such that
\begin{equation}
	\label{H2}
	\|\tilde{\mathcal{L}}_\omega\phi\|_{\BV_{\sigma\omega}}\le C\|\phi\|_{\BV_{\omega}}\mbox{ for a.e.\ $\omega$.}
\end{equation}
Define random cones $\mathcal{C}_{a,\omega}=\{\phi\in \BV_\omega: \phi\ge 0, \var\phi\le a\int \phi\ d\nu_\omega\}$.

\begin{lemma}[General weight version of Lemma A.1 \cite{DFGTV18a}]
	\label{lemmaA1} For sufficiently large $a>0$ we have that $\tilde{\mathcal{L}}_\omega^{RN}\mathcal{C}_{a,\omega}\subset \mathcal{C}_{a/2,\sigma^{RN}\omega}$ for sufficiently large $R$ and a.e.\ $\omega$.
\end{lemma}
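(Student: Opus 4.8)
\textbf{Proof proposal for Lemma \ref{lemmaA1}.}

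The plan is to follow the classical Lasota--Yorke cone-contraction argument, adapted to the fibered setting and to the general weight $g_{\om,0}$, using the uniform Lasota--Yorke inequality (\ref{LY20}) that has just been established. The key mechanism is that the ``good'' term $A\alpha^{n}\var(\phi)$ decays exponentially in $n$, so after enough iterates the variation of $\tilde{\mathcal{L}}_\om^{n}\phi$ is dominated by a fixed multiple of its $\nu$-integral; since $\nu_{\om,0}(\tilde{\mathcal{L}}_\om^{n}\phi)=\nu_{\sigma^{n}\om,0}(\phi)$ by the conformality relation in \eqref{CCM} (equivalently \eqref{C2} with $\ep=0$), the integral is exactly preserved, and one lands inside a narrower cone.

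First I would fix the constants $A,\alpha<1,B$ from (\ref{LY20}), which hold for a.e.\ $\om$ and all $n\ge 1$. Given $\phi\in\mathcal{C}_{a,\om}$, so that $\phi\ge 0$ and $\var\phi\le a\,\nu_{\om,0}(\phi)$, positivity of $\tilde{\mathcal{L}}_\om^{m}$ gives $\tilde{\mathcal{L}}_\om^{m}\phi\ge 0$ for every $m$, and the conformality relation gives $\nu_{\sigma^{m}\om,0}(\tilde{\mathcal{L}}_\om^{m}\phi)=\nu_{\om,0}(\phi)$. Applying (\ref{LY20}) with $n=m$ to $\phi\in\mathcal{C}_{a,\om}$ yields
\begin{align*}
	\var(\tilde{\mathcal{L}}_\om^{m}\phi)
	&\le A\alpha^{m}\var(\phi)+B\,\nu_{\om,0}(|\phi|)
	\le (A\alpha^{m}a+B)\,\nu_{\om,0}(\phi)
	=(A\alpha^{m}a+B)\,\nu_{\sigma^{m}\om,0}(\tilde{\mathcal{L}}_\om^{m}\phi).
\end{align*}
Thus $\tilde{\mathcal{L}}_\om^{m}\phi\in\mathcal{C}_{A\alpha^{m}a+B,\sigma^{m}\om}$ for every $m\ge 1$ and a.e.\ $\om$.

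Now I would choose the parameters. Pick $a:=4B$; then for any $m$ with $A\alpha^{m}a\le B$, i.e.\ $A\alpha^{m}\le 1/4$, we get $\tilde{\mathcal{L}}_\om^{m}\phi\in\mathcal{C}_{2B,\sigma^{m}\om}=\mathcal{C}_{a/2,\sigma^{m}\om}$. Such $m$ exist because $\alpha<1$; fix one, call it $m_0$. Finally, to phrase the conclusion in the form ``$\tilde{\mathcal{L}}_\om^{RN}$ for sufficiently large $R$'' as stated, note that the cones are nested, $\mathcal{C}_{a/2,\om}\subset\mathcal{C}_{a,\om}$, and that $\tilde{\mathcal{L}}$ maps $\mathcal{C}_{a,\om}$ into $\mathcal{C}_{A\alpha a+B,\sigma\om}\subset\mathcal{C}_{(A\alpha a+B),\sigma\om}$; enlarging $a$ if necessary so that $A\alpha a+B\le a$ (again possible since $\alpha<1$, taking $a\ge B/(1-A\alpha)$, and also $a\ge 4B$), the family $\{\mathcal{C}_{a,\om}\}$ is forward invariant under one step, hence under all steps. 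Then for any $R$ with $RN\ge m_0$ we obtain $\tilde{\mathcal{L}}_\om^{RN}\mathcal{C}_{a,\om}\subset\tilde{\mathcal{L}}_{\sigma^{RN-m_0}\om}^{\,m_0}\mathcal{C}_{a,\sigma^{RN-m_0}\om}\subset\mathcal{C}_{a/2,\sigma^{RN}\om}$, which is the claim.

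The only genuine subtlety — and the step I would treat most carefully — is making sure the constant $a$ is chosen once and for all, uniformly in $\om$, so that simultaneously (i) the one-step map keeps $\mathcal{C}_{a,\cdot}$ invariant and (ii) after $m_0$ steps the variation bound collapses into $\mathcal{C}_{a/2,\cdot}$; both requirements are just lower bounds on $a$ of the form $a\ge \max\{4B,\,B/(1-A\alpha)\}$, and since $A,\alpha,B$ are $\om$-independent by (\ref{LY20}) there is no obstruction. I do not expect any real difficulty here; the argument is a routine adaptation of the deterministic Lasota--Yorke cone argument, with the fibered bookkeeping handled by the exact conformality identity $\nu_{\sigma^{n}\om,0}(\tilde{\mathcal{L}}_\om^{n}\phi)=\nu_{\om,0}(\phi)$ replacing the $\Leb$-preservation used in \cite{DFGTV18a}.
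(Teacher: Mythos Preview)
Your proposal is correct and takes essentially the same approach as the paper, which simply states that the proof is ``identical to \cite{DFGTV18a}, substituting $\mathcal{C}_{a,\omega}$ for $\mathcal{C}_{a}$, $\tilde{\mathcal{L}}_\omega$ for $\mathcal{L}_\omega$, and $\nu_\omega$ for Lebesgue''; your write-up reconstructs precisely this standard Lasota--Yorke cone-contraction argument. One small remark: the detour through one-step forward invariance of $\mathcal{C}_{a,\cdot}$ is unnecessary (and would require $A\alpha<1$, which is not guaranteed); your own first display already shows $\tilde{\mathcal{L}}_\om^{m}\mathcal{C}_{a,\om}\subset\mathcal{C}_{A\alpha^{m}a+B,\sigma^{m}\om}$ for \emph{every} $m\ge 1$, so taking $m=RN\ge m_0$ directly gives the conclusion without the extra step.
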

\begin{proof}Identical to \cite{DFGTV18a}, substituting $\mathcal{C}_{a,\omega}$ for $\mathcal{C}_{a}$, $\tilde{\mathcal{L}}_\omega$ for $\mathcal{L}_\omega$, and $\nu_\omega$ for Lebesgue.
\end{proof}

\begin{lemma}[General weight version of Lemma 1 \cite{DFGTV18a}]
	\label{lemma1}
	If one has uniform covering in the sense of (11) \cite{DFGTV18a}, then there is an $N$ such that for each $a>0$ and sufficently large $n$, there exists $c>0$ such that
	\begin{equation}
		\label{l1eqn}
		\essinf_\omega \tilde{\mathcal{L}}_\omega^{Nn}h\ge (c/2)\nu_\omega(|h|)\mbox{ for every $h\in \mathcal{C}_{\omega,a}$ and a.e.\ $\omega$}.
	\end{equation}
\end{lemma}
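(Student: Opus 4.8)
\textbf{Proof plan for Lemma \ref{lemma1}.}

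The plan is to reproduce the argument of Lemma 1 in \cite{DFGTV18a}, making the same systematic substitutions used throughout Appendix \ref{appB}: replace the deterministic transfer operator $\mathcal{L}_\omega$ by the normalised cocycle operator $\tilde{\mathcal{L}}_\omega$, replace Lebesgue measure by the random conformal measure $\nu_\omega$, and replace the fixed cone $\mathcal{C}_a$ by the random cone $\mathcal{C}_{a,\omega}=\{\phi\in\BV_\omega:\phi\ge 0,\ \var\phi\le a\,\nu_\omega(\phi)\}$. The key inputs that make these substitutions legitimate are: the cone-invariance Lemma \ref{lemmaA1} (so that iterating keeps us in a cone with controlled variation), the uniform Lasota--Yorke inequality (\ref{LY20}), the uniform bound (\ref{H2}), the full support and non-atomicity of $\nu_\omega$ established in the proof of Lemma \ref{DFGTV18alemma}, and the uniform covering assumption (\ref{E4}) (equivalently condition (11) of \cite{DFGTV18a}).

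First I would fix $a>0$ large enough and $R,N$ as in Lemma \ref{lemmaA1}, so that $\tilde{\mathcal{L}}_\omega^{RN}\mathcal{C}_{a,\omega}\subset\mathcal{C}_{a/2,\sigma^{RN}\omega}$; absorbing $R$ into $N$, we may assume $\tilde{\mathcal{L}}_\omega^{N}$ maps $\mathcal{C}_{a,\omega}$ into $\mathcal{C}_{a/2,\sigma^{N}\omega}\subset\mathcal{C}_{a,\sigma^N\omega}$. For $h\in\mathcal{C}_{a,\omega}$ the function $\tilde{\mathcal{L}}_\omega^{Nk}h$ therefore has variation bounded by $a\cdot\nu_{\sigma^{Nk}\omega}(\tilde{\mathcal{L}}_\omega^{Nk}h)=a\,\nu_\omega(h)$ for every $k$, using the eigen-relation $\nu_{\sigma\omega}(\tilde{\mathcal{L}}_\omega f)=\nu_\omega(f)$. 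A function of bounded variation on $[0,1]$ whose variation is at most $a\,\nu_\omega(h)$ and whose $\nu_{\sigma^{Nk}\omega}$-integral equals $\nu_\omega(h)$ cannot be small everywhere: its essential infimum is at least $\nu_\omega(h)(1 - a\cdot(\text{something}))$ once we control on a set of positive measure where it exceeds its average. More precisely, I would use the standard argument: since $\nu_{\sigma^{Nk}\omega}$ is a probability measure, there is a point (or a set of positive measure) where $\tilde{\mathcal{L}}_\omega^{Nk}h\ge \nu_\omega(h)$, and then the variation bound forces $\tilde{\mathcal{L}}_\omega^{Nk}h\ge \nu_\omega(h)-\var(\tilde{\mathcal{L}}_\omega^{Nk}h)$ wherever we can reach from that point; combined with the uniform covering (\ref{E4}) — which guarantees that after finitely many more steps the image spreads over all of $[0,1]$ — one propagates a positive lower bound to the whole interval. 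This yields $\essinf_\omega \tilde{\mathcal{L}}_\omega^{Nn}h\ge c\,\nu_\omega(h)=c\,\nu_\omega(|h|)$ for some $c>0$ depending only on $a$ and the covering time, for all sufficiently large $n$ and a.e.\ $\omega$; halving $c$ absorbs the negligible discrepancies and gives (\ref{l1eqn}).

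The main obstacle I anticipate is making the ``spreading'' step uniform in $\omega$: in the random setting the covering time $k(J)$ in (\ref{E4}) is allowed to depend on the interval $J$ but not on $\omega$, and one needs the lower bound one propagates (which involves $\inf g_\omega^{(k)}/\lambda_\omega^{(k)}$, cf.\ (\ref{LY LB calc})) to be bounded below uniformly in $\omega$. This is exactly where (\ref{E3}), (\ref{E8}) and the uniform lower bound (\ref{uniflbLeps}) on $\lambda_{\omega,0}^{n'}$ enter, and one must be careful that the number of steps used to go from ``positive on a small interval'' to ``positive everywhere'' is chosen from the finitely many cases arising from the partition elements, so that the resulting constant $c$ is genuinely $\omega$-independent. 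Apart from this bookkeeping, the argument is routine once Lemma \ref{lemmaA1} and (\ref{LY20}) are in hand, and I would present it by citing the proof in \cite{DFGTV18a} and indicating only the substitutions and the place where uniform covering is invoked.
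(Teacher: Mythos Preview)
Your overall plan---follow \cite{DFGTV18a} with the substitutions $\mathcal{L}_\omega\rightsquigarrow\tilde{\mathcal{L}}_\omega$, $\Leb\rightsquigarrow\nu_\omega$, $\mathcal{C}_a\rightsquigarrow\mathcal{C}_{a,\omega}$, then use uniform covering to spread a local lower bound---is exactly the paper's approach, and your discussion of the uniformity-in-$\omega$ issue (handled via $\essinf_\omega\inf g_\omega>0$, $\esssup_\omega\|g_\omega\|_\infty<\infty$, $\esssup_\omega D(T_\omega)<\infty$) is also on target.

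However, the specific mechanism you propose for producing an interval on which the function is bounded below does not work as written. You argue: pick a point where $\tilde{\mathcal{L}}_\omega^{Nk}h\ge \nu_\omega(h)$, then the global variation bound gives $\tilde{\mathcal{L}}_\omega^{Nk}h\ge \nu_\omega(h)-\var(\tilde{\mathcal{L}}_\omega^{Nk}h)\ge (1-a)\nu_\omega(h)$ everywhere. But the lemma is stated for \emph{each} $a>0$, and Lemma~\ref{lemmaA1} itself requires $a$ sufficiently large; so typically $a\ge 1$ and your lower bound is nonpositive. Iterating via Lemma~\ref{lemmaA1} only takes $a$ to $a/2$, not below $1$, so this does not rescue the argument.

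What the paper (and \cite{DFGTV18a}) actually does is a localisation/pigeonhole step you have omitted: partition $[0,1]$ into $n$ subintervals of equal $\nu_\omega$-mass $1/n$ (possible since $\nu_\omega$ is non-atomic). If $\inf_{J_i}h<\tfrac12\nu_\omega(h)$ on every piece $J_i$, then bounding $\int_{J_i}h\,d\nu_\omega\le \tfrac1n(\inf_{J_i}h+\var_{J_i}h)$ and summing gives $\nu_\omega(h)<\tfrac12\nu_\omega(h)+\tfrac{a}{n}\nu_\omega(h)$, a contradiction once $n\ge 2a$. So for $n$ large (depending only on $a$) there is an interval $J$ of $\nu_\omega$-measure $1/n$ with $\inf_J h\ge \tfrac12\nu_\omega(h)$. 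One then applies uniform covering to $J$ and the uniform bounds on $g_\omega$ to propagate this to all of $[0,1]$ after $k$ further iterates. This is the step that both makes the constant positive for arbitrary $a$ and explains why the statement asks for $n$ sufficiently large.
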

\begin{proof}
	Making all of the obvious substitutions, as per Lemma \ref{lemmaA1} and its proof, we subdivide the unit interval into an equipartition according to $\nu_\omega$ mass.
	This is possible because $\nu_\omega$ is non-atomic (Proposition 3.1 \cite{AFGTV20}).
	We conclude, as in the proof of Lemma 1 \cite{DFGTV18a}, that there is an interval $J$ of $\nu_\omega$-measure $1/n$ such that for each $f\in\mathcal{C}_{\omega,a},$ one has $\inf_J f\ge (1/2)\nu_\omega(f)$.
	Then using uniform covering and the facts that $\essinf_\omega\inf g_\omega>0$, $\esssup_\omega g_\omega<\infty$, and $\esssup_\omega D(T_\omega)<\infty$, we obtain $\essinf_\omega\inf \tilde{\mathcal{L}}_\omega^{(k)}f\ge\alpha^*_0>0$, where $k$ is the uniform covering time for the interval $J$.
	The rest of the proof follows as in \cite{DFGTV18a}.
\end{proof}

\begin{lemma}[General weight version of Lemma A.2 \cite{DFGTV18a}]
	\label{lemmaA2}
	Assume that $\phi,\psi\in \mathcal{C}_{a,\omega}$ and  $\int \phi\ d\nu_\omega=\int \psi\ d\nu_\omega=1$. Then $\|\phi-\psi\|_{\BV_\omega}\le 2(1+a)\Theta_{a,\omega}(\phi,\psi)$.
\end{lemma}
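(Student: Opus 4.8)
The plan is to reproduce the proof of Lemma A.2 of \cite{DFGTV18a} verbatim, making only the substitutions described in the proof of Lemma~\ref{lemmaA1}: Lebesgue measure is replaced by the random conformal probability measure $\nu_\omega=\nu_{\omega,0}$, the norm $\|\cdot\|_{\BV}$ by the random norm $\|\cdot\|_{\BV_\omega}=\var(\cdot)+\nu_\omega(|\cdot|)$, and the cone $\mathcal{C}_a$ by $\mathcal{C}_{a,\omega}=\{\phi\in\BV_\omega:\phi\ge 0,\ \var\phi\le a\,\nu_\omega(\phi)\}$, with $\Theta_{a,\omega}$ the Hilbert projective metric of that cone. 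The argument is the classical comparison between the Banach-space distance and the projective metric, and the only structural fact about the cone that is used is that for every $f\in\mathcal{C}_{a,\omega}$ one has $\|f\|_{\BV_\omega}=\var f+\nu_\omega(|f|)\le a\,\nu_\omega(f)+\nu_\omega(f)=(1+a)\,\nu_\omega(f)$.

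The steps, in order, are as follows. First I would recall the two quantities defining $\Theta_{a,\omega}$, namely $\alpha:=\sup\{t>0:\psi-t\phi\in\mathcal{C}_{a,\omega}\}$ and $\beta:=\inf\{s>0:s\phi-\psi\in\mathcal{C}_{a,\omega}\}$, so that $\Theta_{a,\omega}(\phi,\psi)=\log(\beta/\alpha)$; the asserted inequality is trivial when $\Theta_{a,\omega}(\phi,\psi)=\infty$, so I may assume $0<\alpha\le\beta<\infty$. Second, using closedness of $\mathcal{C}_{a,\omega}$ in $\BV_\omega$ (or, if one prefers, taking $t\uparrow\alpha$ and passing to the limit using lower semicontinuity of $\var$ and continuity of $f\mapsto\nu_\omega(|f|)$), I get $\psi-\alpha\phi\in\mathcal{C}_{a,\omega}$ and $\beta\phi-\psi\in\mathcal{C}_{a,\omega}$; integrating the first against $\nu_\omega$ and using the normalisation $\nu_\omega(\phi)=\nu_\omega(\psi)=1$ gives $1-\alpha\ge 0$, and symmetrically $\beta\ge 1$. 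Third, I combine the cone estimate above with the triangle inequality:
\[
\|\phi-\psi\|_{\BV_\omega}\le(1-\alpha)\|\phi\|_{\BV_\omega}+\|\alpha\phi-\psi\|_{\BV_\omega}\le(1+a)(1-\alpha)+(1+a)\,\nu_\omega(\psi-\alpha\phi)=2(1+a)(1-\alpha),
\]
using $\|\phi\|_{\BV_\omega}\le(1+a)$ and $\|\psi-\alpha\phi\|_{\BV_\omega}\le(1+a)\nu_\omega(\psi-\alpha\phi)=(1+a)(1-\alpha)$. Finally, the elementary inequality $1-\alpha\le-\log\alpha\le\log(\beta/\alpha)=\Theta_{a,\omega}(\phi,\psi)$ (valid since $0<\alpha\le 1\le\beta$, because $\log x\le x-1$) yields $\|\phi-\psi\|_{\BV_\omega}\le 2(1+a)\Theta_{a,\omega}(\phi,\psi)$, as claimed.

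I do not expect a genuine obstacle here: the lemma is an abstract consequence of the cone definition and the normalisation, and does not interact with the dynamics. The only points deserving a line of care are (i) that $\mathcal{C}_{a,\omega}$ is closed in $(\BV_\omega,\|\cdot\|_{\BV_\omega})$, or else that one runs the argument at $t<\alpha$ and $s>\beta$ and lets them converge, and (ii) that $\nu_\omega(\cdot)$ is indeed the normalising functional compatible with $\mathcal{C}_{a,\omega}$ — which it is, since $\mathcal{C}_{a,\omega}$ is built from $\nu_\omega$ exactly as $\mathcal{C}_a$ was built from Lebesgue in \cite{DFGTV18a}. Everything else transfers unchanged, and the constant comes out as exactly $2(1+a)$.
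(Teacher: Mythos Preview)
Your proposal is correct and is precisely the substitution argument the paper invokes: the paper's own proof simply states that it is identical to \cite{DFGTV18a} with $\nu_\omega$ replacing Lebesgue, and you have faithfully carried out that classical Hilbert-metric-versus-Banach-norm comparison in detail. The constant $2(1+a)$ falls out exactly as you compute, and the two caveats you flag (closedness of the cone, compatibility of $\nu_\omega$ as the normalising functional) are the only points where care is needed and are easily handled as you indicate.
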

\begin{proof}
	Identical to \cite{DFGTV18a}, substituting $\nu_\omega$ for Lebesgue.
	The randomness of the Hilbert metric $\Theta_{a,\omega}$ only appears because the functions lie in $\mathcal{C}_{a,\omega}$.
\end{proof}

\begin{lemma}[General weight version of Lemma A.3 \cite{DFGTV18a}]
	\label{lemmaA3}
	For any $a\ge 2\var(\ind_X)$, we have that $\tilde{\mathcal{L}}_\omega^{RN}$ is a contraction on $\mathcal{C}_{\omega,a}$ for any sufficiently large $R$ and a.e.\ $\omega\in\Omega$.
\end{lemma}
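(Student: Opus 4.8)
The plan is to run the Birkhoff cone--contraction argument exactly as in \cite{DFGTV18a}, with the conformal measures $\nu_\omega$ in place of Lebesgue and the general weights $g_\omega$ in place of $1/|T_\omega'|$. Recall the abstract principle: a positive linear map that sends a cone into a subset of \emph{finite} diameter with respect to the cone's projective (Hilbert) metric contracts that metric by the factor $\tanh(\Delta/4)<1$, where $\Delta$ is the diameter of the image. So the entire task reduces to locating an iterate $\tilde{\mathcal{L}}_\omega^{RN}$ and bounding the Hilbert diameter of its image, with the bound uniform in $\omega$. A key point to keep in mind is that $\mathcal{C}_{a/2,\omega}$ is \emph{not} of finite diameter inside $\mathcal{C}_{a,\omega}$ on its own (a unit-mass function in $\mathcal{C}_{a/2,\omega}$ need not be bounded below by a positive constant), so the positivity-improving iterate supplied by Lemma~\ref{lemma1} is genuinely needed.

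First I would fix the number of iterates. By Lemma~\ref{lemmaA1} we may choose $R$ large enough that $\tilde{\mathcal{L}}_\omega^{RN}(\mathcal{C}_{a,\omega})\subseteq\mathcal{C}_{a/2,\sigma^{RN}\omega}$ for a.e.\ $\omega$; enlarging $R$ if necessary, Lemma~\ref{lemma1} (with $RN$ playing the role of $Nn$) provides a constant $c>0$, independent of $\omega$, with $\tilde{\mathcal{L}}_\omega^{RN}h\ge c\,\nu_\omega(h)$ pointwise for every $h\in\mathcal{C}_{a,\omega}$ and a.e.\ $\omega$. At the same time, for $h\in\mathcal{C}_{a,\omega}$ normalised so that $\nu_\omega(h)=1$ we have $\var(h)\le a$, and since the $\tilde{\mathcal{L}}_\omega$ are normalised so that $\nu_{\sigma\omega}(\tilde{\mathcal{L}}_\omega f)=\nu_\omega(f)$, we get $\nu_{\sigma^{RN}\omega}(\tilde{\mathcal{L}}_\omega^{RN}h)=1$; combining the Lasota--Yorke inequality \eqref{LY20} with property (V3) yields
\[
\|\tilde{\mathcal{L}}_\omega^{RN}h\|_{\infty,\sigma^{RN}\omega}\le\var\big(\tilde{\mathcal{L}}_\omega^{RN}h\big)+\nu_{\sigma^{RN}\omega}\big(|\tilde{\mathcal{L}}_\omega^{RN}h|\big)\le A\alpha^{RN}a+B+1=:M,
\]
a constant independent of $\omega$. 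Hence every normalised image $\phi:=\tilde{\mathcal{L}}_\omega^{RN}h$ satisfies $c\le\phi\le M$, $\var\phi\le a/2$ and $\nu_{\sigma^{RN}\omega}(\phi)=1$.

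Next I would bound the Hilbert diameter of this family of images directly. For $\phi,\psi$ with $c\le\phi,\psi\le M$, $\var\phi,\var\psi\le a/2$ and unit $\nu$-mass, consider $\alpha(\phi,\psi)=\sup\{t>0:\psi-t\phi\in\mathcal{C}_{a,\cdot}\}$ and $\beta(\phi,\psi)=\inf\{s>0:s\phi-\psi\in\mathcal{C}_{a,\cdot}\}$. The positivity constraint forces $t\le c/M$ (resp.\ $s\ge M/c$), while the bounded-variation constraint $\var(\psi-t\phi)\le(a/2)(1+t)\le a(1-t)$ holds as soon as $t\le 1/3$ (resp.\ $\var(s\phi-\psi)\le(a/2)(s+1)\le a(s-1)$ holds for $s\ge 3$); therefore $\alpha(\phi,\psi)\ge\min\{c/M,1/3\}$ and $\beta(\phi,\psi)\le\max\{M/c,3\}$, so $\Theta_{a,\sigma^{RN}\omega}(\phi,\psi)=\log(\beta/\alpha)\le\Delta$ with $\Delta:=\log\big(\max\{M/c,3\}/\min\{c/M,1/3\}\big)$ finite and independent of $\omega$. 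Since the projective metric is scale-invariant, $\tilde{\mathcal{L}}_\omega^{RN}(\mathcal{C}_{a,\omega})$ has $\Theta_{a,\sigma^{RN}\omega}$-diameter at most $\Delta$, and Birkhoff's cone-contraction theorem, as used in \cite{DFGTV18a}, then gives, for all $\phi,\psi\in\mathcal{C}_{a,\omega}$ and a.e.\ $\omega$,
\[
\Theta_{a,\sigma^{RN}\omega}\big(\tilde{\mathcal{L}}_\omega^{RN}\phi,\tilde{\mathcal{L}}_\omega^{RN}\psi\big)\le\tanh(\Delta/4)\,\Theta_{a,\omega}(\phi,\psi),
\]
with $\tanh(\Delta/4)<1$ uniform in $\omega$, which is the asserted contraction.

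Measurability and the a.e.\ qualifiers are automatic, since each ingredient — Lemma~\ref{lemmaA1}, Lemma~\ref{lemma1}, and the inequality \eqref{LY20} — already holds for a.e.\ $\omega$ with $\omega$-independent constants. The main obstacle, conceptually, is recognising that one cannot work with $\mathcal{C}_{a/2,\omega}$ alone and must pass to the positivity-improved iterate; technically, it is ensuring that both constants $c$ and $M$ in the second step are truly independent of $\omega$. The lower bound $c$ comes from Lemma~\ref{lemma1}, which rests on the uniform covering hypothesis \eqref{E4} and the uniform weight bounds \eqref{E1}--\eqref{E3}, while the uniform upper bound $M$ needs the uniform Lasota--Yorke constants supplied by \eqref{E8}. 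Once these $\omega$-independent constants are secured, the diameter $\Delta$, and hence the contraction rate $\tanh(\Delta/4)$, is automatically independent of $\omega$, and the lemma follows.
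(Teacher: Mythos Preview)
Your proposal is correct and follows essentially the same approach as the paper, which simply defers to \cite{DFGTV18a} after recording the upper-bound inequality $\esssup\tilde{\mathcal{L}}^{RN}_\omega f\le 1+C_{var}a/2$ obtained from (V3) and the cone inclusion of Lemma~\ref{lemmaA1}. Your version spells out the finite-diameter computation and the Birkhoff contraction step explicitly; the only cosmetic difference is that you derive the upper bound $M$ from the Lasota--Yorke inequality \eqref{LY20} rather than directly from the $\mathcal{C}_{a/2}$ inclusion, but both routes give $\omega$-uniform constants and the rest of the argument is identical.
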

\begin{proof}
	The proof in \cite{DFGTV18a} may be followed, making the substitutions as in Lemma \ref{lemmaA1} and its proof.
	The first inequality reads 
	$$
	    \esssup_\omega \tilde{\mathcal{L}}^{RN}_\omega f\le \nu_{\sigma^{RN}\omega}(|\tilde{\mathcal{L}}^{RN}_\omega f|)+C_{var}(\tilde{\mathcal{L}}^{RN}_\omega f)\le (1+C_{var}a/2)\nu_\omega(|f|)=1+C_{var}a/2,
    $$
	where we have used axiom (V3) \cite{DFGTV18a} and the weak contracting property of $\tilde{\mathcal{L}}^{RN}_\omega$ in the $\nu_\omega$ norm.
	The rest of the proof follows as in \cite{DFGTV18a}.
\end{proof}

Let $\BV_{\omega,0}=\{\phi\in \BV: \int \phi\ d\nu_\omega=0\}$.

\begin{lemma}[General weight version of Lemma A.4 \cite{DFGTV18a}]
	\label{lemmaA4}
	
\end{lemma}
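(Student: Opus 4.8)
The statement to be established here is the general-weight analogue of Lemma A.4 of \cite{DFGTV18a}, which I take to be the \emph{uniform} Hilbert-metric contraction of a fixed power of the normalised operator on the Birkhoff cones, together with its norm consequence: there exist integers $R,N\ge 1$ and constants $\rho_0\in(0,1)$, $K<\infty$, $\gamma\in(0,1)$, all independent of $\omega$, such that for $m$-a.e.\ $\omega$,
\begin{equation*}
\Theta_{a,\sigma^{RN}\omega}\!\left(\tilde{\mathcal{L}}_\omega^{RN}\phi,\tilde{\mathcal{L}}_\omega^{RN}\psi\right)\le\rho_0\,\Theta_{a,\omega}(\phi,\psi)\qquad\text{for all }\phi,\psi\in\mathcal{C}_{a,\omega},
\end{equation*}
and hence $\|\tilde{\mathcal{L}}_\omega^{n}h\|_{\BV_{\sigma^n\omega}}\le K\gamma^n\|h\|_{\BV_\omega}$ for every $h\in\BV$ with $\nu_{\omega,0}(h)=0$ and every $n\ge 0$. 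Here $\mathcal{C}_{a,\omega}$, $\Theta_{a,\omega}$, and the admissible range of $a$ are those of Lemmas~\ref{lemmaA1}--\ref{lemmaA3}.

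The plan is to follow the proof of Lemma A.4 of \cite{DFGTV18a} verbatim, with the substitutions already used above: the Perron--Frobenius operator is replaced by $\tilde{\mathcal{L}}_\omega=\mathcal{L}_{\omega,0}/\nu_{\omega,0}(\mathcal{L}_{\omega,0}\ind)$, Lebesgue measure by the conformal family $\nu_{\omega,0}$ (which is fully supported and non-atomic by Lemma~\ref{DFGTV18alemma}), and the fixed $\BV$ norm by the random norms $\|\cdot\|_{\BV_\omega}=\var(\cdot)+\nu_{\omega,0}(|\cdot|)$. By Lemma~\ref{lemmaA1}, for a suitable $R$ the operator $\tilde{\mathcal{L}}_\omega^{RN}$ sends $\mathcal{C}_{a,\omega}$ into the strictly smaller cone $\mathcal{C}_{a/2,\sigma^{RN}\omega}$. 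The crucial point is that the $\Theta_{a,\cdot}$-diameter of $\mathcal{C}_{a/2,\cdot}$ is finite \emph{and} bounded by a single $\Delta<\infty$ uniformly in $\omega$: this uniformity is inherited from the uniform-in-$\omega$ hypotheses (E1)--(E4) and (E8), which via Lemma~\ref{lemma1} supply a uniform lower bound $\essinf_\omega\inf\tilde{\mathcal{L}}_\omega^{k}f\ge\alpha_0^*>0$ on the cone and, via (\ref{LY20}) and (\ref{H2}), uniform upper bounds on variation and sup-norm. One then invokes Birkhoff's contraction inequality $\Theta(L\phi,L\psi)\le\tanh(\Delta/4)\,\Theta(\phi,\psi)$ and sets $\rho_0:=\tanh(\Delta/4)<1$, which is $\omega$-independent.

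For the norm estimate I would iterate: after one block, two normalised elements of $\mathcal{C}_{a,\omega}$ lie in $\mathcal{C}_{a/2}$ at mutual $\Theta$-distance $\le\Delta$, so Lemma~\ref{lemmaA2} and the contraction give $\|\tilde{\mathcal{L}}_\omega^{RNn}\phi-\tilde{\mathcal{L}}_\omega^{RNn}\psi\|_{\BV_{\sigma^{RNn}\omega}}\le 2(1+a)\rho_0^{\,n-1}\Delta$. A general $h$ with $\nu_\omega(h)=0$ is written, after applying one block of $\tilde{\mathcal{L}}_\omega$ and using (\ref{LY20}) to land in the cone with controlled norm (exactly as in \cite{DFGTV18a}), as a difference of two suitably normalised cone elements, whence $\|\tilde{\mathcal{L}}_\omega^{RNn}h\|_{\BV_{\sigma^{RNn}\omega}}\le C\rho_0^{\,n}\|h\|_{\BV_\omega}$; gaps between consecutive multiples of $RN$ are absorbed by the uniform bound (\ref{H2}), and the non-random norm equivalence (\ref{normequiv}) lets one state the conclusion with $\omega$-independent $K$ and $\gamma=\rho_0^{1/(RN)}$, yielding exactly (\ref{DECgeneral}). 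This lemma is then the missing ingredient for the generalised Lemma 2 and Proposition 1 of \cite{DFGTV18a}, hence for the $\epsilon=0$ parts of \eqref{C2}, \eqref{C3}, \eqref{C4'}, and \eqref{C5'} asserted in Lemma~\ref{DFGTV18alemma}.

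The main obstacle is the uniform-in-$\omega$ finiteness of $\Delta$: one must check that the ratios $\sup f/\inf f$ and $\var f/\nu_\omega(f)$ controlling the Hilbert diameter of $\mathcal{C}_{a/2,\omega}$ do not degenerate with $\omega$. This reduces to the uniform lower bound of Lemma~\ref{lemma1} (itself using non-atomicity and full support of $\nu_\omega$, the uniform covering time from (E4), and the uniform weight bounds (E2)--(E3)) together with the uniform Lasota--Yorke constants $A,\alpha,B$ of (\ref{LY20}). Once this uniformity is secured, every remaining step is the routine transcription of the deterministic-weight argument of \cite{DFGTV18a}, and no new idea is needed.
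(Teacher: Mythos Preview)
Your approach matches the paper's, which simply says the proof is ``identical to \cite{DFGTV18a}, substituting as per Lemma~\ref{lemmaA1} and Lemma~\ref{lemmaA2} and their proofs above, and using (\ref{H2}).'' You have supplied considerably more detail than the paper does, but the underlying strategy---transport the cone/Hilbert-metric argument of \cite{DFGTV18a} by replacing $\mathcal{L}_\omega$ with $\tilde{\mathcal{L}}_\omega$, Lebesgue with $\nu_{\omega,0}$, and the fixed $\BV$ norm with $\|\cdot\|_{\BV_\omega}$, using (\ref{H2}) to control single steps---is the same.

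One structural remark: in the paper's organisation, Lemma~\ref{lemmaA4} is only one of the auxiliary ingredients (alongside Lemmas~\ref{lemmaA1}--\ref{lemmaA3} and (\ref{LY20})) feeding into the generalised ``Lemma~2'' of \cite{DFGTV18a}, which is the full decay statement (\ref{DECgeneral}). The definition of $\BV_{\omega,0}=\{\phi:\nu_\omega(\phi)=0\}$ immediately preceding the lemma indicates that Lemma~A.4 itself concerns the passage from cone contraction to zero-mean functions (the decomposition step you describe in your third paragraph), rather than the complete statement of (\ref{DECgeneral}). Your write-up folds Lemma~A.4 and the generalised Lemma~2 together; this is harmless mathematically, since you correctly identify and justify all the pieces, but it slightly misreads where the paper draws the line between the auxiliary lemma and its consequence.
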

\begin{proof}
	Identical to \cite{DFGTV18a}, substituting as per Lemma \ref{lemmaA1} and Lemma \ref{lemmaA2} and their proofs above, and using (\ref{H2}).
\end{proof}

\begin{lemma}[General weight version of Lemma 5 \cite{DFGTV18a}]
	\label{lemma5}
	$\essinf_\omega \phi_{\omega}\ge c/2$ for a.e.\ $\omega$.
\end{lemma}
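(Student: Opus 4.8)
The plan is to reduce the statement $\essinf_\omega \varphi_\omega \ge c/2$ to the uniform lower bound already established in Lemma~\ref{lemma1} (the general-weight version of Lemma~1 of \cite{DFGTV18a}), exactly as is done in \cite{DFGTV18a}. The key observation is that $\varphi_\omega$ is (essentially) the limit of $\tilde{\mathcal{L}}^{n}_{\sigma^{-n}\omega}\ind$ along the equivariant family, and that $\ind$ lies in every cone $\mathcal{C}_{a,\omega}$ once $a$ is large enough. Concretely, first I would fix $a \ge 2\var(\ind)$ so that $\ind \in \mathcal{C}_{a,\omega}$ for a.e.\ $\omega$ (this uses axiom (V1) and the definition of the cones). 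By Lemma~\ref{lemmaA1}, for $R$ large we have $\tilde{\mathcal{L}}^{RN}_\omega \mathcal{C}_{a,\omega} \subset \mathcal{C}_{a/2,\sigma^{RN}\omega}$, so all the relevant iterates stay in a fixed cone.

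Next I would invoke Lemma~\ref{lemma1}: there is an $N$ and, for the chosen $a$ and all sufficiently large $n$, a constant $c>0$ such that $\essinf_\omega \tilde{\mathcal{L}}^{Nn}_\omega h \ge (c/2)\nu_\omega(|h|)$ for every $h \in \mathcal{C}_{a,\omega}$ and a.e.\ $\omega$. Applying this with $h = \ind$ (which has $\nu_\omega(|\ind|) = \nu_\omega(\ind) = 1$ by \eqref{C2}/\eqref{CCM}), we get a uniform lower bound $\essinf_\omega \tilde{\mathcal{L}}^{Nn}_{\sigma^{-Nn}\omega}\ind \ge c/2$ for all large $n$ and a.e.\ $\omega$. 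Then I would use the convergence statement from the modified Proposition~1 of \cite{DFGTV18a} (established in the proof of Lemma~\ref{DFGTV18alemma}): the iterates $\tilde{\mathcal{L}}^{Nn}_{\sigma^{-Nn}\omega}\ind$ converge to $\varphi_\omega$ in $\BV_\omega$ (and hence, by axiom (V3) / the inequality $\|\cdot\|_{\infty,\omega} \le \|\cdot\|_{\BV_\omega}$, in $L^\infty(\nu_\omega)$) as $n \to \infty$, uniformly in $\omega$ in the sense that $\esssup_\omega \|\tilde{\mathcal{L}}^{Nn}_{\sigma^{-Nn}\omega}\ind - \varphi_\omega\|_{\BV_\omega} \to 0$. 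Passing to the limit in the pointwise-essential-infimum bound preserves the inequality, yielding $\essinf_\omega \varphi_\omega \ge c/2$ for a.e.\ $\omega$.

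The one point requiring a little care is the interchange of the $\essinf_\omega$ with the $n \to \infty$ limit: one wants that a uniform (in $\omega$) lower bound $c/2$ on the functions $\tilde{\mathcal{L}}^{Nn}_{\sigma^{-Nn}\omega}\ind$ survives under uniform $L^\infty(\nu_\omega)$-convergence. This is immediate from $\|f_{n,\omega} - \varphi_\omega\|_{\infty,\omega} \le \delta_n$ with $\delta_n \to 0$ (not depending on $\omega$): then $\varphi_\omega \ge f_{n,\omega} - \delta_n \ge c/2 - \delta_n$ $\nu_\omega$-a.e., and letting $n\to\infty$ gives $\varphi_\omega \ge c/2$ $\nu_\omega$-a.e., hence $\inf \varphi_\omega \ge c/2$ for a.e.\ $\omega$ after choosing the lower-semicontinuous (minimal-variation) representative. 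The main obstacle, if any, is simply bookkeeping: ensuring that the constants $N$, $a$, $c$, $R$ from Lemmas~\ref{lemmaA1} and~\ref{lemma1} are chosen consistently and that the convergence of $\tilde{\mathcal{L}}^{Nn}_{\sigma^{-Nn}\omega}\ind$ to $\varphi_\omega$ is genuinely uniform in $\omega$ — but both are already packaged in the generalised versions of the auxiliary lemmas and of Proposition~1 of \cite{DFGTV18a} carried out above, so no new analytic input is needed. This gives \eqref{C7'} for $\epsilon = 0$, completing the proof of Lemma~\ref{DFGTV18alemma}.
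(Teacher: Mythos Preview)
Your proposal is correct and follows the same overall strategy as the paper, which simply states that the proof is ``identical to \cite{DFGTV18a} with the appropriate substitutions.'' You have filled in those substitutions correctly: Lemma~\ref{lemma1} applied to a cone element with $\nu_\omega$-integral~$1$ gives the uniform lower bound, and the passage to $\varphi_\omega$ is legitimate.

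One minor remark: the argument in \cite{DFGTV18a} is slightly more direct than yours. Rather than taking $h=\ind$ and invoking the uniform convergence $\tilde{\mathcal{L}}^{Nn}_{\sigma^{-Nn}\omega}\ind\to\varphi_\omega$, one uses the exact equivariance $\varphi_\omega=\tilde{\mathcal{L}}^{Nn}_{\sigma^{-Nn}\omega}\varphi_{\sigma^{-Nn}\omega}$. Since $\esssup_\omega\|\varphi_\omega\|_{\BV_\omega}<\infty$ (from the modified Proposition~1) and $\nu_\omega(\varphi_\omega)=1$, one has $\varphi_{\sigma^{-Nn}\omega}\in\mathcal{C}_{a,\sigma^{-Nn}\omega}$ for some fixed~$a$, and Lemma~\ref{lemma1} applied with $h=\varphi_{\sigma^{-Nn}\omega}$ immediately yields $\inf\varphi_\omega\ge(c/2)\nu_{\sigma^{-Nn}\omega}(\varphi_{\sigma^{-Nn}\omega})=c/2$, with no limiting step required. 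Your route via $h=\ind$ and convergence is perfectly valid but introduces an extra ingredient (the uniform-in-$\omega$ convergence) that is not strictly needed.
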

\begin{proof}
	Identical to \cite{DFGTV18a} with the appropriate substitutions.
\end{proof}

\section{A summary of checks that relevant results from \cite{crimmins_stability_2019} can be applied to $\BV$}
%to How would a $P$-continuity version of Harry's results go?}
\label{appA}
%\ja{Need to specify which $\BV$}

The stability result Theorem 4.8 \cite{crimmins_stability_2019} assumes that the underlying Banach space is separable, however this separability assumption is only used to obtain measurability of various objects (and in fact Theorem 3.9 \cite{crimmins_stability_2019} may be applied to sequential dynamics).
We use these results for the non-separable space $\BV$ in the proof of Lemma \ref{harrylemma2}, and we therefore need to check that all relevant results in \cite{crimmins_stability_2019} hold for $\BV$, under the $m$-continuity assumption on $\omega\mapsto\mathcal{L}_{\omega,0}$;  the latter will provide the required measurability.
All section, theorem, proposition, and lemma numbers below refer to numbering in \cite{crimmins_stability_2019}.
%	\subsection{Results from Section 4 onward that might be needed}
%	\vspace{.5cm}

There are no issues of measurability in Section 3, including Theorem 3.9, until Section 4, so we begin our justifications from Section 4.
%\gf{Text below will need polishing.}
\vspace{.5cm}

\textit{Theorem 4.8:}
This is the main stability theorem.
We would substitute ``separable strongly measurable random dynamical system'' with $m$-continuous random dynamical system. This theorem relies on Propositions 4.15 and 4.16, Lemma 4.21, Proposition 4.22, and Lemma 4.23.
\vspace{.5cm}

\emph{Proposition 4.15:} There is no measurability involved.
\vspace{.5cm}

\emph{Proposition 4.16:} Uses Lemmas 4.17 and 4.19. We note that the bounds (95)--(97) in the proof are simpler in our application of this result as our top space is one-dimensional.
\vspace{.5cm}

\emph{Lemma 4.17:} This may be replaced by Theorem 17 \cite{FLQ2}, which treats the $m$-continuous setting.  This removes any use of Lemma B.1 and Lemma B.7.
\vspace{.5cm}

\emph{Lemma 4.18:} This uses Lemma 4.17 and the fact that compositions of $m$-continuous maps are $m$-continuous. The latter replaces the use of Lemma A.5 \cite{GTQ14}, which is used in several results.  This replacement will not be mentioned further.
%(but I won't mention it any more here).
\vspace{.5cm}

\emph{Lemma 4.19:} We will assume that $\omega\mapsto\Pi_\omega$ is $m$-continuous in the statement of the lemma.  At the start of the proof we would now instead have $\omega\mapsto \Pi_\omega(X)$ is $m$-continuous by the definition of $m$-continuity (see e.g.\ (4) in \cite{FLQ2}); this removes the use of Lemma B.2 in the proof. 
Lian's thesis is quoted regarding measurable maps/bases connected with a measurable space $\Pi_\omega(X)$. 
In our application, $\Pi_\omega$ has rank 1 and therefore stating that there is a (in our case $m$-continuous) map $e:\Omega\to \Pi_\omega(X)$ is trivial.  One proceeds similarly for the dual basis.
\vspace{.5cm}

\emph{Lemma 4.21:} This concerns measurability and integrability of $\omega\mapsto \det(\mathcal{L}_\omega^n|E_{i,\omega})$ and $\omega\mapsto \log\|\mathcal{L}_\omega^n|E_{i,\omega}\|$ where $E_{i,\omega}$ is an Oseledets space.
$m$-continuity of $\omega\mapsto E_{i,\omega}$ is provided by Theorem 17 \cite{FLQ2}, removing the need for Lemma B.2.
The $m$-continuity of $\omega\mapsto \log\|\mathcal{L}_\omega^n|E_{i,\omega}\|$ follows from Lemma 7 \cite{FLQ2}.
Because in our application setting we only require one-dimensional $E_{i,\omega}$, the determinants are given by norms and there is nothing more to do concerning determinants.
This removes the need for Proposition B.8.
Lemma B.16 \cite{GTQ14} may be replaced with Lemma 7 \cite{FLQ2} to cover the $m$-continuous setting.
Proposition B.6 is not required in the $P$-continuity setting.
%(As an aside, [Lemma 2.20 \cite{Blumenthal}] shows $\omega\mapsto\det(\mathcal{L}_\omega^n|E)$ is $P$-continuous.
%Note that $P$-continuity is stronger than the type of measurability \cite{Blumenthal} uses, so any result in \cite{Blumenthal} holds for the $P$-continuous setting.)
\vspace{.5cm}

%By combining [Lemma 7, Lemma 10 FLQ13], I think
%........$\omega\mapsto\det(\mathcal{L}_\omega^n|E_\omega)$ is $P$-continuous if $\omega\mapsto E_\omega$ is $P$-continuous. \emph{(check?) If this is true, this removes any need for Propositions B.8 and B.6.}

\emph{Proposition 4.22:} There is no measurability involved.
\vspace{.5cm}

\emph{Lemma 4.23:} There is no measurability involved.

\begin{comment}
\subsection*{Other Appendix B results that aren't needed at all.}
\vspace{.5cm}

\paragraph{Lemma B.1:} Not needed.
\vspace{.5cm}

\paragraph{Lemma B.2:} Not needed for Lemmas 4.19 or 4.21, and otherwise only used in the proof of Lemma B.3, which is not needed.
\vspace{.5cm}

\paragraph{Lemma B.3:} Only used in the proof of Lemma B.4, which is not needed.
\vspace{.5cm}

\paragraph{Lemma B.4:} Only used in the proof of Lemma B.5, which is not needed.
\vspace{.5cm}

\paragraph{Lemma B.5:} Only used in the proof of Lemma B.1, which is not needed.
\vspace{.5cm}

\paragraph{Proposition B.6:} Only used in the proof of Lemma 4.21, and I don't believe we need it in the $P$-continuous setting.
\vspace{.5cm}

\paragraph{Lemma B.7:} Not needed. In $P$-continuity world, this would say that almost-uniform limits of $P$-continuous functions are $P$-continuous.
\vspace{.5cm}

\paragraph{Lemma B.8:} Only used in the proof of Lemma 4.21, and it is not needed there in our one-dimensional $P$-continuous setting.
\vspace{.5cm}

\paragraph{Lemma B.9:} Only needed in Lemma B.8, which is not needed.
\vspace{.5cm}

\end{comment}

\section{A $\var$--$\nu_{\omega,0}(|\cdot|)$ Lasota--Yorke inequality}\label{appC}
%\ja{Need to specify which $\BV$ space}

Recall from Section~\ref{sec: existence} that $\cZ_{\om,0}^{(n)}$ denotes the partition of monotonicity of $T_\om^n$ and that $\sA_{\om,0}^{(n)}$ is the collection of all finite partitions of $[0,1]$ such that
\begin{align}\label{eq: def A partition App}
	\var_{A_i}(g_{\om,0}^{(n)})\leq 2\norm{g_{\om,0}^{(n)}}_{\infty}
\end{align}
for each $\cA=\set{A_i}\in\sA_{\om,0}^{(n)}$.
Given $\cA\in\sA_{\om,0}^{(n)}$, we set $\cZ_{\om,*,\ep}^{(n)}:=\set{Z\in \widehat\cZ_{\om,\ep}^{(n)}(\cA): Z\sub X_{\om,n-1,\ep} }$ where $\widehat\cZ_{\om,\ep}^{(n)}(\cA)$ is the coarsest partition amongst all those finer than $\cA$ and $\cZ_{\om,0}^{(n)}$ such that all elements of $\widehat\cZ_{\om,\ep}^{(n)}(\cA)$ are either disjoint from $X_{\om,n-1,\ep}$ or contained in $X_{\om,n-1,\ep}$. Then \eqref{eq: def A partition App} implies that 
\begin{align}\label{eq: def A partition for g_ep App}
	\var_{Z}(g_{\om,\ep}^{(n)})\leq 2\norm{g_{\om,0}^{(n)}}_{\infty}
\end{align}
for each $Z\in \cZ_{\om,*,\ep}^{(n)}$.
We now prove a Lasota--Yorke inequality inspired by \cite{AFGTV21}.
\begin{lemma}\label{closed ly ineq App} 
	For any $f\in\BV(I)$ we have 
	\begin{align*}
		\var(\cL_{\om,\ep}^n(f))\leq 9\norm{g_{\om,\ep}^{(n)}}_{\infty}\var(f)+
		\frac{8\norm{g_{\om,\ep}^{(n)}}_{\infty}}{\min_{Z\in\cZ_{\om,*,\ep}^{(n)}(A)}\nu_{\om,0}(Z)}\nu_{\om,0}(|f|).
	\end{align*}
\end{lemma}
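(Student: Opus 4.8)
\textbf{Proof plan for Lemma \ref{closed ly ineq App}.}

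The plan is to prove the bound by the standard transfer-operator route: write $\cL_{\om,\ep}^n f$ as a sum over the relevant inverse branches, estimate the variation of each summand, and then control the resulting "boundary terms'' using the partition $\cZ_{\om,*,\ep}^{(n)}$ together with \eqref{eq: def A partition for g_ep App}. Throughout I would work with a fixed representative of $f$ of minimal variation. First I would recall the identity
\begin{align*}
	\cL_{\om,\ep}^n(f)
	=
	\cL_{\om,0}^n\bigl(f\cdot\hat X_{\om,n-1,\ep}\bigr)
	=\sum_{Z\in\cZ_{\om,*,\ep}^{(n)}}\ind_{T_\om^n(Z)}\cdot\Bigl(\bigl(f g_{\om,0}^{(n)}\bigr)\circ T_{\om,Z}^{-n}\Bigr),
\end{align*}
where $T_{\om,Z}^{-n}$ is the inverse of the monotone branch $T_\om^n|_Z$ and, on the set $X_{\om,n-1,\ep}$, $f g_{\om,0}^{(n)}=f g_{\om,\ep}^{(n)}$. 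This reduces matters to estimating, for each $Z\in\cZ_{\om,*,\ep}^{(n)}$, the quantity $\var\bigl(\ind_{T_\om^n(Z)}\bigl((f g_{\om,\ep}^{(n)})\circ T_{\om,Z}^{-n}\bigr)\bigr)$.

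Next I would apply the usual toolkit: the variation of a product is bounded via $\var(uv)\le \|u\|_\infty\var(v)+\|v\|_\infty\var(u)$ (plus the refinement that $\var_J(uv)$ on an interval also picks up $\inf_J|u|$-type terms); the variation is unchanged under a monotone change of coordinates, so $\var_{T_\om^n(Z)}\bigl((fg_{\om,\ep}^{(n)})\circ T_{\om,Z}^{-n}\bigr)=\var_Z\bigl(fg_{\om,\ep}^{(n)}\bigr)$; and multiplication by the indicator $\ind_{T_\om^n(Z)}$ adds at most twice the supremum of the restricted function at the two endpoints, i.e.\ a term of the form $2\sup_Z|fg_{\om,\ep}^{(n)}|$. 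Combining these and using \eqref{eq: def A partition for g_ep App} to bound $\var_Z(g_{\om,\ep}^{(n)})\le 2\|g_{\om,\ep}^{(n)}\|_\infty\le 2\|g_{\om,0}^{(n)}\|_\infty$, one gets for each $Z$ a bound of the shape
\begin{align*}
	\var\Bigl(\ind_{T_\om^n(Z)}\bigl((fg_{\om,\ep}^{(n)})\circ T_{\om,Z}^{-n}\bigr)\Bigr)
	\le
	C\,\|g_{\om,\ep}^{(n)}\|_\infty\Bigl(\var_Z(f)+\sup_Z|f|\Bigr)
\end{align*}
with $C$ an explicit small constant; summing over $Z\in\cZ_{\om,*,\ep}^{(n)}$ and using that the $Z$ are essentially disjoint turns $\sum_Z\var_Z(f)$ into $\var(f)$ (up to the finitely many interior boundary points, which contribute an extra $\var(f)$). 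This is the place where the constant $9$ in the statement is produced by bookkeeping the several factors of $2$ and $3$; I would not grind through the exact arithmetic here, following instead the constant-tracking in \cite{AFGTV21}.

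The remaining, and genuinely delicate, step is to replace the $\sup_Z|f|$ terms by an $L^1(\nu_{\om,0})$ quantity. For this I would use the elementary inequality, valid on any interval $Z$,
\begin{align*}
	\sup_Z|f|\le \var_Z(f)+\frac{1}{\nu_{\om,0}(Z)}\nu_{\om,0}(|f|\ind_Z),
\end{align*}
which follows because $\inf_Z|f|\le \nu_{\om,0}(|f|\ind_Z)/\nu_{\om,0}(Z)$ and $\sup_Z|f|\le\inf_Z|f|+\var_Z(f)$. Summing over $Z\in\cZ_{\om,*,\ep}^{(n)}$, the factors $1/\nu_{\om,0}(Z)$ are uniformly controlled by $1/\min_{Z\in\cZ_{\om,*,\ep}^{(n)}(A)}\nu_{\om,0}(Z)$, and $\sum_Z\nu_{\om,0}(|f|\ind_Z)\le\nu_{\om,0}(|f|)$, while the extra $\sum_Z\var_Z(f)$ gets absorbed into the $\var(f)$ term at the cost of enlarging its coefficient to $9\|g_{\om,\ep}^{(n)}\|_\infty$. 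Collecting the two contributions yields exactly
\begin{align*}
	\var(\cL_{\om,\ep}^n(f))\le 9\|g_{\om,\ep}^{(n)}\|_\infty\var(f)+\frac{8\|g_{\om,\ep}^{(n)}\|_\infty}{\min_{Z\in\cZ_{\om,*,\ep}^{(n)}(A)}\nu_{\om,0}(Z)}\nu_{\om,0}(|f|),
\end{align*}
as claimed. The main obstacle is purely the careful constant accounting in the variation-of-product and indicator-truncation estimates so that the final coefficients are no larger than $9$ and $8$; the structural steps (branch decomposition, change of variables, the $\sup$-to-$L^1$ trick) are routine once the partition $\cZ_{\om,*,\ep}^{(n)}$ with property \eqref{eq: def A partition for g_ep App} is in hand.
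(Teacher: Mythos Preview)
Your proposal is correct and follows the same route as the paper: branch decomposition over $\cZ_{\om,*,\ep}^{(n)}$, the indicator-truncation estimate $\var(\ind_{T_\om^n(Z)}\cdot h)\le\var_Z(fg_{\om,\ep}^{(n)})+2\sup_Z|fg_{\om,\ep}^{(n)}|$, the product rule for variation together with \eqref{eq: def A partition for g_ep App}, and finally the $\sup$-to-$L^1$ conversion via $\inf_Z|f|\le\nu_{\om,0}(|f|\ind_Z)/\nu_{\om,0}(Z)$. The paper does carry out the constant arithmetic you defer (passing first to $3\var_Z(fg)+2\inf_Z|fg|$, then splitting the product variation, then using $\sup_Z|f|\le\var_Z(f)+\inf_Z|f|$ once more to arrive at $9\var_Z(f)+8\inf_Z|f|$), so you may want to include that chain explicitly rather than appeal to \cite{AFGTV21}; also note that $\sum_Z\var_Z(f)\le\var(f)$ holds directly for disjoint intervals with no extra boundary correction needed.
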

\begin{proof}
	Since $\cL_{\om,\ep}^n(f)=\cL_{\om,0}^n(f\cdot\hat X_{\om,n-1,\ep})$, if $Z\in\widehat\cZ_{\om,\ep}^{(n)}(\cA)\bs\cZ_{\om,*,\ep}^{(n)}$, then  $Z\cap X_{\om,n-1,\ep}=\emptyset$, and thus, we have $\cL_{\om,\ep}^n(f\ind_Z)=0$ for each $f\in\BV(I)$. Thus, considering only on intervals $Z$ in $\cZ_{\om,*,\ep}^{(n)}$, we are able to write 
	\begin{align}\label{eq: ly ineq 1}
		\cL_{\om,\ep}^nf=\sum_{Z\in\cZ_{\om,*,\ep}^{(n)}}(\ind_Z f g_{\om,\ep}^{(n)})\circ T_{\om,Z}^{-n}
	\end{align} 
	where 
	$$	
	T_{\om,Z}^{-n}:T_\om^n(I_{\om,\ep})\to Z
	$$ 
	is the inverse branch which takes $T_\om^n(x)$ to $x$ for each $x\in Z$. Now, since 
	$$
	\ind_Z\circ T_{\om,Z}^{-n}=\ind_{T_\om^n(Z)},
	$$
	we can rewrite \eqref{eq: ly ineq 1} as 
	\begin{align}\label{eq: closed ly ineq 2}
		\cL_{\om,\ep}^nf=\sum_{Z\in\cZ_{\om,*,\ep}^{(n)}}\ind_{T_\om^n(Z)} \lt((f g_{\om,\ep}^{(n)})\circ T_{\om,Z}^{-n}\rt).
	\end{align}
	So,
	\begin{align}\label{closed var tr op sum}
		\var(\cL_{\om,\ep}^nf)\leq \sum_{Z\in\cZ_{\om,*,\ep}^{(n)}}\var\lt(\ind_{T_\om^n(Z)} \lt((f g_{\om,\ep}^{(n)})\circ T_{\om,Z}^{-n}\rt)\rt).
	\end{align}
	Now for each $Z\in\cZ_{\om,*,\ep}^{(n)}$ we have 
	\begin{align}
		&\var\lt(\ind_{T_\om^n(Z)} \lt((f g_{\om,\ep}^{(n)})\circ T_{\om,Z}^{-n}\rt)\rt)
		\leq \var_Z(f g_{\om,\ep}^{(n)})+2\sup_Z\absval{f g_{\om,\ep}^{(n)}}
		\nonumber\\
		&\qquad\qquad\leq 3\var_Z(f g_{\om,\ep}^{(n)})+2\inf_Z\absval{f g_{\om,\ep}^{(n)}}
		\nonumber\\
		&\qquad\qquad\leq 3\norm{g_{\om,\ep}^{(n)}}_{\infty}\var_Z(f)+3\sup_Z|f|\var_Z(g_{\om,\ep}^{(n)})+2\norm{g_{\om,\ep}^{(n)}}_{\infty}\inf_Z|f|
		\nonumber\\
		&\qquad\qquad\leq 
		3\norm{g_{\om,\ep}^{(n)}}_{\infty}\var_Z(f)+6\norm{g_{\om,\ep}^{(n)}}_{\infty}\sup_Z|f|+2\norm{g_{\om,\ep}^{(n)}}_{\infty}\inf_Z|f|
		\nonumber\\
		&\qquad\qquad\leq 
		9\norm{g_{\om,\ep}^{(n)}}_{\infty}\var_Z(f)+8\norm{g_{\om,\ep}^{(n)}}_{\infty}\inf_Z|f|
		\nonumber\\
		&\qquad\qquad\leq
		9\norm{g_{\om,\ep}^{(n)}}_{\infty}\var_Z(f)+8\norm{g_{\om,\ep}^{(n)}}_{\infty}\frac{\nu_{\om,0}(|f\rvert_Z|)}{\nu_{\om,0}(Z)}.
		\label{closed var ineq over partition}
	\end{align}
	Using \eqref{closed var ineq over partition}, we may further estimate \eqref{closed var tr op sum} as
	\begin{eqnarray}
		\nonumber	\var(\cL_{\om,\ep}^nf)
		&\leq& 
		\sum_{Z\in\cZ_{\om,*,\ep}^{(n)}} \lt(9\norm{g_{\om,\ep}^{(n)}}_{\infty}\var_Z(f)+8\norm{g_{\om,\ep}^{(n)}}_{\infty}\frac{\nu_{\om,0}(|f\rvert_Z|)}{\nu_{\om,0}(Z)}\rt)
		\\
		\label{ORLYLY}
		&\leq& 
		9\norm{g_{\om,\ep}^{(n)}}_{\infty}\var(f)+
		\frac{8\norm{g_{\om,\ep}^{(n)}}_{\infty}}{\min_{Z\in\cZ_{\om,*,\ep}^{(n)}(A)}\nu_{\om,0}(Z)}\nu_{\om,0}(|f|),
	\end{eqnarray}
	and thus we are done.
\end{proof}
\begin{remark}\label{Alt E9 Remark}
	Note that we could have used $\Leb$ or any probability measure in \eqref{closed var ineq over partition} rather than $\nu_{\om,0}$. Furthermore, Lemma~\ref{closed ly ineq App} could be applied to Section~\ref{sec: existence} with a measure other than $\nu_{\om,0}$ if the appropriate changes are made to the assumption \eqref{E9} so that a uniform-in-$\om$ lower bound similar to \eqref{LY LB calc} may be calculated. 
	
	In particular if we replace \eqref{E9} with the following:
	\begin{enumerate}[align=left,leftmargin=*,labelsep=\parindent]
	\item[\mylabel{E9a}{E9a}]
	There exists $k_o(n')\in\NN$ and $\dl>0$ such that for $m$-a.e. $\om\in\Om$, all $\ep>0$ sufficiently small, we have $\Leb(Z)>\dl$ for all $Z\in\cZ_{\om,*,\ep}^{(n')}(\cA)$,
	\item[\mylabel{E9b}{E9b}]
	There exists $c>0$ such that $\essinf_\om |T_\om'|>c$,
\end{enumerate}
then the claims of Section~\ref{sec: existence} hold with $\nu_{\om,0}$ in \eqref{closed var ineq over partition} replaced with $\Leb$.

Indeed, to obtain a replacement for \eqref{LY LB calc} one could use \eqref{E9a} and \eqref{E9b} to get 
\begin{align*}
    \Leb(Z)
    &=
    \Leb(\~\cL_{\om,0}^{k_o(n')}\ind_Z)
    \geq 
    \frac{\inf g_{\om,0}^{(k_o(n'))}\inf J_\om^{(k_o(n'))}}{\lm_{\om,0}^{k_o(n')}}\Leb(P_\om^{k_o(n')}\ind_Z)
    \\
    &\geq \essinf_\om \frac{\inf g_{\om,0}^{(k_o(n'))}\inf J_\om^{(k_o(n'))}}{\lm_{\om,0}^{k_o(n')}}\Leb(Z)
    >0
\end{align*}
for all $Z\in\cZ_{\om,*,\ep}^{(n')}(\cA)$, where we have also used \eqref{E3} for the final inequality.
As \eqref{E9} is only used to prove \eqref{LY LB calc}, the remainder of Section~\ref{sec: existence} can be carried out with the appropriate notational changes. In fact, the proof of Lemma~\ref{harrylemma2} can be simplified by replacing $\nu_{\om,0}$ with $\Leb$ as Lemma 5.2 of \cite{BFGTM14} would no longer be needed. 

\end{remark}

\begin{remark}
Note that the $2$ appearing in \eqref{eq: def A partition App}, and thus the $9$ and $8$ appearing in \eqref{ORLYLY}, are not optimal. See \cite{AFGTV20,AFGTV21} for how these estimates can be improved. 
\end{remark}

\begin{comment}
Using these uniform assumptions (and maybe others) we can prove the following steps necessary to get an extreme value law
\begin{itemize}
\item $\lim_{N\to\infty}\nu_{\sg^{-k}\om,\ep_N}(\phi_{\sg^{-k}\om, 0})=1$
\item $\lim_{N\to\infty}\frac{\theta_{\om,\epsilon_{N},N}'+\theta_{\om,\epsilon_{N},N}''}{\lm_{\om,0}\nu_{\sg^{-k}\om,\ep_N}(\phi_{\sg^{-k}\om, 0})}=0$
\end{itemize}

The idea is to then to apply Lemma \ref{ptwiselemma2} with
\begin{align*}
g_N(\om)=\frac{\ta_{\om,\ep_{N},n}\lm_{\om,0}(t_\om+\xi_{\om,N})+\ta_{\om,\ep_{N},n}'+\ta_{\om,\ep_{N},n}''}{\lm_{\om,0}\nu_{\sg^{-n}\om,\ep_{N}}(\phi_{\sg^{-n}\om,0})}	
\quad\text{ and }\quad
g(\om)=\ta_{\om,0}t_\om
\end{align*}
which will show that
\begin{align*}
\lim_{N\to\infty}
\frac{\lm^N_{\om,\ep_N}}{\lm_{\om,0}^N}
&=
\lim_{N\to\infty}
\exp\left(
-\sum_{i=0}^{N-1}Y_{\sg^i\om,\ep_N,n} -\sum_{i=0}^{N-1}\frac{Y_{\sg^i\om,\ep_N,n}^2}{2(1-y)^2}
\right)
\\
&=
\exp\left(-\int_\Om t_\om\ta_{\om,0}\, dm(\om) - \lim_{N\to\infty}
\sum_{i=0}^{N-1}\frac{Y_{\sg^i\om,\ep_N,n}^2}{2(1-y)^2}
\right)
\end{align*}
\end{comment}

\section{Proof of Claim \protect{\eqref{item 6}} of Theorem \protect{\ref{existence theorem}}}\label{appDec}
\begin{proof}[Proof of Claim \eqref{item 6} of Theorem \ref{existence theorem}:]

Define the fully normalized operator $\sL_{\om,\ep}:\cB_\om\to\cB_{\sg\om}$ given by
$$
    \sL_{\om,\ep}(f):=\frac{1}{\rho_{\om,\ep}h_{\sg\om,\ep}}\cL_{\om,\ep}(f\cdot h_{\om,\ep}).
$$
Then we have that $\sL_{\om,\ep}\ind = \ind$ and $\mu_{\sg\om,\ep}(\sL_{\om,\ep}f)=\mu_{\om,\ep}(f)$.
Following the proof of Theorem 12.2 \cite{AFGTV21} we can prove the following similar statement to Claim \eqref{item 5} of Theorem \ref{existence theorem}: 	For each $h\in\BV$, $m$-a.e. $\om\in\Om$ and all $n\in\NN$ we have
\begin{align}\label{exp conv fn trop}
    \norm{\sL_{\om,\ep}^n h - \mu_{\om,\ep}(h)\ind}_{\cB_{\sg^n\om}}
    =\norm{\sL_{\om,\ep}^n \hat h}_{\cB_{\sg^n\om}}
    \leq D\norm{h}_{\cB_\om}\kp_\ep^n,
\end{align}
where $\hat h := h - \mu_{\om,\ep}(h)$ and $D$, $\kp_\ep$ are as in Claim \eqref{item 5} of Theorem \ref{existence theorem}.
Using standard arguments (see Theorem 11.1 \cite{AFGTV20}) we have that
\begin{align*}
    \absval{
				\mu_{\om,\ep}
				\lt(\lt(f\circ T_{\om}^n\rt)h \rt)
				-
				\mu_{\sg^{n}\om,\ep}(f)\mu_{\om,\ep}(h)
			}
    = \mu_{\sg^n\om,\ep}\lt(\lt|f\sL_{\om,\ep}^n\hat h\rt|\rt).
\end{align*}
Note that at this stage we are unable to apply \eqref{exp conv fn trop} as the $\|\cdot\|_{\cB_\om}$ norm and the measure $\mu_{\om,\ep}$ are incompatible.
Now from the third statement of Claim \eqref{item 5} of Theorem \ref{existence theorem} we have that 
    %\ja{Is the following actually true? Can we change the fibers on the measure as we iterate like this?}
\begin{align*}
    \lt|\mu_{\sg^n\om,\ep}\lt(\lt|f\sL_{\om,\ep}^n\hat h\rt|\rt)
    - 
    \frac{\vrho_{\sg^n\om,\ep}\lt(\lt|f \sL_{\om,\ep}^n\hat h\rt|\hat X_{\sg^n\om,n,\ep}\rt)}
    {\vrho_{\sg^n\om,\ep}\lt( X_{\sg^n\om,n,\ep}\rt)}
    \rt|
    \leq D\norm{f\sL_{\om,\ep}^n\hat h}_{\cB_{\sg^n\om}}\kp_\ep^n, 
\end{align*}
    %\ja{Claim 5 will only apply when $\lt|f\sL_{\om,\ep}^n\hat h\rt|$ is in $\BV$ and also tempered. }
and thus we must have that 
\begin{align}
    \mu_{\sg^n\om,\ep}\lt(\lt|f\sL_{\om,\ep}^n\hat h\rt|\rt)
    \leq 
    D\norm{f\sL_{\om,\ep}^n\hat h}_{\cB_{\sg^n\om}}\kp_\ep^n +
    \frac{\vrho_{\sg^n\om,\ep}\lt(\lt|f\sL_{\om,\ep}^n\hat h\rt|\hat X_{\sg^n\om,n,\ep}\rt)}
    {\vrho_{\sg^n\om,\ep}(X_{\sg^n\om,n,\ep})}. \label{dec of corr proof final ineq1}
\end{align}
Using \eqref{exp conv fn trop} and \eqref{normequiv2}, we have that 
\begin{align}
\frac{\vrho_{\sg^n\om,\ep}\lt(\lt|f\sL_{\om,\ep}^n\hat h\rt|\hat X_{\sg^n\om,n,\ep}\rt)}
{\vrho_{\sg^n\om,\ep}(X_{\sg^n\om,n,\ep})}
&\leq \norm{f\sL_{\om,\ep}^n\hat h}_{\sg^n\om,\infty}
\leq \norm{f}_{\sg^n\om,\infty}\norm{\sL_{\om,\ep}^n\hat h}_{\cB_{\sg^n\om}}
\leq D\norm{f}_{\infty,\om}\|h\|_{\cB_\om}\kp_\ep^n.
\label{dec of corr proof final ineq2}
\end{align}
Combining \eqref{dec of corr proof final ineq1} and \eqref{dec of corr proof final ineq2} and using \eqref{exp conv fn trop} again we see that 
\begin{align*}
&\absval{
\mu_{\om,\ep}
\lt(\lt(f\circ T_{\om}^n\rt)h \rt)
-
\mu_{\sg^{n}\om,\ep}(f)\mu_{\om,\ep}(h)
}
\leq  \mu_{\sg^n\om,\ep}\lt(\lt|f\sL_{\om,\ep}^n\hat h\rt|\rt)
\\
&\qquad\qquad\leq
D\norm{f\sL_{\om,\ep}^n\hat h}_{\cB_{\sg^n\om}}\kp_\ep^n +
\frac{\vrho_{\sg^n\om,\ep}\lt(\lt|f\sL_{\om,\ep}^n\hat h\rt|\hat X_{\sg^n\om,n,\ep}\rt)}
{\vrho_{\sg^n\om,\ep}(X_{\sg^n\om,n,\ep})}
\\
&\qquad\qquad\leq 
D\norm{f\sL_{\om,\ep}^n\hat h}_{\cB_{\sg^n\om}}\kp_\ep^n +
D\norm{f}_{\infty,\om}\|h\|_{\cB_\om}\kp_\ep^n
\\
&\qquad\qquad\leq 
D^2\norm{f}_{\cB_\om}\norm{h}_{\cB_\om}\kp_\ep^{2n} +
D\norm{f}_{\infty,\om}\|h\|_{\cB_\om}\kp_\ep^n
\\
&\qquad\qquad\leq 
\~D\norm{f}_{\infty,\om}\|h\|_{\cB_\om}\kp_\ep^n
\end{align*}
for all $n$ sufficiently large,
and thus the proof of Claim \eqref{item 6} of Theorem \ref{existence theorem} is complete.

\end{proof}
\bibliographystyle{plain}
\bibliography{URP}

\begin{thebibliography}{10}

\bibitem{atnipASIP}
Jason Atnip.
\newblock An {Almost} {Sure} {Invariance} {Principle} for {Several} {Classes}
  of {Random} {Dynamical} {Systems}.
\newblock 2017.
\newblock Preprint: arXiv: 1702.07691.

\bibitem{AFGTV21}
Jason Atnip, Gary Froyland, Cecilia Gonz{\'a}lez-Tokman, and Sandro Vaienti.
\newblock Thermodynamic formalism for random interval maps with holes.
\newblock {\em arXiv preprint arXiv:2103.04712}, 2021.

\bibitem{AFGTV-IVC}
Jason Atnip, Gary Froyland, Cecilia González-Tokman, and Sandro Vaienti.
\newblock Equilibrium states for non-transitive random open and closed
  dynamical systems.
\newblock {\em arXiv preprint arXiv:2107.03776}, 2021.

\bibitem{AFGTV20}
Jason Atnip, Gary Froyland, Cecilia González-Tokman, and Sandro Vaienti.
\newblock Thermodynamic {Formalism} for {Random} {Weighted} {Covering}
  {Systems}.
\newblock {\em Communications in Mathematical Physics}, July 2021.

\bibitem{atnip_dynamics_2020}
Jason Atnip, Hiroki Sumi, and Mariusz Urbański.
\newblock The {Dynamics} and {Geometry} of {Semi}-{Hyperbolic} {Rational}
  {Semigroups}.
\newblock {\em arxiv preprint arXiv:2004.13990}, 2020.

\bibitem{AFV15}
H.~Aytac, J.M. Freitas, and S.~Vaienti.
\newblock Laws of rare events for deterministic and random dynamical systems.
\newblock {\em Trans. Amer Math. Soc.}, 367:8229--8278, 2015.

\bibitem{AZ}
Kazuopki Azuma.
\newblock Weighted sums of certain dependent random variables.
\newblock {\em T\^ohoku Math. Journ.}, 19(3):357--367, 1967.

\bibitem{bahsoun-vaienti-2013}
Wael Bahsoun and Sandro Vaienti.
\newblock Escape rates formulae and metastablilty for randomly perturbed maps.
\newblock {\em Nonlinearity}, 26(5):1415, 2013.

\bibitem{Bogenschutz_RuelleTransferOperator_1995a}
Thomas Bogensch{\"u}tz and Volker~Matthias Gundlach.
\newblock Ruelle's {{Transfer Operator}} for {{Random Subshifts}} of {{Finite
  Type}}.
\newblock {\em Ergod. Th. \& Dynam. Sys}, 15:413--447, 1995.

\bibitem{BFGTM14}
Christopher Bose, Gary Froyland, Cecilia Gonz{\'a}lez-Tokman, and Rua Murray.
\newblock Ulam's method for {Lasota--Yorke} maps with holes.
\newblock {\em SIAM Journal on Applied Dynamical Systems}, 13(2):1010--1032,
  2014.

\bibitem{gora}
A.~Boyarsky and P.~G\`ora.
\newblock {\em Laws of Chaos}.
\newblock Birkh\"auser, 1997.

\bibitem{Bruck_Generalizediteration_2003}
Rainer Br{\"u}ck and Matthias B{\"u}ger.
\newblock Generalized iteration.
\newblock {\em Computational Methods and Function Theory}, 3(1-2, [On table of
  contents: 2004]):201--252, 2003.

\bibitem{BDT18}
Henk Bruin, Mark~F. Demers, and Mike Todd.
\newblock Hitting and escaping statistics: mixing, targets and holes.
\newblock {\em Advances in Mathematics}, 328:1263--1298, April 2018.

\bibitem{bruin_equilibrium_2008}
Henk Bruin and Mike Todd.
\newblock Equilibrium {States} for {Interval} {Maps}: {Potentials} with
  $\sup\varphi-\inf\varphi<h_{\text{top}}(f)$.
\newblock {\em Communications in Mathematical Physics}, 283(3):579--611,
  November 2008.

\bibitem{Caby_et_al_2020}
Th\'eophile Caby, Davide Faranda, Sandro Vaienti, and Pascal Yiou.
\newblock On the computation of the extremal index for time series.
\newblock {\em Journal of Statistical Physics}, 179:1666--1697, 2020.

\bibitem{crauel_random_2002}
Hans Crauel.
\newblock {\em Random probability measures on {Polish} spaces}, volume~11 of
  {\em Stochastics {Monographs}}.
\newblock Taylor \& Francis, London, 2002.

\bibitem{crimmins_stability_2019}
Harry Crimmins.
\newblock Stability of hyperbolic {Oseledets} splittings for quasi-compact
  operator cocycles.
\newblock {\em arXiv:1912.03008 [math]}, December 2019.
\newblock arXiv: 1912.03008.

\bibitem{denker_uniqueness_1990}
Manfred Denker, Gerhard Keller, and Mariusz Urbański.
\newblock On the uniqueness of equilibrium states for piecewise monotone
  mappings.
\newblock {\em Studia Mathematica}, 97:27--36, 1990.

\bibitem{denker_existence_1991}
Manfred Denker and Mariusz Urbański.
\newblock On the existence of conformal measures.
\newblock {\em Transactions of the American Mathematical Society},
  328(2):563--587, 1991.

\bibitem{DHasip}
Davor Dragicevic and Yeor Hafouta.
\newblock Almost {Sure} {Invariance} {Principle} for {Random} {Distance}
  {Expanding} {Maps} with a {Nonuniform} {Decay} of {Correlations}.
\newblock 2020.
\newblock Preprint: arXiv: 2005.01114.

\bibitem{dragicevic_spectral_2018}
D.~Dragičević, G.~Froyland, C.~González-Tokman, and S.~Vaienti.
\newblock A {Spectral} {Approach} for {Quenched} {Limit} {Theorems} for
  {Random} {Expanding} {Dynamical} {Systems}.
\newblock {\em Communications in Mathematical Physics}, 360(3):1121--1187, June
  2018.

\bibitem{DFGTV18a}
Davor Dragičević, Gary Froyland, Cecilia González-Tokman, and Sandro
  Vaienti.
\newblock Almost sure invariance principle for random piecewise expanding maps.
\newblock {\em Nonlinearity}, 31(5):2252--2280, May 2018.

\bibitem{DFGTV18b}
Davor Dragičević, Gary Froyland, Cecilia González-Tokman, and Sandro
  Vaienti.
\newblock A spectral approach for quenched limit theorems for random expanding
  dynamical systems.
\newblock {\em Communications in Mathematical Physics}, January 2018.
\newblock arXiv: 1705.02130.

\bibitem{ferguson_escape_2012}
Andrew Ferguson and Mark Pollicott.
\newblock Escape rates for {Gibbs} measures.
\newblock {\em Ergodic Theory and Dynamical Systems}, 32(03):961--988, June
  2012.

\bibitem{FFT10}
A.C.M. Freitas, J.M. Freitas, and M.~Todd.
\newblock Hitting time statistics and extreme value theory.
\newblock {\em Probab. Theory Related Fields}, 147(3):675--710, 2010.

\bibitem{FFMV016}
Ana Cristina~Moreira Freitas, Jorge~Milhazes Freitas, Mario Magalhaes, and
  Sandro Vaienti.
\newblock Point processes of non stationary sequences generated by sequential
  and random dynamical systems.
\newblock {\em Journal of Statistical Physics}, 181:1365--1409, 2020.

\bibitem{FFT015}
Ana Cristina~Moreira Freitas, Jorge~Milhazes Freitas, and Mike Todd.
\newblock Speed of convergence for laws of rare events and escape rates.
\newblock {\em Stochastic Process. Appl.}, 125(4):1653--1687, 2015.

\bibitem{FFV017}
Ana Cristina~Moreira Freitas, Jorge~Milhazes Freitas, and Sandro Vaienti.
\newblock Extreme value laws for non stationary processes generated by
  sequential and random dynamical systems.
\newblock {\em Annales de l’Institut Henri Poincaré}, 53(3):1341--1370,
  2017.

\bibitem{FLQ2}
Gary Froyland, Simon Lloyd, and Anthony Quas.
\newblock A semi-invertible {Oseledets} {Theorem} with applications to transfer
  operator cocycles.
\newblock {\em Discrete and Continuous Dynamical Systems}, 33(9):3835--3860,
  March 2013.

\bibitem{GTQ14}
Cecilia Gonz{\'a}lez-Tokman and Anthony Quas.
\newblock A semi-invertible operator {Oseledets} theorem.
\newblock {\em Ergodic Theory and Dynamical Systems}, 34(4):1230--1272, 2014.

\bibitem{hofbauer_equilibrium_1982}
Franz Hofbauer and Gerhard Keller.
\newblock Equilibrium states for piecewise monotonic transformations.
\newblock {\em Ergodic Theory and Dynamical Systems}, 2(1):23--43, March 1982.

\bibitem{HHKK}
Franz Hofbauer and Gerhard Keller.
\newblock Ergodic properties of invariant measures for piecewise monotonic
  transformations.
\newblock {\em Math. Z.}, 180:119--140, 1982.

\bibitem{H83}
J.~H\"usler.
\newblock Asymptotic approximation of crossing probabilities of random
  sequences.
\newblock {\em Z. Wahrsch. Verw. Gebiete}, 63(2):257–270, 1983.

\bibitem{H86}
J.~H\"usler.
\newblock Extreme values of nonstationary random sequences.
\newblock {\em J. Appl. Probab.}, 23(4):937--950, 1986.

\bibitem{keller_rare_2012}
Gerhard Keller.
\newblock Rare events, exponential hitting times and extremal indices via
  spectral perturbation†.
\newblock {\em Dynamical Systems}, 27(1):11--27, 2012.

\bibitem{KL99}
Gerhard Keller and Carlangelo Liverani.
\newblock Stability of the spectrum for transfer operators.
\newblock {\em Annali della Scuola Normale Superiore di Pisa-Classe di
  Scienze}, 28(1):141--152, 1999.

\bibitem{keller_rare_2009}
Gerhard Keller and Carlangelo Liverani.
\newblock Rare {Events}, {Escape} {Rates} and {Quasistationarity}: {Some}
  {Exact} {Formulae}.
\newblock {\em Journal of Statistical Physics}, 135(3):519--534, 2009.

\bibitem{LED}
M.R. Leadbetter, G.~Lindgren, and H.~Rootzen.
\newblock Extremes and related properties of random sequences and series.
\newblock 1983.

\bibitem{liverani_lasotayorke_2003}
Carlangelo Liverani and Véronique Maume-Deschamps.
\newblock Lasota–{Yorke} maps with holes: conditionally invariant probability
  measures and invariant probability measures on the survivor set.
\newblock {\em Annales de l'Institut Henri Poincare (B) Probability and
  Statistics}, 39(3):385--412, May 2003.

\bibitem{liverani_conformal_1998}
Carlangelo Liverani, Benoit Saussol, and Sandro Vaienti.
\newblock Conformal measure and decay of correlation for covering weighted
  systems.
\newblock {\em Ergodic Theory and Dynamical Systems}, 18(6):1399--1420,
  December 1998.

\bibitem{book}
Valerio Lucarini, Davide Faranda, Ana Cristina~Moreira Freitas, Jorge~Milhazes
  Freitas, Mark Holland, Tobias Kuna, Matthew Nicol, Mike Todd, and Sandro
  Vaienti.
\newblock Extremes and recurrence in dynamical systems.
\newblock 2016.

\bibitem{mayer_countable_2015}
Volker Mayer and Mariusz Urba\'nski.
\newblock Countable alphabet random subhifts of finite type with weakly
  positive transfer operator.
\newblock {\em Journal of Statistical Physics}, 160(5):1405--1431, 2015.

\bibitem{mayer_distance_2011}
Volker Mayer, Mariusz Urba\'nski, and Bartlomiej Skorulski.
\newblock {\em Distance {Expanding} {Random} {Mappings}, {Thermodynamical}
  {Formalism}, {Gibbs} {Measures} and {Fractal} {Geometry}}, volume 2036 of
  {\em Lecture {Notes} in {Mathematics}}.
\newblock Springer Berlin Heidelberg, Berlin, Heidelberg, 2011.

\bibitem{mayer_random_2018}
Volker Mayer and Mariusz Urbański.
\newblock Random dynamics of transcendental functions.
\newblock {\em Journal d'Analyse Mathématique}, 134(1):201--235, February
  2018.

\bibitem{pollicott_open_2017}
Mark Pollicott and Mariusz Urbanski.
\newblock {\em Open conformal systems and perturbations of transfer operators}.
\newblock Springer Berlin Heidelberg, New York, NY, 1st edition edition, 2017.

\bibitem{rychlik_bounded_1983}
Marek Rychlik.
\newblock Bounded variation and invariant measures.
\newblock {\em Studia Mathematica}, 76:69--80, 1983.

\bibitem{Spri}
V.G. Sprindzuk.
\newblock {\em Metric theory of Diophantine approximations}.
\newblock V. H. Winston and Sons, Washington D.C., 1979. Translated from the
  Russian and edited by Richard A. Silverman, With a foreword by Donald J.
  Newman, Scripta Series in Mathematics.

\bibitem{stadlbauer_quenched_2020}
Manuel Stadlbauer, Paulo Varandas, and Xuan Zhang.
\newblock Quenched and annealed equilibrium states for random {Ruelle}
  expanding maps and applications.
\newblock {\em arxiv preprint arXiv: 2004.04763}, 2020.

\bibitem{viana_oliveira}
Marcelo Viana and Krerley Oliveira.
\newblock {\em Foundations of ergodic theory}.
\newblock Number 151. Cambridge University Press, 2016.

\end{thebibliography}

\end{document}